\documentclass{amsart}
\usepackage[all]{xy}
\usepackage{tikz}
\usetikzlibrary{shapes.geometric}
\usetikzlibrary{shapes.multipart} 
\usetikzlibrary{arrows}
\usetikzlibrary{cd}
\usetikzlibrary{snakes}


    \tikzstyle{int}=[circle, draw,fill=black,outer sep=0,minimum size=3pt, inner sep=0]
    \tikzstyle{ext}=[circle, draw=black,outer sep=0,inner sep=1pt]
    \tikzstyle{black}=[circle,fill=black!,inner sep=0pt,minimum size=2mm]
    \tikzstyle{white}=[circle,inner sep=0pt,draw=black,minimum size=2mm]
    \tikzstyle{invisible}=[coordinate,inner sep=0pt,minimum size=2mm]
    \tikzstyle{biw}=[circle split,inner sep=1pt,draw=black,minimum size=8mm]
    \tikzstyle{biwb}=[rectangle,inner sep=1pt,draw=black,minimum size=8mm]
    \tikzstyle{uniw}=[circle,inner sep=1pt,draw=black,minimum size=8mm]
    \tikzstyle{unimin}=[circle,inner sep=1pt,draw=black,minimum size=5mm]

\usepackage{xcolor}
    \definecolor{mygray}{gray}{0.9}
\usepackage{latexsym}

\usepackage{amssymb}
\usepackage{amsmath}
\usepackage{amsthm}
\usepackage{amscd}
\usepackage{fancyhdr}
\usepackage{fullpage}
\usepackage{mathrsfs}

\xyoption{arc}
\usepackage[center]{caption}

\setlength{\parindent}{0pt} \addtolength{\headsep}{0.5cm}


\newcommand{\GC}{\mathsf{GC}}

\newcommand{\dGC}{\mathsf{dGC}}

\newcommand{\wGC}{\mathsf{wGC}}
\newcommand{\OGC}{\mathsf{oGC}}
\newcommand{\oGC}{\mathsf{oGC}}

\newcommand{\fWGC}{\mathsf{fwGC}}
\newcommand{\fwGC}{\mathsf{fwGC}}
\newcommand{\fpWGC}{\mathsf{fwPGC}}
\newcommand{\fqWGC}{\mathsf{fwQGC}}
\newcommand{\fpwGC}{\mathsf{fwPGC}}
\newcommand{\fqwGC}{\mathsf{fwQGC}}

\newcommand{\pwGC}{\mathsf{wPGC}}
\newcommand{\qwGC}{\mathsf{wQGC}}
\newcommand{\OWGC}{\mathsf{owGC}}

\newcommand{\pOWGC}{\mathsf{owPGC}}
\newcommand{\owGC}{\mathsf{owGC}}
\newcommand{\oqwGC}{\mathsf{owQGC}}
\newcommand{\opwGC}{\mathsf{owPGC}}
\newcommand{\fowGC}{\mathsf{fowGC}}
\newcommand{\foqwGC}{\mathsf{fowQGC}}
\newcommand{\fopwGC}{\mathsf{fowPGC}}

\newcommand{\qGC}{\mathsf{qGC}}
\newcommand{\qqGC}{\mathsf{qQGC}}
\newcommand{\pqGC}{\mathsf{qPGC}}
\newcommand{\oqGC}{\mathsf{oqGC}}
\newcommand{\oqqGC}{\mathsf{oqQGC}}
\newcommand{\opqGC}{\mathsf{oqPGC}}

\newcommand{\tGC}{\mathsf{tGC}}
\newcommand{\qtGC}{\mathsf{tQGC}}
\newcommand{\ptGC}{\mathsf{tPGC}}

\newcommand{\qmGC}{\mathsf{mQGC}}
\newcommand{\pmGC}{\mathsf{mPGC}}

 
 \newcommand{\bu}{\bullet}
 \newcommand{\ad}{{\mathrm a\mathrm d}}

\theoremstyle{plain}
\swapnumbers
\newtheorem{theorem}{Theorem}[subsection]
\newtheorem{corollary}[theorem]{Corollary}

\newtheorem{proposition}[theorem]{Proposition}

\newtheorem{prop-def}[theorem]{Proposition-definition}

\newtheorem{main-theorem}{Main~Theorem}[section]
\newtheorem{section-theorem}{Theorem}[section]
\newtheorem{section-corollary}{Corollary}[section]

\theoremstyle{definition}

\newtheorem{remark}[theorem]{Remark}
\newtheorem{definition}[theorem]{Definition}



\usetikzlibrary{decorations.markings}

\def\MarkLt{6pt}
\def\MarkSep{3pt}

\tikzset{
  TwoMarks/.style={
    postaction={decorate,
      decoration={
        markings,
        mark=at position #1 with
          {
              \begin{scope}[xslant=0.2]
              \draw[line width=\MarkSep,white,-] (0pt,-\MarkLt) -- (0pt,\MarkLt) ;
              \draw[-] (-0.5*\MarkSep,-\MarkLt) -- (-0.5*\MarkSep,\MarkLt) ;
              \draw[-] (0.5*\MarkSep,-\MarkLt) -- (0.5*\MarkSep,\MarkLt) ;
              \end{scope}
          }
       }
    }
  },
  TwoMarks/.default={0.5},
  OneMark/.style={
    postaction={decorate,
      decoration={
        markings,
        mark=at position #1 with
          {
              \draw[-,solid] (0,-\MarkLt) -- (0,\MarkLt) ;
          }
       }
    }
  },
  OneMark/.default={0.5},
  NMark/.style={
    postaction={decorate,
      decoration={
        markings,
        mark=at position #1 with
          {
              \draw[-,solid] (-3pt,-\MarkLt) -- (-3pt,\MarkLt) ;
              \draw[-,solid] (3pt,-\MarkLt) -- (3pt,\MarkLt) ;
              
          }
       }
    }
  },
  NMark/.default={0.5}
}

 

\sloppy

\long\def\symbolfootnote[#1]#2{\begingroup%
\def\thefootnote{\fnsymbol{footnote}}\footnote[#1]{#2}\endgroup}

\author{Oskar Frost}
\address{Mathematics Research Unit, University of Luxembourg, Maison du Nombre, 6 Avenue de la Fonte,
 L-4364 Esch-sur-Alzette, Grand Duchy of Luxembourg }
\email{oskar.frost@uni.lu}
\title{Graph complexes and Deformation theories of the (wheeled) properads of quasi- and pseudo-Lie bialgebras}

\begin{document}
\begin{abstract}
    Quasi-Lie bialgebras are natural extensions of Lie-bialgebras, where the cobracket satisfies the co-Jacobi relation up to some natural obstruction controlled by a skew-symmetric $3$-tensor $\phi$. This structure was introduced by Drinfeld while studying deformation theory of universal enveloping algebras and has since seen many other applications in algebra and geometry. In this paper we study the derivation complex of strongly homotopy quasi-Lie bialgebra, both in the unwheeled (i.e standard) and wheeled case, and compute its cohomology in terms of Kontsevich graph complexes.
\end{abstract}
\maketitle

{\large
\section{Introduction and outline}
}
\label{sec:introduction}
\subsection{Introduction}
A Lie bialgebra is a vector space equipped with a bracket $[-,-]$ and a cobracket $\delta$, such that $(V,[-,-])$ is a Lie algebra, $(V,\delta)$ is a Lie coalgebra and that the bracket and cobracket satisfy the Drinfeld compability relation (that $\delta$ is a 1-cocycle). First introduced by V. Drinfeld \cite{D1} in the theory of Yang Baxter equations and quantum groups, this notion has later found many other applications in a wide range of areas of pure mathematics and mathematical physics.\\

A quasi-Lie bialgebra is a vector space equipped with a bracket $[-,-]$, a cobracket $\delta$ and a skew-symmetric element $\phi\in V\otimes V\otimes V$ such that $(V,[-,-])$ is a Lie algebra, and $\delta$ satisfies co-Jacobi relation up to some natural relation with $\phi$. These structures were also introduced by V. Drinfeld \cite{D2} and their quantization theory was developed later by B. Enriques and G. Halbout \cite{EH}. Their strongly homotopy versions have been studied in \cite{G,Kr}. Quasi-Lie bialgebras appear in many different areas. V. Turaev has found such structures when studying quasi-surfaces \cite{Tu}.
Recently quasi-Lie bialgebra structures have been found in the theory of twisting of properads \cite{M3} and the theory of cohomology groups of the moduli spaces $\mathcal{M}_{g,n}$ of genus $g$ algebraic curves with $n$ punctures \cite{M4}.
Our main purpose in this paper is to study the deformation theory of the properad $\mathcal{QL}ieb$ governing quasi-Lie bialgebras, as well as the deformation of the wheeled closure of its minimal resolution $\mathcal{QH}olieb$, first constructed in \cite{G}.
This work complements a similar study of the properad of Lie bialgebras $\mathcal{L}ieb$ and of the wheeled closure of its minimal resolution $\mathcal{H}olieb$ given in \cite{MW1} and \cite{F} respectively.\\

Graph complexes is a collective notion for chain complexes whose generators are linear combinations of various types of graphs. The type of graphs considered most often include undirected graphs, directed graphs, hairy graphs, ribbon graphs, and so on. The most famous series of graph complexes is given by the Kontsevich graph complexes $\GC_k$ indexed by an integer $k$ which are generated by undirected graphs. The Kontsevich graph complex carries, like many other graph complexes, a Lie algebra structure compatible with its differential. T. Willwacher proved in \cite{W1} that the zeroth cohomology of $\GC_2$ is equal to the Lie algebra $\mathfrak{grt}_1$ of the pro-unipotent Grothendieck-Teichmuller group $GRT_1$ introduced by Drinfeld in \cite{D3}. The directed graph complex $\dGC_k$ consisting of directed graphs is quasi-isomorhpic to $\GC_d$ and has many interesting subcomplexes. For example, the oriented graph complex $\OGC_{k+1}$ is the subcomplex of $\dGC_{k+1}$ spanned by graphs containing no closed directed paths. The targeted graph complex $\dGC_{d+1}^t$ is the subcomplex spanned graphs with at least one target vertex (i.e a vertex with no outgoing directed edges attached). Both of these subcomplexes of $\dGC_{k+1}$ are quasi-isomorphic to $\GC_{k}$ \cite{W2,Z2}.\\

In this paper we study the deformation theory of homotopy Lie bialgebras and homotopy quasi-Lie bialgebras in larger generality. All the above mentioned properads have degree shifted versions $\mathcal{H}olieb_{c,d}$ and $\mathcal{QH}olieb_{c,d}$ in which the Lie bracket has degree $1-d$ and the Lie co-bracket has degree $1-c$, so that the case $c=d=1$ corresponds to the usual (quasi-) Lie bialgebras. We study in this paper $\mathcal{QH}olieb_{c,d}$ for arbitrary $c$ and $d$.
In \cite{MW1} a quasi-isomorphism $G:\OGC_{c+d+1}\rightarrow Der(\mathcal{H}olieb_{c,d})$  (up to a rescaling class) has been constructed. In this paper we will give an alternative proof of this statement, and extend it to the deformation complex of the homotopy quasi-Lie bialgebra properad.
\begin{theorem}[Main Theorem I]
    For any $c,d\in\mathbb{Z}$, there is an explicit morphism of dg Lie algebras
    \begin{equation*}
        G:\OGC_{c+d+1}\rightarrow Der(\mathcal{QH}olieb_{c,d})
    \end{equation*}
    which is a quasi-isomorphism up to a rescaling class.
\end{theorem}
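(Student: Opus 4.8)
The plan is to realize the entire statement inside one combinatorial model of the derivation complex and then peel off, by a sequence of filtration arguments, the pieces that do not survive in cohomology, reducing everything to the already-understood Lie bialgebra case. First I would identify $Der(\mathcal{QH}olieb_{c,d})$ with an explicit directed graph complex. Since $\mathcal{QH}olieb_{c,d}$ is the minimal cobar-type resolution of $\mathcal{QL}ieb_{c,d}$, its derivation complex is the deformation complex $\mathrm{Def}(\mathcal{QH}olieb_{c,d}\xrightarrow{\mathrm{id}}\mathcal{QH}olieb_{c,d})$, whose underlying space is read off directly from the generators of $\mathcal{QL}ieb_{c,d}$: the bracket, the cobracket, and the skew-symmetric source $\phi$. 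This produces a full directed graph complex $\mathrm{fdGC}^{\mathsf q}_{c,d}$ whose generators are connected directed graphs with external in- and out-legs and whose internal vertices are either of $\mathcal{L}ieb$-type (at least one incoming and one outgoing adjacent half-edge) or $\phi$-type \emph{sources} (no incoming half-edges, at least three outgoing), with differential the sum of vertex splittings, now including the splittings that create or resolve source vertices, and with Lie bracket given by graph insertions. This dictionary is exactly the one used for $\mathcal{H}olieb_{c,d}$ in \cite{MW1}, the only new feature being the source vertices encoding $\phi$ and its homotopies.

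With this model in hand I would define $G$ by the same hair-decoration formula as in the non-quasi case: for an oriented graph $\gamma\in\OGC_{c+d+1}$ one forms $G(\gamma)$ as the sum over all admissible ways of attaching external in- and out-hairs to the vertices of $\gamma$ so as to land in $\mathrm{fdGC}^{\mathsf q}_{c,d}$. Because this decoration makes every vertex $\mathcal{L}ieb$-type, the image of $G$ lands in the sourceless part of the complex, so that $G$ factors as the MW1 map $G_{\mathcal{H}}:\OGC_{c+d+1}\to Der(\mathcal{H}olieb_{c,d})$ followed by the evident map into $Der(\mathcal{QH}olieb_{c,d})$. That $G$ is a morphism of complexes and of Lie algebras is then a careful but routine sign-and-degree check: the vertex-splitting differential and the graph-insertion bracket on $\OGC_{c+d+1}$ correspond term by term to the internal differential and commutator on derivations, since the decoration is natural for both operations.

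The analytic heart is the computation of $H(\mathrm{fdGC}^{\mathsf q}_{c,d})$, and the point at which the quasi case genuinely differs from \cite{MW1}. I would introduce a filtration of $Der(\mathcal{QH}olieb_{c,d})$ by the number of $\phi$-type source vertices, arranged on each fixed (genus, valence) piece so that the filtration is bounded and the spectral sequence converges. On the first page the induced differential retains precisely the part that pairs the creation of a source with its resolution; the goal is to show that this auxiliary source-complex is acyclic, via an explicit contracting homotopy that absorbs a distinguished marked source vertex. Once this acyclicity is established, the spectral sequence collapses onto the sourceless subquotient, which is nothing but $Der(\mathcal{H}olieb_{c,d})$. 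Invoking \cite{MW1}, the latter has cohomology $H(\OGC_{c+d+1})$ up to the one-dimensional rescaling class, and since $G$ factors through it by the construction above, naturality of $G$ forces it to induce the identity on $H(\OGC_{c+d+1})$ modulo that class, which is exactly the assertion.

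I expect the acyclicity of the source-complex to be the main obstacle. The delicacy is that a source vertex interacts with the rest of a graph both through its outgoing edges and through the component of the properadic differential that converts a cobracket into a $\phi$-vertex (this is the shadow of the co-Jacobi-up-to-$\phi$ relation), so the filtration must be chosen so that the associated-graded differential isolates exactly the source-creation piece and nothing more, and the contracting homotopy must be defined compatibly with connectivity and with consistent signs. Guaranteeing that the filtration is exhaustive and bounded on each arity–genus summand, so that no convergence pathology occurs in the completed complex, is the other place where the argument must be handled with care; everything else follows the template of \cite{MW1} together with the reduction of hairs and low-valence vertices recorded there.
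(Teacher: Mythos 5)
Your reduction-to-\cite{MW1} strategy is genuinely different from the paper's (which does \emph{not} quote \cite{MW1} but reproves the $\mathcal{H}olieb$ case simultaneously via bi-weighted graph complexes), and its skeleton could in principle work; but as written it has two concrete gaps. First, the directions of your maps are wrong. In $Der(\mathcal{QH}olieb_{c,d})$ the sourceless graphs do \emph{not} form a subcomplex: the $\mathcal{QH}olieb$ vertex-splitting differential applied to a graph all of whose vertices have inputs produces terms in which one of the two new vertices receives no inputs at all (such splittings are zero in $\mathcal{H}olieb$ but are nonzero in $\mathcal{QH}olieb$ -- this is the shadow of the co-Jacobi-up-to-$\phi$ relation you mention). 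Consequently the ``evident map'' $Der(\mathcal{H}olieb_{c,d})\to Der(\mathcal{QH}olieb_{c,d})$ is not a chain map, your $G$ (summing only over hair decorations that make every vertex $\mathcal{L}ieb$-type) is not a chain map either, and the claimed factorization of $G$ through the map of \cite{MW1} collapses. The correct picture is dual: the graphs containing at least one source vertex form a subcomplex $\mathcal{S}$ (the differential creates sources but never destroys them), the sourceless part is the \emph{quotient}, giving a projection $\pi:Der(\mathcal{QH}olieb_{c,d})\twoheadrightarrow Der(\mathcal{H}olieb_{c,d})$, and the chain map $G$ must include the source-creating decorations -- exactly as the paper defines it -- so that $\pi\circ G$ equals the map of \cite{MW1}.

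Second, even after this correction, your entire argument rests on the acyclicity of $\mathcal{S}$ (equivalently, on $\pi$ being a quasi-isomorphism), and this is precisely what you do not prove: you defer it to ``an explicit contracting homotopy that absorbs a distinguished marked source vertex'' and yourself flag it as the main obstacle. Your description of the spectral sequence is also incoherent: since the differential only creates source vertices and never resolves them, no page of the source-count filtration ``pairs the creation of a source with its resolution''; the associated graded differential preserves the number of sources, and any cancellation must come from the source-creating differential on a later page. This missing acyclicity statement is where essentially all of the content of the theorem lies; it is what the paper's chain of reductions (identification with $\oqwGC_{c+d+1}$, the special in-vertex and special out-vertex filtrations, the passage to monodecorated graphs, and the final comparison with $\oGC_{c+d+1}$) accomplishes. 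So the proposal has the right overall shape and correctly locates the difficulty, but the map is set up in the wrong direction and the crux is asserted rather than proved.
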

The map is defined by mapping an oriented graph to a graph of the same shape, summing over all possible ways of attaching hairs to the vertices. We immediately get the following corollary.
\begin{corollary} \
$H^0(Der(\mathcal{QH}olieb_{1,1}))=H^0(\OGC_3)=\mathfrak{grt}_1$
\end{corollary}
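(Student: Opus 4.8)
The plan is to read off the statement from Main Theorem~I once we set $c=d=1$, and then to chain together the two cohomology computations recalled in the introduction. With $c=d=1$ we have $c+d+1=3$, so Main Theorem~I provides a morphism of dg Lie algebras $G\colon\OGC_3\to Der(\mathcal{QH}olieb_{1,1})$ that is a quasi-isomorphism up to a single rescaling class. Hence $G$ induces an isomorphism on cohomology in every degree except the one occupied by that class. The first thing I would pin down is the cohomological degree of the rescaling class; arguing exactly as in the homotopy Lie bialgebra case of \cite{MW1}, it does not lie in degree $0$. Granting this, the induced map is an isomorphism in degree $0$, giving $H^0(\OGC_3)\cong H^0(Der(\mathcal{QH}olieb_{1,1}))$, which is the first equality of the corollary.

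For the second equality I would invoke the two results quoted in the introduction. By the quasi-isomorphism $\OGC_{k+1}\simeq\GC_k$ of \cite{W2,Z2}, taken at $k=2$, we get $H^0(\OGC_3)\cong H^0(\GC_2)$; and Willwacher's theorem \cite{W1} identifies $H^0(\GC_2)$ with $\mathfrak{grt}_1$. Composing the isomorphisms yields $H^0(Der(\mathcal{QH}olieb_{1,1}))\cong H^0(\OGC_3)\cong\mathfrak{grt}_1$, and since all of the maps involved are morphisms of (dg) Lie algebras the identification respects the bracket.

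The only genuine subtlety, and the step I would treat most carefully, is the placement of the rescaling class. If it were to sit in degree $0$ the answer would pick up an extra copy of $\mathbb{K}$, i.e. we would land on $\mathfrak{grt}=\mathbb{K}\ltimes\mathfrak{grt}_1$ rather than on $\mathfrak{grt}_1$; so the whole content of the corollary beyond Main Theorem~I is the verification that the one-dimensional discrepancy between the cohomologies of $\OGC_3$ and $Der(\mathcal{QH}olieb_{1,1})$ occurs away from degree $0$. Everything else is a formal consequence of $G$ being a quasi-isomorphism up to that class together with the two cited identifications, so no further graph-combinatorial work should be required.
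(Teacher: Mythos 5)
Your overall skeleton --- specialize Main Theorem~I to $c=d=1$, then chain $H^0(\OGC_3)\cong H^0(\GC_2)\cong\mathfrak{grt}_1$ via \cite{W2,Z2} and \cite{W1} --- is exactly the route the paper takes (the paper treats the corollary as an immediate consequence of Main Theorem~I and these two citations). The problem is the step you yourself flag as the crux: your claim that the rescaling class does not lie in degree $0$ is false. The rescaling class is the derivation $r$ with $r(o_{m,n})=(m+n-2)\,o_{m,n}$; since it sends every generator to a scalar multiple of itself, it preserves degree and is a degree-$0$ cocycle of $Der(\mathcal{QH}olieb_{1,1})$. Concretely, under the paper's isomorphism $Der(\mathcal{QH}olieb_{1,1}^{\uparrow})\cong\foqwGC_{3}$ it is precisely the loop-number-zero class $\sum_{i+j\geq 3}(i+j-2)\cdot(\text{single vertex with bi-weight }(i,j))$ exhibited in the paper's proposition on loop number zero, and any single bi-weighted vertex sits in degree $0$: the degree $1-p(m-1)-q(n-1)$ of $o_{m,n}$ is exactly cancelled by the shift $[1+p(1-m)+q(1-n)]$ appearing in the description of the derivation complex. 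Moreover, the precedent from \cite{MW1} that you invoke establishes the opposite of what you assert: there the conclusion for $c=d=1$ is $H^0(Der(\mathcal{H}olieb_{1,1}))\cong\mathfrak{grt}_1\ltimes\mathbb{K}\cong\mathfrak{grt}$, the extra one-dimensional summand being exactly the degree-zero rescaling class.

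The consequence is that no argument can move the rescaling class away from degree $0$, so the displayed equality cannot be proved in the literal form you aim for; what is true is $H^0(Der(\mathcal{QH}olieb_{1,1}))\cong\mathfrak{grt}_1\oplus\mathbb{K}\cdot r$, and the corollary has to be read the same way Main Theorem~I is stated, namely \emph{up to the rescaling class}: the map $G$ identifies $H^0(\OGC_3)=\mathfrak{grt}_1$ with a complement of the line spanned by $r$. Your remaining steps (the identifications from \cite{W2,Z2} and \cite{W1}, and the compatibility with Lie brackets) are correct and coincide with the paper's argument; the error is isolated in the one piece of content you added beyond that formal chain, and it is resolved in the direction opposite to the one your proof needs.
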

The non-wheeled properad $\mathcal{H}olieb_{c,d}$ (respectively $\mathcal{QH}olieb_{c,d}$) is spanned by directed graphs with at least one incoming and at least one outgoing leg (respectively with at least one outgoing leg). Once we consider their wheeled extensions $\mathcal{H}olieb_{c,d}^\circlearrowleft$ (respectively $\mathcal{QH}olieb_{c,d}^\circlearrowleft$), we have to work with a much larger family of graphs which might not have incoming or outgoing legs at all. Therefore, when studying the deformation theory of Lie- or quasi-Lie bialgebras, it is natural to expect that generic deformations of original structures might produce new homotopy operations of "curvature" type, i.e with no inputs at all, or with no outputs, or both. Hence in the wheeled case, it is natural to study several deformation theories of the original Lie (respectively quasi-Lie) bialgebra structure, the one which allows deformation which create new curvature type homotopy operations, and the other one where such a creation is prohibited. Denote the first type derivation complex by $Der^*(\mathcal{H}olieb_{c,d}^\circlearrowleft)$ (respectively $Der^*(\mathcal{QH}olieb_{c,d}^\circlearrowleft)$), and the second type derivation complex by $Der(\mathcal{H}olieb_{c,d}^\circlearrowleft)$ (respectively $Der(\mathcal{H}olieb_{c,d}^\circlearrowleft)$). We computed the cohomologies of both $Der^*(\mathcal{H}olieb_{c,d}^\circlearrowleft)$ and $Der(\mathcal{H}olieb_{c,d}^\circlearrowleft)$ in our previous paper \cite{F} and related their cohomologies to Kontsevich graph complexes. In this paper we compute the cohomology of $Der^*(\mathcal{QH}olieb_{c,d}^\circlearrowleft)$ and $Der(\mathcal{QH}olieb_{c,d}^\circlearrowleft)$. We show complete answers for both complexes; not very surprisingly, the cohomology groups of the two complexes turn out to be rather different.
\begin{theorem}[Main Theorem II]
    For any $c,d\in\mathbb{Z}$, there is an explicit morphism of dg Lie algebras
    \begin{equation*}
        G:\dGC_{c+d+1}^t\rightarrow Der(\mathcal{QH}olieb_{c,d}^\circlearrowleft)
    \end{equation*}
    which is a quasi-isomorphism up to a rescaling class.
\end{theorem}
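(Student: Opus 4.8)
The plan is to mirror the proof of Main Theorem~I, lifting it to the wheeled setting by first fixing an explicit graph model for $Der(\mathcal{QH}olieb_{c,d}^\circlearrowleft)$ and then running a filtration argument to identify its cohomology with $H(\dGC_{c+d+1}^t)$. First I would unravel the derivation complex: since $\mathcal{QH}olieb_{c,d}$ is a minimal (cobar-type) resolution, its wheeled derivation complex is the convolution complex of the cogenerators against $\mathcal{QH}olieb_{c,d}^\circlearrowleft$, which I would present concretely as a complex of directed graphs whose internal vertices are decorated by the generating $(m,n)$-corollas (with $n\ge 1$, including the quasi-generators with $m=0$) and which are now allowed to carry wheels. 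The differential is a sum $\partial_{\mathrm{split}}+\partial_{\mathrm{act}}$ of the vertex-splitting part inherited from the resolution and the part recording the action on a generator. With this model, $G$ sends a directed graph to the derivation built by attaching in- and out-hairs to its vertices in all admissible ways, with signs governed by $c$ and $d$, exactly as in Main Theorem~I. I would then verify that $G$ is a morphism of dg Lie algebras by the same local computations as before, now permitting wheels: that the vertex-splitting differential of $\dGC_{c+d+1}^t$ matches $\partial_{\mathrm{split}}+\partial_{\mathrm{act}}$ after hair attachment, and that the graph-insertion bracket matches the properadic bracket on derivations. The one structurally new point is well-definedness of the target: the full complex $\dGC_{c+d+1}$ would map into the curvature-allowing complex $Der^*(\mathcal{QH}olieb_{c,d}^\circlearrowleft)$, and it is precisely the targeted condition --- the presence of at least one vertex with no outgoing edge --- that forces the image to have at least one outgoing leg, hence to land in the curvature-free complex $Der(\mathcal{QH}olieb_{c,d}^\circlearrowleft)$.

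The quasi-isomorphism is the main obstacle, and I would attack it with a filtration by the total number of external legs (hairs), chosen so that the associated graded differential retains only the piece attaching a single new generator carrying one external leg. The core step is then an acyclicity lemma: the induced sub-differential is acyclic away from the subcomplex of graphs all of whose internal vertices are genuine trivalent bracket/cobracket corollas with no detachable hairs, via the standard homotopy contracting along the distinguished leg. On the $E_1$-page the complex therefore collapses onto exactly these reduced directed graphs, that is onto $\dGC_{c+d+1}^t$, and the spectral sequence degenerates. This is the wheeled analogue of the reduction used for Main Theorem~I and in \cite{F}; the additional care needed here is to carry the wheels through the homotopy and to confirm that the curvature-prohibition condition cuts out the targeted subcomplex rather than the full $\dGC_{c+d+1}$, matching the quasi-isomorphism $\dGC^t\simeq\GC$ of \cite{W2,Z2}.

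Finally I would record why the quasi-structure produces the target-only condition. Because $\mathcal{QH}olieb_{c,d}$ already contains input-free generators (the homotopy $\phi$), no source condition is imposed on admissible deformations, so curvature-freeness reduces to the single requirement of having an output; this is what singles out $\dGC_{c+d+1}^t$ rather than the sourced-and-targeted complex that governs the plain Lie bialgebra case of \cite{F}. The one-dimensional discrepancy between $H(G)$ and the full cohomology is then accounted for by the explicit Euler/rescaling derivation, yielding the ``up to a rescaling class'' clause exactly as in Main Theorem~I.
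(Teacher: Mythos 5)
Your first half is on track and matches the paper: modelling $Der(\mathcal{QH}olieb_{c,d}^\circlearrowleft)$ by directed graphs with hairs (the paper's quasi bi-weighted complex $\fqwGC_{c+d+1}$, whose curvature-free part is the subcomplex $\qwGC_{c+d+1}^+$ of graphs with at least one positive out-weight), defining $G$ as the sum over all admissible hair decorations, and the observation that the targeted condition is exactly what forces the image into the curvature-free complex (in the quasi case a target vertex must carry positive out-weight, since $w_x^{out}+|x|_{out}\geq 1$). The gap is the quasi-isomorphism argument itself, which as written would not go through. First, your filtration by the total number of hairs does not have the associated graded you describe: the differential splits into a hair-preserving part (vertex splitting, summing over all redistributions of hairs onto the two new vertices) and a hair-increasing part (attaching a generator to a hair raises the hair count by at least one, because generators are at least trivalent), so the associated graded differential is the vertex-splitting part --- essentially the whole difficulty --- and not ``the piece attaching a single new generator''. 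Second, the claimed $E_1$-page cannot be right: graphs with no hairs are not even elements of $Der(\mathcal{QH}olieb_{c,d}^\circlearrowleft)$ (every element of that complex has at least one out-leg by definition), and $\dGC_{c+d+1}^t$ is spanned by graphs with vertices of every valence $\geq 3$, not only trivalent ones. Third, there is a convergence issue you never address: hairs do not contribute to the cohomological degree, so every degree contains graphs with arbitrarily many hairs and a filtration by hair count is unbounded in each degree; this is precisely why the paper introduces the degree-shifting trick and then only ever filters by counts of vertices of special kinds, never by weights or hairs.

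What the paper actually does in place of your single acyclicity lemma is a chain of reductions, each with its own filtration and intermediate complex: the special in-vertex reduction $\mathsf{S}\qwGC_k^+\hookrightarrow\qwGC_k^+$ (filtering by in-core vertices), the special out-vertex reduction $\qqGC_k^+\hookrightarrow\mathsf{S}\qwGC_k^+$ (filtering by out-core vertices), killing graphs containing a $\frac{0}{0}$-decorated vertex via the quotient $\qqGC_k^+\to\qtGC_k^+$, the monodecorated reduction $\qmGC_k^+\hookrightarrow\qtGC_k^+$ (which requires the antenna-vertex and zig-zag edge analysis), then acyclicity of $\mathcal{C}(\frac{\infty_1}{0})$ and finally the identification of the resulting quotient $\mathcal{Q}^t$ with $\dGC_k^t$, the rescaling class coming from the loop-order-zero summand. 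Note also that at the end the cohomology classes are represented by the full sums $G(\Gamma)$ over all decorations of a targeted graph $\Gamma$, not by hairless or minimally decorated graphs, so any correct argument must keep track of all decorations simultaneously; your plan of contracting everything onto reduced graphs in one step is not a streamlining of this argument but a replacement by one whose key lemma is false as stated.
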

\begin{theorem}[Main Theorem III]\label{thm:Der(QH*)}
    For any $c,d\in\mathbb{Z}$, there is an explicit morphism of dg Lie algebras
    \begin{equation*}
        G:\dGC_{c+d+1}\oplus \dGC_{c+d+1}^{\geq3,no \ t}\rightarrow Der^*(\mathcal{QH}olieb_{c,d}^\circlearrowleft)
    \end{equation*}
    which is a quasi-isomorphism up to a rescaling class. Here $\dGC_{c+d+1}^{\geq3,no \ t}$ is the quotient complex spanned by directed graphs with no bivalent vertices nor target vertices. 
\end{theorem}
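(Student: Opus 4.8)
The plan is to realize $Der^*(\mathcal{QH}olieb_{c,d}^\circlearrowleft)$ explicitly as a complex of directed graphs decorated with external hairs, and then to collapse the hairs by a spectral sequence so that only bare directed graphs survive. Since $\mathcal{QH}olieb_{c,d}$ is the minimal (quasi-free) resolution, a derivation is freely determined by its restriction to the generating corollas, so as a graded vector space $Der^*(\mathcal{QH}olieb_{c,d}^\circlearrowleft)$ is the space of equivariant maps from the generators into $\mathcal{QH}olieb_{c,d}^\circlearrowleft$; the $*$ decoration is exactly what allows the image to contain curvature-type (no-input, no-output) corollas. A basis element is thus a connected directed graph built from the QHolieb corollas and equipped with external legs (the legs of the corolla being deformed), and the differential is $[\mu,-]$ for the Maurer--Cartan element $\mu$ defining the quasi-Lie bialgebra structure, which unwinds into vertex-splitting (from the resolution differential) together with the operations that glue the bracket, cobracket and the $\phi$-corolla onto the graph. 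Under this dictionary $G$ sends a bare directed graph to the sum over all admissible ways of attaching the minimal number of hairs, so checking that $G$ is a morphism of dg Lie algebras is a direct verification that hair-attachment commutes with both differentials and with the brackets.

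First I would set up a filtration under which the associated graded differential acts only on the hair-decoration of a fixed directed core. The central computation is the cohomology of this hair-differential: using the standard contracting homotopy comparing a graph with and without a hair attached at a chosen vertex, one shows that the surviving classes carry their hairs in a rigid canonical position, which identifies the $E_1$-page with a complex of hairless directed graphs. This is the step where the known acyclicity results for the in/out hair complexes and the identification of $\dGC_{c+d+1}^t$ with $\GC_{c+d}$ from \cite{W2,Z2} enter.

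Next I would organise the surviving graphs by comparison with the no-curvature complex $Der(\mathcal{QH}olieb_{c,d}^\circlearrowleft)$, whose cohomology is the targeted complex $\dGC_{c+d+1}^t$ by Main Theorem II. In $Der$, forbidding the creation of curvature forces every deformation graph to retain at least one genuine output, i.e. at least one target vertex, which is precisely the targeted constraint; passing to $Der^*$ lifts this restriction, so the same family of deformations now fills out the full directed graph complex $\dGC_{c+d+1}$, giving the first summand. The quasi structure contributes, on top of this, an independent family of deformations of its pure-output $\phi$-type curvature corollas; these carry no target vertex by construction, and after discarding the bivalent vertices (which only carry the rescaling class) and keeping valence $\geq 3$ they assemble into the quotient complex $\dGC_{c+d+1}^{\geq3,no \ t}$, the second summand. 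The key structural point is that these two families decouple on cohomology, so that the associated extension splits as a genuine direct sum rather than a nontrivial extension; I would then check convergence of the spectral sequence (the filtration is bounded on each fixed-genus, fixed-valence piece) and degeneration at $E_1$ for degree reasons, tracking the single bivalent generator responsible for the ``up to a rescaling class'' clause.

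The hardest part will be establishing this direct-sum decomposition, i.e. proving that the standard deformations filling out $\dGC_{c+d+1}$ and the new curvature deformations giving $\dGC_{c+d+1}^{\geq3,no \ t}$ do not interact at the level of cohomology. Concretely this requires showing that the component of the differential carrying a pure-output curvature corolla into a graph possessing a target vertex is cohomologically trivial, which rests on a careful analysis of how the homotopy $\phi$-corollas interact with the wheels, together with a separate acyclicity argument eliminating the low-valence and bivalent configurations that would otherwise glue the two summands together; this decoupling is exactly the feature that distinguishes the quasi-Lie bialgebra case from the Lie bialgebra computation of \cite{F}.
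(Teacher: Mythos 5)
Your starting dictionary---derivations of $\mathcal{QH}olieb_{c,d}^\circlearrowleft$ realized as directed graphs whose external hairs are recorded on the vertices---is exactly the paper's (its complex $\fqwGC_{c+d+1}$), but your proposal inverts where the difficulty sits, and the step you single out as ``the hardest part'' addresses a map that is identically zero. A derivation preserves arity: its value on the arity-$(m,n)$ generator lies in $\mathcal{QH}olieb_{c,d}^\circlearrowleft(m,n)$, and the differential respects this decomposition. The summand $\dGC_{c+d+1}^{\geq3,no\ t}$ is precisely the $(0,0)$-component, i.e.\ derivations valued in closed graphs, equivalently bi-weighted graphs all of whose weights are zero; this is a direct summand \emph{as a complex} for the trivial reason that vertex splitting preserves the total weight while the hair-creating terms of the differential vanish on zero weights and never take a positive-weight graph to an all-zero-weight one. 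The constraints ``no target vertex'' and ``all vertices at least trivalent'' are automatic, since every quasi-corolla has at least one output and is at least trivalent---nothing is ``discarded'', and the rescaling class is the loop-order-zero one-vertex graph, not something carried by bivalent vertices of this summand. Consequently there is no ``component of the differential carrying a pure-output curvature corolla into a graph possessing a target vertex'' whose cohomological triviality needs establishing: the two summands do not interact at the chain level, and the paper records the identification $\qqGC_k^0\cong\dGC_k/\dGC_k^{2+t}$ as an easy observation. The decoupling you plan to spend the main effort on needs no proof.

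Conversely, the part you compress into ``the standard contracting homotopy''---showing that the complementary summand of graphs carrying at least one hair has the cohomology of $\dGC_{c+d+1}$---is the actual content of the theorem, and a single filtration with a rigid normal form for hairs will not produce it. The paper needs the chain of quasi-isomorphisms $\qwGC_k^*\leftarrow\qqGC_k^*\rightarrow\qtGC_k^*\leftarrow\qmGC_k^*\rightarrow\qmGC_k(\omega)\cong\dGC_k$, obtained from separate filtrations by special in-vertices, special out-vertices, elimination of $\frac{0}{0}$-decorated vertices with positively weighted companions, reduction to monodecorated graphs, and a zig-zag-edge analysis; moreover the resulting quasi-isomorphism $\dGC_k\to\qwGC_k^*$ sends a graph to the sum over \emph{all} admissible bi-weightings (excluding the all-zero one), not to a graph with hairs in a canonical position. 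Your alternative of bootstrapping from Main Theorem II also has a gap: knowing that $Der(\mathcal{QH}olieb_{c,d}^\circlearrowleft)$ has the cohomology of $\dGC_{c+d+1}^t$ only controls the subcomplex of graphs with a positive out-weight, so you would still need the cohomology of the quotient (all out-weights zero, some positive in-weight) and of the connecting homomorphism in the resulting long exact sequence, none of which is outlined and which is comparable in difficulty to the direct computation. Finally, the identification of $\dGC^t$ with the undirected complex $\GC$ from \cite{W2,Z2} that you invoke plays no role in this theorem.
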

Let us remark that the direct summand $\dGC_{c+d+1}^{\geq3,no \ t}$ arises in the above correspondence for a "trivial" reason. The direct summand correspond precisely to the direct summand of the complex $Der(\mathcal{QH}olieb_{c,d}^\circlearrowleft)$ with no in- or out-legs. The isomorphism of these complexes is immediate, so the non-trivial part of the above theorem established an isomorphism between $\dGC_{c+d+1}$ and the complement in $Der(\mathcal{QH}olieb_{c,d}^\circlearrowleft)$ to the above trivial subcomplex.
To explain this new situation with the $Der(\mathcal{QH}olieb_{c,d}^\circlearrowleft)$ and $Der^*(\mathcal{QH}olieb_{c,d}^\circlearrowleft)$ complexes better, it is useful to consider a further extension of the notion of Lie bialgebras. An extension where one allows not only the curvature term $\phi\in\wedge^3 V$ (as in the case of $\mathcal{QL}ieb_{c,d}$), but also its dual curvature term $\psi\in\wedge^3 V^*$ satisfying natural compability relations. This extension has been introduced and studied by J. Granåker in \cite{G} and we denote associated the degree shifted properad (respectively homotopy properad) by $\mathcal{PL}ieb_{c,d}$ (respectively $\mathcal{PH}olieb_{c,d}$).
\begin{theorem}[Main Theorem IV]
    For any $c,d\in\mathbb{Z}$, there is an explicit dg morphism
\begin{align*}
    G:\OGC_{c+d+1}\rightarrow Der(\mathcal{PH}olieb_{c,d})
\end{align*}
which is a quasi-isomorphism up to a rescaling class.
\end{theorem}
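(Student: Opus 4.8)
The plan is to realise $Der(\mathcal{PH}olieb_{c,d})$ as an explicit graph complex and then to deduce the statement from Main Theorem I by a spectral sequence that ``switches off'' the dual curvature $\psi$. Since $\mathcal{PH}olieb_{c,d}$ is the minimal (dg free) resolution of $\mathcal{PL}ieb_{c,d}$, its derivation complex is, as usual, the complex of connected directed graphs with no closed directed paths whose internal vertices are the generating corollas: the honest $(p,q)$-corollas with $p,q\geq 1$, the curvature $\phi$-corollas of type $(0,q)$ with $q\geq 3$, and the dual curvature $\psi$-corollas of type $(p,0)$ with $p\geq 3$. Graphs carry external in- and out-legs, and the differential $d=d_{\mathrm{split}}+d_{\mathrm{graft}}$ splits a vertex into two joined by a new internal edge (summed over all ways producing legal generators, including curvature ones) and grafts the four defining operations of $\mathcal{PL}ieb_{c,d}$ onto the external legs. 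The map $G$ sends an oriented graph $\Gamma$ to the sum over all admissible ways of attaching in- and out-hairs to the vertices of $\Gamma$, exactly as in Main Theorem I.

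First I would check that $G$ is a morphism of dg Lie algebras. Compatibility with the brackets is immediate from the vertex-insertion formula for the Lie structure on both sides, and compatibility with the differentials is verified term by term: the edge-contraction differential of $\OGC_{c+d+1}$ matches $d_{\mathrm{split}}$ on the underlying oriented shape, while the hair-grafting terms produced by $d_{\mathrm{graft}}$ cancel in pairs once one sums over all hair-attachments. This part is essentially formal and runs parallel to the verification for $\mathcal{QH}olieb_{c,d}$.

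The heart of the argument is the quasi-isomorphism. I would equip $Der(\mathcal{PH}olieb_{c,d})$ with the decreasing filtration by the number of $\psi$-type (output-free) vertices; this is a filtration because the differential never decreases the $\psi$-count (splitting a $\psi$-corolla necessarily produces one honest and one $\psi$-corolla, the smaller outputless pieces not being legal generators, and no grafting removes a $\psi$). On the associated graded the induced differential $d_0$ preserves the $\psi$-count, and near a $\psi$-vertex it still contains the splitting that peels off, along one $\psi$-input, a neighbour with a single output; this is a standard bar-type acyclic piece. Exhibiting an explicit contracting homotopy built from this peeling shows that every graph carrying at least one $\psi$-vertex is exact on the $E_1$-page, so that $E_1$ is concentrated on the $\psi$-free part. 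But the $\psi$-free part is precisely $Der(\mathcal{QH}olieb_{c,d})$, whose cohomology is $H(\OGC_{c+d+1})$ by Main Theorem I; moreover the $\psi$-free (leading) part of $G$ recovers the map of Main Theorem I, so $G$ induces exactly this identification on $E_1$, including the one-dimensional rescaling class, which already lives in the $\psi$-free sector. Edge-reversal symmetry of $\OGC_{c+d+1}$ and of the pseudo structure, under which $\phi\leftrightarrow\psi$, serves as a useful consistency check.

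The main obstacle I expect is twofold. First, the acyclicity of the $\psi$-decorated part must be made rigorous: the peeling homotopy has to be shown to be a genuine contracting homotopy once all signs, orientations and the $\phi$--$\psi$ interaction through $d_{\mathrm{split}}$ are taken into account, and one must control boundary effects when a $\psi$-vertex is adjacent to another curvature vertex. Second, because $Der(\mathcal{PH}olieb_{c,d})$ is a direct product (a completed complex) rather than a direct sum, I would have to justify convergence of the spectral sequence: the $\psi$-count filtration is exhaustive, complete and Hausdorff, and on each fixed genus-and-arity summand it is bounded, so the complete-convergence theorem applies and the collapse $E_1\Rightarrow H(Der(\mathcal{PH}olieb_{c,d}))$ is legitimate.
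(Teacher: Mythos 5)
Your argument stands or falls with the claim that, for the filtration by the number of $\psi$-vertices (vertices with no outputs at all), the associated graded is acyclic on graphs containing at least one $\psi$-vertex. That claim is false, and it fails in a way no contracting homotopy can repair. The culprit is the leg-less sector of $Der(\mathcal{PH}olieb_{c,d})$: since the differential (vertex splitting plus grafting of a corolla onto a hair) never decreases the number of hairs, the graphs with no in- or out-legs form a direct summand subcomplex, isomorphic to the complex of oriented graphs all of whose vertices are at least trivalent (the oriented analogue of the quotient $\dGC_{c+d+1}^{\geq 3}$ appearing in Main Theorem V), with differential given only by those splittings for which both new vertices remain legal, i.e.\ at least trivalent. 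Every leg-less oriented graph has a sink, which, having no legs, is a $\psi$-vertex, so this entire summand sits in your filtration degree $\geq 1$. Concretely, take $c+d+1$ odd (this includes the classical case $c=d=1$) and let $\Theta$ be the graph with one $(3,0)$-vertex and one $(0,3)$-vertex joined by three parallel edges; it is a nonzero element for this parity. Every splitting of either vertex of $\Theta$ produces a vertex of total valency $<3$, hence an illegal generator, and there are no hairs to graft onto, so $d\Theta=0$ in the full complex and in the associated graded; your peeling homotopy vanishes on $\Theta$ (its source is not a single-output vertex and the $\psi$-vertex admits no legal splitting); and $\Theta$ cannot be a boundary, because the differential raises the vertex count by one while every one-vertex graph with a $\psi$-vertex necessarily carries hairs. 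Hence $[\Theta]\neq 0$ on your $E_1$-page in $\psi$-column $1$, and because the leg-less graphs form an honest direct summand of the whole complex, this class also survives in $H(Der(\mathcal{PH}olieb_{c,d}))$: no higher spectral-sequence differential can cancel it. In particular your conclusion $H(Der(\mathcal{PH}olieb_{c,d}))\cong H(Der(\mathcal{QH}olieb_{c,d}))$ cannot hold for the full derivation complex.

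This failure is exactly the point where the pseudo case differs from the quasi case: in $Der(\mathcal{QH}olieb_{c,d})$ every vertex has an output, so an oriented graph always has a sink with an out-leg and there is no leg-less sector, whereas in the pseudo case there is one and it carries cohomology. Correspondingly, the paper's own proof establishes the quasi-isomorphism $\oGC_{c+d+1}\rightarrow\pOWGC_{c+d+1}^{*}$ onto the summand of graphs with at least one non-zero bi-weight (at least one leg), via the chain of reductions by special-in and special-out vertices and the monodecorated complexes; the leg-less summand is set aside, just as Main Theorems III and V carry the explicit extra summands $\dGC_{c+d+1}^{\geq 3, no\ t}$ and $\dGC_{c+d+1}^{\geq 3}$. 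If you restrict your filtration to that legged summand, your overall strategy (identify the $\psi$-free column with $Der(\mathcal{QH}olieb_{c,d})$, invoke Main Theorem I, check complete convergence) is a reasonable alternative route; but there the acyclicity of the $\psi\geq 1$ columns is precisely the hard combinatorial content that occupies the paper's sections 4 and 5, and invoking ``a standard bar-type acyclic piece'' is not yet a proof of it -- the homotopy must be shown to survive the valency constraints on generators, graph automorphisms, and the interaction of peeling with grafting, which is what the paper's tree-complex computations accomplish.
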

\begin{theorem}[Main Theorem V]
    There is an explicit dg morphism
\begin{align*}
    G:\dGC_{c+d+1}\oplus\dGC_{c+d+1}^{\geq3}\rightarrow Der(\mathcal{PH}olieb_{c,d}^\circlearrowleft)
\end{align*}
which is a quasi-isomorphism up to a rescaling class. Here $\dGC_{c+d+1}^{\geq3}$ denotes the quotient complex spanned by directed graphs with no bivalent vertices.
\end{theorem}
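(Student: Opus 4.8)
The plan is to begin by making the target complex explicit in graph language, exactly as in the proofs of Main Theorems I--IV. I would describe $Der(\mathcal{PH}olieb_{c,d}^\circlearrowleft)$ as a complex whose generators are directed graphs --- now permitted to contain wheels, since we are in the wheeled setting --- with internal vertices decorated by the generating $(m,n)$-corollas of $\mathcal{PH}olieb_{c,d}$ (here $m,n\geq 0$ and $m+n\geq 3$, reflecting the symmetric presence of both curvature terms $\phi\in\wedge^3 V$ and $\psi\in\wedge^3 V^*$) and with some external in- and out-legs, the ``hairs''. The differential is $[\mu,-]$ for the Maurer--Cartan element $\mu$ defining the properad, and it splits into a vertex-splitting part (which preserves the number of hairs) and a curvature/hair part coming from the low generators, which changes the number of hairs. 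The first move is to split this complex, as a direct sum of complexes, into the subcomplex of graphs with no external legs and the complement spanned by graphs with at least one external leg. The legless part is, essentially by definition, the directed graph complex with internal vertices only, and after discarding the bivalent (rescaling) class it is isomorphic to $\dGC_{c+d+1}^{\geq3}$ on the nose. This handles the second summand for precisely the same ``trivial reason'' noted after Main Theorem III, and reduces the theorem to showing that $G$ restricts to a quasi-isomorphism, up to the rescaling class, from $\dGC_{c+d+1}$ onto the part of $Der(\mathcal{PH}olieb_{c,d}^\circlearrowleft)$ spanned by graphs carrying at least one leg.

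Next I would verify that $G$ --- defined, as in the earlier theorems, by sending a directed graph $\Gamma$ to the sum over all ways of attaching external hairs to its vertices (with at least one hair) --- is a morphism of dg Lie algebras landing in the with-legs part. Since the defining formula and the bracket (graph insertion) are literally the same as in Main Theorems I--IV, the verification that $G$ commutes with the differentials and respects the brackets is identical to the ones carried out there, and I would reuse it verbatim; the only point to record is that vertex splitting commutes with the summation over hair attachments, so $G$ is a chain map onto the with-legs summand.

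For that main summand I would run a spectral sequence, filtering so that the leading term of the differential is the hair part (the curvature/cobracket/bracket insertions that alter the number of hairs), with vertex splitting appearing only at the next page. The crucial local input is a \emph{hair lemma} computing, vertex by vertex, the cohomology of the hair-insertion differential: the claim is that the complex of hair-decorations at a fixed vertex is acyclic except for a single surviving class, the ``saturated'' configuration carrying no superfluous hairs. Granting this, $E_1$ collapses onto the bare directed graphs, i.e. $E_1\cong\dGC_{c+d+1}$, the induced next differential is visibly the vertex-splitting differential of $\dGC_{c+d+1}$, and $G$ induces the identity there; the one remaining one-dimensional ambiguity is exactly the rescaling class, which also explains why the legless summand was $\dGC_{c+d+1}^{\geq3}$ rather than $\dGC_{c+d+1}$. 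Because wheels are now allowed, no acyclicity hypothesis cuts out the directed cycles, so the \emph{full} directed complex $\dGC_{c+d+1}$ survives, rather than its oriented (cycle-free) subcomplex $\OGC_{c+d+1}$ which was the answer in the non-wheeled Main Theorem IV.

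The main obstacle, as always in this circle of ideas, is twofold. First is the hair lemma itself: one must prove that the per-vertex hair-decoration complex of $\mathcal{PH}olieb_{c,d}$ is acyclic apart from the single saturated class, and that this holds \emph{uniformly} over all vertices --- in particular over vertices lying on a wheel, which possess no canonical source or target to anchor an induction, so the computation must be genuinely local and insensitive to the global cycle structure. Second is the bookkeeping of convergence and of the rescaling class: each fixed (genus, number-of-legs) piece is finite-dimensional, since each internal vertex is at least trivalent, so convergence is not problematic on graded pieces, but one must still check that the filtration is exhaustive and bounded there and isolate the theta/loop class responsible for the ``up to a rescaling class'' clause and for the passage $\dGC_{c+d+1}\to\dGC_{c+d+1}^{\geq3}$. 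A clean way to organize the whole argument, and to import Main Theorem IV as a black box, is to introduce a secondary filtration by the number of wheels: the cycle-free associated graded then reduces to the already-established oriented statement of Main Theorem IV, leaving only the genuinely new wheeled classes to be computed by the hair lemma directly.
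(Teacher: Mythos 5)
Your global skeleton is the right one and matches the paper's: the legless graphs form a direct summand on which the differential is pure vertex splitting truncated at valence three (so it is the \emph{quotient} complex $\dGC_{c+d+1}/\dGC^2_{c+d+1}=\dGC_{c+d+1}^{\geq 3}$, not a subcomplex of $\dGC_{c+d+1}$ --- worth stating precisely), and the content of the theorem is that the sum-over-all-hair-attachments map from $\dGC_{c+d+1}$ to the with-legs summand is a quasi-isomorphism. The genuine gap is your ``hair lemma''. The hair part of the differential is not a vertex-local operation: each application creates a \emph{new univalent vertex}, which in turn supports further hair attachments, so in any filtration in which hair creation is leading, the associated graded differential grows \emph{trees} of univalent vertices, and the relevant local complexes are tree complexes, not complexes of decorations at a fixed vertex. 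Their cohomology is very far from one-dimensional: in the pseudo case the paper's computations (the special-in reduction, Proposition \ref{prop:SinpwGC}, and its special-out analogue) show that the surviving local classes at a single vertex form infinite families (decorations $\frac{m}{\infty_1}$ and $\frac{m}{0}$ for all $m\geq 0$, plus the exceptional $\frac{0}{\infty_2}$ at targets). Consequently $E_1$ is not $\dGC_{c+d+1}$; it is the much larger complex $\mathsf{S}\pwGC_{c+d+1}$, which must then be reduced again (to $\pqGC_{c+d+1}$, with six decoration types), split along the all-$\frac{0}{0}$ part, and finally collapsed by proving acyclicity of the mixed-decoration and monodecorated pieces containing univalent vertices --- arguments that use genuinely global filtrations (numbers of non-passing and non-antenna vertices, skeleton graphs, zig-zag edges), not per-vertex ones. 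The ``one class per underlying graph'' statement, with the class represented by the sum over \emph{all} admissible decorations, only emerges at the end of this cascade; it cannot be extracted from a single spectral sequence with a vertex-local input.

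Two further points are off. First, you conflate the rescaling class with the bivalent truncation: the rescaling class is the loop-number-zero class (the single vertex carrying all possible hairs), and it is the sole reason the map is a quasi-isomorphism only ``up to a rescaling class''; it has nothing to do with why the legless summand is $\dGC_{c+d+1}^{\geq 3}$, which is simply the minimal-valence-three condition on the generators of $\mathcal{PH}olieb_{c,d}$. Second, filtering by the number of wheels does not let you import Main Theorem IV as a black box: the cycle-free associated graded does coincide with $Der(\mathcal{PH}olieb_{c,d}^{\uparrow})$ and hence contributes $H(\OGC_{c+d+1})\cong H(\GC_{c+d})$ to $E_1$, but the final answer involves $H(\dGC_{c+d+1})\cong H(\GC_{c+d+1})$, so almost all of those oriented classes must die against, or be identified with, wheeled classes under higher differentials; tracking precisely that interaction is the hard part, and nothing is gained. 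The paper's route --- the isomorphism with the bi-weighted complex $\fpwGC_{c+d+1}$, two successive core-vertex reductions, and the monodecoration analysis --- is what actually substitutes for your hair lemma, and none of its stages can be skipped in the way you propose.
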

Similar to the remark for Theorem \ref{thm:Der(QH*)}, the direct summand $\dGC_k^{\geq3}$ trivially correspond to the direct summand of the derivation complex with no in- and out-legs.
\subsection{Outline}
    In section two we introduce the necessary background behind the theory of quasi-, pseudo- and normal Lie bialgebras as well as their properads and their deformation complexes.
    In section three we give a brief reminder about the Kontsevich graph complexes and their interrelations. We also define a new auxiliary graph complex $\wGC_k$ of bi-weighted graphs, as well as the corresponding quasi and pseudo versions $\qwGC_k$ and $\pwGC_k$, and show that they are isomorphic to the above mentioned derivation complexes of (wheeled) properads.
    In section four we consider two filtrations on the complexes $\wGC_k$, $\qwGC_k$ and $\pwGC_k$ by special-in and special-out vertices, and find the "much smaller" quasi-isomorphic subcomplexes $\qGC_k$, $\qqGC_k$ and $\pqGC_k$ which are generated by graphs with vertices of four kinds.
    In section five we compute the cohomologies of these "small" complexes and show the main theorems.
\subsection{Notation}

Let $\mathbb{S}_n$ denote the permutation group of the set $\{1,2,...,n\}$. The one dimensional sign representation of $\mathbb{S}_n$ is denoted by $sgn_n$. All vector spaces are assumed to be $\mathbb{Z}$ graded over a field $\mathbb{K}$ of characteristic zero. If $V=\bigoplus_{i\in\mathbb{Z}} V^i$ is a graded vector space, then $V[k]$ denotes the graded vector space where $V[k]^i=V^{i+k}$. For a properad $\mathcal{P}$ we denote by $\mathcal{P}\{k\}$ the properad being uniquely defined by the property: for any graded vector space $V$, a representation of $\mathcal{P}\{k\}$ in $V$ is identical to a representation of $\mathcal{P}$ in $V[k]$. Also let $\mathrm{Hom}^k$(V,W) denote the morphisms of degree $k$ from $V$ to the graded vector space $W$.

\subsection*{Acknowledgements}
I would like to thank Sergei Merkulov for giving the motivation to study this problem and his helpful guidance along the way.

\section{Lie bialgebras and deformation complexes of properads}
\subsection{Pseudo- and quasi-Lie bialgebras}
\begin{definition}
    Let $n\in\mathbb{Z}$. A \textit{Lie }$n$\textit{-bialgebra} is a $\mathbb{Z}$ graded vector space $V$ together with two operations $[\,,]:V\wedge V\rightarrow V$ and $\delta:V\rightarrow V\wedge V$. $[\,,]$ is a Lie-bracket of degree $-n$, $\delta$ is a Lie coproduct of degree $0$ and the operations satisfy the relation
    \begin{equation*}
        \delta([a,b])=(\ad_a\otimes 1 + 1\otimes\ad_a)\delta(b)-(\ad_b\otimes 1 + 1\otimes\ad_b)\delta(a)
    \end{equation*}
    for any $a,b\in V$ and $\ad_a(b)=[a,b]$.
    Let $n\in\mathbb{Z}$. A \textit{pseudo-Lie $n$-bialgebra} is a 5-tuple $(V,[-,-],\delta,\phi,\eta)$ where $V$ is a $\mathbb{Z}$ graded vector space and the other symbols denote morphisms $\phi\in\mathrm{Hom}_V^{n}(\mathbb{K},V^{\wedge 3})$, $\delta\in\mathrm{Hom}_V^0(V,V^{\wedge 2})$, $[-,-]\in\mathrm{Hom}_V^{-n}(V^{\wedge 2},V)$ and $ \eta\in\mathrm{Hom}_V^{-2n}(V^{\wedge 3},\mathbb{K})$ such that
    \begin{align*}
        \delta[a,b]&=(\ad_a\otimes 1 + 1\otimes\ad_a)\delta(b)-(\ad_b\otimes 1 + 1\otimes\ad_b)\delta(a) + \sum_\phi \phi_1\otimes\phi_2 (\eta(\phi_3\otimes a\otimes b))\\
        \dfrac{1}{2}\mathrm{Alt}_3(\delta\otimes\mathrm{id})(\delta(a))&=[a\otimes1\otimes1+1\otimes1\otimes1+1\otimes1\otimes a,\phi]\\
        [[a,b],c]+[[c,a],b]+[[b,c],a]&=\eta(a,b,\delta_1(c))\delta_2(c)+\eta(c,a,\delta_1(b))\delta_2(b)+\eta(b,c,\delta_1(a))\delta_2(a)\\
        0&=\mathrm{Alt}_4(\delta\otimes\mathrm{id}\otimes\mathrm{id})(\phi)\\
        0&=\eta([x,y],z,w)+\eta([x,z],y,w)+\eta([x,w],y,z)\\
        & \ \ \ +\eta([y,z],x,w)+\eta([y,w],x,z)+\eta([z,w],x,y)
    \end{align*}
    where $\mathrm{Alt}_k:V^{\otimes k}\rightarrow V^{\otimes k}$ denotes the operator $\sum_{\sigma\in\mathbb{S}_k}\mathrm{sgn}(\sigma)\sigma$. A \textit{quasi-Lie bialgebra} is a pseudo-Lie algebra where $\eta=0$. When $\eta=0$ and $\phi=0$, the definition reduces to that of a Lie bi-algebra.
\end{definition}
\begin{definition}
    We denote by $\mathcal{PL}ieb_n$ the properad of pseudo-Lie bialgebras. Let $p,q\in\mathbb{Z}$. Let $\mathcal{PL}ieb_{p,q}:=\mathcal{PL}ieb_{p+q-2}\{1-p\}$ be the degree shifted pseudo-Lie bialgebra properad where the bracket has degree $1-q$ and the cobracket has degree $1-p$. Further the trace operation has degree $5-2p-3q$ and the constant tri-tensor has degree $p+2q-3$. This properad is quadratic generated by the $\mathbb{S}$-bimodule
    \begin{equation*}
        E(m,n)=
        \begin{cases}
            (\mathbf{1}_0\otimes\mathrm{sgn}_3^{\otimes 5-2p-3q})[-5+2p+3q]=
,
        & (m,n)=(4,0)
        \end{cases}
\end{equation*}
    Similarly one can define the properads $\mathcal{QL}ieb_{c,d}$ and $\mathcal{L}ieb_{c,d}$ using appropriate restrictions on the generators and relations above. It has been shown that the properads $\mathcal{L}ieb$, $\mathcal{QL}ieb$ and $\mathcal{PL}ieb$ admit minimal resolutions. We denote the respective minimal resolution by $\mathcal{H}olieb_{c,d}$ \cite{V}, $\mathcal{QH}olieb_{c,d}$ \cite{G,Kr} and $\mathcal{PH}olieb_{c,d}$ \cite{G}. Each of them is a free properad generated by $\mathbb{S}$-modules where in each arity $(m,n)$ they are either zero or the span of one element $o_{m,n}=$\begin{tikzpicture}
        [shorten >=1pt,node distance=1.2cm,auto,baseline=-0.1cm,scale=0.6]
        \node[black] (c) at (0,0) {};
        \node[] (u1) at (-1.4,0.6) {$\scriptstyle 1$}
            edge [-] (c);
        \node[] (u2) at (-0.8,0.6) {}
            edge [-] (c);
        \node[] (u3) at (0,0.5) {$\scriptstyle \cdots$};
        \node[] (u4) at (0.8,0.6) {}
            edge [-] (c);
        \node[] (u5) at (1.4,0.6) {$\scriptstyle m$}
            edge [-] (c);
        \node[] (l1) at (-1.4,-0.6) {$\scriptstyle 1$}
            edge [-] (c);
        \node[] (l2) at (-0.8,-0.6) {}
            edge [-] (c);
        \node[] (l3) at (0,-0.5) {$\scriptstyle \cdots$};
        \node[] (l4) at (0.8,-0.6) {}
            edge [-] (c);
        \node[] (l5) at (1.4,-0.6) {$\scriptstyle n$}
            edge [-] (c);
    \end{tikzpicture} of degree $1-p(m-1)-q(n-1)$ satisfying $\sigma_m\tau_no_{m,n}=(-1)^{p|\sigma_m|+q|\tau_n|}o_{m,n}$ for all $\sigma_m\in\mathbb{S}_m$ and $\tau_n\in\mathbb{S}_n$. That is
    $\begin{tikzpicture}
        [shorten >=1pt,node distance=1.2cm,auto,baseline=-0.1cm,scale=0.6]
        \node[black] (c) at (0,0) {};
        \node[] (u1) at (-1.4,0.6) {$\scriptstyle \sigma(1)$}
            edge [-] (c);
        \node[] (u2) at (-0.8,0.6) {}
            edge [-] (c);
        \node[] (u3) at (0,0.5) {$\scriptstyle \cdots$};
        \node[] (u4) at (0.8,0.6) {}
            edge [-] (c);
        \node[] (u5) at (1.4,0.6) {$\scriptstyle \sigma(m)$}
            edge [-] (c);
        \node[] (l1) at (-1.4,-0.6) {$\scriptstyle \tau(1)$}
            edge [-] (c);
        \node[] (l2) at (-0.8,-0.6) {}
            edge [-] (c);
        \node[] (l3) at (0,-0.5) {$\scriptstyle \cdots$};
        \node[] (l4) at (0.8,-0.6) {}
            edge [-] (c);
        \node[] (l5) at (1.4,-0.6) {$\scriptstyle \tau(n)$}
            edge [-] (c);
    \end{tikzpicture}
    =(-1)^{p|\sigma|+q|\tau|}
    \begin{tikzpicture}
        [shorten >=1pt,node distance=1.2cm,auto,baseline=-0.1cm,scale=0.6]
        \node[black] (c) at (0,0) {};
        \node[] (u1) at (-1.4,0.6) {$\scriptstyle 1$}
            edge [-] (c);
        \node[] (u2) at (-0.8,0.6) {}
            edge [-] (c);
        \node[] (u3) at (0,0.5) {$\scriptstyle \cdots$};
        \node[] (u4) at (0.8,0.6) {}
            edge [-] (c);
        \node[] (u5) at (1.4,0.6) {$\scriptstyle m$}
            edge [-] (c);
        \node[] (l1) at (-1.4,-0.6) {$\scriptstyle 1$}
            edge [-] (c);
        \node[] (l2) at (-0.8,-0.6) {}
            edge [-] (c);
        \node[] (l3) at (0,-0.5) {$\scriptstyle \cdots$};
        \node[] (l4) at (0.8,-0.6) {}
            edge [-] (c);
        \node[] (l5) at (1.4,-0.6) {$\scriptstyle n$}
            edge [-] (c);
    \end{tikzpicture}$.
    More specifically let $\mathcal{H}olieb_{c,d}$, $\mathcal{QH}olieb_{c,d}$ and $\mathcal{PH}olieb_{c,d}$ be the free properads generated by the $\mathbb{S}$-bimodules $\{E(m,n)\}_{m,n\in\mathbb{N}}$, $\{QE(m,n)\}_{m,n\in\mathbb{N}}$ and $\{PE(m,n)\}_{m,n\in\mathbb{N}}$ defined by
\begin{align*}
    E(m,n)&=\begin{cases}
        \langle o_{m,n}\rangle & \text{if }m+n\geq 3\text{ and }m,n\geq 1\\
        0 & \text{else.}
    \end{cases}\\
    QE(m,n)&=\begin{cases}
       \langle o_{m,n}\rangle & \text{if }m+n\geq 3\text{ and }m\geq 1\\
        0 & \text{else.}
    \end{cases}\\
    PE(m,n)&=\begin{cases}
        \langle o_{m,n}\rangle & \text{if }m+n\geq 3\\
        0 & \text{else.}
    \end{cases}
\end{align*}
The differential $d$ acts on generators by splitting the vertex in two and summing over all possible reattachments of the in- and out-legs:
\begin{equation}\label{eq:holiebdiff)}
d
\resizebox{14mm}{!}{\begin{xy}
 <0mm,0mm>*{\circ};<0mm,0mm>*{}**@{},
 <-0.6mm,0.44mm>*{};<-8mm,5mm>*{}**@{-},
 <-0.4mm,0.7mm>*{};<-4.5mm,5mm>*{}**@{-},
 <0mm,0mm>*{};<-1mm,5mm>*{\ldots}**@{},
 <0.4mm,0.7mm>*{};<4.5mm,5mm>*{}**@{-},
 <0.6mm,0.44mm>*{};<8mm,5mm>*{}**@{-},
   <0mm,0mm>*{};<-8.5mm,5.5mm>*{^1}**@{},
   <0mm,0mm>*{};<-5mm,5.5mm>*{^2}**@{},
   <0mm,0mm>*{};<4.5mm,5.5mm>*{^{m\hspace{-0.5mm}-\hspace{-0.5mm}1}}**@{},
   <0mm,0mm>*{};<9.0mm,5.5mm>*{^m}**@{},
 <-0.6mm,-0.44
 mm>*{};<-8mm,-5mm>*{}**@{-},
 <-0.4mm,-0.7mm>*{};<-4.5mm,-5mm>*{}**@{-},
 <0mm,0mm>*{};<-1mm,-5mm>*{\ldots}**@{},
 <0.4mm,-0.7mm>*{};<4.5mm,-5mm>*{}**@{-},
 <0.6mm,-0.44mm>*{};<8mm,-5mm>*{}**@{-},
   <0mm,0mm>*{};<-8.5mm,-6.9mm>*{^1}**@{},
   <0mm,0mm>*{};<-5mm,-6.9mm>*{^2}**@{},
   <0mm,0mm>*{};<4.5mm,-6.9mm>*{^{n\hspace{-0.5mm}-\hspace{-0.5mm}1}}**@{},
   <0mm,0mm>*{};<9.0mm,-6.9mm>*{^n}**@{},
 \end{xy}}
\ \ = \ \
 \sum_{\substack{[1,\ldots,m]=I_1\sqcup I_2\\
 {|I_1|\geq 0, |I_2|\geq 1}}}
 \sum_{\substack{[1,\ldots,n]=J_1\sqcup J_2 \\
 {|J_1|\geq 1, |J_2|\geq 1} }
}\hspace{0mm}
\pm
\resizebox{22mm}{!}{ \begin{xy}
 <0mm,0mm>*{\circ};<0mm,0mm>*{}**@{},
 <-0.6mm,0.44mm>*{};<-8mm,5mm>*{}**@{-},
 <-0.4mm,0.7mm>*{};<-4.5mm,5mm>*{}**@{-},
 <0mm,0mm>*{};<0mm,5mm>*{\ldots}**@{},
 <0.4mm,0.7mm>*{};<4.5mm,5mm>*{}**@{-},
 <0.6mm,0.44mm>*{};<12.4mm,4.8mm>*{}**@{-},
     <0mm,0mm>*{};<-2mm,7mm>*{\overbrace{\ \ \ \ \ \ \ \ \ \ \ \ }}**@{},
     <0mm,0mm>*{};<-2mm,9mm>*{^{I_1}}**@{},
 <-0.6mm,-0.44mm>*{};<-8mm,-5mm>*{}**@{-},
 <-0.4mm,-0.7mm>*{};<-4.5mm,-5mm>*{}**@{-},
 <0mm,0mm>*{};<-1mm,-5mm>*{\ldots}**@{},
 <0.4mm,-0.7mm>*{};<4.5mm,-5mm>*{}**@{-},
 <0.6mm,-0.44mm>*{};<8mm,-5mm>*{}**@{-},
      <0mm,0mm>*{};<0mm,-7mm>*{\underbrace{\ \ \ \ \ \ \ \ \ \ \ \ \ \ \
      }}**@{},
      <0mm,0mm>*{};<0mm,-10.6mm>*{_{J_1}}**@{},
 <13mm,5mm>*{};<13mm,5mm>*{\circ}**@{},
 <12.6mm,5.44mm>*{};<5mm,10mm>*{}**@{-},
 <12.6mm,5.7mm>*{};<8.5mm,10mm>*{}**@{-},
 <13mm,5mm>*{};<13mm,10mm>*{\ldots}**@{},
 <13.4mm,5.7mm>*{};<16.5mm,10mm>*{}**@{-},
 <13.6mm,5.44mm>*{};<20mm,10mm>*{}**@{-},
      <13mm,5mm>*{};<13mm,12mm>*{\overbrace{\ \ \ \ \ \ \ \ \ \ \ \ \ \ }}**@{},
      <13mm,5mm>*{};<13mm,14mm>*{^{I_2}}**@{},
 <12.4mm,4.3mm>*{};<8mm,0mm>*{}**@{-},
 <12.6mm,4.3mm>*{};<12mm,0mm>*{\ldots}**@{},
 <13.4mm,4.5mm>*{};<16.5mm,0mm>*{}**@{-},
 <13.6mm,4.8mm>*{};<20mm,0mm>*{}**@{-},
     <13mm,5mm>*{};<14.3mm,-2mm>*{\underbrace{\ \ \ \ \ \ \ \ \ \ \ }}**@{},
     <13mm,5mm>*{};<14.3mm,-4.5mm>*{_{J_2}}**@{},
 \end{xy}}.
\end{equation}
The properad $\mathcal{H}olieb_{c,d}$ is generated by corollas with at least one output, one input and where the total amount of in- and outputs is greater than or equal to three. The properad $\mathcal{QH}olieb_{c,d}$ is generated by corollas with at least one output and at least three inputs and outputs combined, while generators $\mathcal{PH}olieb_{c,d}$ only need to have at least three inputs and outputs in total. So while $\mathcal{H}olieb_{c,d}$ is only generated by \textit{generic} corollas, $\mathcal{QH}olieb_{c,d}$ is additionally generated by corollas that are at least trivalent sources, and $\mathcal{PH}olieb_{c,d}$ is additionally generated by corollas that are at least trivalent sourced or targets.
\end{definition}
\subsection{Properad extensions}
Let $\mathcal{H}olieb^\bullet_{c,d}$ be the free properad generated by the operation $o_{m,n}$ in each arity. Set $\mathcal{H}olieb_{c,d}^+$, $\mathcal{QH}olieb_{c,d}^+$ and $\mathcal{PH}olieb_{c,d}^+$ to be the free extensions of $\mathcal{H}olieb_{c,d}$, $\mathcal{QH}olieb_{c,d}$ and $\mathcal{PH}olieb_{c,d}$ by adding the following generators:
\begin{align*}
    \mathcal{H}olieb_{c,d}^+: &\quad
    \begin{tikzpicture}
        [shorten >=1pt,node distance=1.2cm,auto,baseline=-0.1cm,scale=0.8]
        \node[black] (c) at (0,0) {};
        \node[] (u1) at (0,0.6) {}
            edge [-] (c);
        \node[] (l1) at (0,-0.6) {}
            edge [-] (c);
    \end{tikzpicture}\\
    \mathcal{QH}olieb_{c,d}^+: &\quad
    \begin{tikzpicture}
        [shorten >=1pt,node distance=1.2cm,auto,baseline=-0.1cm,scale=0.8]
        \node[black] (c) at (0,0) {};
        \node[] (u1) at (0,0.6) {}
            edge [-] (c);
        \node[] (l1) at (0,-0.6) {}
            edge [-] (c);
    \end{tikzpicture}\ \ ,
    \begin{tikzpicture}
        [shorten >=1pt,node distance=1.2cm,auto,baseline=-0.1cm,scale=0.8]
        \node[black] (c) at (0,0) {};
        \node[] (u1) at (-0.25,0.6) {}
            edge [-] (c);
        \node[] (u2) at (0.25,0.6) {}
            edge [-] (c);
    \end{tikzpicture}, \ 
    \begin{tikzpicture}
        [shorten >=1pt,node distance=1.2cm,auto,baseline=-0.1cm,scale=0.8]
        \node[black] (c) at (0,0) {};
        \node[] (u1) at (0,0.6) {}
            edge [-] (c);
    \end{tikzpicture}\\
    \mathcal{PH}olieb_{c,d}^+: &\quad
    \begin{tikzpicture}
        [shorten >=1pt,node distance=1.2cm,auto,baseline=-0.1cm,scale=0.8]
        \node[black] (c) at (0,0) {};
        \node[] (u1) at (0,0.6) {}
            edge [-] (c);
        \node[] (l1) at (0,-0.6) {}
            edge [-] (c);
    \end{tikzpicture}\ \ ,
    \begin{tikzpicture}
        [shorten >=1pt,node distance=1.2cm,auto,baseline=-0.1cm,scale=0.8]
        \node[black] (c) at (0,0) {};
        \node[] (u1) at (-0.25,0.6) {}
            edge [-] (c);
        \node[] (u2) at (0.25,0.6) {}
            edge [-] (c);
    \end{tikzpicture}\ , \ 
    \begin{tikzpicture}
        [shorten >=1pt,node distance=1.2cm,auto,baseline=-0.1cm,scale=0.8]
        \node[black] (c) at (0,0) {};
        \node[] (u1) at (0,0.6) {}
            edge [-] (c);
    \end{tikzpicture}\ \ ,
    \begin{tikzpicture}
        [shorten >=1pt,node distance=1.2cm,auto,baseline=-0.1cm,scale=0.8]
        \node[black] (c) at (0,0) {};
        \node[] (u1) at (-0.25,-0.6) {}
            edge [-] (c);
        \node[] (u2) at (0.25,-0.6) {}
            edge [-] (c);
    \end{tikzpicture}\  , \
    \begin{tikzpicture}
        [shorten >=1pt,node distance=1.2cm,auto,baseline=-0.1cm,scale=0.8]
        \node[black] (c) at (0,0) {};
        \node[] (u1) at (0,-0.6) {}
            edge [-] (c);
    \end{tikzpicture}\ , \ 
    \begin{tikzpicture}
        [shorten >=1pt,node distance=1.2cm,auto,baseline=-0.1cm,scale=0.8]
        \node[black] (c) at (0,0) {};
    \end{tikzpicture}\\
\end{align*}
The differential extends naturally in each case. We immediately remark that $\mathcal{H}olieb_{c,d}^\bullet=\mathcal{PH}olieb_{c,d}^+$.
We have projections from $\mathcal{H}olieb_{c,d}^\bullet$ into $\mathcal{H}olieb_{c,d}$, $\mathcal{H}olqieb_{c,d}$ and $\mathcal{PH}olieb_{c,d}$, which factor through their corresponding extended properad by similar projections. That is
\begin{align*}
        \mathcal{H}olieb_{c,d}^\bullet \twoheadrightarrow \mathcal{H}olieb_{c,d}^+ \twoheadrightarrow \mathcal{H}olieb_{c,d}\\
        \mathcal{H}olieb_{c,d}^\bullet \twoheadrightarrow \mathcal{QH}olieb_{c,d}^+ \twoheadrightarrow \mathcal{QH}olieb_{c,d}\\
        \mathcal{H}olieb_{c,d}^\bullet \twoheadrightarrow \mathcal{PH}olieb_{c,d}^+ \twoheadrightarrow \mathcal{PH}olieb_{c,d}
    \end{align*}

When studying deformation complexes of free properads, we always work with their genus completion as in \cite{MW1} by default.

\subsection{Wheelification}
Combinatorially, a properad $\mathcal{P}$ is an $\mathbb{S}$-module $\{\mathcal{P}(m,n)\}_{m,n\in\mathbb{N}}$ together with a horizontal composition
\begin{equation*}
    \otimes:\mathcal{P}(m_1,n_1)\otimes...\otimes \mathcal{P}(m_k,n_k)\rightarrow P(m_1+...+m_k,n_1+...+n_k)
\end{equation*}
and vertical composition \cite{V}:
\begin{equation*}
    \circ: \mathcal{P}(m,n)\otimes \mathcal{P}(n,k)\rightarrow \mathcal{P}(m,k)
\end{equation*}
satisfying certain axioms.
A \textit{wheeled properad} is a properad equipped with additional contraction maps
\begin{equation*}
    \xi^i_j:\mathcal{P}(m,n)\rightarrow \mathcal{P}(m-1,n-1)\quad\text{ where }\quad 1\leq m,n,\, 1\leq i\leq m \text{ and } 1\leq j\leq n
\end{equation*}
satisfying some further axioms (see \cite{M2,MMS} for full details).
Consider the endomorphism properad $\mathcal{E}nd_V=\{Hom(V^m,V^n)\}_{m,n\in\mathbb{N}}$ for some finite dimensional vector space $V$. This properad has a natural contraction map $\xi_{i,j}:Hom(V^m,V^n)\rightarrow Hom(V^{m-1},V^{n-1})$ induced by the standard trace map $Hom(V,V)\rightarrow\mathbb{K}$.
Wheeled properads form the category $Properad^\circlearrowleft$, and there is a natural forgetful functor $\Box:Properad^\circlearrowleft\rightarrow Properad$. This functor has a left adjoint functor $(-)^\circlearrowleft:Properad\rightarrow Properad^\circlearrowleft$ called the \textit{wheelification functor}.
Suppose that $\mathcal{F}$ is a free properad. The elements of $\mathcal{F}$ are the operations from the generating set composed freely. Any such element can be viewed as a directed graph with no closed paths, having input- and output-hairs attached to the vertices. The elements of wheelification $\mathcal{F}^\circlearrowleft$ of $\mathcal{F}$ can be viewed in the same way, with the difference that the graphs can contain closed paths. Sometimes we write $\mathcal{P}^\uparrow$ for a properad $\mathcal{P}$ that is not wheeled.
\subsection{Derivation complexes}
\begin{definition}
    Let $f:\mathcal{P}\rightarrow\mathcal{Q}$ be a morphism of dg properads. A \textit{derivation} of $f$ is a morphism of graded $\mathcal{S}$-modules $D:\mathcal{P}\rightarrow\mathcal{Q}$ of degree $-1$ such that 
    \begin{equation*}
        D(x\circ y)=D(x)\circ f(y) - (-1)^{|x|}f(x)\circ D(y).
    \end{equation*}
    When the map $f$ is implied we write the space of derivations by $Der(\mathcal{P},\mathcal{Q})$. The space of deformations is the space of derivations shiftet by $-1$, i.e $Def(\mathcal{P},\mathcal{Q}):=Der(\mathcal{P},\mathcal{Q})[-1]$. There is a natural differential $\partial$ on the deformation complex defined by
    \begin{equation*}
        \partial(D):=d_b\circ D + f \circ D + (-1)^{|f|}D\circ f
    \end{equation*}
    for $D\in Def(\mathcal{P},\mathcal{Q})$. We will only work with deformation complexes from free properads making the study of the deformation complex easier since $Der(\mathcal{F}ree(E),\mathcal{Q})\cong Hom(E,\mathcal{Q})[-1]$. That is any derivation is uniquely determined by where it maps the generators. 
\end{definition}
\begin{definition}
    Let $Der^\bullet(\mathcal{H}olieb_{p,q}^\uparrow)$ be the complex of derivations of the projection map $\mathcal{H}olieb_{p,q}^\bullet\rightarrow\mathcal{H}olieb_{p,q}$. Then
    \begin{equation*}
        Der^\bullet(\mathcal{H}olieb_{p,q}^\uparrow)\cong\prod_{m,n\geq0}(\mathcal{H}olieb_{p,q}^\uparrow(m,n)\otimes sgn_m^{\otimes |p|}\otimes sgn_n^{\otimes |q|})^{\mathbb{S}_m\times\mathbb{S}_n}[1+p(1-m)+q(1-n)]
    \end{equation*}
    where $\mathcal{H}olieb_{p,q}^\uparrow(m,n)$ is the vector space of elements with $m$ outputs and $n$ inputs in $Der^\bullet(\mathcal{H}olieb_{p,q}^\uparrow)$.
    Similarly let $Der(\mathcal{H}olieb_{p,q}^{\uparrow})$ be the derivation complex of the projection map $\mathcal{H}olieb_{p,q}^+\rightarrow\mathcal{H}olieb_{p,q}$. Then
    \begin{equation*}
        Der(\mathcal{H}olieb_{p,q}^\uparrow)\cong\prod_{m,n\geq1}(\mathcal{H}olieb_{p,q}^\uparrow(m,n)\otimes sgn_m^{\otimes |p|}\otimes sgn_n^{\otimes |q|})^{\mathbb{S}_m\times\mathbb{S}_n}[1+p(1-m)+q(1-n)].
    \end{equation*}
    We remark that $Der^\bullet(\mathcal{H}olieb_{p,q}^\uparrow)=Der(\mathcal{H}olieb_{p,q}^\uparrow)$ since $\mathcal{H}olieb_{p,q}^\uparrow(m,n)=0$ if $m=0$ or $n=0$, and that the elements of $\mathcal{H}olieb_{p,q}^\uparrow(m,n)$ can be represented as hairy graphs with no closed paths, and hence contain at least one source and one target vertex. These vertices must have at least one incoming hair and one outgoing hair respectively.
    Similarly define $Der(\mathcal{QH}olieb_{p,q}^\uparrow)$ and $Der(\mathcal{PH}olieb_{p,q}^\uparrow)$ with the associated quasi Lie-bialgebra and pseudo Lie-bialgebra properads. We gather
    \begin{align*}
        Der(\mathcal{QH}olieb^\uparrow_{p,q})&\cong\prod_{m,\geq 1,n\geq 0}(\mathcal{QH}olieb^\uparrow_{p,q}(m,n)\otimes sgn_m^{\otimes |p|}\otimes sgn_n^{\otimes |q|})^{\mathbb{S}_m\times\mathbb{S}_n}[1+p(1-m)+q(1-n)]\\
        Der(\mathcal{PH}olieb^\uparrow_{p,q})&\cong\prod_{m,n\geq 0}(\mathcal{PH}olieb^\uparrow_{p,q}(m,n)\otimes sgn_m^{\otimes |p|}\otimes sgn_n^{\otimes |q|})^{\mathbb{S}_m\times\mathbb{S}_n}[1+p(1-m)+q(1-n)].
    \end{align*}
The differential $\partial$ on the derivation complexes is given by vertex splitting in the respective properad, with the addition that one also attaches $(m,n)$ corollas to every hair for all integers $m,n$ compatible with the valency conditions of the given properad.
\begin{equation}
\partial \Gamma =
 d\Gamma
  \pm
  \sum_{m,n}
 \resizebox{14mm}{!}{\begin{xy}
 <0mm,0mm>*{\bu};<0mm,0mm>*{}**@{},
 <-0.6mm,0.44mm>*{};<-8mm,5mm>*{}**@{-},
 <-0.4mm,0.7mm>*{};<-4.5mm,5mm>*{}**@{-},
 <0mm,0mm>*{};<-1mm,5mm>*{\ldots}**@{},
 <0.4mm,0.7mm>*{};<4.5mm,5mm>*{}**@{-},
 <0.6mm,0.44mm>*{};<10mm,6mm>*{}**@{-},
   <0mm,0mm>*{};<12.0mm,7.5mm>*{\Gamma}**@{},
 <-0.6mm,-0.44mm>*{};<-8mm,-5mm>*{}**@{-},
 <-0.4mm,-0.7mm>*{};<-4.5mm,-5mm>*{}**@{-},
 <0mm,0mm>*{};<-1mm,-5mm>*{\ldots}**@{},
 <0.4mm,-0.7mm>*{};<4.5mm,-5mm>*{}**@{-},
 <0.6mm,-0.44mm>*{};<8mm,-5mm>*{}**@{-},
 \end{xy}}
   \mp
  \sum_{m,n}
 \resizebox{14mm}{!}{\begin{xy}
 <0mm,0mm>*{\bu};<0mm,0mm>*{}**@{},
 <-0.6mm,0.44mm>*{};<-8mm,5mm>*{}**@{-},
 <-0.4mm,0.7mm>*{};<-4.5mm,5mm>*{}**@{-},
 <0mm,0mm>*{};<-1mm,5mm>*{\ldots}**@{},
 <0.4mm,0.7mm>*{};<4.5mm,5mm>*{}**@{-},
 <0.6mm,0.44mm>*{};<-10mm,-6mm>*{}**@{-},
   <0mm,0mm>*{};<-12.0mm,-7.5mm>*{\Gamma}**@{},
 <-0.6mm,-0.44mm>*{};<8mm,5mm>*{}**@{-},
 <-0.4mm,-0.7mm>*{};<-4.5mm,-5mm>*{}**@{-},
 <0mm,0mm>*{};<-1mm,-5mm>*{\ldots}**@{},
 <0.4mm,-0.7mm>*{};<4.5mm,-5mm>*{}**@{-},
 <0.6mm,-0.44mm>*{};<8mm,-5mm>*{}**@{-},
 \end{xy}}
 \end{equation}
The sign rule for the formula is described in \cite{MW1}.
\end{definition}
\begin{definition}
    Let $Der^\bullet(\mathcal{H}olieb_{p,q}^\circlearrowleft)$ be the complex of derivations of the projection map $\mathcal{H}olieb_{p,q}^{\circlearrowleft,\bullet}\rightarrow\mathcal{H}olieb^{\circlearrowleft}_{p,q}$, and similarly let $Der(\mathcal{H}olieb_{p,q}^\circlearrowleft)$ be the derivation complex of the projection $\mathcal{H}olieb_{p,q}^{\circlearrowleft,+}\rightarrow\mathcal{H}olieb^{\circlearrowleft}_{p,q}$. Then
    \begin{align*}
        Der^\bullet(\mathcal{H}olieb_{p,q}^\circlearrowleft)&\cong\prod_{m,n\geq0}(\mathcal{H}olieb_{p,q}^\circlearrowleft(m,n)\otimes sgn_m^{\otimes |p|}\otimes sgn_n^{\otimes |q|})^{\mathbb{S}_m\times\mathbb{S}_n}[1+p(1-m)+q(1-n)]\\
        Der(\mathcal{H}olieb_{p,q}^\circlearrowleft)&\cong\prod_{m,n\geq1}(\mathcal{H}olieb_{p,q}^\circlearrowleft(m,n)\otimes sgn_m^{\otimes |p|}\otimes sgn_n^{\otimes |q|})^{\mathbb{S}_m\times\mathbb{S}_n}[1+p(1-m)+q(1-n)]
    \end{align*}
    where $\mathcal{H}olieb_{p,q}^\circlearrowleft(m,n)$ is the subspace of graphs in $\mathcal{H}olieb_{p,q}^{\bullet,\circlearrowleft}$ of graphs with $m$ outputs and $n$ inputs. The differential is the same as in the unwheeled derivation complexes.
    We similarly define the analogous derivation complexes in the case of the quasi Lie-bialgebra for which we have
    \begin{align*}
        Der^\bullet(\mathcal{QH}olieb_{p,q}^\circlearrowleft)&\cong\prod_{m,n\geq0}(\mathcal{QH}olieb_{p,q}^\circlearrowleft(m,n)\otimes sgn_m^{\otimes |p|}\otimes sgn_n^{\otimes |q|})^{\mathbb{S}_m\times\mathbb{S}_n}[1+p(1-m)+q(1-n)]\\
        Der(\mathcal{QH}olieb_{p,q}^\circlearrowleft)&\cong\prod_{m\geq1,\geq0}(\mathcal{QH}olieb_{p,q}^\circlearrowleft(m,n)\otimes sgn_m^{\otimes |p|}\otimes sgn_n^{\otimes |q|})^{\mathbb{S}_m\times\mathbb{S}_n}[1+p(1-m)+q(1-n)]
    \end{align*}
    Since $\mathcal{H}olieb_{p,q}^\bullet=\mathcal{PH}olieb_{p,q}^+$, the similarly defined derivation complexes $Der^\bullet(\mathcal{H}olieb_{p,q}^\circlearrowleft)$ and $Der(\mathcal{H}olieb_{p,q}^\circlearrowleft)$ are equal, and can be described as
    \begin{equation*}
        Der(\mathcal{PH}olieb_{p,q}^\circlearrowleft)\cong\prod_{m,n\geq0}(\mathcal{PH}olieb_{p,q}^\circlearrowleft(m,n)\otimes sgn_m^{\otimes |p|}\otimes sgn_n^{\otimes |q|})^{\mathbb{S}_m\times\mathbb{S}_n}[1+p(1-m)+q(1-n)].
    \end{equation*}
\end{definition}

\section{Graph complexes}
\subsection{The directed Kontsevich's graph complex}
Let $\overline{\mathrm{V}}_v\overline{\mathrm{E}}_e\mathrm{cgra}$ be the set of connected directed graphs with $e$ edges and $v$ vertices. The edges and vertices are labeled from $1$ to $e$ and $1$ to $v$ respectively. Both tadpoles and multiple edges are allowed in the graphs. Let $k\in\mathbb{Z}$ and let $\mathrm{V}_v\mathrm{E}_e\mathrm{GC}_k$ be the graded $\mathbb{K}$ vector space concentrated in degree $(v-1)k+(1-k)e$ generated by $\overline{\mathrm{V}}_v\overline{\mathrm{E}}_e\mathrm{cgra}$. There is a natural right action of $\mathbb{S}_v\times\mathbb{S}_e$ on the vector space permuting the labels of vertices and edges respectively. The full connected directed graph complex $(\mathsf{cfdGC}_k,d)$ is a chain complex where
\begin{equation*}
        \mathsf{cfdGC}_k:=
        \begin{cases}
        \prod_{e,v} \Big(\mathrm{\Bar{V}}_v\mathrm{\Bar{E}}_e\mathrm{GC}_k \otimes \mathrm{sgn}_e \Big)_{\mathbb{S}_v\times \mathbb{S}_e} & \text{ for }k \text{ even,}\\
        \prod_{e,v} \Big(\mathrm{\Bar{V}}_v\mathrm{\Bar{E}}_e\mathrm{GC}_k \otimes \mathrm{sgn}_v\Big)_{\mathbb{S}_v\times \mathbb{S}_e} & \text{ for }k \text{ odd.}
        \end{cases}
\end{equation*}
The subscript denotes taking the space of coinvariants under the group actions. The differential of degree 1 is defined on graphs $\Gamma$ as
\begin{equation*}
    d(\Gamma):=\delta(\Gamma)-\delta'(\Gamma)-\delta''(\Gamma)=\sum_{x\in V(\Gamma)}\delta_x(\Gamma)-\delta_x'(\Gamma)-\delta_x''(\Gamma)
\end{equation*}
where $V(\Gamma)$ denotes the set of vertices of $\Gamma$. The term $\delta_x(\Gamma)$ denotes the sum of graphs obtained by replacing the vertex $x$ with two vertices and a new edge between these two vertices. The summation is over all possible ways of reattaching the edges connected to $x$ in $\Gamma$. The term $\delta'_x(\Gamma)$ is the graph where a new univalent vertex has been added to the graph connected via an incoming edge to $x$. Similarly $\delta''_x(\Gamma)$ is the graph where a new univalent vertex is connected via an outgoing edge to $x$. Given a representative of $\Gamma$, the signs of the resulting graphs is determined by the labelling its vertices and edges. The new edge is labelled $e+1$, while the source vertex of this edge is labelled by $x$ and the target vertex by $v+1$. Note that no new univalent vertices are created by the differential.
The term full refers to that we have no restrictions on the graphs we consider in the complex with respects to the valency of vertices in the graphs.
The \textit{loop number} of a graph is the number $b=e-v+1$ where $e$ is the number of edges and $v$ the number of vertices. The loop number is invariant under the differential.
Let $\dGC_k$ be the subcomplex of $\mathsf{fcdGC}_k$ of graphs with no univalent nor passing vertices (bivalent vertices that have an incoming and one outgoing edge).
\begin{proposition}
    The inclusion $\dGC_k\hookrightarrow\mathsf{cfdGC}_k$ is a quasi-isomorphism.
\end{proposition}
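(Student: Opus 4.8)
The plan is to show that passing from the full complex $\mathsf{cfdGC}_k$ to the subcomplex $\dGC_k$ kills only acyclic pieces coming from low-valence vertices. The standard strategy, following Kontsevich and Willwacher, is to isolate the ``bad'' vertices (univalent vertices and passing bivalent vertices) and exhibit an explicit contracting homotopy on the quotient complex. First I would observe that $\mathsf{cfdGC}_k$ admits a direct sum decomposition by an auxiliary grading: since the loop number $b = e - v + 1$ is preserved by the differential and each graph is finite, one can set up a filtration (for instance by the number of vertices, or by the number of edges) whose associated spectral sequence has a tractable first page. The key point is that the full differential $d = \delta - \delta' - \delta''$ contains, among its terms, the operations $\delta'$ and $\delta''$ that attach a new univalent vertex via an incoming or outgoing edge, and these are precisely the terms that allow one to cancel graphs carrying univalent vertices.

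The central step is to prove that the subcomplex $\dGC_k^{\mathrm{bad}}$ spanned by graphs that \emph{do} contain at least one univalent or passing vertex is acyclic. I would single out the part of the differential that creates a univalent vertex (the $\delta'$, $\delta''$ terms) versus the part that splits an internal vertex ($\delta$), and use the former to build a homotopy $h$ that \emph{removes} a univalent vertex by contracting its unique edge, or that merges a passing bivalent vertex into its neighbour. Concretely, one fixes a canonical ``special'' vertex to act on (say, the univalent vertex of smallest label, with a chosen convention for passing vertices when no univalent vertex is present), and checks that $dh + hd = \mathrm{id}$ on the quotient $\mathsf{cfdGC}_k / \dGC_k$. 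The verification that $h$ is well defined on coinvariants and that the homotopy identity holds is where the bookkeeping concentrates: one must track the Koszul signs introduced by the edge- or vertex-ordering (the $\mathrm{sgn}_e$ or $\mathrm{sgn}_v$ twist depending on the parity of $k$) and confirm that the terms of $dh + hd$ not equal to the identity cancel in pairs. This sign-chasing, together with correctly handling the boundary cases (tadpoles, double edges, and graphs with several univalent vertices where the homotopy must pick out a unique one), is the part I expect to be the main obstacle.

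Having established acyclicity of the bad part, I would conclude by the standard long exact sequence (or equivalently the collapse of the spectral sequence) argument: the short exact sequence of complexes
\begin{equation*}
0 \longrightarrow \dGC_k \longrightarrow \mathsf{cfdGC}_k \longrightarrow \mathsf{cfdGC}_k/\dGC_k \longrightarrow 0
\end{equation*}
induces a long exact sequence in cohomology, and since the quotient $\mathsf{cfdGC}_k/\dGC_k$ is identified with the acyclic bad part (its cohomology vanishes), the connecting maps force the inclusion $\dGC_k \hookrightarrow \mathsf{cfdGC}_k$ to induce an isomorphism on cohomology. One subtlety worth flagging is that the decomposition of a graph complex into ``good'' and ``bad'' pieces is not a direct sum of subcomplexes in general, because the differential can turn a graph with all vertices of valence $\geq 3$ into one with a passing vertex and vice versa; this is exactly why one works with the filtration/spectral sequence rather than a naive splitting, and why the contracting homotopy must be constructed on the appropriate page. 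Since completeness of the construction is established already for the undirected and even for several directed variants in the literature cited in the excerpt, I would expect to adapt those homotopies essentially verbatim, the only new ingredient being the directed refinement (incoming versus outgoing edges) that distinguishes $\delta'$ from $\delta''$.
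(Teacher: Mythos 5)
Your overall architecture — show the quotient $\mathsf{cfdGC}_k/\dGC_k$ is acyclic and conclude by the long exact sequence — is indeed the shape of the proof in the literature (the paper itself gives no argument, deferring to \cite{W1}). But the two concrete devices you commit to would fail as stated. The filtrations you name, by the total number of vertices or of edges, are vacuous here: every term of $d=\delta-\delta'-\delta''$ increases both counts by exactly one, so $d$ is homogeneous of degree $+1$ for either quantity and the associated graded is just the original complex again; nothing is gained. The filtration that makes the argument run — the one used in \cite{W1}, in \cite{Z2}, and repeatedly in this very paper (e.g.\ in the proof of Proposition \ref{prop:SinwGC}) — is by the number of \emph{core} vertices, i.e.\ vertices that are neither univalent nor passing. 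On its associated graded the differential only lengthens the strings of passing vertices and the antennas attached to a fixed skeleton, the complex factors as a tensor product over skeleton edges, and it is on those per-edge string complexes that a contracting homotopy can honestly be written down. Relatedly, a homotopy defined by acting on ``the univalent vertex of smallest label'' does not descend to the coinvariant space (it is not relabeling-equivariant), and the identity $dh+hd=\mathrm{id}$ cannot hold for it against the full differential, which splits every vertex simultaneously: the cross terms coming from vertices other than the chosen one do not cancel. The tensor factorization on the associated graded is exactly what replaces this naive prescription.

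Two further points. Your flagged ``subtlety'' is backwards, and this matters. The differential of a graph in $\dGC_k$ never produces univalent or passing vertices: univalent creation is cancelled by the $\delta',\delta''$ terms by construction, and the two subdivision terms produced by splitting the two endpoints of any given edge cancel pairwise in the sign-twisted coinvariants. This is precisely why $\dGC_k$ is a subcomplex and why the short exact sequence you invoke exists at all; if the differential could turn a good graph into a bad one, as you claim, your argument would not even start. What fails is the opposite containment: splitting a passing vertex can produce a bivalent source together with a bivalent target, i.e.\ a graph lying in $\dGC_k$, so the bad graphs span only a quotient complex, not a subcomplex as you assert. Finally, the plan to adapt the undirected homotopies ``essentially verbatim'' hits a real obstruction: the undirected analogue of this proposition is \emph{false} — there the quotient by the subcomplex of $\geq 3$-valent graphs has nontrivial cohomology, the loop classes. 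Acyclicity of the bad part here is a genuinely directed phenomenon: one must show that consistently oriented cycles, all of whose vertices are passing, die in the quotient, while the loop classes are instead represented inside $\dGC_k$ by cycles of bivalent vertices alternating between sources and targets. That directed-specific computation is the heart of the proof in \cite{W1} and is absent from your sketch.
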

\begin{proof}
    See \cite{W1}
\end{proof}
Up to the parity of $k$, the complexes $\dGC_k$ are quasi-isomorphic up to degree shifts. Hence if one wishes to study the cohomology of $\dGC_k$, it is sufficient to study the complexes $\dGC_2$ and $\dGC_3$.
\begin{proposition}
    The cohomology of $\dGC_2$ vanish in negative degrees, i.e $H^l(\dGC_2)=0$ for $l<0$.
    The cohomlogy of $\dGC_3$ vanish in degrees higher than $-4$, i.e $H^l(\dGC_3)=0$ for $l>-4$.
\end{proposition}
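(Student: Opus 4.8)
The plan is to deduce both vanishing statements from the corresponding facts about the \emph{undirected} Kontsevich complexes $\GC_2$ and $\GC_3$, using the quasi-isomorphism $\dGC_k\simeq\GC_k$ recalled in the introduction (\cite{W2,Z2}), and to supply the elementary degree bookkeeping that pins down the relevant ranges. Throughout I use that a connected directed graph with $v$ vertices and $e$ edges sits in degree $(v-1)k+(1-k)e$, and I rewrite this via the loop number $b=e-v+1$ as $(v-1)-b$ when $k=2$ and as $(v-1)-2b$ when $k=3$. Since the generators of $\dGC_k$ have all vertices at least trivalent (univalent and passing bivalent vertices being excluded), we have $2e=\sum_x\mathrm{val}(x)\ge 3v$, hence $2b\ge v+2$, with equality exactly for all-trivalent graphs. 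First I note that this convention is the standard one: for $k=2$ the tetrahedron ($v=4$, $e=6$) sits in degree $0$, consistent with $H^0(\GC_2)=\mathfrak{grt}_1$.

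For $\dGC_2$ the statement is genuinely non-combinatorial: since $2b\ge v+2$ only bounds the degree $(v-1)-b$ from above, graphs populate arbitrarily negative degrees (e.g. two vertices joined by many edges), so no degree count can suffice. Instead I would invoke Willwacher's theorem $H^{<0}(\GC_2)=0$ (\cite{W1}) and transport it along $\dGC_2\simeq\GC_2$. The one thing to check is that the extra classes by which $H(\dGC_2)$ may differ from $H(\GC_2)$ — loop/wheel type classes $L_j$ with $v=e=j$, $b=1$, of degree $j-2$ — contribute, if at all, only in degrees $\ge 0$, so that no negative-degree class is introduced. Hence $H^{<0}(\dGC_2)=0$.

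For $\dGC_3$ the combinatorics already does most of the work. From $2b\ge v+2$ we get $\deg=(v-1)-2b\le -3$, so $\dGC_3$ is concentrated in degrees $\le -3$ and therefore $H^{l}(\dGC_3)=0$ for every $l\ge -2$ without any further argument. The entire content of the claim is thus the single equality $H^{-3}(\dGC_3)=0$. Since degree $-3$ is the top of the complex, every degree $-3$ chain is automatically a cocycle, and $H^{-3}=\operatorname{coker}\big(d\colon \dGC_3^{-4}\to\dGC_3^{-3}\big)$, where $\dGC_3^{-3}$ is spanned by all-trivalent directed graphs and $\dGC_3^{-4}$ by those with exactly one four-valent vertex. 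I would prove surjectivity of $d$ either (i) by importing the vanishing of the top cohomology of $\GC_3$ through $\dGC_3\simeq\GC_3$ (\cite{W2,Z2}), or (ii) directly: given an all-trivalent graph $G$, contract one edge to a four-valent vertex to obtain a preimage $G'\in\dGC_3^{-4}$ (note that this preserves the loop number) and organise the sum $d(G')$ by a suitable ordering of edges so that the component along $G$ is nonzero, making $d$ upper-triangular and hence surjective onto the trivalent graphs.

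\textbf{Main obstacle.} The genuinely hard input sits entirely in the $\dGC_2$ case, where the result is, via the quasi-isomorphism, equivalent to Willwacher's deep theorem $H^{<0}(\GC_2)=0$; the real work on my side is the careful set-up of the comparison and the bookkeeping of the loop classes so that no class slips below degree $0$. For $\dGC_3$ the only nontrivial point is the top-degree vanishing $H^{-3}=0$, i.e. surjectivity of $d$ onto all-trivalent directed graphs; the danger there is sign cancellation in $d(G')$ destroying the triangular structure, together with ensuring that the directed/undirected loop-class corrections contribute nothing in degrees $>-4$.
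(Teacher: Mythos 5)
The first thing to note is that the paper contains no proof to compare against: its ``proof'' of this proposition is the single citation ``See \cite{F}''. Judged against the definitions of the present paper, however, your proposal has a genuine gap, and it sits exactly where you claim the work is easy. Your entire treatment of $\dGC_3$ rests on the assertion that all vertices of $\dGC_k$ are at least trivalent, giving $2e\ge 3v$ and hence concentration of $\dGC_3$ in degrees $\le -3$. But the paper defines $\dGC_k$ as the subcomplex of $\mathsf{cfdGC}_k$ of graphs with no univalent and no \emph{passing} vertices, where a passing vertex is bivalent with one incoming and one outgoing edge; bivalent sources (two out-edges) and bivalent targets (two in-edges) are allowed. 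This destroys the inequality $2e\ge 3v$ and with it your degree bound. Concretely, the graph with two vertices joined by two parallel edges (a bivalent source and a bivalent target) lies in $\dGC_3$, is nonzero for odd $k$ (edge permutations carry no sign there), and sits in degree $3(v-1)-2e=-1$; the alternating hexagon (sources and targets alternating around a $6$-cycle) sits in degree $+3$. Moreover both are cocycles (the only surviving terms of the differential are the edge-subdivision terms, which cancel in pairs), and the loop-order-one part of $\dGC_3$ consists of nothing but such alternating cycles, all in odd degrees, so nothing can bound them: they are honest cohomology classes in degrees $-1$ and $+3$. So under the stated definition your reduction of the $\dGC_3$ case to ``$H^{-3}=0$'' fails; the degree count only works for the all-trivalent variant of $\dGC_k$, which is evidently what your argument needs, and this directed-versus-undirected ``loop class'' issue that you carefully track for $\dGC_2$ is precisely what you ignore for $\dGC_3$.

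Second, even granting the all-trivalent reading, the step you call the only nontrivial point, $H^{-3}(\dGC_3)=0$, is not provable by your proposed triangularity trick, and it is not available as a citation in the form you want. The cokernel of $d\colon\dGC_3^{-4}\to\dGC_3^{-3}$ is the space of trivalent graphs modulo Whitehead/IHX-type relations, and there is no ordering of trivalent graphs making your ``contract one edge'' assignment upper-triangular; indeed, whether this cokernel vanishes depends delicately on which degenerate graphs the complex contains. For example, in the standard tadpole-free undirected complex the theta graph --- two vertices joined by three parallel edges, which is nonzero in the odd complex because the vertex swap also reverses all three edges --- is a nonzero class in $H^{-3}(\GC_3)$ in loop order $2$: its only conceivable preimage is the one-vertex graph with two tadpoles. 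So ``$H^{-3}(\GC_3)=0$'' is simply false in that convention, and any correct proof of top-degree vanishing for the complexes of this paper must make essential use of the tadpole and multiple-edge graphs they allow; your argument never engages them, and no purely formal ordering argument can, since it would apply equally to the tadpole-free complex where the statement fails. By contrast, the $\dGC_2$ half of your proposal --- quote Willwacher's $H^{<0}(\GC_2)=0$, transport along the comparison map, and check that loop classes only occur in degrees $\ge 0$ --- has the right shape and is presumably close to what the citation \cite{F} does (one small slip: your example of graphs of arbitrarily negative degree, two vertices joined by many parallel edges, vanishes for even $k$ by edge antisymmetry; other graphs do the job).
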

\begin{proof}
    See \cite{F}.
\end{proof}
Thomas Willwacher proved the following important result in \cite{W1}.
\begin{proposition}
    The zeroth cohomology group of $\dGC_2$ is isomorphic to the Grothendieck-Teichmüller Lie-algebra $\mathfrak{grt}$.
\end{proposition}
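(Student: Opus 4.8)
Since the statement concerns the directed complex $\dGC_2$ while the Grothendieck--Teichmüller Lie algebra is most naturally attached to the undirected one, the plan is to first reduce to the undirected setting and then invoke the structural identification of the graph complex with a deformation complex. For the reduction I would use the chain of quasi-isomorphisms relating $\dGC_2$ to the undirected Kontsevich complex $\GC_2$ (in this even case the edge orientations carry no cohomological information), so that $H^0(\dGC_2)\cong H^0(\GC_2)$ and it suffices to treat $\GC_2$. Recall that $\mathfrak{grt}=\mathfrak{grt}_1$ is the Lie algebra of the prounipotent group $GRT_1$, which by Drinfeld's theory acts simply transitively on the nonempty torsor of Drinfeld associators.

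The key bridge is the operadic interpretation of both sides. By Tamarkin's formality theorem, Drinfeld associators are the same data as $\infty$-quasi-isomorphisms from the Gerstenhaber operad $e_2$ to the chains on the little $2$-disks operad; consequently $\mathfrak{grt}_1$ is identified, up to a rescaling class, with the degree-zero homotopy derivations of $e_2$. I would make this precise through Kontsevich's graph operad $\mathsf{Graphs}_2$, a cofibrant model for $e_2$, for which $\GC_2$ sits inside the derivation complex $\mathrm{Der}(\mathsf{Graphs}_2)$. The explicit Grothendieck--Teichmüller action on $e_2$ can then be written as a sum of graphs, producing for each element of $\mathfrak{grt}$ an honest degree-zero cocycle in $\GC_2$; this defines a Lie algebra morphism $\mathfrak{grt}\to H^0(\GC_2)$, whose injectivity I would check by tracking the leading (lowest loop-order) term.

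To upgrade this morphism to an isomorphism I would compute $H^0$ of the full deformation complex of $\mathsf{Graphs}_2$ directly. The strategy is to split this complex, via a filtration by the number of external legs (hairs), into the internal part governed by $\GC_2$ itself and a \emph{hairy} part $\mathsf{HGC}_2$ controlling deformations of the operadic structure maps, and to analyze the resulting long exact sequence. One shows that in the relevant degree the hairy contributions either vanish or are accounted for by the known one-dimensional rescaling (Euler) class, so that $H^0(\mathrm{Der}(\mathsf{Graphs}_2))\cong \mathfrak{grt}_1\oplus\mathbb{K}$, and the internal piece recovers exactly $\mathfrak{grt}_1$. The main obstacle is precisely this last computation: controlling the cohomology of the hairy graph complex and proving that the connecting maps in the long exact sequence behave as claimed, so that no spurious degree-zero classes appear beyond those coming from $\mathfrak{grt}$ and the rescaling class. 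This is the technical heart of Willwacher's argument in \cite{W1}, which I would invoke rather than reprove from scratch.
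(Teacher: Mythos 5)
The paper offers no proof of this proposition at all---it simply attributes the result to Willwacher and cites \cite{W1}---so your proposal, which faithfully sketches Willwacher's strategy (reduction of $\dGC_2$ to the undirected complex $\GC_2$, the $GRT_1$-action on $e_2$ via Tamarkin's formality and associators, and the deformation/hairy-complex computation) and then defers the technical heart to that same reference, is correct and takes essentially the same approach. The only small imprecision is your parenthetical suggestion that edge directions are cohomologically invisible because $k=2$ is even: the quasi-isomorphism between the directed and undirected graph complexes holds for every $k$, and that is the form in which both \cite{W1} and this paper use it.
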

\subsection{Subcomplexes of $\dGC_k$}

%
There is a family of subcomplexes of $\dGC_k$:
\begin{equation*}
    \begin{tikzcd}
        & & \dGC_k^s \arrow[rd, hook] & &\\
        \OGC_k \arrow[r, hook] & \dGC_k^{st} \arrow[ru, hook] \arrow[rd, hook] & & \dGC^{s+t}_k \arrow[r,hook] & \dGC_k\\
        & & \dGC_k^t \arrow[ru, hook] & &
    \end{tikzcd}
\end{equation*}
where
\begin{itemize}
    \item $\dGC_k^{s+t}$ is the \textit{sourced or targeted graph complex} generated by graphs with at least one source vertex or one target vertex.
    \item $\dGC_k^s$ is the \textit{sourced graph complex} generated by graphs with at least one source vertex.
    \item $\dGC_k^t$ is the \textit{targeted graph complex} generated by graphs with at least one target vertex.
    \item $\dGC_k^{st}$ is the \textit{sourced and targeted graph complex} generated by graphs with at least one source and one target vertex.
    \item $\OGC_k$ is the \textit{oriented graph complex} generated by graphs not containing any closed path.
\end{itemize}
The cohomology of these complexes have been studied in several papers and have been related to each other. It is an easy observation that $\dGC^s_k\cong\dGC_k^t$ by reversing all the direction of the edges in a graph. M. Zivkovic showed in \cite{Z2} that $H^l(\OGC_k)\cong H^l(\dGC^s)\cong H^l(\dGC^t)$ by explicit quasi-isomorphisms. T. Willwacher showed that $H^l(\dGC_k)\cong H^l(\OGC_{k+1})$ \cite{W2}, and M. Zivkovic later found explicit quasi-isomorphisms from $\dGC_k$ to $\oGC_{k+1},\ \dGC_{k+1}^s$ and $\dGC_{k+1}^t$.
The complexes $\dGC^{st}$ and $\dGC^{s+t}$ have been studied in \cite{Z3} and \cite{M3}, and are best understood through two short exact sequences.
\begin{equation*}
        \xymatrix{
        0 \ar[r] & \dGC^{st}_k \ar[r] & \dGC^s_k\oplus\dGC^t_k \ar[r] & \dGC^{s+t}_k \ar[r] & 0\\
            & \Gamma \ar@{|->}[r] & (\Gamma,\Gamma)\\
            & & (\Gamma_1,\Gamma_2) \ar@{|->}[r] & \Gamma_1-\Gamma_2 }
\end{equation*}
\begin{equation*}
        \xymatrix{0 \ar[r] & \dGC^{s+t}_k \ar[r] & \dGC_k \ar[r] & \dGC^{\circlearrowleft}_k \ar[r] & 0}
\end{equation*}
where $\dGC_k^\circlearrowleft$ is the quotient complex $\dGC_k/\dGC_k^{s+t}$ of graphs with no sources nor targets. By considering a decomposition of these complexes over loop number, one can find certain conditions depending on $k$ and degrees of graphs for when these complexes are acyclic (see \cite{Z2} for the full statement.) The most interesting case occurs when $k=3$ where one finds that $H^l(\dGC_3^{s+t})=0$ for $l\leq 1$. This in turn implies that $H^0(\dGC^{st}_3)=H^0(\dGC^s_3)\oplus H^0(\dGC_3^t)\cong\mathfrak{grt}\oplus\mathfrak{grt}$, and $H^0(\dGC^\circlearrowleft_3)=H^0(\dGC_3)$.

\subsection{The bi-weighted graph complex}
In \cite{F}, we defined the \textit{bi-weighted graph complex} $\fWGC_k$ as a tool to easier compute the cohomology of $Der(\mathcal{H}olieb_{c,d}^\circlearrowleft)$. Here we follow the same idea and define two new complexes $\fqWGC_k$ and $\fpWGC_k$ to study the derivations of the quasi- and pseudo-Lie bialgebra properads. We refer to our previous paper for full details.
\begin{definition}
    Consider a directed graph $\Gamma\in\dGC_k$ (which might or might not be zero due to symmetries) and let $x$ be a vertex of $\Gamma$. A \textit{bi-weight} on $x$ is a pair of non-negative integers $(w_x^{out},w_x^{in})$ such that $w_x^{out}+|x|_{out}\geq 1$, $w_x^{in}+|x|_{in}\geq 1$, and $w_x^{out}+w_x^{in}+|x|_{out}+|x|_{in}\geq 3$. A \textit{bi-weighted graph} is a graph $\Gamma\in\dGC_k$ with a bi-weight on each vertex. Let $\fwGC_k$ be the vector space spanned by all bi-weighted graphs. We usually write a bi-weighted vertex together with its bi-weights as $\begin{tikzpicture}[baseline={([yshift=-.5ex]current bounding box.center)}]
    \node[biw] (v) at (0,0) {$\scriptstyle{w_x^{out}}$\nodepart{lower}$\scriptstyle{w_x^{in}}$};
\end{tikzpicture}$.
    We similarly define a \textit{quasi bi-weight} $(w_x^{out},w_x^{in})_q$ and a \textit{pseudo bi-weight} $(w_x^{out},w_x^{in})_p$ as a pair of non-positive integers satisfying
\begin{equation*}
    \begin{tabular}{|c|c|}
        \hline
        $(w_x^{out},w_x^{in})_q$ & $(w_x^{out},w_x^{in})_p$ \\
        \hline
        $\begin{array}{l}
             w_x^{out}+|x|_{out}\geq 1  \\
             w_x^{in}+|x|_{in}\geq 0  \\
             w_x^{out}+w_x^{in}+|x|_{out}+|x|_{in}\geq 3
        \end{array}$ & 
        $\begin{array}{l}
             w_x^{out}+|x|_{out}\geq 0  \\
             w_x^{in}+|x|_{in}\geq 0  \\
             w_x^{out}+w_x^{in}+|x|_{out}+|x|_{in}\geq 3
        \end{array}$  \\
        \hline
    \end{tabular}
\end{equation*}
We similarly define a quasi bi-weighted and a pseudo bi-weighted graph. Quasi- and psuedo bi-weighted vertices are represented in the same manner as bi-weighted vertices. Let $\fqwGC_k$ and $\fpwGC_k$ be the vector spaces of quasi- and pseudo bi-weighted graphs.
The differential on all of three complexes is defined similarly as the differential in $\dGC_k$ by splitting vertices and attaching univalent vertices. Hence in all three cases the differential act on a graph $\Gamma$ as
\begin{equation*}
    d(\Gamma):=\delta(\Gamma)-\delta'(\Gamma)-\delta''(\Gamma)=\sum_{x\in V(\Gamma)}\delta_x(\Gamma)-\delta_x'(\Gamma)-\delta_x''(\Gamma).
\end{equation*}
where $V(\Gamma)$ denotes the set of vertices of $\Gamma$. The map $\delta_x(\Gamma)$ denotes vertex splitting of the vertex $x$ similar as in $\dGC$ with the addition that one also sum over all possible re-distributions of the bi-weights of $x$ on the two new vertices. The $\delta'_x$ is the graph where a new univalent vertex has been added to the graph connected via an incoming edge to $x$. The out-weight of $x$ is decreased by one, and one sums over all possible bi-weights on the new univalent vertex. Similarly $\delta''_x$ is a graph where a new univalent vertex is connected via an outgoing edge to $x$ where the in-weight of $x$ is decreased by one and one sums over all possible bi-weights assigned to the new univalent vertex. The differentials differ between the complexes only due to the different conditions regarding the bi-weights. 
Pictorially we can represent $\delta_x(\Gamma)-\delta_x'(\Gamma)-\delta_x''(\Gamma)$ in all of these three complexes as
\begin{equation*}
    d_x\Big(\ 
    \begin{tikzpicture}[shorten >=1pt,>=angle 90,baseline={([yshift=-.5ex]current bounding box.center)}]
        \node[biw] (v0) at (0,0) {$m$\nodepart{lower}$n$};
        \node[invisible] (v1) at (-0.4,0.8) {}
            edge [<-] (v0);
        \node[] at (0,0.6) {$\scriptstyle\cdots$};
        \node[invisible] (v3) at (0.4,0.8) {}
            edge [<-] (v0);
        \node[invisible] (v4) at (-0.4,-0.8) {}
            edge [->] (v0);
        \node[] at (0,-0.6) {$\scriptstyle\cdots$};
        \node[invisible] (v6) at (0.4,-0.8) {}
            edge [->] (v0);
    \end{tikzpicture}\ \Big) \ 
    = \ \sum_{\substack{\scriptstyle{m=m_1+m_2}\\ \scriptstyle{n=n_1+n_2}}}
    \begin{tikzpicture}[shorten >=1pt,>=angle 90,baseline={([yshift=-.5ex]current bounding box.center)}]
        \node[ellipse,
            draw = black,
            minimum width = 3cm, 
            minimum height = 1.8cm,
            dotted] (e) at (0,0) {};
        \node[biw] (vLeft) at (-0.8,0) {$m_1$\nodepart{lower}$n_1$};
        \node[biw] (vRight) at (0.8,0) {$m_2$\nodepart{lower}$n_2$}
            edge [<-] (vLeft);
        \node[invisible] (v1) at (-0.6,1.4) {}
            edge [<-] (e);
        \node[] at (0,1.2) {$\cdots$};
        \node[invisible] (v3) at (0.6,1.4) {}
            edge [<-] (e);
        \node[invisible] (v4) at (-0.6,-1.4) {}
            edge [->] (e);
        \node[] at (0,-1.2) {$\cdots$};
        \node[invisible] (v6) at (0.6,-1.4) {}
            edge [->] (e);
    \end{tikzpicture}
    \ -\ \sum_{\substack{i\geq 1,j\geq 0\\
    i+j\geq 2}}\ 
    \begin{tikzpicture}[shorten >=1pt,>=angle 90,baseline={(4ex,-0.5ex)}]
        \node[biw] (v0) at (0,0) {$\scriptstyle m-1$\nodepart{lower}$n$};
        \node[biw] (new) at (1.5,0.7) {$i$\nodepart{lower}$j$}
            edge [<-] (v0);
        \node[invisible] (v1) at (-0.4,0.8) {}
            edge [<-] (v0);
        \node[] at (0,0.6) {$\scriptstyle\cdots$};
        \node[invisible] (v3) at (0.4,0.8) {}
            edge [<-] (v0);
        \node[invisible] (v4) at (-0.4,-0.8) {}
            edge [->] (v0);
        \node[] at (0,-0.6) {$\scriptstyle\cdots$};
        \node[invisible] (v6) at (0.4,-0.8) {}
            edge [->] (v0);
    \end{tikzpicture}
    \ -\ \sum_{\substack{i\geq 0,j\geq 1\\
    i+j\geq 2}}\ 
    \begin{tikzpicture}[shorten >=1pt,>=angle 90,baseline={(4ex,-0.5ex)}]
        \node[biw] (v0) at (0,0) {$\scriptstyle m$\nodepart{lower}$\scriptstyle{n-1}$};
        \node[biw] (new) at (1.5,-0.7) {$i$\nodepart{lower}$j$}
            edge [->] (v0);
        \node[invisible] (v1) at (-0.4,0.8) {}
            edge [<-] (v0);
        \node[] at (0,0.6) {$\scriptstyle\cdots$};
        \node[invisible] (v3) at (0.4,0.8) {}
            edge [<-] (v0);
        \node[invisible] (v4) at (-0.4,-0.8) {}
            edge [->] (v0);
        \node[] at (0,-0.6) {$\scriptstyle\cdots$};
        \node[invisible] (v6) at (0.4,-0.8) {}
            edge [->] (v0);
    \end{tikzpicture}
\end{equation*}
We will also use the following notation for brecity when describing the splitting term $\delta_x$ of the differential:
\begin{equation*}
    \delta_x\Big(\ 
    \begin{tikzpicture}[shorten >=1pt,>=angle 90,baseline={([yshift=-.5ex]current bounding box.center)}]
        \node[biw] (v0) at (0,0) {$m$\nodepart{lower}$n$};
        \node[invisible] (v1) at (-0.4,0.8) {}
            edge [<-] (v0);
        \node[] at (0,0.6) {$\scriptstyle\cdots$};
        \node[invisible] (v3) at (0.4,0.8) {}
            edge [<-] (v0);
        \node[invisible] (v4) at (-0.4,-0.8) {}
            edge [->] (v0);
        \node[] at (0,-0.6) {$\scriptstyle\cdots$};
        \node[invisible] (v6) at (0.4,-0.8) {}
            edge [->] (v0);
    \end{tikzpicture}\ \Big) \ 
    = \ \sum_{\substack{\scriptstyle{m=m_1+m_2}\\ \scriptstyle{n=n_1+n_2}}} \Big(\frac{m_1}{n_1},\frac{m_2}{n_2}\Big)_x,\ \text{ where }
    \Big(\frac{m_1}{n_1},\frac{m_2}{n_2}\Big)_x:=
\begin{tikzpicture}[shorten >=1pt,>=angle 90,baseline={([yshift=-.5ex]current bounding box.center)}]
        \node[ellipse,
            draw = black,
            minimum width = 3cm, 
            minimum height = 1.8cm,
            dotted] (e) at (0,0) {};
        \node[biw] (vLeft) at (-0.8,0) {$m_1$\nodepart{lower}$n_1$};
        \node[biw] (vRight) at (0.8,0) {$m_2$\nodepart{lower}$n_2$}
            edge [<-] (vLeft);
        \node[invisible] (v1) at (-0.6,1.4) {}
            edge [<-] (e);
        \node[] at (0,1.2) {$\cdots$};
        \node[invisible] (v3) at (0.6,1.4) {}
            edge [<-] (e);
        \node[invisible] (v4) at (-0.6,-1.4) {}
            edge [->] (e);
        \node[] at (0,-1.2) {$\cdots$};
        \node[invisible] (v6) at (0.6,-1.4) {}
            edge [->] (e);
    \end{tikzpicture}.
\end{equation*}
Any invalid assignment of bi-weights to a vertex makes the whole graph zero. Note that this can be different in each of the three complexes:
\begin{equation*}
\begin{tabular}{lc}
     Invalid &
        \begin{tikzpicture}[shorten >=1pt,node distance=1.2cm,auto,>=angle 90,baseline=-0.1cm]
                \node[biw] (a) at (0,0) {$0$\nodepart{lower}$2$};
        \end{tikzpicture}\ ,\
        \begin{tikzpicture}[shorten >=1pt,node distance=1.2cm,auto,>=angle 90,baseline=-0.1cm]
            \node[biw] (a) at (0,0) {$0$\nodepart{lower}$0$};
            \node[] (up) at (0,1) {}
                edge [<-] (a);
            \node[] (down) at (0,-1) {}
                edge [->] (a);
        \end{tikzpicture}\ ,\
        \begin{tikzpicture}[shorten >=1pt,node distance=1.2cm,auto,>=angle 90,baseline=-0.1cm]
            \node[biw] (a) at (0,0) {$-1$\nodepart{lower}$2$};
            \node[] (down1) at (-0.4,-0.8) {}
                edge [->] (a);
            \node[] (down3) at (0.4,-0.8) {}
                edge [->] (a);
            \node[] (up1) at (-0.4,0.8) {}
                edge [<-] (a);
            \node[] (up3) at (0.4,0.8) {}
                edge [<-] (a);
        \end{tikzpicture}
        \\
     $\fpwGC_k$ & 
     \begin{tikzpicture}[shorten >=1pt,node distance=1.2cm,auto,>=angle 90,baseline=-0.1cm]
            \node[biw] (a) at (0,0) {$0$\nodepart{lower}$2$};
            \node[] (down) at (0,-1) {}
                edge [->] (a);
    \end{tikzpicture} \ , \
     \begin{tikzpicture}[shorten >=1pt,node distance=1.2cm,auto,>=angle 90,baseline=-0.1cm]
            \node[biw] (a) at (0,0) {$0$\nodepart{lower}$2$};
            \node[] (up1) at (-0.4,-0.8) {}
                edge [->] (a);
            \node[] (up3) at (0.4,-0.8) {}
                edge [->] (a);
    \end{tikzpicture}\\
     $\fqwGC_k$ & 
     \begin{tikzpicture}[shorten >=1pt,node distance=1.2cm,auto,>=angle 90,baseline=-0.1cm]
            \node[biw] (a) at (0,0) {$2$\nodepart{lower}$0$};
            \node[] (up1) at (-0.4,0.8) {}
                edge [<-] (a);
            \node[] (up3) at (0.4,0.8) {}
                edge [<-] (a);
        \end{tikzpicture} \ , \ 
        \begin{tikzpicture}[shorten >=1pt,node distance=1.2cm,auto,>=angle 90,baseline=-0.1cm]
            \node[biw] (a) at (0,0) {$4$\nodepart{lower}$0$};
            \node[] (up) at (0,1) {}
                edge [<-] (a);
        \end{tikzpicture}
     \\
     $\fwGC_k$ & 
     \begin{tikzpicture}[shorten >=1pt,node distance=1.2cm,auto,>=angle 90,baseline=-0.1cm]
            \node[biw] (a) at (0,0) {$0$\nodepart{lower}$1$};
            \node[] (up) at (0,1) {}
                edge [<-] (a);
            \node[] (down) at (0,-1) {}
                edge [->] (a);
        \end{tikzpicture}
\end{tabular}
\end{equation*}
We call $\fwGC_k$ the \textit{bi-weighted graph complex}, $\fqwGC_k$ the \textit{quasi bi-weighted graph complex} and $\fpwGC_k$ the \textit{psuedo bi-weighted graph complex}.
\end{definition}
\begin{remark}
    Unlike the differential in $\dGC_k$, the differential does not cancel the creation of new univalent vertices in any of the bi-weighted complexes.
\end{remark}
\begin{definition}
We introduce a notion for bi-weights that will be useful in the remainder of this paper. Let $r$ be a natural number.  We let the symbol $\infty_r$ when used as an in or out-weight denote the sum of graphs
\begin{equation*}
    \begin{tikzpicture}[baseline={([yshift=-.5ex]current bounding box.center)}]
        \node[biw] (v0) at (0,0) {$\infty_r$\nodepart{lower}$n$};
        \node[invisible] (v1) at (-0.4,0.8) {}
            edge [<-] (v0);
        \node[] at (0,0.6) {$\scriptstyle\cdots$};
        \node[invisible] (v3) at (0.4,0.8) {}
            edge [<-] (v0);
        \node[invisible] (v4) at (-0.4,-0.8) {}
            edge [->] (v0);
        \node[] at (0,-0.6) {$\scriptstyle\cdots$};
        \node[invisible] (v6) at (0.4,-0.8) {}
            edge [->] (v0);
    \end{tikzpicture}  
\ = \ \sum_{i\geq r}\ 
    \begin{tikzpicture}[baseline={([yshift=-.5ex]current bounding box.center)}]
        \node[biw] (v0) at (0,0) {$i$\nodepart{lower}$n$};
        \node[invisible] (v1) at (-0.4,0.8) {}
            edge [<-] (v0);
        \node[] at (0,0.6) {$\scriptstyle\cdots$};
        \node[invisible] (v3) at (0.4,0.8) {}
            edge [<-] (v0);
        \node[invisible] (v4) at (-0.4,-0.8) {}
            edge [->] (v0);
        \node[] at (0,-0.6) {$\scriptstyle\cdots$};
        \node[invisible] (v6) at (0.4,-0.8) {}
            edge [->] (v0);
    \end{tikzpicture} 
\ \qquad,\qquad\
    \begin{tikzpicture}[baseline={([yshift=-.5ex]current bounding box.center)}]
        \node[biw] (v0) at (0,0) {$m$\nodepart{lower}$\infty_r$};
        \node[invisible] (v1) at (-0.4,0.8) {}
            edge [<-] (v0);
        \node[] at (0,0.6) {$\scriptstyle\cdots$};
        \node[invisible] (v3) at (0.4,0.8) {}
            edge [<-] (v0);
        \node[invisible] (v4) at (-0.4,-0.8) {}
            edge [->] (v0);
        \node[] at (0,-0.6) {$\scriptstyle\cdots$};
        \node[invisible] (v6) at (0.4,-0.8) {}
            edge [->] (v0);
    \end{tikzpicture}  
\ = \ \sum_{i\geq r} \ 
    \begin{tikzpicture}[baseline={([yshift=-.5ex]current bounding box.center)}]
        \node[biw] (v0) at (0,0) {$m$\nodepart{lower}$i$};
        \node[invisible] (v1) at (-0.4,0.8) {}
            edge [<-] (v0);
        \node[] at (0,0.6) {$\scriptstyle\cdots$};
        \node[invisible] (v3) at (0.4,0.8) {}
            edge [<-] (v0);
        \node[invisible] (v4) at (-0.4,-0.8) {}
            edge [->] (v0);
        \node[] at (0,-0.6) {$\scriptstyle\cdots$};
        \node[invisible] (v6) at (0.4,-0.8) {}
            edge [->] (v0);
    \end{tikzpicture}
\end{equation*}
which distributes like a tensor over the vertices of the graph when more then one vertex is decorated with these symbols.
\end{definition}
\begin{remark}
Note that vertices $\begin{tikzpicture}[baseline={([yshift=-.5ex]current bounding box.center)}]
\node[biw] (v) at (0,0) {$\scriptstyle{\infty_0}$\nodepart{lower}$\scriptstyle{\infty_0}$};
\end{tikzpicture}$ do not create any new univalent vertices under the action of the differential in any of the three complexes. Further note that the differential with this convention can be described as
\begin{equation*}
    d_x\Big(\ 
    \begin{tikzpicture}[shorten >=1pt,>=angle 90,baseline={([yshift=-.5ex]current bounding box.center)}]
        \node[biw] (v0) at (0,0) {$m$\nodepart{lower}$n$};
        \node[invisible] (v1) at (-0.4,0.8) {}
            edge [<-] (v0);
        \node[] at (0,0.6) {$\scriptstyle\cdots$};
        \node[invisible] (v3) at (0.4,0.8) {}
            edge [<-] (v0);
        \node[invisible] (v4) at (-0.4,-0.8) {}
            edge [->] (v0);
        \node[] at (0,-0.6) {$\scriptstyle\cdots$};
        \node[invisible] (v6) at (0.4,-0.8) {}
            edge [->] (v0);
    \end{tikzpicture}\ \Big) \ 
    = \ \sum_{\substack{\scriptstyle{m=m_1+m_2}\\ \scriptstyle{n=n_1+n_2}}}
    \begin{tikzpicture}[shorten >=1pt,>=angle 90,baseline={([yshift=-.5ex]current bounding box.center)}]
        \node[ellipse,
            draw = black,
            minimum width = 3cm, 
            minimum height = 1.8cm,
            dotted] (e) at (0,0) {};
        \node[biw] (vLeft) at (-0.8,0) {$m_1$\nodepart{lower}$n_1$};
        \node[biw] (vRight) at (0.8,0) {$m_2$\nodepart{lower}$n_2$}
            edge [<-] (vLeft);
        \node[invisible] (v1) at (-0.6,1.4) {}
            edge [<-] (e);
        \node[] at (0,1.2) {$\cdots$};
        \node[invisible] (v3) at (0.6,1.4) {}
            edge [<-] (e);
        \node[invisible] (v4) at (-0.6,-1.4) {}
            edge [->] (e);
        \node[] at (0,-1.2) {$\cdots$};
        \node[invisible] (v6) at (0.6,-1.4) {}
            edge [->] (e);
    \end{tikzpicture}
    \ -\
    \begin{tikzpicture}[shorten >=1pt,>=angle 90,baseline={(4ex,-0.5ex)}]
        \node[biw] (v0) at (0,0) {$\scriptstyle m-1$\nodepart{lower}$n$};
        \node[biw] (new) at (1.5,0.7) {$\infty_0$\nodepart{lower}$\infty_0$}
            edge [<-] (v0);
        \node[invisible] (v1) at (-0.4,0.8) {}
            edge [<-] (v0);
        \node[] at (0,0.6) {$\scriptstyle\cdots$};
        \node[invisible] (v3) at (0.4,0.8) {}
            edge [<-] (v0);
        \node[invisible] (v4) at (-0.4,-0.8) {}
            edge [->] (v0);
        \node[] at (0,-0.6) {$\scriptstyle\cdots$};
        \node[invisible] (v6) at (0.4,-0.8) {}
            edge [->] (v0);
    \end{tikzpicture}
    \ -\ 
    \begin{tikzpicture}[shorten >=1pt,>=angle 90,baseline={(4ex,-0.5ex)}]
        \node[biw] (v0) at (0,0) {$\scriptstyle m$\nodepart{lower}$\scriptstyle{n-1}$};
        \node[biw] (new) at (1.5,-0.7) {$\infty_0$\nodepart{lower}$\infty_0$}
            edge [->] (v0);
        \node[invisible] (v1) at (-0.4,0.8) {}
            edge [<-] (v0);
        \node[] at (0,0.6) {$\scriptstyle\cdots$};
        \node[invisible] (v3) at (0.4,0.8) {}
            edge [<-] (v0);
        \node[invisible] (v4) at (-0.4,-0.8) {}
            edge [->] (v0);
        \node[] at (0,-0.6) {$\scriptstyle\cdots$};
        \node[invisible] (v6) at (0.4,-0.8) {}
            edge [->] (v0);
    \end{tikzpicture}
\end{equation*}
\end{remark}
\begin{proposition}\label{prop:Holieb-iso}
    Let the maps
    \begin{align*}
    F&:Der^\bullet(\mathcal{H}olieb_{p,q}^\circlearrowleft)\rightarrow\fwGC_{p+q+1} \\
    qF&:Der^\bullet(\mathcal{QH}olieb_{p,q}^{\circlearrowleft})\rightarrow\fqwGC_{p+q+1} \\
    pF&:Der^\bullet(\mathcal{PH}olieb_{p,q}^{\circlearrowleft})\rightarrow\fpwGC_{p+q+1}
    \end{align*}
be defined by mapping the graph representation of an element with out-hairs and in-hairs to a bi-weighted graphs of the same shape where the hairs have been interpreted as bi-weights. Then these maps are chain maps of degree $0$, and furthermore they are isomorphisms.
\end{proposition}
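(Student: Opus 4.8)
The plan is to verify four things in turn --- that $F$, $qF$, $pF$ are well defined, bijective, degree-preserving, and compatible with the differentials --- of which the first three are essentially bookkeeping and the chain-map property carries the real content. Since the non-quasi map $F$ is already established in \cite{F}, I would treat that case as known and concentrate on why the same recipe survives for $qF$ and $pF$; the only genuine difference between the three settings is the set of admissible valencies at a vertex.

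First I would spell out the map on basis graphs. An element of, say, $\mathcal{QH}olieb_{p,q}^\circlearrowleft(m,n)$ is a directed (possibly wheeled) graph whose internal vertices are corollas $o_{a_x,b_x}$ joined by internal edges and carrying $m$ out-legs and $n$ in-legs in total; I send it to the bi-weighted graph with the same internal vertices and edges, assigning to $x$ the bi-weight $(w_x^{out},w_x^{in})=(a_x-|x|_{out},\,b_x-|x|_{in})$, and the inverse simply reattaches $w_x^{out}$ out-legs and $w_x^{in}$ in-legs at each vertex. Writing $a_x=|x|_{out}+w_x^{out}$ and $b_x=|x|_{in}+w_x^{in}$, the generator conditions $a_x\geq1,\ b_x\geq1,\ a_x+b_x\geq3$ defining $\mathcal{H}olieb$ are literally the bi-weight conditions for $\fwGC$; relaxing $b_x\geq1$ to $b_x\geq0$ gives the $\mathcal{QH}olieb$/$\fqwGC$ pair and relaxing also $a_x\geq1$ to $a_x\geq0$ gives $\mathcal{PH}olieb$/$\fpwGC$. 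This is exactly the content of the three admissibility tables, so well-definedness and bijectivity on basis graphs are immediate. The only subtle point is that legs are anonymous in a bi-weighted graph whereas the derivation complex takes $(\,\cdot\otimes sgn_m^{\otimes|p|}\otimes sgn_n^{\otimes|q|})^{\mathbb{S}_m\times\mathbb{S}_n}$; I would observe that the global leg-symmetrization combined with the internal corolla symmetry $\sigma_a\tau_b\,o_{a,b}=(-1)^{p|\sigma_a|+q|\tau_b|}o_{a,b}$ is precisely what makes the anonymous-leg graph a consistent (co)invariant, the parities agreeing since $(-1)^{|p|}=(-1)^p$.

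For the degree, I would use that $o_{a,b}$ has degree $1-p(a-1)-q(b-1)$ and that properadic composition is degree $0$, so a graph with $v$ vertices and $e$ internal edges has internal degree $\sum_x\big(1-p(a_x-1)-q(b_x-1)\big)=v-p(e+m-v)-q(e+n-v)$, using $\sum_x a_x=e+m$ and $\sum_x b_x=e+n$. Subtracting the shift $1+p(1-m)+q(1-n)$ coming from the derivation complex yields
\begin{equation*}
(p+q+1)(v-1)-(p+q)e,
\end{equation*}
in which the dependence on $m$ and $n$ has cancelled. This is exactly the degree of the underlying graph in $\dGC_{p+q+1}$, confirming that the bi-weights carry no degree and that all three maps are of degree $0$; here I would also check that the $sgn_e$ versus $sgn_v$ twist, selected by the parity of $k=p+q+1$, is compatible with the leg sign-twists above.

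The chain-map property is the crux, and I would match the three kinds of terms of $\partial$ one-to-one with $\delta$, $-\delta'$, $-\delta''$. Splitting an internal corolla into two corollas joined by a new edge, summed over all redistributions of its legs and edges, is precisely $\delta_x$ (vertex splitting plus the sum over redistributions of the bi-weight). Attaching a new corolla to an out-leg of $x$ turns that leg into an internal edge, lowers $w_x^{out}$ by one, and creates a new vertex carrying the remaining free legs of the attached corolla; summing over all admissible valencies of that corolla is exactly the sum over all bi-weights of the new univalent vertex, i.e. $\delta_x'$, and symmetrically an in-leg gives $\delta_x''$. The main obstacle is twofold: that the admissibility of the \emph{attached} corolla --- which genuinely differs among the three cases, since in the pseudo setting the new vertex may be a source, a target, or carry no legs at all --- matches termwise the admissible bi-weights of the newly created vertex in the corresponding complex; and that the signs generated by labelling the new edge $e+1$ and the new vertex agree with those in $d=\delta-\delta'-\delta''$. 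Both follow the template of \cite{F}: once the constraint tables are seen to correspond termwise the sign computation is identical to the non-quasi case, and I would carry it out by tracking the positions of the new edge and vertex exactly as in the definition of the differential on $\fwGC_k$.
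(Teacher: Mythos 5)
Your proposal is correct and follows essentially the same route as the paper, which disposes of the statement by noting it is "just untwisting of definitions" with the details already carried out in \cite{F} for the map $F$; your identification of hairs with bi-weights, the matching of the generator-valency tables for the three properads with the three bi-weight admissibility tables, the degree computation $(p+q+1)(v-1)-(p+q)e$, and the term-by-term matching of vertex splitting and corolla-attachment with $\delta$, $\delta'$, $\delta''$ are exactly the content being "untwisted." The only difference is that you write out explicitly what the paper leaves implicit, which is sound.
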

\begin{proof}
    The proof is just untwisting of definitions of both complexes, and showing that the maps is of degree 0. This is already shown in \cite{F}.
\end{proof}
\subsection{Subcomplexes of $\fwGC_k$, $\fqwGC_k$ and $\fpwGC_k$}
\begin{definition}
    Let $\fwGC_k^+$ be the subcomplex of $\fwGC_k$ of graphs having at least one vertex with positive out-weight and at least one vertex with a positive in-weight.
    Let $\fqwGC_k^+$ be the subcomplex of $\fqwGC_k$ of graphs having at least one vertex with positive out-weight.
\end{definition}
\begin{proposition}
    The isomorphisms from proposition \ref{prop:Holieb-iso} restrict to isomorphisms
\begin{align*}
    F:&Der(\mathcal{H}olieb_{p,q}^\circlearrowleft)\rightarrow \fwGC^+_{p+q+1}\\
    qF:&Der(\mathcal{QH}olieb_{p,q}^\circlearrowleft)\rightarrow \fqwGC^+_{p+q+1}
\end{align*}
\end{proposition}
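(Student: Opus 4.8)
The plan is to derive the statement directly from Proposition \ref{prop:Holieb-iso}: since $F$ and $qF$ are already isomorphisms of complexes, it suffices to show that each carries the subspace on the left (sitting inside the corresponding $Der^\bullet$) bijectively onto the named subcomplex on the right, and that both sides are genuinely subcomplexes. An isomorphism of chain complexes that restricts to a bijection between subcomplexes is automatically an isomorphism of those subcomplexes, so the entire argument reduces to identifying the subspaces and checking stability under the differential.

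First I would pin down the underlying graded vector spaces. Comparing the two product formulas, $Der(\mathcal{H}olieb_{p,q}^\circlearrowleft)=\prod_{m,n\geq 1}$ is exactly the sub-product of $Der^\bullet(\mathcal{H}olieb_{p,q}^\circlearrowleft)=\prod_{m,n\geq 0}$ consisting of derivations supported in arities with at least one out-leg and at least one in-leg. Under $F$ a hair attached to a vertex becomes a unit of that vertex's bi-weight, so the total number of out-legs of an element equals the total out-weight $\sum_x w_x^{out}$ of its image and the total number of in-legs equals $\sum_x w_x^{in}$. As all weights are non-negative, the condition $m\geq 1$ is equivalent to ``at least one vertex carries positive out-weight'' and $n\geq 1$ to ``at least one vertex carries positive in-weight.'' These are precisely the defining conditions of $\fwGC^+_{p+q+1}$, so $F$ sends the graded space $Der(\mathcal{H}olieb_{p,q}^\circlearrowleft)$ bijectively onto $\fwGC^+_{p+q+1}$. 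The quasi case is identical except that only $m\geq 1$ is imposed, matching the single condition defining $\fqwGC^+_{p+q+1}$.

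Next I would check that these subspaces are closed under $d=\delta-\delta'-\delta''$, which simultaneously shows they are subcomplexes and that the restricted maps are chain maps. The splitting term $\delta$ preserves both $\sum_x w_x^{out}$ and $\sum_x w_x^{in}$, hence preserves each positivity condition. In $\delta'_x$ the out-weight of $x$ is lowered by one while a new vertex of out-weight $i$ is created; since this new vertex carries only an incoming internal edge, the generator condition $w^{out}+|x|_{out}\geq 1$ forces $i\geq 1$, so the total out-weight changes by $i-1\geq 0$ and the total in-weight does not decrease. Symmetrically, in the $\fwGC$-case $\delta''_x$ creates a new vertex of in-weight $j\geq 1$, leaving the total in-weight at least its old value while the total out-weight does not decrease. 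Consequently a graph with positive total out- and in-weight keeps both positive, so $\fwGC^+_{p+q+1}$ is a subcomplex; a graph with positive total out-weight keeps it positive under all three terms, so $\fqwGC^+_{p+q+1}$ is a subcomplex. Reading these estimates through $F^{-1}$ also identifies the intrinsic differential of $Der(\mathcal{H}olieb_{p,q}^\circlearrowleft)$ with the restriction of the $Der^\bullet$-differential: both are built from the same vertex splittings and corolla attachments dictated by the common target $\mathcal{H}olieb_{p,q}^\circlearrowleft$, and the only terms by which they could differ would land in the curvature arities $m=0$ or $n=0$, which the estimates exclude.

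The crux, and the only genuine obstacle, is exactly this last stability: one must ensure that applying $\delta'$ or $\delta''$ to a vertex carrying the \emph{last} remaining unit of out- (respectively in-) weight still yields a graph with positive total out- (respectively in-) weight. This is guaranteed precisely because the newly created vertex must itself be a legal generator, forcing positive out-weight in $\delta'$ and, in the $\fwGC$-case, positive in-weight in $\delta''$. It is also why no analogous statement is needed for the pseudo case: there $\mathcal{H}olieb_{c,d}^\bullet=\mathcal{PH}olieb_{c,d}^+$, so $Der^\bullet(\mathcal{PH}olieb_{p,q}^\circlearrowleft)=Der(\mathcal{PH}olieb_{p,q}^\circlearrowleft)$ and no proper ``$+$'' subcomplex arises.
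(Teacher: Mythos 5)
Your proposal is correct and takes essentially the same route as the paper, whose entire proof of this proposition is ``This follows by inspection.'' Your write-up is simply that inspection made explicit: identifying the sub-products $\prod_{m,n\geq 1}$ and $\prod_{m\geq 1,n\geq 0}$ with the positivity conditions defining $\fwGC^+_{p+q+1}$ and $\fqwGC^+_{p+q+1}$, and checking stability under $d=\delta-\delta'-\delta''$ using that a univalent vertex created by $\delta'$ or $\delta''$ must itself satisfy the (quasi) bi-weight constraints, so total out-/in-weights never decrease.
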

\begin{proof}
    This follows by inspection.
\end{proof}
\begin{definition}
    Let $\mathsf{f}\owGC_k$ be the subcomplex of $\fwGC_k$ of graphs containing no closed path. Similarly define $\mathsf{f}\oqwGC_k$ and $\mathsf{f}\opwGC_k$.
\end{definition}
\begin{proposition}
    The isomorphisms from proposition \ref{prop:Holieb-iso} restrict to isomorphisms
    \begin{align*}
    F&:Der(\mathcal{H}olieb_{p,q}^\uparrow)\rightarrow\fowGC_{p+q+1} \\
    qF&:Der(\mathcal{QH}olieb_{p,q}^{\uparrow})\rightarrow\foqwGC_{p+q+1} \\
    pF&:Der(\mathcal{PH}olieb_{p,q}^{\uparrow})\rightarrow\fopwGC_{p+q+1}
    \end{align*}
\end{proposition}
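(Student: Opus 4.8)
The plan is to observe that each of the three isomorphisms $F$, $qF$, $pF$ of Proposition \ref{prop:Holieb-iso} is \emph{shape preserving} and therefore automatically matches the ``no closed path'' part of its source with that of its target. Since these maps are already isomorphisms of the ambient (full) complexes, once the two subcomplexes are shown to correspond, injectivity and surjectivity of the restriction are inherited for free; the only real work is to check that ``containing no closed path'' is a condition that makes sense on both sides and is preserved by the differential.

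First I would record the shape-preserving property explicitly. By the construction in Proposition \ref{prop:Holieb-iso}, the maps $F$, $qF$, $pF$ leave the directed edges among the internal vertices of a graph untouched and convert only the out- and in-hairs at a vertex into its out- and in-bi-weights. A closed directed path is assembled exclusively from internal directed edges, so the hairs (equivalently, the bi-weights) never enter one. Hence a hairy graph has a closed path if and only if its image does, and the same holds for the inverse maps; consequently $F$, $qF$, $pF$ together with their inverses restrict to mutually inverse bijections between the no-closed-path elements on the two sides.

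Next I would identify these no-closed-path subspaces with the complexes in the statement. On the graph side this is the definition: $\fowGC_{p+q+1}$, $\foqwGC_{p+q+1}$, $\fopwGC_{p+q+1}$ are the subcomplexes of $\fwGC_{p+q+1}$, $\fqwGC_{p+q+1}$, $\fpwGC_{p+q+1}$ spanned by graphs with no closed path. On the properad side I would invoke the description of wheelification: an element of a free properad is represented by a directed graph with no closed path, while wheelification adjoins precisely the graphs that do contain one. Thus $Der(\mathcal{H}olieb_{p,q}^\uparrow)$, $Der(\mathcal{QH}olieb_{p,q}^\uparrow)$ and $Der(\mathcal{PH}olieb_{p,q}^\uparrow)$ are exactly the no-closed-path subspaces of the wheeled derivation complexes occurring in Proposition \ref{prop:Holieb-iso}. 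As a consistency check, the three sets of bi-weight inequalities reproduce the index ranges $m,n\ge 1$, $m\ge 1,n\ge 0$ and $m,n\ge 0$: acyclicity forces a source and a target vertex, and the bi-weight rules then require, or in the pseudo case permit the omission of, the corresponding in-/out-weight.

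The one step I expect to require genuine care is confirming that the differential preserves acyclicity, so that the subcomplexes are honest and the restricted maps are chain maps. For the univalent-vertex terms $\delta'_x$ and $\delta''_x$ this is immediate, since a univalent vertex lies on no closed path. For the splitting term $\delta_x$ I would argue that a closed path running through the newly created edge $x_1\to x_2$ would, upon recontracting that edge, yield a closed path through $x$ in the original graph, while a closed path avoiding it would already be a closed path in the original; acyclicity of the original rules out both, so every summand of $\delta_x$ stays acyclic (the simultaneous redistribution of bi-weights is irrelevant, as it touches no edges). With this, $d$ preserves the no-closed-path condition on both sides, the restrictions of $F$, $qF$, $pF$ are chain maps, and by the bijections established above they are the asserted isomorphisms.
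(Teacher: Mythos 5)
Your proposal is correct and takes essentially the same route as the paper, whose entire proof is ``this follows by inspection'': the maps $F$, $qF$, $pF$ are shape-preserving, the no-closed-path condition on the graph side corresponds exactly to unwheeledness on the properad side, and the differential (vertex splitting and hair/corolla attachment) preserves acyclicity, so the restrictions are chain isomorphisms. Your writeup simply makes that inspection explicit, including the correct observation that acyclicity forces the source/target vertices to carry the in-/out-weights that reproduce the index ranges $m,n\ge 1$, \ $m\ge 1, n\ge 0$, and $m,n\ge 0$.
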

\begin{proof}
    This follows by inspection.
\end{proof}
\subsection{Loop number zero}
All graph complexes above split over their loop number $b=e-v+1$. Let $\mathsf{b}_0\wGC_k$ be the subcomplex of $\fwGC_k$ of graphs with loop number $b=0$ and let $\wGC_k$ denote the subcomplex of graphs with looper number one and higher. Hence $\fwGC_k=\mathsf{b}_0\wGC_k\oplus\wGC_k$.
Similarly let $\mathsf{b_0}\qwGC_k$ and $\mathsf{b_0}\pwGC_k$ be the subcomplexes of graphs with loop number $b=0$, and $\qwGC_k$ and $\pwGC_k$ the subcomplexes of graphs with loop number greater than zero.
\begin{proposition}
    The cohomology of $\mathsf{b}_0\wGC_k$, $\mathsf{b}_0\qwGC_k$ and $\mathsf{b}_0\pwGC_k$ are generated by the single vertex graph
    \begin{equation*}
    \sum_{\substack{i,j\geq0\\i+j\geq 3}}(i+j-2)\ 
    \tikz[baseline=-2.07cm]
    \node[biw] at (0,-2) {$i$\nodepart{lower}$j$};\
\end{equation*}
where any graph containing a vertex of invalid bi-weight is zero.
\end{proposition}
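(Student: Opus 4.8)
The plan is to prove the statement in two stages: that the displayed element is a cocycle, and that it generates the full cohomology, treating $\wGC_k$, $\qwGC_k$ and $\pwGC_k$ simultaneously through the rule that an invalid bi-weight kills the graph. First I would record that a degree count (matching the $\dGC_k$ convention) shows the degree of a bi-weighted graph depends only on its underlying directed graph and equals $(v-1)k-(k-1)e$; on the loop number zero part this is simply $v-1$, so single vertices sit in degree $0$, two-vertex trees in degree $1$, and there are no negative degrees. Writing $v_{i,j}$ for the single vertex of bi-weight $(i,j)$ and $\theta=\sum_{i+j\ge 3}(i+j-2)\,v_{i,j}$ for the displayed element, the cocycle check reduces to reading off the coefficient of each two-vertex tree $T=[(a,b)\to(c,d)]$ in $d\theta=\delta\theta-\delta'\theta-\delta''\theta$. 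Such a $T$ receives a splitting contribution from $v_{a+c,\,b+d}$ with weight $(a+b+c+d-2)$, a $\delta'$-contribution from $v_{a+1,\,b}$ with weight $(a+b-1)$, and a $\delta''$-contribution from $v_{c,\,d+1}$ with weight $(c+d-1)$, so that the signed total is $(a+b+c+d-2)-(a+b-1)-(c+d-1)=0$. The only subtlety occurs at the boundary of weight space, where some of $a,b,c,d$ vanish: there the presence of the $\delta'$- and $\delta''$-terms is dictated by exactly the same admissibility conditions that decide whether $T$ itself and the relevant source vertices are nonzero, while the weight $(i+j-2)$ vanishes precisely at the valence threshold $i+j=2$. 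This is what makes the identity hold verbatim in all three complexes and is the real content of the ``invalid bi-weight $=0$'' convention.

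To compute the whole cohomology I would filter by the total bi-weight, say by the pair $(M,N)$ of total out- and in-weight. Both $M$ and $N$ are non-decreasing along the differential, the vertex-splitting operator $\delta$ is its only weight-preserving part, and $\delta'$, $\delta''$ strictly raise the total weight; since for each fixed number of vertices the total weight is bounded below, the resulting decreasing filtration is complete and the spectral sequence converges. On the associated graded the differential is $\delta$ alone, and within a fixed sector $(M,N)$ this splitting complex is, via Proposition \ref{prop:Holieb-iso}, the arity-$(M,N)$, loop number zero part of the minimal resolution equipped with its resolution differential; its cohomology is therefore the genus-zero part of the corresponding classical properad $\mathcal{L}ieb$, $\mathcal{QL}ieb$ or $\mathcal{PL}ieb$. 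This identifies the $E_1$-page, after which the induced differential is the one coming from the hair-creating terms $\delta'+\delta''$.

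It then remains to run this induced differential and to show that it leaves exactly the one-dimensional space spanned by $\theta$ in degree zero and is acyclic in positive degrees. For the plain complex $\wGC_k$ this is the loop number zero computation of \cite{F}, and for the quasi- and pseudo-versions I would re-run that argument with the relaxed valence conditions in force. The main obstacle, and the genuinely new point compared with \cite{F}, is precisely this final collapse in the presence of source-type generators (and, for $\mathcal{PL}ieb$, target-type generators): relaxing ``at least one in-leg'' (quasi) or both leg conditions (pseudo) simultaneously enlarges the set of admissible single vertices $v_{i,0}$ (and $v_{0,j}$) and the set of admissible leaves produced by $\delta'$, $\delta''$, and one must verify that these two enlargements are matched so that the extra source and target vertices enter only as exact terms rather than as new cohomology classes. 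As in the cocycle computation, this matching is exactly what is encoded by feeding the identical combinatorial formulas through the three different admissibility conventions, so that the single class $\theta$ is what survives in each case.
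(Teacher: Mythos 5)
Your first stage is sound: the coefficient count $(a+b+c+d-2)-(a+b-1)-(c+d-1)=0$ for the two-vertex tree $[(a,b)\to(c,d)]$ is correct, and the boundary matching you invoke is real — in each of $\wGC_k$, $\qwGC_k$, $\pwGC_k$ one checks that a valid tree forces validity of all three single vertices mapping onto it (and of the univalent vertices created by $\delta',\delta''$), so every valid tree receives all three contributions and $d\theta=0$; since loop-number-zero graphs sit in degrees $v-1\geq 0$, this already yields a nonzero class in $H^0$. However, your filtration claim contains a factual error: it is \emph{not} true that both $M$ and $N$ are non-decreasing along the differential in the quasi and pseudo complexes. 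In $\qwGC_k$ the map $\delta''$ may attach a univalent source of bi-weight $(i,0)$ with $i\geq 2$ (valid there, invalid in $\wGC_k$), which lowers the host vertex's in-weight by one while contributing nothing to $N$, so $N$ drops; dually, in $\pwGC_k$ the map $\delta'$ may attach $(0,j)$, $j\geq 2$, and lower $M$. Only the total weight $M+N$ is monotone (it strictly increases under $\delta',\delta''$ since any new univalent vertex has $i+j\geq 2$), so you must filter by $M+N$; because $\delta$ preserves $(M,N)$ individually, the identification of the associated graded sector by sector survives, so this is repairable — but note that the failure is produced by exactly the new source/target vertices whose effect your argument is supposed to control, and it also means the convergence discussion needs more care than ``bounded below'' (this filtration is unbounded in each fixed degree, unlike the vertex-count filtrations the paper tames with its degree-shift trick).

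The genuine gap is the last step. Identifying $E_1$ with (invariants of) the genus-zero parts of $\mathcal{L}ieb_{c,d}$, $\mathcal{QL}ieb_{c,d}$, $\mathcal{PL}ieb_{c,d}$ is fine in principle, but the proposition is precisely the statement that the induced differential on this page leaves a one-dimensional cohomology: $H^{>0}=0$ and nothing in degree $0$ beyond the single class. Your cocycle computation does not address this, and in the quasi/pseudo cases the $E_1$ page genuinely contains degree-zero classes absent from the $\mathcal{L}ieb$ case — e.g.\ $v_{3,0}$, the class of the generator $\phi\in\mathcal{QL}ieb_{c,d}(3,0)$, for the pseudo case also $v_{0,3}$ (the class of $\eta$), together with all their genus-zero composites. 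Whether these pair off under $d_1$ with positive-degree classes or survive is the entire content of the statement; the closing assertion that ``this matching is exactly what is encoded by feeding the identical combinatorial formulas through the three different admissibility conventions'' is a restatement of the desired conclusion, not an argument — indeed, as the filtration error above shows, the same formulas do \emph{not} behave identically under the three conventions. The paper's own proof is admittedly terse (it cites \cite{F} for $\mathsf{b}_0\wGC_k$ and claims the computation extends), but a self-contained proof along your lines would have to actually carry out the analogue of that tree-complex computation on your $E_1$ page for the enlarged sets of admissible vertices; as written, your proposal establishes only that $\theta$ is a nontrivial cocycle, not that it generates the cohomology.
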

\begin{proof}
    The proof has already been done for the complex $\mathsf{b}_0\wGC_k$. The proof naturally extends to the quasi- and pseudo cases.
\end{proof}
The oriented graph complexes decompose in a similar manner, where the loop number zero component are equivalent to the complexes above. Denote their subcomplexes of graphs with loop number greater than zero by $\owGC_k$, $\oqwGC_k$ and $\opwGC_k$ respectively.

\section{Special in-vertices and special out-vertices}
\subsection{On the convergence of spectral sequences}
    For most of the arguments in this paper, we will consider filtrations of complexes and then consider the associated spectral sequences, and so we need to make sure that these spectral sequences converge. This can be assured by considering a trick of shifting degrees as seen in \cite{W1} and \cite{F}, where we let the degree of a graph to be $k(v-1)-(k-1)e+(k-\frac{1}{2})(e-v)=\frac{1}{2}(v+e)-k$. With this new grading, the number of graphs of a specific degree is always finite. Hence any filtration considered over the number of specific vertices independent of their bi-weight will be bounded, giving us convergence.
\subsection{Special in-vertices}
\begin{definition}
    Let $\Gamma$ be a normal, quasi or psuedo bi-weighted graph. A
    vertex $x$ of $\Gamma$ is a \textit{special in-vertex} if
    \begin{enumerate}
        \item[i)] either $x$ is a univalent vertex with one outgoing edge and out-weight zero, i.e on the form $\begin{tikzpicture}[shorten >=1pt,node distance=1.2cm,auto,>=angle 90,baseline=-0.1cm]
            \node[biw] (a) at (0,0) {$0$\nodepart{lower}$n$};
            \node[] (d) at (0,1) {}
                edge [<-] (a);
        \end{tikzpicture}$
        \item[ii)] or $x$ becomes a univalent vertex of type i) after recursively removing the special-in vertices of type i) from $\Gamma$.
    \end{enumerate}
    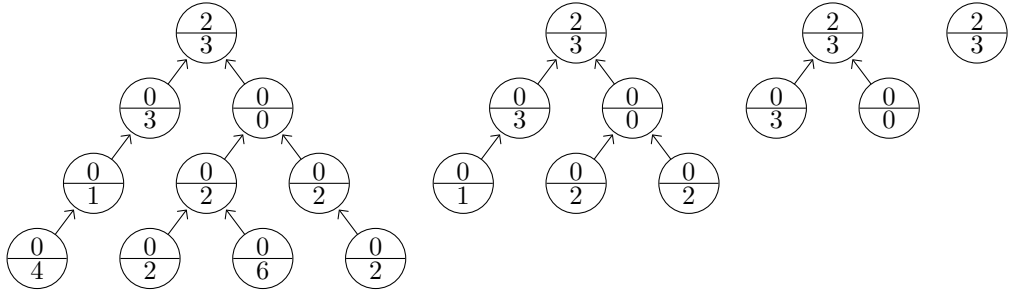
\begin{figure}[h]
    \centering
\begin{tikzpicture}[shorten >=1pt,>=angle 90,baseline={([yshift={-\ht\strutbox}]current bounding box.north)},outer sep=0pt,inner sep=0pt]
    \node[biw] (r1) at (0,0) {$2$\nodepart{lower}$3$};
    \node[biw] (g1v1) at (-0.75,-1) {$0$\nodepart{lower}$3$}
        edge [->] (r1);
    \node[biw] (g1v2) at (-1.5,-2) {$0$\nodepart{lower}$1$}
        edge [->] (g1v1);
    \node[biw] (g1v3) at (-2.25,-3) {$0$\nodepart{lower}$4$}
        edge [->] (g1v2);
    \node[biw] (g1v4) at (0.75,-1) {$0$\nodepart{lower}$0$}
        edge [->] (r1);
    \node[biw] (g1v5) at (1.5,-2) {$0$\nodepart{lower}$2$}
        edge [->] (g1v4);
    \node[biw] (g1v6) at (2.25,-3) {$0$\nodepart{lower}$2$}
        edge [->] (g1v5);
    \node[biw] (g1v7) at (0,-2) {$0$\nodepart{lower}$2$}
        edge [->] (g1v4);
    \node[biw] (g1v8) at (-0.75,-3) {$0$\nodepart{lower}$2$}
        edge [->] (g1v7);
    \node[biw] (g1v9) at (0.75,-3) {$0$\nodepart{lower}$6$}
        edge [->] (g1v7);
\end{tikzpicture}
\ \ 
\begin{tikzpicture}[shorten >=1pt,>=angle 90,baseline={([yshift={-\ht\strutbox}]current bounding box.north)},outer sep=0pt,inner sep=0pt]
    \node[biw] (r1) at (0,0) {$2$\nodepart{lower}$3$};
    \node[biw] (g1v1) at (-0.75,-1) {$0$\nodepart{lower}$3$}
        edge [->] (r1);
    \node[biw] (g1v2) at (-1.5,-2) {$0$\nodepart{lower}$1$}
        edge [->] (g1v1);
    \node[biw] (g1v4) at (0.75,-1) {$0$\nodepart{lower}$0$}
        edge [->] (r1);
    \node[biw] (g1v5) at (1.5,-2) {$0$\nodepart{lower}$2$}
        edge [->] (g1v4);
    \node[biw] (g1v7) at (0,-2) {$0$\nodepart{lower}$2$}
        edge [->] (g1v4);
\end{tikzpicture}
\ \ 
\begin{tikzpicture}[shorten >=1pt,>=angle 90,baseline={([yshift={-\ht\strutbox}]current bounding box.north)},outer sep=0pt,inner sep=0pt]
    \node[biw] (r1) at (0,0) {$2$\nodepart{lower}$3$};
    \node[biw] (g1v1) at (-0.75,-1) {$0$\nodepart{lower}$3$}
        edge [->] (r1);
    \node[biw] (g1v4) at (0.75,-1) {$0$\nodepart{lower}$0$}
        edge [->] (r1);
\end{tikzpicture}
\ \ 
\begin{tikzpicture}[baseline={([yshift={-\ht\strutbox}]current bounding box.north)},outer sep=0pt,inner sep=0pt]
    \node[biw] (r1) at (0,0) {$2$\nodepart{lower}$3$};
\end{tikzpicture}    \caption{Example of recursive removal of special-in vertices of the rightmost graph. All vertices except the top one are special in-vertices.}
\end{figure}
We say that a vertex that is not a special in-vertex is an \textit{in-core vertex}. The special in-vertices of any graph form clusters of trees where the direction of the edges always point towards a single root vertex that is an in-core vertex.
To any graph $\Gamma$ we let the associated \textit{in-core graph} $\gamma$ be the graph spanned by the in-core vertices, not considering their in-weight. Hence each vertex of an in-core graph has three integer parameters $|x|_{out}$, $|x|_{in}$ and $w_x^{out}$ associated to it.
\end{definition}

\begin{definition}\label{def:swGC}
    Let $\mathsf{S}\wGC_k$ be the subcomplex of $\wGC_k$ of graphs $\Gamma$ whose vertices $V(\Gamma)$ are independently decorated by the modified bi-weights $\frac{m}{\infty_1}$ and $\frac{m}{0}$ for $m\in\mathbb{N}$ subject to the following conditions:
    \begin{enumerate}
    \item If $x\in V(\Gamma)$ is a source, then
        \begin{equation*}
        x=\begin{tikzpicture}[shorten >=1pt,node distance=1.2cm,auto,>=angle 90,baseline=-0.1cm]
            \node[biw] (a) at (0,0) {$m$\nodepart{lower}$\infty_1$};
            \node[] (up1) at (-0.4,0.8) {}
                edge [<-] (a);
            \node[] (up2) at (0,0.6) {$\scriptstyle\cdots$};
            \node[] (up2) at (0,0.9) {$\scriptstyle \geq 1$};
            \node[] (up3) at (0.4,0.8) {}
                edge [<-] (a);
        \end{tikzpicture}
        \text{with } m+|x|_{out}\geq 2
    \end{equation*}
    \item If $x\in V(\Gamma)$ is a univalent target, then
        \begin{equation*}
        x=\begin{tikzpicture}[shorten >=1pt,node distance=1.2cm,auto,>=angle 90,baseline=-0.1cm]
            \node[biw] (a) at (0,0) {$m$\nodepart{lower}$\infty_1$};
            \node[] (d) at (0,-1) {}
                edge [->] (a);
        \end{tikzpicture}
        \text{ with }m\geq 1,\text{ or }
        x=\begin{tikzpicture}[shorten >=1pt,node distance=1.2cm,auto,>=angle 90,baseline=-0.1cm]
            \node[biw] (a) at (0,0) {$m$\nodepart{lower}$0$};
            \node[] (d) at (0,-1) {}
                edge [->] (a);
        \end{tikzpicture}
        \text{ with }m\geq 2
    \end{equation*}
    \item If $x\in V(\Gamma)$ is a target with at least two in-edges, then
    \begin{equation*}
        x=\begin{tikzpicture}[shorten >=1pt,node distance=1.2cm,auto,>=angle 90,baseline=-0.1cm]
            \node[biw] (a) at (0,0) {$m$\nodepart{lower}$\infty_1$};
            \node[] (up1) at (-0.4,-0.8) {}
                edge [->] (a);
            \node[] (up2) at (0,-0.6) {$\scriptstyle\cdots$};
            \node[] (up2) at (0,-0.9) {$\scriptstyle \geq 2$};
            \node[] (up3) at (0.4,-0.8) {}
                edge [->] (a);
        \end{tikzpicture}
        \text{with } m\geq 1,\text{ or }
        x=\begin{tikzpicture}[shorten >=1pt,node distance=1.2cm,auto,>=angle 90,baseline=-0.1cm]
            \node[biw] (a) at (0,0) {$m$\nodepart{lower}$0$};
            \node[] (up1) at (-0.4,-0.8) {}
                edge [->] (a);
            \node[] (up2) at (0,-0.6) {$\scriptstyle\cdots$};
            \node[] (up2) at (0,-0.9) {$\scriptstyle \geq 2$};
            \node[] (up3) at (0.4,-0.8) {}
                edge [->] (a);
        \end{tikzpicture}
        \text{with } m\geq 1
    \end{equation*}
    \item If $x\in V(\Gamma)$ is passing (one in-edge and one out-edge), then
    \begin{equation*}
        x=\begin{tikzpicture}[shorten >=1pt,node distance=1.2cm,auto,>=angle 90,baseline=-0.1cm]
            \node[biw] (a) at (0,0) {$m$\nodepart{lower}$\infty_1$};
            \node[] (up) at (0,1) {}
                edge [<-] (a);
            \node[] (down) at (0,-1) {}
                edge [->] (a);
        \end{tikzpicture}
        \text{ with }m\geq 0,\text{ or }
        x=\begin{tikzpicture}[shorten >=1pt,node distance=1.2cm,auto,>=angle 90,baseline=-0.1cm]
            \node[biw] (a) at (0,0) {$m$\nodepart{lower}$0$};
            \node[] (up) at (0,1) {}
                edge [<-] (a);
            \node[] (down) at (0,-1) {}
                edge [->] (a);
        \end{tikzpicture}
        \text{ with }m\geq 1
    \end{equation*}
    \item If $x\in V(\Gamma)$ is of none of the types above (i.e $x$ is at least trivalent and has at least one in-edge and at least one out-edge), then
    \begin{equation*}
        x=\begin{tikzpicture}[shorten >=1pt,node distance=1.2cm,auto,>=angle 90,baseline=-0.1cm]
            \node[biw] (a) at (0,0) {$m$\nodepart{lower}$\infty_1$};
            \node[] (down1) at (-0.4,-0.8) {}
                edge [->] (a);
            \node[] (down2) at (0,-0.6) {$\scriptstyle\cdots$};
            \node[] (down3) at (0.4,-0.8) {}
                edge [->] (a);
            \node[] (up1) at (-0.4,0.8) {}
                edge [<-] (a);
            \node[] (up2) at (0,0.6) {$\scriptstyle\cdots$};
            \node[] (up3) at (0.4,0.8) {}
                edge [<-] (a);
        \end{tikzpicture}
        \text{with } m\geq 0,\text{ or }
        x=\begin{tikzpicture}[shorten >=1pt,node distance=1.2cm,auto,>=angle 90,baseline=-0.1cm]
            \node[biw] (a) at (0,0) {$m$\nodepart{lower}$0$};
            \node[] (down1) at (-0.4,-0.8) {}
                edge [->] (a);
            \node[] (down2) at (0,-0.6) {$\scriptstyle\cdots$};
            \node[] (down3) at (0.4,-0.8) {}
                edge [->] (a);
            \node[] (up1) at (-0.4,0.8) {}
                edge [<-] (a);
            \node[] (up2) at (0,0.6) {$\scriptstyle\cdots$};
            \node[] (up3) at (0.4,0.8) {}
                edge [<-] (a);
        \end{tikzpicture}
        \text{with } m\geq 0
    \end{equation*}
\end{enumerate}
    Note that the graphs of $\mathsf{S}\wGC_k$ do not contain special-in vertices, and that no such vertices are created by the differential. The differential acts on graphs $\Gamma$ with vertices of the types (1)-(5) above as $d(\Gamma)=\sum_{x\in V(\Gamma)}d_x(\Gamma)$. The map $d_x$ act on vertices with in-weight $\infty_1$ and $0$ respectively as
    \begin{align*}
    d_x\Big(\ 
    \begin{tikzpicture}[shorten >=1pt,>=angle 90,baseline={([yshift=-.5ex]current bounding box.center)}]
        \node[biw] (v0) at (0,0) {$m$\nodepart{lower}$\infty_1$};
        \node[invisible] (v1) at (-0.4,0.8) {}
            edge [<-] (v0);
        \node[] at (0,0.6) {$\scriptstyle\cdots$};
        \node[invisible] (v3) at (0.4,0.8) {}
            edge [<-] (v0);
        \node[invisible] (v4) at (-0.4,-0.8) {}
            edge [->] (v0);
        \node[] at (0,-0.6) {$\scriptstyle\cdots$};
        \node[invisible] (v6) at (0.4,-0.8) {}
            edge [->] (v0);
    \end{tikzpicture}\ \Big) \ 
    &= \ \sum_{\scriptstyle{m=m_1+m_2}}
    \Bigg(
    \begin{tikzpicture}[shorten >=1pt,>=angle 90,baseline={([yshift=-.5ex]current bounding box.center)}]
        \node[ellipse,
            draw = black,
            minimum width = 3cm, 
            minimum height = 1.8cm,
            dotted] (e) at (0,0) {};
        \node[biw] (vLeft) at (-0.8,0) {$m_1$\nodepart{lower}$\infty_1$};
        \node[biw] (vRight) at (0.8,0) {$m_2$\nodepart{lower}$\infty_1$}
            edge [<-] (vLeft);
        \node[invisible] (v1) at (-0.6,1.4) {}
            edge [<-] (e);
        \node[] at (0,1.2) {$\cdots$};
        \node[invisible] (v3) at (0.6,1.4) {}
            edge [<-] (e);
        \node[invisible] (v4) at (-0.6,-1.4) {}
            edge [->] (e);
        \node[] at (0,-1.2) {$\cdots$};
        \node[invisible] (v6) at (0.6,-1.4) {}
            edge [->] (e);
    \end{tikzpicture}
    \ + \
    \begin{tikzpicture}[shorten >=1pt,>=angle 90,baseline={([yshift=-.5ex]current bounding box.center)}]
        \node[ellipse,
            draw = black,
            minimum width = 3cm, 
            minimum height = 1.8cm,
            dotted] (e) at (0,0) {};
        \node[biw] (vLeft) at (-0.8,0) {$m_1$\nodepart{lower}$0$};
        \node[biw] (vRight) at (0.8,0) {$m_2$\nodepart{lower}$\infty_1$}
            edge [<-] (vLeft);
        \node[invisible] (v1) at (-0.6,1.4) {}
            edge [<-] (e);
        \node[] at (0,1.2) {$\cdots$};
        \node[invisible] (v3) at (0.6,1.4) {}
            edge [<-] (e);
        \node[invisible] (v4) at (-0.6,-1.4) {}
            edge [->] (e);
        \node[] at (0,-1.2) {$\cdots$};
        \node[invisible] (v6) at (0.6,-1.4) {}
            edge [->] (e);
    \end{tikzpicture}
    \ + \ 
    \begin{tikzpicture}[shorten >=1pt,>=angle 90,baseline={([yshift=-.5ex]current bounding box.center)}]
        \node[ellipse,
            draw = black,
            minimum width = 3cm, 
            minimum height = 1.8cm,
            dotted] (e) at (0,0) {};
        \node[biw] (vLeft) at (-0.8,0) {$m_1$\nodepart{lower}$\infty_1$};
        \node[biw] (vRight) at (0.8,0) {$m_2$\nodepart{lower}$0$}
            edge [<-] (vLeft);
        \node[invisible] (v1) at (-0.6,1.4) {}
            edge [<-] (e);
        \node[] at (0,1.2) {$\cdots$};
        \node[invisible] (v3) at (0.6,1.4) {}
            edge [<-] (e);
        \node[invisible] (v4) at (-0.6,-1.4) {}
            edge [->] (e);
        \node[] at (0,-1.2) {$\cdots$};
        \node[invisible] (v6) at (0.6,-1.4) {}
            edge [->] (e);
    \end{tikzpicture}
    \Bigg)\\
    &\qquad \ -\
    \begin{tikzpicture}[shorten >=1pt,>=angle 90,baseline={(4ex,-0.5ex)}]
        \node[biw] (v0) at (0,0) {$\scriptstyle m-1$\nodepart{lower}$\infty_1$};
        \node[biw] (new) at (1.5,0.7) {$\infty_1$\nodepart{lower}$\infty_0$}
            edge [<-] (v0);
        \node[invisible] (v1) at (-0.4,0.8) {}
            edge [<-] (v0);
        \node[] at (0,0.6) {$\scriptstyle\cdots$};
        \node[invisible] (v3) at (0.4,0.8) {}
            edge [<-] (v0);
        \node[invisible] (v4) at (-0.4,-0.8) {}
            edge [->] (v0);
        \node[] at (0,-0.6) {$\scriptstyle\cdots$};
        \node[invisible] (v6) at (0.4,-0.8) {}
            edge [->] (v0);
    \end{tikzpicture}
    \ -\ 
    \begin{tikzpicture}[shorten >=1pt,>=angle 90,baseline={(4ex,-0.5ex)}]
        \node[biw] (v0) at (0,0) {$m$\nodepart{lower}$\infty_1$};
        \node[biw] (new) at (1.5,-0.7) {$\infty_1$\nodepart{lower}$\infty_1$}
            edge [->] (v0);
        \node[invisible] (v1) at (-0.4,0.8) {}
            edge [<-] (v0);
        \node[] at (0,0.6) {$\scriptstyle\cdots$};
        \node[invisible] (v3) at (0.4,0.8) {}
            edge [<-] (v0);
        \node[invisible] (v4) at (-0.4,-0.8) {}
            edge [->] (v0);
        \node[] at (0,-0.6) {$\scriptstyle\cdots$};
        \node[invisible] (v6) at (0.4,-0.8) {}
            edge [->] (v0);
    \end{tikzpicture}
    \ -\ 
    \begin{tikzpicture}[shorten >=1pt,>=angle 90,baseline={(4ex,-0.5ex)}]
        \node[biw] (v0) at (0,0) {$m$\nodepart{lower}$0$};
        \node[biw] (new) at (1.5,-0.7) {$\infty_1$\nodepart{lower}$\infty_1$}
            edge [->] (v0);
        \node[invisible] (v1) at (-0.4,0.8) {}
            edge [<-] (v0);
        \node[] at (0,0.6) {$\scriptstyle\cdots$};
        \node[invisible] (v3) at (0.4,0.8) {}
            edge [<-] (v0);
        \node[invisible] (v4) at (-0.4,-0.8) {}
            edge [->] (v0);
        \node[] at (0,-0.6) {$\scriptstyle\cdots$};
        \node[invisible] (v6) at (0.4,-0.8) {}
            edge [->] (v0);
    \end{tikzpicture}\\
    d_x\Big(\ 
    \begin{tikzpicture}[shorten >=1pt,>=angle 90,baseline={([yshift=-.5ex]current bounding box.center)}]
        \node[biw] (v0) at (0,0) {$m$\nodepart{lower}$0$};
        \node[invisible] (v1) at (-0.4,0.8) {}
            edge [<-] (v0);
        \node[] at (0,0.6) {$\scriptstyle\cdots$};
        \node[invisible] (v3) at (0.4,0.8) {}
            edge [<-] (v0);
        \node[invisible] (v4) at (-0.4,-0.8) {}
            edge [->] (v0);
        \node[] at (0,-0.6) {$\scriptstyle\cdots$};
        \node[invisible] (v6) at (0.4,-0.8) {}
            edge [->] (v0);
    \end{tikzpicture}\ \Big) \ 
    &= \ \sum_{\scriptstyle{m=m_1+m_2}}
    \begin{tikzpicture}[shorten >=1pt,>=angle 90,baseline={([yshift=-.5ex]current bounding box.center)}]
        \node[ellipse,
            draw = black,
            minimum width = 3cm, 
            minimum height = 1.8cm,
            dotted] (e) at (0,0) {};
        \node[biw] (vLeft) at (-0.8,0) {$m_1$\nodepart{lower}$0$};
        \node[biw] (vRight) at (0.8,0) {$m_2$\nodepart{lower}$0$}
            edge [<-] (vLeft);
        \node[invisible] (v1) at (-0.6,1.4) {}
            edge [<-] (e);
        \node[] at (0,1.2) {$\cdots$};
        \node[invisible] (v3) at (0.6,1.4) {}
            edge [<-] (e);
        \node[invisible] (v4) at (-0.6,-1.4) {}
            edge [->] (e);
        \node[] at (0,-1.2) {$\cdots$};
        \node[invisible] (v6) at (0.6,-1.4) {}
            edge [->] (e);
    \end{tikzpicture}
    \ -\
    \begin{tikzpicture}[shorten >=1pt,>=angle 90,baseline={(4ex,-0.5ex)}]
        \node[biw] (v0) at (0,0) {$\scriptstyle m-1$\nodepart{lower}$0$};
        \node[biw] (new) at (1.5,0.7) {$\infty_1$\nodepart{lower}$\infty_0$}
            edge [<-] (v0);
        \node[invisible] (v1) at (-0.4,0.8) {}
            edge [<-] (v0);
        \node[] at (0,0.6) {$\scriptstyle\cdots$};
        \node[invisible] (v3) at (0.4,0.8) {}
            edge [<-] (v0);
        \node[invisible] (v4) at (-0.4,-0.8) {}
            edge [->] (v0);
        \node[] at (0,-0.6) {$\scriptstyle\cdots$};
        \node[invisible] (v6) at (0.4,-0.8) {}
            edge [->] (v0);
    \end{tikzpicture}
\end{align*}
Any term on the right hand side containing at least one vertex not of the type $(1)-(5)$ is set to zero.
\end{definition}
\begin{proposition}\label{prop:SinwGC}
    The inclusion $\mathsf{S}\wGC_k \hookrightarrow \wGC_k$ is a quasi-isomorphism.
\end{proposition}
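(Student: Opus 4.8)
The plan is to prove that the inclusion is a quasi-isomorphism by a two-step spectral sequence argument, in the same spirit as the resolution of $\fwGC_k$ carried out in \cite{F}. Before doing so I would first check that $\mathsf{S}\wGC_k$ really is a subcomplex of $\wGC_k$: the decorations $\infty_1$ and $0$ are exactly the (possibly infinite, hence genus-completed) linear combinations of in-weights for which the three-term differential $\delta-\delta'-\delta''$ of $\wGC_k$ either telescopes back to an admissible $\infty_1/0$ decoration or produces a vertex outside the type $(1)$--$(5)$ list, which is then discarded. Unwinding Definition \ref{def:swGC} against this differential shows the inclusion is a chain map, so the content of the statement is the collapse of the homology of the complementary (in-weight and special-in) directions. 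Throughout I would use the degree-shifting trick recalled at the start of the section, assigning a graph the degree $\frac{1}{2}(v+e)-k$, so that every total degree contains finitely many graphs and all the filtrations below are bounded in each degree; this guarantees the spectral sequences converge.

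For the first step I would filter $\wGC_k$ by the number of \emph{in-core} vertices. Since each of $\delta,\delta',\delta''$ only adds vertices and never merges them, the in-core count is weakly increasing, so this is a genuine filtration. The associated graded differential $d_0$ retains precisely the pieces that do not create a new in-core vertex, namely the formation and growth of the special-in trees: the $\delta'$-terms whose new univalent vertex has out-weight zero, together with the $\delta$-splittings that detach a special-in piece. Crucially $d_0$ leaves the in-weights untouched (the in-weight-lowering $\delta''$ creates a univalent target, an in-core vertex, so it raises the filtration degree and vanishes on the associated graded). The in-decorations at distinct in-core vertices are then acted upon independently, so $d_0$ is a sum of commuting local differentials, and I would contract each special-in tree by an explicit homotopy that reabsorbs a special-in leaf $\tfrac{0}{\ast}$ back into one unit of out-weight of its parent. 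This is a Koszul-type contraction; its homology is the space of in-core graphs carrying no special-in vertices, with the out- and in-weights still free.

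The second step is a further filtration of the $E_1$-page that now resolves the in-weights. The induced differential contains the in-weight-lowering $\delta''$, and by the computation dual to the first step — contracting a freshly created univalent target against the in-weight — the in-weight grading $w\ge w_{\min}$ collapses onto its two closed states: the bottom state $w=0$ and the infinite telescoping sum $\sum_{w\ge 1}$, recorded as $\infty_1$ (a genuine cohomology class only in the completed complex, consistent with working over genus completions). Matching the surviving two-dimensional local cohomology against the validity inequalities $w_x^{out}+|x|_{out}\ge 1$, $w_x^{in}+|x|_{in}\ge 1$, and $w_x^{out}+w_x^{in}+|x|_{out}+|x|_{in}\ge 3$ reproduces exactly the case distinction $(1)$--$(5)$, including the asymmetric lower bounds on $m$ and the fact that sources admit only the $\infty_1$ option. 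One then identifies the $E_\infty$-page with $\mathsf{S}\wGC_k$ and verifies that the induced differential agrees with the one written in Definition \ref{def:swGC}.

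I expect the main obstacle to be the two local cohomology computations, and in particular controlling the interaction between the special-in tree structure and the in-weight grading: one must confirm that the homotopy contracting special-in leaves and the dual homotopy resolving in-weights are compatible, that no spurious classes survive from the deeper (non-leaf) vertices of the special-in trees, and that the separate local windows for sources, univalent targets, higher targets, passing vertices, and generic vertices all yield the \emph{same} two-dimensional answer up to the stated shifts in the bounds on $m$. A secondary nuisance is the sign bookkeeping together with the systematic discarding of the "invalid" boundary terms, which must be handled uniformly across the five vertex types so that the $E_\infty$ differential matches Definition \ref{def:swGC} on the nose.
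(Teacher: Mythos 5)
Your overall strategy is the paper's (filter by the number of in-core vertices, compute local cohomology vertex by vertex, identify the $E_1$-page with $\mathsf{S}\wGC_k$), but your identification of the associated graded differential contains a fatal swap that breaks both steps of your argument. In $\wGC_k$ the map $\delta'$ lowers the \emph{out}-weight of $x$ and attaches a univalent vertex by an edge directed \emph{away from} $x$: that new vertex is a univalent target whose validity forces out-weight $\geq 1$, so it is always an in-core vertex and \emph{every} $\delta'$-term raises the filtration degree. It is $\delta''$ that lowers the \emph{in}-weight of $x$ and attaches a univalent vertex by an edge directed \emph{into} $x$, and it is exactly the $\delta''$-terms whose new vertex has out-weight $0$ that create special in-vertices; these terms \emph{survive} on the associated graded. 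You assert the opposite on both counts: you keep ``$\delta'$-terms whose new univalent vertex has out-weight zero'' (no such nonzero terms exist, since such a vertex would be an invalid univalent target) and you discard $\delta''$ on the grounds that it ``creates a univalent target'' (it does not). Consequently your central claim that $d_0$ ``leaves the in-weights untouched'' is false: on the associated graded it is the \emph{out}-weights that are frozen --- this is precisely the invariance of $w_x^{out}$ that the paper uses to reduce to the model complexes $\mathcal{T}^{in}_{|x|_{out},|x|_{in},w_x^{out}}$ --- while the in-weights drop by one each time a special-in leaf is created.

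This invalidates the two-step structure. Your step-1 homotopy, which reabsorbs a special-in leaf into one unit of \emph{out}-weight of its parent, is not a homotopy for the actual $d_0$ (a special-in leaf is created from a unit of \emph{in}-weight of its parent), and your claimed $E_1$-page --- in-core graphs with out- and in-weights still free --- is not the cohomology of the associated graded. Since the growth of special-in trees and the lowering of in-weights are literally the same terms of the differential, they cannot be separated into two successive filtration steps: the collapse of the in-weight onto the two states $0$ and $\infty_1$ must happen simultaneously with the contraction of the special-in trees, inside the single local tree complex attached to each in-core vertex, whose cohomology the paper shows is exactly the list of decorations $(1)$--$(5)$ of Definition \ref{def:swGC}. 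In other words, the correct $E_1$-page is already $\mathsf{S}\wGC_k$ and there is no residual in-weight grading left for your step 2 to resolve; the in-core-creating parts of $\delta'$ and $\delta''$ that you invoke there only enter through the induced differential on $E_1$, where they must be matched against the differential of $\mathsf{S}\wGC_k$, not used to collapse weights.
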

\begin{proof}
    This proposition was already proven in \cite{F}, and here we only give a brief summary of the proof.
    Consider a filtration of $\wGC_k$ over the number of in-core vertices in a graph and let $gr\ \wGC_k$ be the associated graded complex. A similar filtration on $\mathsf{S}\wGC_k$ gives an associated graded complex with a trivial differential. we prove the statement by showing that the cohomology of $gr\ \wGC_k$ is isomorphic to $\mathsf{S}\wGC_k$. 
    The complex $gr\ \wGC_k$ decompose over in-core graphs $\gamma$ as $gr\ \wGC_k=\bigoplus_{\gamma}\mathsf{inCore}(\gamma)$ where $\mathsf{inCore}(\gamma)$ is the subspace of graphs with in-core $\gamma$. To each vertex in $V(\gamma)$, we can associate the complex $\mathcal{T}_x^{in}$ consisting of special-in trees attached to $x$. The complex further decompose as $\mathsf{inCore}(\gamma)=\Big(\bigotimes_{x\in V(\gamma)}\mathcal{T}_x^{in}\Big)^{\mathrm{Aut}(\gamma)}$.
    For two vertices $x,y\in V(\gamma)$, we gather that $\mathcal{T}^{in}_x\cong\mathcal{T}^{in}_{y}$ if $|x|_{in}=|y|_{in}$, $|x|_{out}=|y|_{out}$ and $w_x^{out}=w_y^{out}$, the latter being invariant under the differential.
    Hence we have an isomorphism $\mathcal{T}^{in}_x\cong\mathcal{T}^{in}_{|x|_{out},|x|_{in},w_x^{out}}$ where $\mathcal{T}^{in}_{a,b,c}$ is a general complex of special-in trees attached to a vertex with $a$ outgoing edges, $b$ incoming edges, and  whose core-vertex has out-weight $c$. The parameters need to satisfy $a+b\geq 1$ and $(a,b,c)\neq(1,0,0)$.
    We find that the only non-trivial cycles of $\mathcal{T}^{in}_{a,b,c}$ are exactly the decorations for vertices stated in the definition of $\mathsf{S}\wGC_k$, proving the statement.
\end{proof}
Let $\mathsf{S}\owGC_k$ be the subcomplex of $\mathsf{S}\wGC_k$ of oriented graphs. This complex is naturally a subcomplex of $\owGC_k$.
\begin{corollary}\label{cor:SinwGC}
    The inclusion $\mathsf{S}\owGC_k \hookrightarrow \owGC_k$ is a quasi-isomorphism.
\end{corollary}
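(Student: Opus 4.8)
The plan is to reuse the entire machinery of the proof of Proposition \ref{prop:SinwGC}, observing that it respects the oriented subcomplex at every step. The central observation is that the property of being oriented (containing no closed directed path) depends only on the in-core graph: the special-in vertices of any graph $\Gamma$ assemble into trees whose edges point towards their in-core root, and a tree never contributes a closed path. Hence $\Gamma$ is oriented if and only if its in-core graph $\gamma$ is oriented, and attaching or removing special-in trees neither creates nor destroys closed paths.

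First I would note that the filtration of $\wGC_k$ by the number of in-core vertices restricts to a filtration of the subcomplex $\owGC_k$, and likewise $\mathsf{S}\owGC_k$ inherits a filtration from $\mathsf{S}\wGC_k$; convergence of the associated spectral sequences is guaranteed by the same degree-shifting trick recalled at the start of this section. Passing to the associated graded, the decomposition $gr\ \wGC_k=\bigoplus_{\gamma}\mathsf{inCore}(\gamma)$ over in-core graphs restricts, by the central observation, to
\[
    gr\ \owGC_k=\bigoplus_{\gamma\ \mathrm{oriented}}\mathsf{inCore}(\gamma),
\]
the direct sum now ranging over oriented in-core graphs only.

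Next I would observe that the cohomology computation is carried out in-core-graph by in-core-graph, and for a fixed $\gamma$ the complex $\mathsf{inCore}(\gamma)=\big(\bigotimes_{x\in V(\gamma)}\mathcal{T}^{in}_x\big)^{\mathrm{Aut}(\gamma)}$, together with the cohomology of each tree factor $\mathcal{T}^{in}_{|x|_{out},|x|_{in},w_x^{out}}$, is completely insensitive to whether $\gamma$ is oriented; orientedness of $\gamma$ is simply a global combinatorial label preserved by every differential internal to $\mathsf{inCore}(\gamma)$. Therefore the cohomology of $gr\ \owGC_k$ is spanned by exactly the cycles identified in Proposition \ref{prop:SinwGC} whose in-core graph is oriented, which is precisely the set of generators of $\mathsf{S}\owGC_k$. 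The inclusion $\mathsf{S}\owGC_k\hookrightarrow\owGC_k$ thus induces an isomorphism on the $E_1$-page, and the corollary follows.

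The only point requiring genuine care, and the main potential obstacle, is verifying that the differential on $\owGC_k$ does not mix the oriented and non-oriented strata, i.e.\ that orientedness is genuinely conserved by both the splitting term $\delta_x$ and the univalent-vertex-attaching terms $\delta'_x,\delta''_x$. For $\delta'_x$ and $\delta''_x$ this is immediate, since one only attaches a new univalent leaf. For $\delta_x$ one must check that splitting a vertex in an oriented graph cannot produce a closed path; this is already implicit in the fact that $\owGC_k$ is a subcomplex of $\wGC_k$, but I would make it explicit by noting that vertex splitting only refines existing directed paths and introduces a single new edge between the two halves, so any closed path in the result would project to a closed path in $\Gamma$, a contradiction.
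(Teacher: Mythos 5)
Your proposal is correct and takes essentially the same route as the paper's proof: filter $\owGC_k$ by the number of in-core vertices, decompose the associated graded over \emph{oriented} in-core graphs $\gamma$ as $\bigoplus_{\gamma}\mathsf{inCore}(\gamma)$, and observe that these complexes coincide with those already computed in Proposition \ref{prop:SinwGC}, so the result follows. Your extra verifications (that orientedness of $\Gamma$ is equivalent to orientedness of its in-core graph, since special-in trees point toward the core and cannot support a closed directed path, and that the differential preserves the oriented subcomplex) are correct and merely make explicit what the paper leaves implicit.
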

\begin{proof}
    The proof is similar of proposition \ref{prop:SinwGC}. Let $gr \ \owGC_k$ be the assocated graded to the filtration of $\owGC_k$ over the number of core-vertcies in a graph. The complex decompose over oriented core graphs $\gamma$ as
    \begin{equation*}
        gr\ \owGC_k=\bigoplus_{\gamma}\mathsf{inCore}(\gamma)
    \end{equation*}
    where $\mathsf{inCore}(\gamma)$ is the subcomplex of graphs with in-core $\gamma$. These are the same complexes as in proposition \ref{prop:SinwGC}, and so the result follows.
\end{proof}
\begin{definition}
    Let $\mathsf{S}\qwGC_k$ be the subcomplex of $\qwGC_k$ of graphs $\Gamma$ whose vertices $V(\Gamma)$ are independently decorated by the modified bi-weights $\frac{m}{\infty_1}$ and $\frac{m}{0}$ for $m\in\mathbb{N}$ subject to the same conditions $(2)-(5)$ as in definition \ref{def:swGC} together with the additional modified condition
    \begin{itemize}
        \item[$(1')$] If $x\in V(\Gamma)$ is a source, then
        \begin{equation*}
            x=\begin{tikzpicture}[shorten >=1pt,node distance=1.2cm,auto,>=angle 90,baseline=-0.1cm]
                \node[biw] (a) at (0,0) {$m$\nodepart{lower}$\infty_1$};
                \node[] (up1) at (-0.4,0.8) {}
                    edge [<-] (a);
                \node[] (up2) at (0,0.6) {$\scriptstyle\cdots$};
                \node[] (up2) at (0,0.9) {$\scriptstyle \geq 1$};
                \node[] (up3) at (0.4,0.8) {}
                    edge [<-] (a);
            \end{tikzpicture}
            \text{with } m+|x|_{out}\geq 2, \text{ or }
            x=\begin{tikzpicture}[shorten >=1pt,node distance=1.2cm,auto,>=angle 90,baseline=-0.1cm]
                \node[biw] (a) at (0,0) {$m$\nodepart{lower}$0$};
                \node[] (up1) at (-0.4,0.8) {}
                    edge [<-] (a);
                \node[] (up2) at (0,0.6) {$\scriptstyle\cdots$};
                \node[] (up2) at (0,0.9) {$\scriptstyle \geq 1$};
                \node[] (up3) at (0.4,0.8) {}
                    edge [<-] (a);
            \end{tikzpicture}
            \text{with } m+|x|_{out}\geq 3.
    \end{equation*}
    \end{itemize}
    Note that this complex neither contain any special-in vertices nor create any such vertices under the differential.
    The differential act similar to that of $\mathsf{S}\wGC_k$, but where additional terms are allowed of graphs with vertices having these new decorations. That is, the differential acts on a graph $\Gamma\in \mathsf{S}\qwGC_k$ with vertices of the types $(1')$ and $(2)-(5)$ as $d(\Gamma)=\sum_{x\in V(\Gamma)}d_x(\Gamma)$. The map $d_x$ act on vertices with in-weight $\infty_1$ and $0$ respectively as
\begin{align*}
    d_x\Big(\ 
    \begin{tikzpicture}[shorten >=1pt,>=angle 90,baseline={([yshift=-.5ex]current bounding box.center)}]
        \node[biw] (v0) at (0,0) {$m$\nodepart{lower}$\infty_1$};
        \node[invisible] (v1) at (-0.4,0.8) {}
            edge [<-] (v0);
        \node[] at (0,0.6) {$\scriptstyle\cdots$};
        \node[invisible] (v3) at (0.4,0.8) {}
            edge [<-] (v0);
        \node[invisible] (v4) at (-0.4,-0.8) {}
            edge [->] (v0);
        \node[] at (0,-0.6) {$\scriptstyle\cdots$};
        \node[invisible] (v6) at (0.4,-0.8) {}
            edge [->] (v0);
    \end{tikzpicture}\ \Big) \ 
    &= \ \sum_{\scriptstyle{m=m_1+m_2}}
    \Bigg(
    \begin{tikzpicture}[shorten >=1pt,>=angle 90,baseline={([yshift=-.5ex]current bounding box.center)}]
        \node[ellipse,
            draw = black,
            minimum width = 3cm, 
            minimum height = 1.8cm,
            dotted] (e) at (0,0) {};
        \node[biw] (vLeft) at (-0.8,0) {$m_1$\nodepart{lower}$\infty_1$};
        \node[biw] (vRight) at (0.8,0) {$m_2$\nodepart{lower}$\infty_1$}
            edge [<-] (vLeft);
        \node[invisible] (v1) at (-0.6,1.4) {}
            edge [<-] (e);
        \node[] at (0,1.2) {$\cdots$};
        \node[invisible] (v3) at (0.6,1.4) {}
            edge [<-] (e);
        \node[invisible] (v4) at (-0.6,-1.4) {}
            edge [->] (e);
        \node[] at (0,-1.2) {$\cdots$};
        \node[invisible] (v6) at (0.6,-1.4) {}
            edge [->] (e);
    \end{tikzpicture}
    \ + \
    \begin{tikzpicture}[shorten >=1pt,>=angle 90,baseline={([yshift=-.5ex]current bounding box.center)}]
        \node[ellipse,
            draw = black,
            minimum width = 3cm, 
            minimum height = 1.8cm,
            dotted] (e) at (0,0) {};
        \node[biw] (vLeft) at (-0.8,0) {$m_1$\nodepart{lower}$0$};
        \node[biw] (vRight) at (0.8,0) {$m_2$\nodepart{lower}$\infty_1$}
            edge [<-] (vLeft);
        \node[invisible] (v1) at (-0.6,1.4) {}
            edge [<-] (e);
        \node[] at (0,1.2) {$\cdots$};
        \node[invisible] (v3) at (0.6,1.4) {}
            edge [<-] (e);
        \node[invisible] (v4) at (-0.6,-1.4) {}
            edge [->] (e);
        \node[] at (0,-1.2) {$\cdots$};
        \node[invisible] (v6) at (0.6,-1.4) {}
            edge [->] (e);
    \end{tikzpicture}
    \ + \ 
    \begin{tikzpicture}[shorten >=1pt,>=angle 90,baseline={([yshift=-.5ex]current bounding box.center)}]
        \node[ellipse,
            draw = black,
            minimum width = 3cm, 
            minimum height = 1.8cm,
            dotted] (e) at (0,0) {};
        \node[biw] (vLeft) at (-0.8,0) {$m_1$\nodepart{lower}$\infty_1$};
        \node[biw] (vRight) at (0.8,0) {$m_2$\nodepart{lower}$0$}
            edge [<-] (vLeft);
        \node[invisible] (v1) at (-0.6,1.4) {}
            edge [<-] (e);
        \node[] at (0,1.2) {$\cdots$};
        \node[invisible] (v3) at (0.6,1.4) {}
            edge [<-] (e);
        \node[invisible] (v4) at (-0.6,-1.4) {}
            edge [->] (e);
        \node[] at (0,-1.2) {$\cdots$};
        \node[invisible] (v6) at (0.6,-1.4) {}
            edge [->] (e);
    \end{tikzpicture}
    \Bigg)\\
    &\qquad \ -\ 
    \begin{tikzpicture}[shorten >=1pt,>=angle 90,baseline={(4ex,-0.5ex)}]
        \node[biw] (v0) at (0,0) {$\scriptstyle m-1$\nodepart{lower}$\infty_1$};
        \node[biw] (new) at (1.5,0.7) {$\infty_1$\nodepart{lower}$\infty_0$}
            edge [<-] (v0);
        \node[invisible] (v1) at (-0.4,0.8) {}
            edge [<-] (v0);
        \node[] at (0,0.6) {$\scriptstyle\cdots$};
        \node[invisible] (v3) at (0.4,0.8) {}
            edge [<-] (v0);
        \node[invisible] (v4) at (-0.4,-0.8) {}
            edge [->] (v0);
        \node[] at (0,-0.6) {$\scriptstyle\cdots$};
        \node[invisible] (v6) at (0.4,-0.8) {}
            edge [->] (v0);
    \end{tikzpicture} 
    \ - \
    \begin{tikzpicture}[shorten >=1pt,>=angle 90,baseline={(4ex,-0.5ex)}]
        \node[biw] (v0) at (0,0) {$m$\nodepart{lower}$\infty_1$};
        \node[biw] (new) at (1.5,-0.7) {$\infty_1$\nodepart{lower}$\infty_1$}
            edge [->] (v0);
        \node[invisible] (v1) at (-0.4,0.8) {}
            edge [<-] (v0);
        \node[] at (0,0.6) {$\scriptstyle\cdots$};
        \node[invisible] (v3) at (0.4,0.8) {}
            edge [<-] (v0);
        \node[invisible] (v4) at (-0.4,-0.8) {}
            edge [->] (v0);
        \node[] at (0,-0.6) {$\scriptstyle\cdots$};
        \node[invisible] (v6) at (0.4,-0.8) {}
            edge [->] (v0);
    \end{tikzpicture}
    \ -\ 
    \begin{tikzpicture}[shorten >=1pt,>=angle 90,baseline={(4ex,-0.5ex)}]
        \node[biw] (v0) at (0,0) {$m$\nodepart{lower}$0$};
        \node[biw] (new) at (1.5,-0.7) {$\infty_1$\nodepart{lower}$\infty_1$}
            edge [->] (v0);
        \node[invisible] (v1) at (-0.4,0.8) {}
            edge [<-] (v0);
        \node[] at (0,0.6) {$\scriptstyle\cdots$};
        \node[invisible] (v3) at (0.4,0.8) {}
            edge [<-] (v0);
        \node[invisible] (v4) at (-0.4,-0.8) {}
            edge [->] (v0);
        \node[] at (0,-0.6) {$\scriptstyle\cdots$};
        \node[invisible] (v6) at (0.4,-0.8) {}
            edge [->] (v0);
    \end{tikzpicture}\\
    &\qquad \ -\
    \begin{tikzpicture}[shorten >=1pt,>=angle 90,baseline={(4ex,-0.5ex)}]
        \node[biw] (v0) at (0,0) {$m$\nodepart{lower}$\infty_1$};
        \node[biw] (new) at (1.5,-0.7) {$\infty_2$\nodepart{lower}$0$}
            edge [->] (v0);
        \node[invisible] (v1) at (-0.4,0.8) {}
            edge [<-] (v0);
        \node[] at (0,0.6) {$\scriptstyle\cdots$};
        \node[invisible] (v3) at (0.4,0.8) {}
            edge [<-] (v0);
        \node[invisible] (v4) at (-0.4,-0.8) {}
            edge [->] (v0);
        \node[] at (0,-0.6) {$\scriptstyle\cdots$};
        \node[invisible] (v6) at (0.4,-0.8) {}
            edge [->] (v0);
    \end{tikzpicture}
    \ -\ 
    \begin{tikzpicture}[shorten >=1pt,>=angle 90,baseline={(4ex,-0.5ex)}]
        \node[biw] (v0) at (0,0) {$m$\nodepart{lower}$0$};
        \node[biw] (new) at (1.5,-0.7) {$\infty_2$\nodepart{lower}$0$}
            edge [->] (v0);
        \node[invisible] (v1) at (-0.4,0.8) {}
            edge [<-] (v0);
        \node[] at (0,0.6) {$\scriptstyle\cdots$};
        \node[invisible] (v3) at (0.4,0.8) {}
            edge [<-] (v0);
        \node[invisible] (v4) at (-0.4,-0.8) {}
            edge [->] (v0);
        \node[] at (0,-0.6) {$\scriptstyle\cdots$};
        \node[invisible] (v6) at (0.4,-0.8) {}
            edge [->] (v0);
    \end{tikzpicture}
     \\
    d_x\Big(\ 
    \begin{tikzpicture}[shorten >=1pt,>=angle 90,baseline={([yshift=-.5ex]current bounding box.center)}]
        \node[biw] (v0) at (0,0) {$m$\nodepart{lower}$0$};
        \node[invisible] (v1) at (-0.4,0.8) {}
            edge [<-] (v0);
        \node[] at (0,0.6) {$\scriptstyle\cdots$};
        \node[invisible] (v3) at (0.4,0.8) {}
            edge [<-] (v0);
        \node[invisible] (v4) at (-0.4,-0.8) {}
            edge [->] (v0);
        \node[] at (0,-0.6) {$\scriptstyle\cdots$};
        \node[invisible] (v6) at (0.4,-0.8) {}
            edge [->] (v0);
    \end{tikzpicture}\ \Big) \ 
    &= \ \sum_{\scriptstyle{m=m_1+m_2}}
    \begin{tikzpicture}[shorten >=1pt,>=angle 90,baseline={([yshift=-.5ex]current bounding box.center)}]
        \node[ellipse,
            draw = black,
            minimum width = 3cm, 
            minimum height = 1.8cm,
            dotted] (e) at (0,0) {};
        \node[biw] (vLeft) at (-0.8,0) {$m_1$\nodepart{lower}$0$};
        \node[biw] (vRight) at (0.8,0) {$m_2$\nodepart{lower}$0$}
            edge [<-] (vLeft);
        \node[invisible] (v1) at (-0.6,1.4) {}
            edge [<-] (e);
        \node[] at (0,1.2) {$\cdots$};
        \node[invisible] (v3) at (0.6,1.4) {}
            edge [<-] (e);
        \node[invisible] (v4) at (-0.6,-1.4) {}
            edge [->] (e);
        \node[] at (0,-1.2) {$\cdots$};
        \node[invisible] (v6) at (0.6,-1.4) {}
            edge [->] (e);
    \end{tikzpicture}
    \ -\
    \begin{tikzpicture}[shorten >=1pt,>=angle 90,baseline={(4ex,-0.5ex)}]
        \node[biw] (v0) at (0,0) {$\scriptstyle m-1$\nodepart{lower}$0$};
        \node[biw] (new) at (1.5,0.7) {$\infty_1$\nodepart{lower}$\infty_0$}
            edge [<-] (v0);
        \node[invisible] (v1) at (-0.4,0.8) {}
            edge [<-] (v0);
        \node[] at (0,0.6) {$\scriptstyle\cdots$};
        \node[invisible] (v3) at (0.4,0.8) {}
            edge [<-] (v0);
        \node[invisible] (v4) at (-0.4,-0.8) {}
            edge [->] (v0);
        \node[] at (0,-0.6) {$\scriptstyle\cdots$};
        \node[invisible] (v6) at (0.4,-0.8) {}
            edge [->] (v0);
    \end{tikzpicture}
\end{align*}
Any term on the right hand side containing at least one vertex not of the type $(1')$ or $(2)-(5)$ is set to zero.
\end{definition}
\begin{proposition}\label{prop:SinqwGC}
    The inclusion $\mathsf{S}\qwGC_k\hookrightarrow\qwGC_k$ is a quasi-isomorphism. 
\end{proposition}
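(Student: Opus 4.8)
The plan is to follow the proof of Proposition \ref{prop:SinwGC} verbatim at the level of the filtration and the decomposition into local tree complexes, and to isolate the single place where the weaker quasi bi-weight condition $w_x^{in}+|x|_{in}\geq 0$ (rather than $\geq 1$) changes the local cohomology.

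First I would filter $\qwGC_k$ by the number of in-core vertices. By the degree-shifting device of Section 4.1 this filtration is bounded in each fixed (shifted) degree, so the associated spectral sequence converges and it suffices to compute $H^\bullet(gr\ \qwGC_k)$. The induced differential on $gr\ \qwGC_k$ retains only those terms that do not change the in-core graph, i.e. it acts purely on the attached special-in trees. Hence, exactly as before, $gr\ \qwGC_k=\bigoplus_\gamma\mathsf{inCore}(\gamma)$ over quasi in-core graphs $\gamma$, and $\mathsf{inCore}(\gamma)=\big(\bigotimes_{x\in V(\gamma)}\mathcal{T}^{in}_x\big)^{\mathrm{Aut}(\gamma)}$, where $\mathcal{T}^{in}_x\cong\mathcal{T}^{in}_{|x|_{out},|x|_{in},w_x^{out}}$ is the complex of special-in trees rooted at $x$ together with the in-weight of $x$.

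A direct check shows that the admissible special-in trees are literally the same as in the normal complex: every internal special-in vertex has out-weight $0$, one outgoing edge, and $k$ incoming edges with in-weight $n$ subject to $n+k\geq 2$, a bound identical in $\wGC_k$ and $\qwGC_k$. Consequently the local complexes $\mathcal{T}^{in}_{a,b,c}$ are unchanged for every vertex that has at least one in-core in-edge, i.e. $b=|x|_{in}\geq 1$: there the root constraint $w^{in}+b\geq 1$ is automatic and the quasi and normal conditions coincide, so the local cohomology is spanned by the same $\tfrac{m}{\infty_1}$ and $\tfrac{m}{0}$ classes recorded in conditions $(2)$--$(5)$. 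The only new phenomenon occurs at a source, $b=0$: here the relaxation lets the root carry in-weight exactly $0$ with no special-in trees attached, a configuration forbidden in the normal complex. I would show that this enlarges $\mathcal{T}^{in}_{a,0,c}$ by a single extra basis element --- the bare source of in-weight $0$, valid precisely when $m+|x|_{out}=c+a\geq 3$ --- and that this element is a nontrivial cocycle independent of the surviving $\tfrac{m}{\infty_1}$ class, thereby producing exactly the two alternatives of condition $(1')$.

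The main obstacle is this last local computation at a source: one must verify that allowing a zero in-weight at the root adds exactly one new cohomology class $\tfrac{m}{0}$ and does not perturb the rest of the cohomology of $\mathcal{T}^{in}_{a,0,c}$. Concretely, the in-weight-$0$ term is terminal for the part of the differential that lowers the root in-weight while growing a special-in tree, so it can be neither a coboundary nor a cancelling partner of the $\tfrac{m}{\infty_1}$ tower; the remaining trees continue to cancel by the same acyclicity argument as in \cite{F}. Once this is established, $gr\ \qwGC_k$ has cohomology exactly $\mathsf{S}\qwGC_k$, the spectral sequence collapses at the first page, and the inclusion $\mathsf{S}\qwGC_k\hookrightarrow\qwGC_k$ is a quasi-isomorphism.
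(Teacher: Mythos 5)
Your proposal is correct and follows essentially the same route as the paper's own proof: the same filtration by the number of in-core vertices, the same decomposition of the associated graded into $\bigoplus_\gamma\big(\bigotimes_{x\in V(\gamma)}q\mathcal{T}^{in}_x\big)^{\mathrm{Aut}(\gamma)}$, the observation that $q\mathcal{T}^{in}_x\cong\mathcal{T}^{in}_x$ whenever $|x|_{in}\geq 1$ (conditions $(2)$--$(5)$), and, at sources, the identification of the bare in-weight-$0$ root as the single extra nontrivial cycle --- which the paper packages as the splitting $q\mathcal{T}^{in}_x=\mathcal{C}\oplus\mathcal{T}^{in}_x$ --- yielding condition $(1')$.
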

\begin{proof}
    The proof follows the same structure as the proof of proposition \ref{prop:SinwGC}. Let $gr\ \mathsf{S}\qwGC_k$ be the associated graded of the filtration over the number of in-core vertices in a graph. This complex decompose over in-core graphs as
    \begin{equation*}
        gr\ \mathsf{S}\qwGC_k = \bigoplus \mathsf{QinCore}(\gamma)
    \end{equation*}
    where $\mathsf{QinCore}(\gamma)$ is the subcomplex of graphs with in-core graph $\gamma$. This complex further decompose as
    \begin{equation*}
        \mathsf{QinCore}(\gamma)\cong \Big(\bigotimes_{x\in V(\gamma)}q\mathcal{T}^{in}_x\Big)^{\mathrm{Aut}(\gamma)}
    \end{equation*}
    where $q\mathcal{T}_x^{in}$ is the complex of in-core trees attached to the vertex $x$ in $\gamma$. We first note that if $|x|_{in}\geq 1$, then $q\mathcal{T}_x^{in}\cong\mathcal{T}^{in}_x$, which shows cases $(2)-(5)$.
    Suppose that $|x|_{in}=0$. Then $q\mathcal{T}_x^{in}$ contain a subcomplex $\mathcal{C}$ generated by single vertex graphs on the form 
    $\begin{tikzpicture}[shorten >=1pt,node distance=1.2cm,auto,>=angle 90,baseline=-0.1cm]
            \node[biw] (a) at (0,0) {$m$\nodepart{lower}$0$};
            \node[] (up1) at (-0.4,0.8) {}
                edge [<-] (a);
            \node[] (up2) at (0,0.6) {$\scriptstyle\cdots$};
            \node[] (up2) at (0,0.9) {$\scriptstyle \geq 1$};
            \node[] (up3) at (0.4,0.8) {}
                edge [<-] (a);
        \end{tikzpicture}$
    for $m\geq 0$ such that $m+|x|_{out}\geq 3$. These are all non-trivial cycles, and the complex split as $q\mathcal{T}^{in}_x=\mathcal{C}\oplus\mathcal{T}^{in}_x$. The cohomology of $\mathcal{T}^{in}_x$ is given by condition $(1)$, and so this gives the modified condition $(1')$.
\end{proof}
Let $\mathsf{S}\oqwGC_k$ be the subcomplex of $\mathsf{S}\qwGC_k$ of oriented graphs. Then $\mathsf{S}\oqwGC_k$ is a subcomplex of $\owGC_k$.
\begin{corollary}
    The inclusion $\mathsf{S}\owGC_k\hookrightarrow\owGC_k$ is a quasi-isomorphism.
\end{corollary}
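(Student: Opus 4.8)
The plan is to carry out the oriented restriction of the filtration argument proving Proposition~\ref{prop:SinwGC}, in the same way that Corollary~\ref{cor:SinwGC} is deduced from it. First I would check that the inclusion is a well-defined morphism of complexes. By construction $\mathsf{S}\owGC_k$ and $\owGC_k$ are the subspaces of $\mathsf{S}\wGC_k$ and $\wGC_k$ spanned by oriented graphs (those with no closed directed path), and the differential $d=\sum_x d_x$ preserves orientedness: vertex splitting only redistributes the edges already incident to $x$ across one new internal edge, while the $\delta'_x,\delta''_x$ terms append a univalent vertex along a fresh source or target edge. Neither operation can close a directed cycle, so $d$ maps oriented graphs to oriented graphs and $\mathsf{S}\owGC_k\hookrightarrow\owGC_k$ is a chain map.

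Next I would filter $\owGC_k$ by the number of in-core vertices, exactly as in Proposition~\ref{prop:SinwGC}. By the degree-shift trick of Section~4.1 the filtration is bounded in each fixed shifted degree, so the associated spectral sequence converges. The associated graded decomposes over \emph{oriented} in-core graphs $\gamma$ as $gr\ \owGC_k=\bigoplus_\gamma \mathsf{inCore}(\gamma)$, with each summand factoring as $\big(\bigotimes_{x\in V(\gamma)}\mathcal{T}^{in}_x\big)^{\mathrm{Aut}(\gamma)}$. The crucial observation is that the local tree complexes $\mathcal{T}^{in}_x\cong\mathcal{T}^{in}_{|x|_{out},|x|_{in},w_x^{out}}$ are identical to those in Proposition~\ref{prop:SinwGC}: the special-in trees attached to $x$ consist of vertices whose unique edge flows toward the root, so attaching them can never produce a closed path. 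Hence the oriented condition constrains only $\gamma$ and leaves the fibrewise tree complexes untouched, so the local cohomology computation applies verbatim. Its non-trivial cycles are exactly the vertex decorations of Definition~\ref{def:swGC}, giving $H(gr\ \owGC_k)\cong\mathsf{S}\owGC_k$ and hence the claimed quasi-isomorphism.

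The main point requiring care — the only step that is not already contained in Proposition~\ref{prop:SinwGC} — is the compatibility of orientedness with the in-core decomposition: one must verify that a graph in $\wGC_k$ is oriented if and only if its in-core graph $\gamma$ is oriented. This holds because the special-in vertices of any graph form clusters of trees whose edges all point toward a single in-core root, so no edge of a special-in tree lies on a directed cycle; every closed directed path is therefore already present in $\gamma$. With this equivalence in hand, the oriented summands of $gr\ \owGC_k$ are precisely the summands of Proposition~\ref{prop:SinwGC} indexed by oriented $\gamma$, and the computation carried out there finishes the proof.
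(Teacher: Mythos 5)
Your proposal is correct and follows the paper's own route exactly: the paper likewise deduces the corollary by filtering $\owGC_k$ over the number of in-core vertices, decomposing the associated graded over \emph{oriented} in-core graphs $\gamma$ as $\bigoplus_\gamma \mathsf{inCore}(\gamma)$, and observing that these summands are the same local tree complexes already computed in Proposition \ref{prop:SinwGC}. You in fact supply more detail than the paper's brief sketch — notably the explicit check that a graph is oriented if and only if its in-core graph is, since special-in trees point rootward and cannot carry a directed cycle — which is a correct and worthwhile filling-in of the step the paper leaves implicit.
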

\begin{proof}
    See proof of corollary \ref{cor:SinwGC}.
\end{proof}
\begin{definition}
    Let $\mathsf{S}\pwGC_k$ be the subcomplex of $\pwGC_k$ of graphs $\Gamma$ whose vertices $V(\Gamma)$ are independently decorated by the modified bi-weights $\frac{m}{\infty_1}$, $\frac{m}{0}$ and $\frac{0}{\infty_2}$ for $m\in\mathbb{N}$ subject to the conditions $(1')$ and $(4)-(5)$ as in definition \ref{def:swGC}, together with the additional modified conditions
    \begin{itemize}
        \item[$(2')$] If $x\in V(\Gamma)$ is a univalent target, then
        \begin{equation*}
        x=\begin{tikzpicture}[shorten >=1pt,node distance=1.2cm,auto,>=angle 90,baseline=-0.1cm]
            \node[biw] (a) at (0,0) {$m$\nodepart{lower}$\infty_1$};
            \node[] (d) at (0,-1) {}
                edge [->] (a);
        \end{tikzpicture}
        \text{ with }m\geq 1,\ \ 
        x=\begin{tikzpicture}[shorten >=1pt,node distance=1.2cm,auto,>=angle 90,baseline=-0.1cm]
            \node[biw] (a) at (0,0) {$m$\nodepart{lower}$0$};
            \node[] (d) at (0,-1) {}
                edge [->] (a);
        \end{tikzpicture}
        \text{ with }m\geq 2 ,\text{ or }
        x=\begin{tikzpicture}[shorten >=1pt,node distance=1.2cm,auto,>=angle 90,baseline=-0.1cm]
            \node[biw] (a) at (0,0) {$0$\nodepart{lower}$\infty_2$};
            \node[] (d) at (0,-1) {}
                edge [->] (a);
        \end{tikzpicture}
        \end{equation*}
        \item[$(3')$] If $x\in V(\Gamma)$ is a target with at least two in-edges, then
    \begin{equation*}
        x=\begin{tikzpicture}[shorten >=1pt,node distance=1.2cm,auto,>=angle 90,baseline=-0.1cm]
            \node[biw] (a) at (0,0) {$m$\nodepart{lower}$\infty_1$};
            \node[] (up1) at (-0.4,-0.8) {}
                edge [->] (a);
            \node[] (up2) at (0,-0.6) {$\scriptstyle\cdots$};
            \node[] (up2) at (0,-0.9) {$\scriptstyle \geq 2$};
            \node[] (up3) at (0.4,-0.8) {}
                edge [->] (a);
        \end{tikzpicture}
        \text{with } m\geq 0,\text{ or }
        x=\begin{tikzpicture}[shorten >=1pt,node distance=1.2cm,auto,>=angle 90,baseline=-0.1cm]
            \node[biw] (a) at (0,0) {$m$\nodepart{lower}$0$};
            \node[] (up1) at (-0.4,-0.8) {}
                edge [->] (a);
            \node[] (up2) at (0,-0.6) {$\scriptstyle\cdots$};
            \node[] (up2) at (0,-0.9) {$\scriptstyle \geq 2$};
            \node[] (up3) at (0.4,-0.8) {}
                edge [->] (a);
        \end{tikzpicture}
        \text{with } m+|x|_{in}\geq 3
    \end{equation*}
    \end{itemize}
    Note that this complex neither contain any special-in vertices nor create any such vertices under the differential. The differential acts on a graph $\Gamma\in S^{in}_1\qwGC_k$ with vertices of the types $(1')$ and $(2)-(5)$ as $d(\Gamma)=\sum_{x\in V(\Gamma)}d_x(\Gamma)$. The map $d_x$ act on vertices with in-weight $\infty_1$ and $0$ respectively as
\begin{align*}
    d_x\Big(\ 
    \begin{tikzpicture}[shorten >=1pt,>=angle 90,baseline={([yshift=-.5ex]current bounding box.center)}]
        \node[biw] (v0) at (0,0) {$m$\nodepart{lower}$\infty_1$};
        \node[invisible] (v1) at (-0.4,0.8) {}
            edge [<-] (v0);
        \node[] at (0,0.6) {$\scriptstyle\cdots$};
        \node[invisible] (v3) at (0.4,0.8) {}
            edge [<-] (v0);
        \node[invisible] (v4) at (-0.4,-0.8) {}
            edge [->] (v0);
        \node[] at (0,-0.6) {$\scriptstyle\cdots$};
        \node[invisible] (v6) at (0.4,-0.8) {}
            edge [->] (v0);
    \end{tikzpicture}\ \Big) \ 
    &= \ \sum_{\scriptstyle{m=m_1+m_2}}
    \Bigg(
    \begin{tikzpicture}[shorten >=1pt,>=angle 90,baseline={([yshift=-.5ex]current bounding box.center)}]
        \node[ellipse,
            draw = black,
            minimum width = 3cm, 
            minimum height = 1.8cm,
            dotted] (e) at (0,0) {};
        \node[biw] (vLeft) at (-0.8,0) {$m_1$\nodepart{lower}$\infty_1$};
        \node[biw] (vRight) at (0.8,0) {$m_2$\nodepart{lower}$\infty_1$}
            edge [<-] (vLeft);
        \node[invisible] (v1) at (-0.6,1.4) {}
            edge [<-] (e);
        \node[] at (0,1.2) {$\cdots$};
        \node[invisible] (v3) at (0.6,1.4) {}
            edge [<-] (e);
        \node[invisible] (v4) at (-0.6,-1.4) {}
            edge [->] (e);
        \node[] at (0,-1.2) {$\cdots$};
        \node[invisible] (v6) at (0.6,-1.4) {}
            edge [->] (e);
    \end{tikzpicture}
    \ + \
    \begin{tikzpicture}[shorten >=1pt,>=angle 90,baseline={([yshift=-.5ex]current bounding box.center)}]
        \node[ellipse,
            draw = black,
            minimum width = 3cm, 
            minimum height = 1.8cm,
            dotted] (e) at (0,0) {};
        \node[biw] (vLeft) at (-0.8,0) {$m_1$\nodepart{lower}$0$};
        \node[biw] (vRight) at (0.8,0) {$m_2$\nodepart{lower}$\infty_1$}
            edge [<-] (vLeft);
        \node[invisible] (v1) at (-0.6,1.4) {}
            edge [<-] (e);
        \node[] at (0,1.2) {$\cdots$};
        \node[invisible] (v3) at (0.6,1.4) {}
            edge [<-] (e);
        \node[invisible] (v4) at (-0.6,-1.4) {}
            edge [->] (e);
        \node[] at (0,-1.2) {$\cdots$};
        \node[invisible] (v6) at (0.6,-1.4) {}
            edge [->] (e);
    \end{tikzpicture}
    \ + \ 
    \begin{tikzpicture}[shorten >=1pt,>=angle 90,baseline={([yshift=-.5ex]current bounding box.center)}]
        \node[ellipse,
            draw = black,
            minimum width = 3cm, 
            minimum height = 1.8cm,
            dotted] (e) at (0,0) {};
        \node[biw] (vLeft) at (-0.8,0) {$m_1$\nodepart{lower}$\infty_1$};
        \node[biw] (vRight) at (0.8,0) {$m_2$\nodepart{lower}$0$}
            edge [<-] (vLeft);
        \node[invisible] (v1) at (-0.6,1.4) {}
            edge [<-] (e);
        \node[] at (0,1.2) {$\cdots$};
        \node[invisible] (v3) at (0.6,1.4) {}
            edge [<-] (e);
        \node[invisible] (v4) at (-0.6,-1.4) {}
            edge [->] (e);
        \node[] at (0,-1.2) {$\cdots$};
        \node[invisible] (v6) at (0.6,-1.4) {}
            edge [->] (e);
    \end{tikzpicture}
    \Bigg)\\
    &\qquad \ -\ 
    \begin{tikzpicture}[shorten >=1pt,>=angle 90,baseline={(4ex,-0.5ex)}]
        \node[biw] (v0) at (0,0) {$\scriptstyle m-1$\nodepart{lower}$\infty_1$};
        \node[biw] (new) at (1.5,0.7) {$\infty_1$\nodepart{lower}$\infty_0$}
            edge [<-] (v0);
        \node[invisible] (v1) at (-0.4,0.8) {}
            edge [<-] (v0);
        \node[] at (0,0.6) {$\scriptstyle\cdots$};
        \node[invisible] (v3) at (0.4,0.8) {}
            edge [<-] (v0);
        \node[invisible] (v4) at (-0.4,-0.8) {}
            edge [->] (v0);
        \node[] at (0,-0.6) {$\scriptstyle\cdots$};
        \node[invisible] (v6) at (0.4,-0.8) {}
            edge [->] (v0);
    \end{tikzpicture} 
    \ - \
    \begin{tikzpicture}[shorten >=1pt,>=angle 90,baseline={(4ex,-0.5ex)}]
        \node[biw] (v0) at (0,0) {$\scriptstyle m-1$\nodepart{lower}$\infty_1$};
        \node[biw] (new) at (1.5,0.7) {$0$\nodepart{lower}$\infty_2$}
            edge [<-] (v0);
        \node[invisible] (v1) at (-0.4,0.8) {}
            edge [<-] (v0);
        \node[] at (0,0.6) {$\scriptstyle\cdots$};
        \node[invisible] (v3) at (0.4,0.8) {}
            edge [<-] (v0);
        \node[invisible] (v4) at (-0.4,-0.8) {}
            edge [->] (v0);
        \node[] at (0,-0.6) {$\scriptstyle\cdots$};
        \node[invisible] (v6) at (0.4,-0.8) {}
            edge [->] (v0);
    \end{tikzpicture}\\
    &\qquad \ - \
    \begin{tikzpicture}[shorten >=1pt,>=angle 90,baseline={(4ex,-0.5ex)}]
        \node[biw] (v0) at (0,0) {$m$\nodepart{lower}$\infty_1$};
        \node[biw] (new) at (1.5,-0.7) {$\infty_1$\nodepart{lower}$\infty_1$}
            edge [->] (v0);
        \node[invisible] (v1) at (-0.4,0.8) {}
            edge [<-] (v0);
        \node[] at (0,0.6) {$\scriptstyle\cdots$};
        \node[invisible] (v3) at (0.4,0.8) {}
            edge [<-] (v0);
        \node[invisible] (v4) at (-0.4,-0.8) {}
            edge [->] (v0);
        \node[] at (0,-0.6) {$\scriptstyle\cdots$};
        \node[invisible] (v6) at (0.4,-0.8) {}
            edge [->] (v0);
    \end{tikzpicture}
    \ -\ 
    \begin{tikzpicture}[shorten >=1pt,>=angle 90,baseline={(4ex,-0.5ex)}]
        \node[biw] (v0) at (0,0) {$m$\nodepart{lower}$0$};
        \node[biw] (new) at (1.5,-0.7) {$\infty_1$\nodepart{lower}$\infty_1$}
            edge [->] (v0);
        \node[invisible] (v1) at (-0.4,0.8) {}
            edge [<-] (v0);
        \node[] at (0,0.6) {$\scriptstyle\cdots$};
        \node[invisible] (v3) at (0.4,0.8) {}
            edge [<-] (v0);
        \node[invisible] (v4) at (-0.4,-0.8) {}
            edge [->] (v0);
        \node[] at (0,-0.6) {$\scriptstyle\cdots$};
        \node[invisible] (v6) at (0.4,-0.8) {}
            edge [->] (v0);
    \end{tikzpicture}\\
    &\qquad \ -\
    \begin{tikzpicture}[shorten >=1pt,>=angle 90,baseline={(4ex,-0.5ex)}]
        \node[biw] (v0) at (0,0) {$m$\nodepart{lower}$\infty_1$};
        \node[biw] (new) at (1.5,-0.7) {$\infty_2$\nodepart{lower}$0$}
            edge [->] (v0);
        \node[invisible] (v1) at (-0.4,0.8) {}
            edge [<-] (v0);
        \node[] at (0,0.6) {$\scriptstyle\cdots$};
        \node[invisible] (v3) at (0.4,0.8) {}
            edge [<-] (v0);
        \node[invisible] (v4) at (-0.4,-0.8) {}
            edge [->] (v0);
        \node[] at (0,-0.6) {$\scriptstyle\cdots$};
        \node[invisible] (v6) at (0.4,-0.8) {}
            edge [->] (v0);
    \end{tikzpicture}
    \ -\ 
    \begin{tikzpicture}[shorten >=1pt,>=angle 90,baseline={(4ex,-0.5ex)}]
        \node[biw] (v0) at (0,0) {$m$\nodepart{lower}$0$};
        \node[biw] (new) at (1.5,-0.7) {$\infty_2$\nodepart{lower}$0$}
            edge [->] (v0);
        \node[invisible] (v1) at (-0.4,0.8) {}
            edge [<-] (v0);
        \node[] at (0,0.6) {$\scriptstyle\cdots$};
        \node[invisible] (v3) at (0.4,0.8) {}
            edge [<-] (v0);
        \node[invisible] (v4) at (-0.4,-0.8) {}
            edge [->] (v0);
        \node[] at (0,-0.6) {$\scriptstyle\cdots$};
        \node[invisible] (v6) at (0.4,-0.8) {}
            edge [->] (v0);
    \end{tikzpicture}
     \\
    d_x\Big(\ 
    \begin{tikzpicture}[shorten >=1pt,>=angle 90,baseline={([yshift=-.5ex]current bounding box.center)}]
        \node[biw] (v0) at (0,0) {$m$\nodepart{lower}$0$};
        \node[invisible] (v1) at (-0.4,0.8) {}
            edge [<-] (v0);
        \node[] at (0,0.6) {$\scriptstyle\cdots$};
        \node[invisible] (v3) at (0.4,0.8) {}
            edge [<-] (v0);
        \node[invisible] (v4) at (-0.4,-0.8) {}
            edge [->] (v0);
        \node[] at (0,-0.6) {$\scriptstyle\cdots$};
        \node[invisible] (v6) at (0.4,-0.8) {}
            edge [->] (v0);
    \end{tikzpicture}\ \Big) \ 
    &= \ \sum_{\scriptstyle{m=m_1+m_2}}
    \begin{tikzpicture}[shorten >=1pt,>=angle 90,baseline={([yshift=-.5ex]current bounding box.center)}]
        \node[ellipse,
            draw = black,
            minimum width = 3cm, 
            minimum height = 1.8cm,
            dotted] (e) at (0,0) {};
        \node[biw] (vLeft) at (-0.8,0) {$m_1$\nodepart{lower}$0$};
        \node[biw] (vRight) at (0.8,0) {$m_2$\nodepart{lower}$0$}
            edge [<-] (vLeft);
        \node[invisible] (v1) at (-0.6,1.4) {}
            edge [<-] (e);
        \node[] at (0,1.2) {$\cdots$};
        \node[invisible] (v3) at (0.6,1.4) {}
            edge [<-] (e);
        \node[invisible] (v4) at (-0.6,-1.4) {}
            edge [->] (e);
        \node[] at (0,-1.2) {$\cdots$};
        \node[invisible] (v6) at (0.6,-1.4) {}
            edge [->] (e);
    \end{tikzpicture}
    \ -\
    \begin{tikzpicture}[shorten >=1pt,>=angle 90,baseline={(4ex,-0.5ex)}]
        \node[biw] (v0) at (0,0) {$\scriptstyle m-1$\nodepart{lower}$0$};
        \node[biw] (new) at (1.5,0.7) {$\infty_1$\nodepart{lower}$\infty_0$}
            edge [<-] (v0);
        \node[invisible] (v1) at (-0.4,0.8) {}
            edge [<-] (v0);
        \node[] at (0,0.6) {$\scriptstyle\cdots$};
        \node[invisible] (v3) at (0.4,0.8) {}
            edge [<-] (v0);
        \node[invisible] (v4) at (-0.4,-0.8) {}
            edge [->] (v0);
        \node[] at (0,-0.6) {$\scriptstyle\cdots$};
        \node[invisible] (v6) at (0.4,-0.8) {}
            edge [->] (v0);
    \end{tikzpicture}
    \ - \
    \begin{tikzpicture}[shorten >=1pt,>=angle 90,baseline={(4ex,-0.5ex)}]
        \node[biw] (v0) at (0,0) {$\scriptstyle m-1$\nodepart{lower}$\infty_1$};
        \node[biw] (new) at (1.5,0.7) {$0$\nodepart{lower}$\infty_2$}
            edge [<-] (v0);
        \node[invisible] (v1) at (-0.4,0.8) {}
            edge [<-] (v0);
        \node[] at (0,0.6) {$\scriptstyle\cdots$};
        \node[invisible] (v3) at (0.4,0.8) {}
            edge [<-] (v0);
        \node[invisible] (v4) at (-0.4,-0.8) {}
            edge [->] (v0);
        \node[] at (0,-0.6) {$\scriptstyle\cdots$};
        \node[invisible] (v6) at (0.4,-0.8) {}
            edge [->] (v0);
    \end{tikzpicture}\\
    d_x\Big(\ \begin{tikzpicture}[shorten >=1pt,node distance=1.2cm,auto,>=angle 90,baseline=-0.1cm]
            \node[biw] (a) at (0,0) {$0$\nodepart{lower}$\infty_2$};
            \node[] (d) at (0,-1) {}
                edge [->] (a);
        \end{tikzpicture}\ \Big) \ 
        &= \begin{tikzpicture}[shorten >=1pt,node distance=1.2cm,auto,>=angle 90,baseline=-0.1cm]
            \node[biw] (a) at (0,0) {$0$\nodepart{lower}$\infty_1$};
            \node[biw] (up) at (1.5,0.7) {$0$\nodepart{lower}$\infty_2$}
                edge [<-] (a);
            \node[] (down) at (0,-1) {}
                edge [->] (a);
        \end{tikzpicture}
        \ - \ 
        \begin{tikzpicture}[shorten >=1pt,node distance=1.2cm,auto,>=angle 90,baseline=-0.1cm]
            \node[biw] (a) at (0,0) {$0$\nodepart{lower}$\infty_1$};
            \node[biw] (up) at (1.5,-0.7) {$\infty_1$\nodepart{lower}$\infty_1$}
                edge [->] (a);
            \node[] (down) at (0,-1) {}
                edge [->] (a);
        \end{tikzpicture}
        \ - \
        \begin{tikzpicture}[shorten >=1pt,node distance=1.2cm,auto,>=angle 90,baseline=-0.1cm]
            \node[biw] (a) at (0,0) {$0$\nodepart{lower}$\infty_1$};
            \node[biw] (up) at (1.5,-0.7) {$\infty_2$\nodepart{lower}$0$}
                edge [->] (a);
            \node[] (down) at (0,-1) {}
                edge [->] (a);
        \end{tikzpicture}
\end{align*}
Any term on the right hand side containing at least one vertex not of the type $(1')-(3')$ or $(4)-(5)$ is set to zero.
\end{definition}
\begin{proposition}\label{prop:SinpwGC}
    The inclusion $\mathsf{S}\pwGC_k\hookrightarrow\pwGC_k$ is a quasi-isomorphism.
\end{proposition}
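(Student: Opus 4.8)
The plan is to run the same spectral-sequence argument as in the proofs of Propositions~\ref{prop:SinwGC} and~\ref{prop:SinqwGC}, adding only the analysis of the one new family of vertices that the pseudo setting permits. First I would filter $\pwGC_k$ by the number of in-core vertices of a graph; by the degree-shifting trick recalled in the discussion of convergence above, this filtration is bounded in each fixed degree, so the associated spectral sequence converges. On the associated graded $gr\ \pwGC_k$ the differential acts only inside the special-in trees, and the complex decomposes over in-core graphs as $gr\ \pwGC_k=\bigoplus_{\gamma}\mathsf{PinCore}(\gamma)$, where $\mathsf{PinCore}(\gamma)\cong\big(\bigotimes_{x\in V(\gamma)}p\mathcal{T}^{in}_x\big)^{\mathrm{Aut}(\gamma)}$ and $p\mathcal{T}^{in}_x$ is the complex of special-in trees attached to $x$. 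Exactly as before, $p\mathcal{T}^{in}_x$ depends only on the triple $(|x|_{out},|x|_{in},w_x^{out})$, so the whole computation reduces to determining the cohomology of these tree complexes.

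For every core vertex with $w_x^{out}+|x|_{out}\geq 1$, i.e. every vertex that is \emph{not} a pure target, the valence constraints on the attached special-in trees coincide with those of the quasi complex --- the pseudo relaxation concerns only vertices with no outgoing edge and vanishing out-weight, whereas the tree vertices always carry exactly one outgoing edge. Hence $p\mathcal{T}^{in}_x\cong q\mathcal{T}^{in}_x$, and its cohomology is supplied by Proposition~\ref{prop:SinqwGC}; this reproduces conditions $(1')$, $(4)$ and $(5)$ verbatim, together with the $w_x^{out}\geq 1$ entries of $(2')$ and $(3')$. The genuinely new case is $|x|_{out}=0$ with $w_x^{out}=0$, i.e. $x$ a pure target, a vertex excluded from $\wGC_k$ and $\qwGC_k$ but admitted in $\pwGC_k$. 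Since $w_x^{out}=0$ is disallowed in the normal and quasi settings there is no ``classical'' summand to import here; instead I would compute the cohomology of $p\mathcal{T}^{in}_x$ directly, mirroring the source computation in the proof of Proposition~\ref{prop:SinqwGC}, and exhibit as a basis the $\infty$-decoration $\frac{0}{\infty_2}$ when $x$ is univalent, and the classes $\frac{0}{\infty_1}$ together with the finite $\frac{0}{0}$ decorations of in-valence $\geq 3$ when $x$ has at least two incoming edges. These are precisely the pure-target ($w_x^{out}=0$) entries of $(2')$ and $(3')$.

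The main obstacle is exactly this pure-target computation. One must first check, using the explicit differential of $\frac{0}{\infty_2}$ recorded in the definition of $\mathsf{S}\pwGC_k$, that the three terms it produces each raise the number of in-core vertices --- the freshly created vertices are either univalent sources of positive out-weight or univalent targets, and hence in-core rather than special-in --- so that they all vanish on the associated graded and $\frac{0}{\infty_2}$ (and likewise $\frac{0}{\infty_1}$) is an honest cocycle of $p\mathcal{T}^{in}_x$. One must then show that these classes, together with those inherited from the non-pure-target vertices, exhaust the cohomology, with no further unexpected cocycles surviving; this is the combinatorial heart of the argument and runs parallel to the quasi source analysis, though the interplay of the in- and out-weight bookkeeping and the distinction between $\infty_1$ and $\infty_2$ demands some care. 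Once every $p\mathcal{T}^{in}_x$ is computed, the cohomology of $gr\ \pwGC_k$ agrees term by term with the associated graded of $\mathsf{S}\pwGC_k$ for the analogous filtration, on which the differential is zero; the map induced by the inclusion is therefore an isomorphism on the $E_1$-pages of two convergent spectral sequences, and the proposition follows.
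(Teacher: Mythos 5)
Your outline coincides with the paper's proof in every structural step: the filtration by the number of in-core vertices (with convergence secured by the degree-shifting trick), the decomposition $gr\ \pwGC_k=\bigoplus_\gamma\mathsf{PinCore}(\gamma)$ with $\mathsf{PinCore}(\gamma)\cong\big(\bigotimes_{x\in V(\gamma)}p\mathcal{T}^{in}_x\big)^{\mathrm{Aut}(\gamma)}$, and the reduction of every vertex with $w_x^{out}+|x|_{out}\geq 1$ to tree complexes already computed. Your unified observation that $p\mathcal{T}^{in}_x\cong q\mathcal{T}^{in}_x$ in all of these cases (because special-in tree vertices always carry exactly one outgoing edge, so the pseudo relaxation never affects them) is correct, and is in fact a slight streamlining of the paper's three-case split ($p\mathcal{T}^{in}_x=\mathcal{T}^{in}_x$ when $|x|_{out},|x|_{in}\geq 1$ or when $|x|_{out}=0$ and $w_x^{out}\geq1$, and $p\mathcal{T}^{in}_x=q\mathcal{T}^{in}_x$ when $|x|_{in}=0$). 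Your check that $\frac{0}{\infty_2}$ and $\frac{0}{\infty_1}$ survive to the associated graded is also correct, and your list of expected generators for pure targets matches the $m=0$ entries of conditions $(2')$ and $(3')$.

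The gap is the step you explicitly defer: proving that for $|x|_{out}=0$, $w_x^{out}=0$ the listed classes exhaust $H(p\mathcal{T}^{in}_x)$. Moreover, the template you point to, ``mirroring the source computation in Proposition \ref{prop:SinqwGC}'', does not transfer to this case. That computation worked because $q\mathcal{T}^{in}_x$ split as a direct sum $\mathcal{C}\oplus\mathcal{T}^{in}_x$ of a summand of extra cocycles carrying zero differential and the normal tree complex, whose cohomology could simply be quoted; for a pure target there is no underlying normal or quasi tree complex at all (such a vertex violates the bi-weight conditions of both settings), so no splitting of this kind exists and nothing can be imported. The paper instead computes $H(p\mathcal{T}^{in}_x)$ from scratch by a two-stage filtration borrowed from \cite{F} (lemmas 5.2.4 and 5.2.5): first filter by the number $N$ of univalent special-in vertices, show the summands $u_Np\mathcal{T}^{in}_x$ with $N\geq 2$ are acyclic via a further filtration on branch vertices and then on their total in-weights, and finally verify directly that the $N=1$ summand has the one- or two-dimensional cohomology spanned by your predicted generators. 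Supplying this argument (or an equivalent one) is exactly what is needed to close your proof; with it, your outline becomes the paper's proof.
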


Finally, consider a filtration of $\pwGC_k$ over the number of core vertices in a graph and let $\{S^{in}_r\pwGC_k\}_{r\geq 0}$ be the associated spectral sequence.

\begin{proof}
    Let $gr\ \mathsf{S}\pwGC_k$ be the associated graded of the filtration of $\pwGC_k$ over the number of core-vertices in a graph. This complex decompose over in-core graphs $\gamma$ as
    \begin{equation*}
        gr\ \mathcal{S}\pwGC_k=\bigoplus_{\gamma}\mathsf{PinCore}(\gamma)
    \end{equation*}
    and further
    \begin{equation*}
        \mathsf{PinCore}(\gamma)\cong\Big(\bigotimes_{x\in V(\gamma)}p\mathcal{T}_x^{in}\Big)^{\mathrm{Aut}(\gamma)}
    \end{equation*}
    where $p\mathcal{T}_x^{in}$ is the complex of in-core trees attached to the vertex $x$ in $\gamma$. Recall that $|x|_{out}$, $|x|_{in}$ and $w_x^{out}$ are invariant in $p\mathcal{T}_x^{in}$. By observation, we gather the following:
    \begin{itemize}
        \item If $|x|_{out},|x|_{in}\geq 1$, then $p\mathcal{T}_x^{in}=\mathcal{T}_x^{in}$.
        \item If $|x|_{in}=0$, then $p\mathcal{T}_x^{in}=q\mathcal{T}_x^{in}$.
        \item If $|x|_{out}=0$ and $w_x^{out}\geq1$, then $p\mathcal{T}_x^{in}=\mathcal{T}_x^{in}$.
    \end{itemize}
    These cases correspond to the conditions $(4)-(5)$, $(1')$ and $(2)-(3)$ respectively. One last case remains. Suppose that $|x|_{out}=0$ and $w_x^{out}=0$. We will show that $H(p\mathcal{T}_x^{in})$ is generated by the following one vertex graphs
    \begin{equation*}
        \begin{tikzpicture}[shorten >=1pt,node distance=1.2cm,auto,>=angle 90,baseline=-0.1cm]
            \node[biw] (a) at (0,0) {$0$\nodepart{lower}$\infty_2$};
            \node[] (d) at (0,-1) {}
                edge [->] (a);
        \end{tikzpicture}
        \ , \ 
        \begin{tikzpicture}[shorten >=1pt,node distance=1.2cm,auto,>=angle 90,baseline=-0.1cm]
            \node[biw] (a) at (0,0) {$0$\nodepart{lower}$\infty_1$};
            \node[] (up1) at (-0.4,-0.8) {}
                edge [->] (a);
            \node[] (up3) at (0.4,-0.8) {}
                edge [->] (a);
        \end{tikzpicture}
        \ , \ 
        \begin{tikzpicture}[shorten >=1pt,node distance=1.2cm,auto,>=angle 90,baseline=-0.1cm]
            \node[biw] (a) at (0,0) {$0$\nodepart{lower}$\infty_1$};
            \node[] (up1) at (-0.4,-0.8) {}
                edge [->] (a);
            \node[] (up2) at (0,-0.6) {$\scriptstyle\cdots$};
            \node[] (up2) at (0,-0.9) {$\scriptstyle \geq 3$};
            \node[] (up3) at (0.4,-0.8) {}
                edge [->] (a);
        \end{tikzpicture}
        \ , \ 
        \begin{tikzpicture}[shorten >=1pt,node distance=1.2cm,auto,>=angle 90,baseline=-0.1cm]
            \node[biw] (a) at (0,0) {$0$\nodepart{lower}$0$};
            \node[] (up1) at (-0.4,-0.8) {}
                edge [->] (a);
            \node[] (up2) at (0,-0.6) {$\scriptstyle\cdots$};
            \node[] (up2) at (0,-0.9) {$\scriptstyle \geq 3$};
            \node[] (up3) at (0.4,-0.8) {}
                edge [->] (a);
        \end{tikzpicture}
    \end{equation*}
    depending on the value of $|x|_{in}$. The proof is the same as lemma 5.2.4 and lemma 5.2.5 in \cite{F}, and so we only give a brief outline here.
    Consider a filtration of $p\mathcal{T}_x^{in}$ over the number univalent special-in vertices, where we consider the graph containing a single vertex as having one such vertex. The associated graded complex split as $gr\ p\mathcal{T}_x^{in}=\bigoplus_{N\geq 1} u_Np\mathcal{T}_x^{in}$ where $u_Np\mathcal{T}_x^{in}$ is the subcomplex of graphs with $N$ univalent special-in vertices.
    We can show that $u_Np\mathcal{T}_x^{in}$ is acyclic for $N\geq2$ by considering a filtration over the number of \textit{branch vertices}. A vertex $y$ is a branch vertex if there are at least two directed paths from a univalent vertex to $y$. Secondly we consider a filtration over the total in-weights of the branch vertices. The in-core vertex is always a branch vertex, and the differential does not depend on the bi-weight of the core-vertex. Hence the proof now is equivalent to that of lemma 5.2.5 in \cite{F}.
    Lastly we note that the one vertex graphs above are non-trivial cycles of $u_1p\mathcal{T}_x^{in}$. By the same methods as in the proof of lemma 5.2.4 we show that the cohomology of $u_1p\mathcal{T}_x^{in}$ is one or two-dimensional, finishing the proof.
\end{proof}
Let $\mathsf{S}\opwGC_k$ be the subcomplex of $\mathsf{S}\pwGC_k$ of oriented graphs. This complex is also a subcomplex of $\opwGC_k$.
\begin{corollary}
    The inclusion $\mathsf{S}\opwGC_k\hookrightarrow\opwGC_k$ is a quasi-isomorphism.
\end{corollary}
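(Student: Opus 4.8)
The plan is to mirror the proof of Corollary \ref{cor:SinwGC}, which deduced the oriented statement from its unoriented counterpart (Proposition \ref{prop:SinwGC}) by observing that the relevant filtration and its associated local complexes are insensitive to the oriented condition. Here the unoriented input is instead Proposition \ref{prop:SinpwGC}. First I would equip $\opwGC_k$ with the same filtration used throughout this section, namely the filtration by the number of in-core vertices of a graph. By the degree-shifting trick recalled in Section~4.1, assigning each graph the weight $\tfrac12(v+e)-k$ makes every graded piece finite-dimensional, so the associated spectral sequence converges and it suffices to compute the cohomology of the associated graded $gr\ \opwGC_k$.

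Next I would decompose the associated graded over oriented in-core graphs, $gr\ \opwGC_k=\bigoplus_\gamma \mathsf{PinCore}(\gamma)$, the sum running over oriented core graphs $\gamma$, and further factorize $\mathsf{PinCore}(\gamma)\cong\big(\bigotimes_{x\in V(\gamma)}p\mathcal{T}_x^{in}\big)^{\mathrm{Aut}(\gamma)}$ exactly as in Proposition \ref{prop:SinpwGC}. The essential observation making this legitimate is that the special-in vertices attached to a core vertex $x$ always form a tree whose edges point toward the root $x$; such a forest is acyclic and can neither create nor destroy a closed directed path. Hence a pseudo bi-weighted graph is oriented if and only if its in-core graph is oriented, so the oriented subcomplex is precisely the direct sum of the $\mathsf{PinCore}(\gamma)$ over oriented $\gamma$, and the local tree complexes $p\mathcal{T}_x^{in}$ are literally the same complexes analyzed in the proof of Proposition \ref{prop:SinpwGC}.

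Since those local computations are then unchanged, their cohomology is again given by the allowed decorations $(1')$--$(3')$ and $(4)$--$(5)$, so $H(gr\ \opwGC_k)\cong \mathsf{S}\opwGC_k$ with vanishing induced differential, and the inclusion $\mathsf{S}\opwGC_k\hookrightarrow\opwGC_k$ is a quasi-isomorphism. The only point requiring genuine care --- and the main obstacle --- is precisely this compatibility of the oriented condition with the filtration: one must verify both that passing to the associated graded does not mix graphs of different orientedness (so that the oriented subcomplex splits as the claimed sub-sum of the core decomposition) and that the tree-level cohomology generators listed in Proposition \ref{prop:SinpwGC} are themselves oriented. The latter holds because each such generator is either a single vertex or a vertex decorated only with univalent special-in leaves, and thus contains no closed directed path; consequently restricting to oriented graphs commutes with the filtration and with extracting cohomology, and the argument reduces entirely to the already-established local statement.
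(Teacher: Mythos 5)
Your proposal is correct and follows essentially the same route as the paper: the paper's proof simply refers back to Corollary \ref{cor:SinwGC}, i.e.\ filter by the number of in-core vertices, decompose the associated graded over \emph{oriented} in-core graphs $\gamma$, and observe that the local tree complexes $p\mathcal{T}_x^{in}$ are identical to those in Proposition \ref{prop:SinpwGC}. Your additional verifications --- that special-in trees cannot create closed directed paths (so orientedness is detected by the in-core graph) and that the tree-level cohomology generators are themselves oriented --- are exactly the implicit justifications behind the paper's terse argument.
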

\begin{proof}
    See proof of corollary \ref{cor:SinwGC}.
\end{proof}
Let $\mathsf{S}\wGC_k^+$ be the subcomplex of $\mathsf{S}\wGC_k$ of graphs which contain at least one vertex with out-decoration $m\geq 1$ and one vertex with in-decoration $\infty_1$. This is also a subcomplex of $\wGC_k^+$. The following proposition was proven in \cite{F}, but here we give an alternate proof.
\begin{proposition}\label{prop:SinwGC+}
    The inclusion $\mathsf{S}\wGC_k^+\hookrightarrow\wGC_k^+$ is a quasi-isomorphism.
\end{proposition}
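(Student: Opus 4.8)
The plan is to compare the two plus-complexes through the in-core filtration already exploited in Proposition \ref{prop:SinwGC}, rather than re-running the tree computation from scratch. Both $\wGC_k^+$ and $\mathsf{S}\wGC_k^+$ inherit, as subcomplexes, the filtration of $\wGC_k$ and $\mathsf{S}\wGC_k$ by the number of in-core vertices, and the inclusion is a filtered chain map. Convergence of the associated spectral sequences is guaranteed by the degree-shifting device of Section \textbf{4}.\textbf{1}, so it suffices to prove that the inclusion induces an isomorphism on the first pages, i.e. on the cohomology of the associated graded complexes.

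The first thing I would record is that the two defining conditions of the plus-complexes behave quite differently under this filtration. Every special-in vertex has out-weight zero: type i) vertices by definition, and since deleting incoming edges leaves an out-weight unchanged, type ii) vertices as well. Consequently the condition ``some vertex carries positive out-weight'' depends only on the out-weights $w_x^{out}$ of the in-core vertices, which are invariant under the associated graded differential, so it is a property of the in-core graph $\gamma$ alone. Writing $gr\,\wGC_k=\bigoplus_\gamma\mathsf{inCore}(\gamma)$ as in Proposition \ref{prop:SinwGC}, the associated graded of $\wGC_k^+$ is therefore the sum, over those $\gamma$ possessing a core vertex of positive out-weight, of the subspace of $\mathsf{inCore}(\gamma)$ carved out by the remaining condition ``some vertex carries positive in-weight''.

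Next I would show that this in-weight condition splits $\mathsf{inCore}(\gamma)$ as a direct sum of subcomplexes, because the associated graded differential never decreases the total in-weight of a graph: vertex splitting conserves it, while the surviving univalent-attachment term trades one unit of in-weight at a core vertex for a new special-in leaf, which, being a univalent source of out-weight zero, is forced by the valence constraints to carry in-weight at least two. Hence $\{\text{total in-weight}>0\}$ and $\{\text{total in-weight}=0\}$ are each preserved, giving $\mathsf{inCore}(\gamma)=\mathsf{inCore}(\gamma)^{>0}\oplus\mathsf{inCore}(\gamma)^{=0}$. Under the cohomology computation of Proposition \ref{prop:SinwGC}, whose representatives are the single in-core graphs with each vertex decorated by in-weight $\infty_1$ or $0$, the classes carrying at least one $\infty_1$ lie in the first summand and the all-zero class in the second.

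Putting these together, for each $\gamma$ with a positive-out-weight core vertex the cohomology of $gr\,\wGC_k^+$ is spanned exactly by the decorated representatives with at least one $\infty_1$, and it vanishes for every other $\gamma$; this is precisely $gr\,\mathsf{S}\wGC_k^+$, whose differential is trivial, and the inclusion realizes the identification decoration by decoration. This yields the isomorphism on first pages and hence the proposition. I expect the main obstacle to be the bookkeeping in the splitting step: one must verify that the two global conditions ``$\exists$ positive out-weight'' and ``$\exists$ positive in-weight'', which do not factor through the tensor decomposition $\mathsf{inCore}(\gamma)=\bigl(\bigotimes_x\mathcal{T}_x^{in}\bigr)^{\mathrm{Aut}(\gamma)}$, nevertheless collapse after passing to cohomology onto the purely core-level conditions defining $\mathsf{S}\wGC_k^+$ --- the out-weight part by invariance and the in-weight part by the monotonicity argument.
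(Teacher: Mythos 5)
Your proposal is correct, but it takes a genuinely different route from the paper. The paper embeds the statement into a $3\times 3$ commutative diagram of short exact sequences $0\to X^+\to X\to X^{\sim}\to 0$ (for $X=\mathsf{S}\wGC_k$, $\wGC_k$, and the column of quotients $\mathsf{Q}$), so that, given Proposition \ref{prop:SinwGC}, everything reduces to showing $\mathsf{S}\wGC_k^{\sim}\hookrightarrow\wGC_k^{\sim}$ is a quasi-isomorphism; that complement is then split as $\wGC_k^{out}\oplus\wGC_k^{in}\oplus\wGC_k^{0}$ and each piece is handled by re-running the in-core filtration and a tree-complex computation (e.g. $\mathcal{T}_x^{in,\sim}\cong\mathcal{T}^{in}_{|x|_{out},|x|_{in},0}$, acyclic at target vertices). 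You instead never leave the plus-complexes: you compare the two in-core spectral sequences directly, using two structural observations --- that every special-in vertex has out-weight zero, so the ``positive out-weight'' condition is a differential-invariant condition on the in-core graph $\gamma$ alone, and that the associated graded differential is non-decreasing on total in-weight, which carves $\mathsf{inCore}(\gamma)$ into the positive-in-weight and all-zero summands. Your route avoids introducing the quotient complexes and the three extra tree computations, at the price of needing the explicit cohomology representatives produced in the proof of Proposition \ref{prop:SinwGC}, not merely its statement; the paper's route is more modular but longer. One point you should make explicit: monotonicity alone only shows that $\{\text{total in-weight}>0\}$ is a subcomplex; to see that $\{\text{total in-weight}=0\}$ is also preserved (so that one genuinely has a direct sum of complexes rather than just a filtration) one must note that on an all-zero graph both in-weight-increasing terms vanish identically --- the surviving univalent-attachment term needs a unit of in-weight at the core vertex to trade away, and a special-in vertex split off a core vertex would need in-weight at least two --- so the differential is zero there. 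With that remark added, your argument is complete.
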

\begin{proof}
Consider the commutative diagram of short exact sequences
\begin{equation*}
    \xymatrix{
    & 0 \ar[d] & 0 \ar[d] & 0 \ar[d] & \\
    0 \ar[r] & \mathsf{S}\wGC_k^+ \ar[d] \ar[r] & \mathsf{S}\wGC_k \ar[r] \ar[d] & \mathsf{S}\wGC_k^\sim \ar[d] \ar[r] & 0  \\
    0 \ar[r] & \wGC_k^+ \ar[d] \ar[r] & \wGC_k \ar[r] \ar[d] & \wGC_k^\sim \ar[d] \ar[r] & 0  \\
    0 \ar[r] & \mathsf{Q}_k^+ \ar[d] \ar[r] & \mathsf{Q}_k \ar[r] \ar[d] & \mathsf{Q}_k^\sim \ar[d] \ar[r] & 0  \\
    & 0 & 0 & 0 & 
    }
\end{equation*}
where the new complexes are the appropriate quotient complexes. Note in particular that $\wGC_k^\sim$ is the quotient complex of bi-weighted graphs that do not both have a positive out-weight and positive in-weight at the same time. It decomposes as $\wGC_k^\sim=\wGC_k^{out}\oplus\wGC_k^{in}\oplus\wGC_k^0$ where $\wGC_k^{out}$ is the complex of graphs with at least one vertex with positive out-weight, $\wGC_k^{out}$ the complex of graphs with at least one vertex with positive in-weight, and $\wGC_k^0$ the complex of graphs with no positive out-weights nor in-weights.
The complex $\mathsf{S}\wGC_k^\sim$ is a subcomplex of $\wGC_k^\sim$ of graphs whose vertices are either only decorated by $\frac{0}{\infty_1}$ and $\frac{0}{0}$, or $\frac{m}{0}$ and $\frac{0}{0}$ for $m\geq 1$. It similarly decompose as $\mathsf{S}\wGC_k^\sim=\mathsf{S}\wGC_k^{out}\oplus\mathsf{S}\wGC_k^{in}\oplus\mathsf{S}\wGC_k^0$ where is the subcomplex of graphs with at least one vertex decorated by $\frac{m}{0}$ $m\geq 1$.
By proposition \ref{prop:SinwGC} inclusion $\mathsf{S}\wGC_k\hookrightarrow\wGC_k$ is a quasi-isomorphism, and so $\mathsf{Q}_k$ is acyclic.
If we show that the inclusion $\mathsf{S}\wGC^\sim_k\hookrightarrow\wGC^\sim_k$ is a quasi-isomorphism, we get that both $\mathsf{Q}^\sim_k$ and $\mathsf{Q}^+_k$ are acyclic, and that the inclusion $\mathsf{S}\wGC_k^+\hookrightarrow\wGC_k^+$ is a quasi-isomorphism.
Equivalently, we show that the inclusions $\mathsf{S}\wGC_k^{out}\hookrightarrow\wGC_k^{out}$, $\mathsf{S}\wGC_k^{in}\hookrightarrow\wGC_k^{out}$ and $\mathsf{S}\wGC_k^0\hookrightarrow\wGC_k^0$ are quasi-isomorphisms.
Consider a filtration of these complexes over the number of in-core vertices.
The associated graded $gr\ \wGC_k^{out}$ contain no special-in vertices since any such vertex necessarily has a positive in-weight, and so the differential is trivial and easily seen to be equal to $\mathsf{S}\wGC_k$.
Consider the associated graded of $\wGC_k^{in}\oplus\wGC_k^0$. It decomposes over in-core graphs $\gamma$ as
\begin{equation*}
    gr(\wGC_k^{in}\oplus\wGC_k^0)=\bigoplus_{\gamma}\mathsf{inCore}^{out,0}(\gamma)
\end{equation*}
where $\mathsf{inCore}^{out,0}(\gamma)$ is the complex of graphs with in-core $\gamma$. The complex further decompose as
\begin{equation*}
    \mathsf{inCore}^{out,0}(\gamma)\cong\Big(\bigotimes_{x\in V(\gamma)}\mathcal{T}_x^{in,\sim}\Big)^{\mathrm{Aut}(\gamma)}
\end{equation*}
where $\mathcal{T}_x^{in,\sim}$ is the complex of special-in trees attached to $x$. This complex is isomorphic to $\mathcal{T}_{|x|_{out},|x|_{in},0}^{in}$ as seen in proposition \ref{prop:SinwGC}. Hence $\mathcal{T}_x^{in,\sim}$ is acyclic when $x$ is a target vertex, and otherwise generated by the single vertex graph decorated by $\frac{0}{\infty_1}$ and $\frac{0}{0}$ (when $x$ is generic). This completes the proof. 
\end{proof}
Lastly let $\mathsf{S}\qwGC_k^+$ be the subcomplex of $\mathsf{S}\qwGC_k$ of graphs which contain at least one vertex with out-decoration $m\geq 1$. This is also a subcomplex of $\qwGC_k^+$.
\begin{proposition}
    The inclusion $\mathsf{S}\qwGC_k^+\hookrightarrow\qwGC_k^+$ is a quasi-isomorphism.
\end{proposition}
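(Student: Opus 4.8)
The plan is to run the same argument as for Proposition \ref{prop:SinwGC+}, with one simplification: there the $+$-condition was two-sided and forced a three-term decomposition of the quotient, whereas the $\qwGC$-version of the $+$-condition only constrains the out-weights, so the quotient is a single complex.

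First I would record that $\qwGC_k^+$ and $\mathsf{S}\qwGC_k^+$ are genuinely subcomplexes. Writing $W^{out}(\Gamma)=\sum_x w_x^{out}$ for the total out-weight, one checks directly from the three terms of the differential that $W^{out}$ never decreases: the splitting term $\delta_x$ preserves it, the term $\delta_x'$ trades one unit of out-weight of $x$ for a new univalent target carrying out-weight $i\geq 1$ (net change $i-1\geq 0$), and the term $\delta_x''$ creates a new univalent source of out-weight $i\geq 0$ while leaving the existing out-weights untouched. Hence the condition $W^{out}\geq 1$ is preserved, and the quotients $\qwGC_k^\sim:=\qwGC_k/\qwGC_k^+$ and $\mathsf{S}\qwGC_k^\sim:=\mathsf{S}\qwGC_k/\mathsf{S}\qwGC_k^+$ are the complexes spanned by graphs all of whose out-weights vanish. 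This yields the commutative diagram of short exact sequences
\begin{equation*}
\xymatrix{
& 0 \ar[d] & 0 \ar[d] & 0 \ar[d] & \\
0 \ar[r] & \mathsf{S}\qwGC_k^+ \ar[d] \ar[r] & \mathsf{S}\qwGC_k \ar[r] \ar[d] & \mathsf{S}\qwGC_k^\sim \ar[d] \ar[r] & 0  \\
0 \ar[r] & \qwGC_k^+ \ar[d] \ar[r] & \qwGC_k \ar[r] \ar[d] & \qwGC_k^\sim \ar[d] \ar[r] & 0  \\
0 \ar[r] & \mathsf{Q}_k^+ \ar[d] \ar[r] & \mathsf{Q}_k \ar[r] \ar[d] & \mathsf{Q}_k^\sim \ar[d] \ar[r] & 0  \\
& 0 & 0 & 0 &
}
\end{equation*}
with $\mathsf{Q}_k^+,\mathsf{Q}_k,\mathsf{Q}_k^\sim$ the vertical cokernels.

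By Proposition \ref{prop:SinqwGC} the middle vertical inclusion is a quasi-isomorphism, so $\mathsf{Q}_k$ is acyclic. Reading off the long exact sequence of the left column, it then suffices to prove that the rightmost vertical inclusion $\mathsf{S}\qwGC_k^\sim\hookrightarrow\qwGC_k^\sim$ is a quasi-isomorphism: once $\mathsf{Q}_k^\sim$ is acyclic, the long exact sequence of the bottom row $0\to\mathsf{Q}_k^+\to\mathsf{Q}_k\to\mathsf{Q}_k^\sim\to 0$ forces $\mathsf{Q}_k^+$ acyclic, whence the leftmost inclusion is a quasi-isomorphism, as desired. To establish the quotient-level statement I would reuse the filtration of Proposition \ref{prop:SinqwGC} verbatim. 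Since every vertex of a graph in $\qwGC_k^\sim$ has out-weight $0$, the quasi-condition $w_x^{out}+|x|_{out}\geq 1$ forces $|x|_{out}\geq 1$, so these graphs have no target vertices. Filtering over the number of in-core vertices, the associated graded splits over in-core graphs $\gamma$ as a tensor product $\big(\bigotimes_{x\in V(\gamma)} q\mathcal{T}^{in,\sim}_x\big)^{\mathrm{Aut}(\gamma)}$ of local special-in tree complexes, exactly as before but with the core out-weight frozen at $w_x^{out}=0$; that is, $q\mathcal{T}^{in,\sim}_x\cong q\mathcal{T}^{in}_{|x|_{out},|x|_{in},0}$. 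Specializing the cohomology computed in the proof of Proposition \ref{prop:SinqwGC} to $w_x^{out}=0$ yields precisely the decorations permitted in $\mathsf{S}\qwGC_k^\sim$: $\frac{0}{\infty_1}$ for passing vertices, both $\frac{0}{\infty_1}$ and $\frac{0}{0}$ for generic vertices, and (for $|x|_{in}=0$) the source decorations of condition $(1')$ with $m=0$. Thus the cohomology of the associated graded is $\mathsf{S}\qwGC_k^\sim$ with vanishing induced differential, the spectral sequence (which converges by the degree-shift of Section \textbf{4}.1) degenerates, and the inclusion is a quasi-isomorphism.

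The main obstacle is the quotient-level statement in the previous paragraph: one must check that passing to $\qwGC_k^\sim$ kills exactly the differential terms which create a positive out-weight (every $\delta_x'$ term together with the $\delta_x''$ terms producing a source of positive out-weight), so that the surviving local complexes really are the $w_x^{out}=0$ specializations of the trees analyzed in Proposition \ref{prop:SinqwGC}. Once this identification is in place, the local cohomology is a direct read-off of that proposition and the remaining homological algebra is the routine diagram chase indicated above.
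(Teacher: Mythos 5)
Your proposal is correct and is essentially the paper's own argument: the paper proves this proposition by declaring it ``analogous to'' Proposition \ref{prop:SinwGC+}, and your write-up is exactly that analogue --- the $3\times 3$ diagram of short exact sequences, the reduction (via acyclicity of the cokernels) to the inclusion of the quotient complexes, and the in-core filtration identifying the local tree complexes with the $w_x^{out}=0$ specialization $q\mathcal{T}^{in}_{|x|_{out},|x|_{in},0}$ computed in Proposition \ref{prop:SinqwGC}. You also correctly note the only structural difference from the $\wGC$ case, namely that the one-sided $+$-condition makes the quotient a single summand (graphs with all out-weights zero, hence no target vertices) rather than three.
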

\begin{proof}
    The proof is analogous of that of proposition \ref{prop:SinwGC+}.
\end{proof}

\subsection{Special out-vertices}
\begin{definition}
    Let $\Gamma$ be a bi-weighted graph. A
    vertex $x$ of $\Gamma$ is a \textit{special out-vertex} if
    \begin{enumerate}
        \item[i)] either $x$ is a univalent vertex with one outgoing edge and in-weight zero, i.e on the form $\begin{tikzpicture}[shorten >=1pt,node distance=1.2cm,auto,>=angle 90,baseline=-0.1cm]
            \node[biw] (a) at (0,0) {$m$\nodepart{lower}$0$};
            \node[] (d) at (0,-1) {}
                edge [->] (a);
        \end{tikzpicture}$
        \item[ii)] or $x$ becomes a univalent vertex of type i) after recursive removal of all special-out vertices of type i) from $\Gamma$.
    \end{enumerate}
Vertices which are not special out-vertices are called \textit{out-core vertices} or just \textit{core vertices}.
\end{definition}
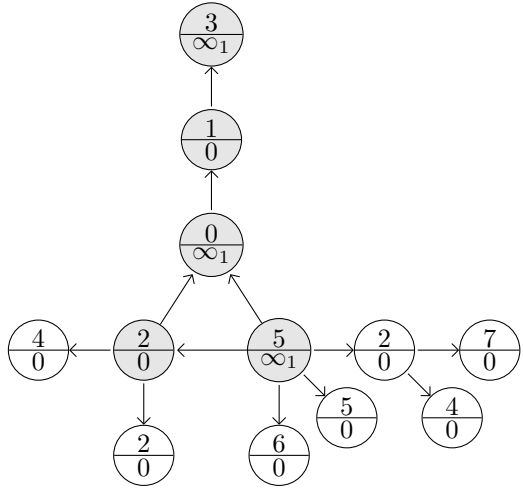
\begin{figure}[h]
    \begin{equation*}
    \begin{tikzpicture}[shorten >=1pt,node distance=1.2cm,auto,>=angle 90,baseline=-0.1cm]
        \node[biw,fill=mygray] (up) at (0,0) {$0$\nodepart{lower}$\infty_1$};
        \node[biw,fill=mygray] (left) at (-0.9,-1.4) {$2$\nodepart{lower}$0$}
            edge [->] (up);
        \node[biw,fill=mygray] (right) at (0.9,-1.4) {$5$\nodepart{lower}$\infty_1$}
            edge [->] (up)
            edge [->] (left);
        \node[biw,fill=mygray] (up1) at (0,1.4) {$1$\nodepart{lower}$0$}
            edge [<-] (up);
        \node[biw,fill=mygray] (up2) at (0,2.8) {$3$\nodepart{lower}$\infty_1$}
            edge [<-] (up1);
        \node[biw] (left1) at (-2.3,-1.4) {$4$\nodepart{lower}$0$}
            edge [<-] (left);
        \node[biw] (left2) at (-0.9,-2.8) {$2$\nodepart{lower}$0$}
            edge [<-] (left);
        \node[biw] (right1) at (2.3,-1.4) {$2$\nodepart{lower}$0$}
            edge [<-] (right);
        \node[biw] (right2) at (1.8,-2.3) {$5$\nodepart{lower}$0$}
            edge [<-] (right);
        \node[biw] (right3) at (0.9,-2.8) {$6$\nodepart{lower}$0$}
            edge [<-] (right);
        \node[biw] at (3.7,-1.4) {$7$\nodepart{lower}$0$}
            edge [<-] (right1);
        \node[biw] at (3.2,-2.3) {$4$\nodepart{lower}$0$}
            edge [<-] (right1);    
    \end{tikzpicture}
\end{equation*}
    \caption{Example of a graph in $S_1^{in}\OWGC_k$ where out-core vertices are colored gray and the special out-vertices are colored white.}
\end{figure}
\begin{definition}
    Let $\qGC_k$ be the subcomplex of $\mathsf{S}\wGC_k$ of graphs $\Gamma$ whose vertices $V(\Gamma)$ are independently decorated by the bi-weights $\frac{\infty_1}{\infty_1},\frac{\infty_1}{0},\frac{0}{\infty_1}$ and $\frac{0}{0}$ subject to the following conditions:
    \begin{enumerate}
        \item If $x\in V(\gamma)$ is a univalent vertex, then it is decorated by $\frac{\infty_1}{\infty_1}$.
        \item If $x\in V(\gamma)$ is a source, then it is either decorated by $\frac{\infty_1}{\infty_1}$ or $\frac{0}{\infty_1}$.
        \item If $x\in V(\gamma)$ is a target, then it is either decorated by $\frac{\infty_1}{\infty_1}$ or $\frac{\infty_1}{0}$.
        \item If $x\in V(\gamma)$ is a passing vertex, then it is either decorated by $\frac{\infty_1}{\infty_1}$, $\frac{\infty_1}{0}$ or $\frac{0}{\infty_1}$.
        \item If $x\in V(\gamma)$ is a generic vertex, then it is either decorated by $\frac{\infty_1}{\infty_1}$, $\frac{\infty_1}{0}$, $\frac{0}{\infty_1}$ or $\frac{0}{0}$.
    \end{enumerate}
    To describe the differential in an effective manner, we introduce a new notation. Let $\Gamma$ be a graph in $\qGC_d$ and let $x$ be some vertex of $\Gamma$. Set $\Big(\frac{a}{b},\frac{c}{d}\Big)_x$ to be the sum over all possible reattachements of the edges attached to $x$ among two new vertices $x'$ and $x''$ (connected by a single edge going from $x'$ to $x''$) of bi-weight $\frac{a}{b}$ and $\frac{c}{d}$ respectively.
    Any term containing an invalid bi-weight is set to zero. Pictorially, we have
    \begin{equation*}
    \Big(\frac{a}{b},\frac{c}{d}\Big)_x:=
    \begin{tikzpicture}[shorten >=1pt,>=angle 90,baseline={([yshift=-.5ex]current bounding box.center)}]
            \node[ellipse,
            draw = black,
            minimum width = 3cm, 
            minimum height = 1.8cm,
            dotted] (e) at (0,0) {};
        \node[biw] (vLeft) at (-0.8,0) {$a$\nodepart{lower}$b$};
        \node[biw] (vRight) at (0.8,0) {$c$\nodepart{lower}$d$}
            edge [<-] (vLeft);
        \node[invisible] (v1) at (-0.6,1.4) {}
            edge [<-] (e);
        \node[] at (0,1.2) {$\cdots$};
        \node[invisible] (v3) at (0.6,1.4) {}
            edge [<-] (e);
        \node[invisible] (v4) at (-0.6,-1.4) {}
            edge [->] (e);
        \node[] at (0,-1.2) {$\cdots$};
        \node[invisible] (v6) at (0.6,-1.4) {}
            edge [->] (e);
    \end{tikzpicture}    
    \end{equation*}
    With this notation, the differential $d$ acts on graphs $\Gamma\in\qGC_k$ as $d(\Gamma)=\sum_{x\in V(\Gamma)}d_x(\Gamma)$. The map $d_x$ act on vertices with the four different bi-weights in the following way:
    \begin{flalign*}
        d_x\Big(\ 
        \begin{tikzpicture}[shorten >=1pt,>=angle 90,baseline={([yshift=-.5ex]current bounding box.center)}]
            \node[biw] (v0) at (0,0) {$\infty_1$\nodepart{lower}$\infty_1$};
            \node[invisible] (v1) at (-0.4,0.8) {}
                edge [<-] (v0);
            \node[] at (0,0.6) {$\scriptstyle\cdots$};
            \node[invisible] (v3) at (0.4,0.8) {}
                edge [<-] (v0);
            \node[invisible] (v4) at (-0.4,-0.8) {}
                edge [->] (v0);
            \node[] at (0,-0.6) {$\scriptstyle\cdots$};
            \node[invisible] (v6) at (0.4,-0.8) {}
                edge [->] (v0);
        \end{tikzpicture}\ \Big) \
        &=\Big(\frac{\infty_1}{\infty_1},\frac{\infty_1}{\infty_1}\Big)_x
        +\Big(\frac{0}{\infty_1},\frac{\infty_1}{\infty_1}\Big)_x
        +\Big(\frac{\infty_1}{0},\frac{\infty_1}{\infty_1}\Big)_x
        +\Big(\frac{\infty_1}{\infty_1},\frac{0}{\infty_1}\Big)_x
        +\Big(\frac{\infty_1}{\infty_1},\frac{\infty_1}{0}\Big)_x\\
        &+\Big(\frac{0}{0},\frac{\infty_1}{\infty_1}\Big)_x
        +\Big(\frac{\infty_1}{0},\frac{0}{\infty_1}\Big)_x
        +\Big(\frac{0}{\infty_1},\frac{\infty_1}{0}\Big)_x
        +\Big(\frac{\infty_1}{\infty_1},\frac{0}{0}\Big)_x\\
        & - \Bigg( \ \ 
        \begin{tikzpicture}[shorten >=1pt,>=angle 90,baseline={(4ex,-0.5ex)}]
            \node[biw] (v0) at (0,0) {$\infty_1$\nodepart{lower}$\infty_1$};
            \node[biw] (new) at (1.5,0.7) {$\infty_1$\nodepart{lower}$\infty_1$}
                edge [<-] (v0);
            \node[invisible] (v1) at (-0.4,0.8) {}
                edge [<-] (v0);
            \node[] at (0,0.6) {$\scriptstyle\cdots$};
            \node[invisible] (v3) at (0.4,0.8) {}
                edge [<-] (v0);
            \node[invisible] (v4) at (-0.4,-0.8) {}
                edge [->] (v0);
            \node[] at (0,-0.6) {$\scriptstyle\cdots$};
            \node[invisible] (v6) at (0.4,-0.8) {}
                edge [->] (v0);
        \end{tikzpicture}
        \ +\ 
        \begin{tikzpicture}[shorten >=1pt,>=angle 90,baseline={(4ex,-0.5ex)}]
            \node[biw] (v0) at (0,0) {$0$\nodepart{lower}$\infty_1$};
            \node[biw] (new) at (1.5,0.7) {$\infty_1$\nodepart{lower}$\infty_1$}
                edge [<-] (v0);
            \node[invisible] (v1) at (-0.4,0.8) {}
                edge [<-] (v0);
            \node[] at (0,0.6) {$\scriptstyle\cdots$};
            \node[invisible] (v3) at (0.4,0.8) {}
                edge [<-] (v0);
            \node[invisible] (v4) at (-0.4,-0.8) {}
                edge [->] (v0);
            \node[] at (0,-0.6) {$\scriptstyle\cdots$};
            \node[invisible] (v6) at (0.4,-0.8) {}
                edge [->] (v0);
        \end{tikzpicture}
        \ +\ 
        \begin{tikzpicture}[shorten >=1pt,>=angle 90,baseline={(4ex,-0.5ex)}]
            \node[biw] (v0) at (0,0) {$\infty_1$\nodepart{lower}$\infty_1$};
            \node[biw] (new) at (1.5,-0.7) {$\infty_1$\nodepart{lower}$\infty_1$}
                edge [->] (v0);
            \node[invisible] (v1) at (-0.4,0.8) {}
                edge [<-] (v0);
            \node[] at (0,0.6) {$\scriptstyle\cdots$};
            \node[invisible] (v3) at (0.4,0.8) {}
                edge [<-] (v0);
            \node[invisible] (v4) at (-0.4,-0.8) {}
                edge [->] (v0);
            \node[] at (0,-0.6) {$\scriptstyle\cdots$};
            \node[invisible] (v6) at (0.4,-0.8) {}
                edge [->] (v0);
        \end{tikzpicture}
        \ +\ 
        \begin{tikzpicture}[shorten >=1pt,>=angle 90,baseline={(4ex,-0.5ex)}]
            \node[biw] (v0) at (0,0) {$\infty_1$\nodepart{lower}$0$};
            \node[biw] (new) at (1.5,-0.7) {$\infty_1$\nodepart{lower}$\infty_1$}
                edge [->] (v0);
            \node[invisible] (v1) at (-0.4,0.8) {}
                edge [<-] (v0);
            \node[] at (0,0.6) {$\scriptstyle\cdots$};
            \node[invisible] (v3) at (0.4,0.8) {}
                edge [<-] (v0);
            \node[invisible] (v4) at (-0.4,-0.8) {}
                edge [->] (v0);
            \node[] at (0,-0.6) {$\scriptstyle\cdots$};
            \node[invisible] (v6) at (0.4,-0.8) {}
                edge [->] (v0);
        \end{tikzpicture}
        \ \ \Bigg) &&
        \\
        \\
        d_x\Big(\ 
        \begin{tikzpicture}[shorten >=1pt,>=angle 90,baseline={([yshift=-.5ex]current bounding box.center)}]
            \node[biw] (v0) at (0,0) {$0$\nodepart{lower}$\infty_1$};
            \node[invisible] (v1) at (-0.4,0.8) {}
                edge [<-] (v0);
            \node[] at (0,0.6) {$\scriptstyle\cdots$};
            \node[invisible] (v3) at (0.4,0.8) {}
                edge [<-] (v0);
            \node[invisible] (v4) at (-0.4,-0.8) {}
                edge [->] (v0);
            \node[] at (0,-0.6) {$\scriptstyle\cdots$};
            \node[invisible] (v6) at (0.4,-0.8) {}
                edge [->] (v0);
        \end{tikzpicture}\ \Big) \
        &=\Big(\frac{0}{\infty_1},\frac{0}{\infty_1}\Big)_x
        +\Big(\frac{0}{0},\frac{0}{\infty_1}\Big)_x
        +\Big(\frac{0}{\infty_1},\frac{0}{0}\Big)_x\\
        & - \Bigg( \ \ 
        \begin{tikzpicture}[shorten >=1pt,>=angle 90,baseline={(4ex,-0.5ex)}]
            \node[biw] (v0) at (0,0) {$0$\nodepart{lower}$\infty_1$};
            \node[biw] (new) at (1.5,-0.7) {$\infty_1$\nodepart{lower}$\infty_1$}
                edge [->] (v0);
            \node[invisible] (v1) at (-0.4,0.8) {}
                edge [<-] (v0);
            \node[] at (0,0.6) {$\scriptstyle\cdots$};
            \node[invisible] (v3) at (0.4,0.8) {}
                edge [<-] (v0);
            \node[invisible] (v4) at (-0.4,-0.8) {}
                edge [->] (v0);
            \node[] at (0,-0.6) {$\scriptstyle\cdots$};
            \node[invisible] (v6) at (0.4,-0.8) {}
                edge [->] (v0);
        \end{tikzpicture}
        \ +\ 
        \begin{tikzpicture}[shorten >=1pt,>=angle 90,baseline={(4ex,-0.5ex)}]
            \node[biw] (v0) at (0,0) {$0$\nodepart{lower}$0$};
            \node[biw] (new) at (1.5,-0.7) {$\infty_1$\nodepart{lower}$\infty_1$}
                edge [->] (v0);
            \node[invisible] (v1) at (-0.4,0.8) {}
                edge [<-] (v0);
            \node[] at (0,0.6) {$\scriptstyle\cdots$};
            \node[invisible] (v3) at (0.4,0.8) {}
                edge [<-] (v0);
            \node[invisible] (v4) at (-0.4,-0.8) {}
                edge [->] (v0);
            \node[] at (0,-0.6) {$\scriptstyle\cdots$};
            \node[invisible] (v6) at (0.4,-0.8) {}
                edge [->] (v0);
        \end{tikzpicture}
        \ \ \Bigg) &&
        \end{flalign*}
    \begin{flalign*}
        d_x\Big(\ 
        \begin{tikzpicture}[shorten >=1pt,>=angle 90,baseline={([yshift=-.5ex]current bounding box.center)}]
            \node[biw] (v0) at (0,0) {$\infty_1$\nodepart{lower}$0$};
            \node[invisible] (v1) at (-0.4,0.8) {}
                edge [<-] (v0);
            \node[] at (0,0.6) {$\scriptstyle\cdots$};
            \node[invisible] (v3) at (0.4,0.8) {}
                edge [<-] (v0);
            \node[invisible] (v4) at (-0.4,-0.8) {}
                edge [->] (v0);
            \node[] at (0,-0.6) {$\scriptstyle\cdots$};
            \node[invisible] (v6) at (0.4,-0.8) {}
                edge [->] (v0);
        \end{tikzpicture}\ \Big) \
        &=\Big(\frac{\infty_1}{0},\frac{\infty_1}{0}\Big)_x
        +\Big(\frac{0}{0},\frac{\infty_1}{0}\Big)_x
        +\Big(\frac{\infty_1}{0},\frac{0}{0}\Big)_x\\
        & - \Bigg( \ \ 
        \begin{tikzpicture}[shorten >=1pt,>=angle 90,baseline={(4ex,-0.5ex)}]
            \node[biw] (v0) at (0,0) {$\infty_1$\nodepart{lower}$0$};
            \node[biw] (new) at (1.5,0.7) {$\infty_1$\nodepart{lower}$\infty_1$}
                edge [<-] (v0);
            \node[invisible] (v1) at (-0.4,0.8) {}
                edge [<-] (v0);
            \node[] at (0,0.6) {$\scriptstyle\cdots$};
            \node[invisible] (v3) at (0.4,0.8) {}
                edge [<-] (v0);
            \node[invisible] (v4) at (-0.4,-0.8) {}
                edge [->] (v0);
            \node[] at (0,-0.6) {$\scriptstyle\cdots$};
            \node[invisible] (v6) at (0.4,-0.8) {}
                edge [->] (v0);
        \end{tikzpicture}
        \ +\ 
        \begin{tikzpicture}[shorten >=1pt,>=angle 90,baseline={(4ex,-0.5ex)}]
            \node[biw] (v0) at (0,0) {$0$\nodepart{lower}$0$};
            \node[biw] (new) at (1.5,0.7) {$\infty_1$\nodepart{lower}$\infty_1$}
                edge [<-] (v0);
            \node[invisible] (v1) at (-0.4,0.8) {}
                edge [<-] (v0);
            \node[] at (0,0.6) {$\scriptstyle\cdots$};
            \node[invisible] (v3) at (0.4,0.8) {}
                edge [<-] (v0);
            \node[invisible] (v4) at (-0.4,-0.8) {}
                edge [->] (v0);
            \node[] at (0,-0.6) {$\scriptstyle\cdots$};
            \node[invisible] (v6) at (0.4,-0.8) {}
                edge [->] (v0);
        \end{tikzpicture}
        \ \ \Bigg) &&
        \\
        \\
        d_x\Big(\ 
        \begin{tikzpicture}[shorten >=1pt,>=angle 90,baseline={([yshift=-.5ex]current bounding box.center)}]
            \node[biw] (v0) at (0,0) {$0$\nodepart{lower}$0$};
            \node[invisible] (v1) at (-0.4,0.8) {}
                edge [<-] (v0);
            \node[] at (0,0.6) {$\scriptstyle\cdots$};
            \node[invisible] (v3) at (0.4,0.8) {}
                edge [<-] (v0);
            \node[invisible] (v4) at (-0.4,-0.8) {}
                edge [->] (v0);
            \node[] at (0,-0.6) {$\scriptstyle\cdots$};
            \node[invisible] (v6) at (0.4,-0.8) {}
                edge [->] (v0);
        \end{tikzpicture}\ \Big) \
        &=\Big(\frac{0}{0},\frac{0}{0}\Big)_x &&
\end{flalign*}
\end{definition}
\begin{proposition}\label{prop:qGC}
    The inclusion $\qGC_k\hookrightarrow\mathsf{S}\wGC_k$ is a quasi-isomorphism.
\end{proposition}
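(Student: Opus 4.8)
The plan is to mirror the proof of Proposition~\ref{prop:SinwGC}, replacing the in-direction filtration by the analogous out-direction one. First I would filter $\mathsf{S}\wGC_k$ by the number of out-core vertices; by the degree-shifting trick recalled at the beginning of this section the number of graphs in each degree is finite, so the filtration is bounded and the associated spectral sequence converges. On the associated graded $gr\,\mathsf{S}\wGC_k$ only the part of the differential that fixes the out-core graph survives, namely the terms that split a vertex off a new special-out vertex or attach a new special-out leaf. These act independently inside the special-out trees growing out of each out-core vertex, so the complex decomposes over out-core graphs as $gr\,\mathsf{S}\wGC_k=\bigoplus_{\gamma}\mathsf{outCore}(\gamma)$ and further as $\mathsf{outCore}(\gamma)\cong\big(\bigotimes_{x\in V(\gamma)}\mathcal{T}^{out}_x\big)^{\mathrm{Aut}(\gamma)}$, where $\mathcal{T}^{out}_x$ is the complex of special-out trees attached to $x$.

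The computation then reduces to a single $\mathcal{T}^{out}_x$. I would note that this complex depends only on the core-valences $|x|_{out},|x|_{in}$ and on the in-decoration $w_x^{in}\in\{\infty_1,0\}$, which is a spectator for the out-differential: splitting off an in-weight-zero special-out child leaves the in-decoration of $x$ untouched, and $\infty_1$ is merely the formal sum over in-weights $\geq 1$, preserved term by term. Reversing every edge and interchanging in- with out-weights identifies $\mathcal{T}^{out}_x$ with the in-tree complex $\mathcal{T}^{in}_{|x|_{in},|x|_{out},w_x^{in}}$ whose cohomology was already determined in Proposition~\ref{prop:SinwGC}; alternatively one reruns the same tree-pruning filtrations (first over the number of univalent special-out vertices, then over the branch vertices). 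Either way $H(\mathcal{T}^{out}_x)$ is spanned by the out-decorations $\infty_1$ and $0$ admissible for a vertex of the given type.

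Finally I would read off the surviving generators and match them against conditions (1)--(5) in the definition of $\qGC_k$, i.e. the decorations $\tfrac{\infty_1}{\infty_1},\tfrac{\infty_1}{0},\tfrac{0}{\infty_1},\tfrac{0}{0}$. It is essential here that a univalent vertex with in-weight $0$ and a single incoming edge is itself a special-out vertex, hence absorbed into a tree rather than recorded as a core vertex; this is exactly what excludes the degenerate case $(|x|_{in},|x|_{out},w_x^{in})=(1,0,0)$ and forces univalent out-core vertices to carry only $\tfrac{\infty_1}{\infty_1}$, and it similarly pins down the source, target and passing cases once the $\mathsf{S}\wGC_k$-constraints coupling $w_x^{in}$ to $m+|x|_{out}$ are imposed. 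Since the corresponding filtration on the subcomplex $\qGC_k$ has trivial differential on its associated graded and reproduces precisely these generators, the inclusion induces an isomorphism on $E_1$-pages, and convergence of both spectral sequences gives the quasi-isomorphism. The main obstacle is precisely this case analysis of $\mathcal{T}^{out}_x$: one must check that the validity conditions, which tie the fixed in-weight to the variable out-weight, trim the cohomology down to exactly the four admissible decorations for each vertex type, the univalent, source and target cases being the delicate ones.
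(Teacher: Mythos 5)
Your proposal follows the paper's proof in its overall architecture: the same filtration of $\mathsf{S}\wGC_k$ by the number of out-core vertices, the same decomposition of the associated graded over out-core graphs $\gamma$ into $\big(\bigotimes_{x\in V(\gamma)}\mathcal{T}^{out}_x\big)^{\mathrm{Aut}(\gamma)}$, the same observation that $\mathcal{T}^{out}_x$ depends only on $(|x|_{out},|x|_{in},w_x^{in})$ with $w_x^{in}$ invariant under the differential, and the same comparison of $E_1$-pages. The paper then simply asserts (deferring to \cite{F}) that the nontrivial cycles of each tree complex are the decorations listed in the definition of $\qGC_k$; your fallback of rerunning the tree-pruning filtrations is exactly that route and is sound.

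The mirror-symmetry shortcut you offer as the primary route, however, is incorrect as stated, and it fails precisely in the cases you call delicate. The complex $\mathsf{S}\wGC_k$ is \emph{not} mirror-symmetric: its in-decorations are the two symbols $0,\infty_1$ subject to conditions (1)--(5) of Definition \ref{def:swGC}, which couple the in-decoration to the integer out-weight and the valences, while out-weights remain honest integers. These conditions are part of what $\mathcal{T}^{out}_x$ \emph{is} --- they determine which elements exist --- so they cannot be imposed after mirroring. Concretely, for a univalent source core vertex ($|x|_{out}=1$, $|x|_{in}=0$, $w_x^{in}=\infty_1$), the element ``core alone with out-weight $0$'' does not exist in $\mathcal{T}^{out}_x$, since condition (1) of Definition \ref{def:swGC} demands $m+|x|_{out}\geq 2$ for sources; but its mirror image, the univalent-target core alone with in-weight $0$ and out-weight $c$, is a valid element and a nontrivial cycle of $\mathcal{T}^{in}_{0,1,c}$ for every $c\geq 2$ (it is the decoration $\tfrac{c}{0}$ allowed by condition (2)). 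So reading $w_x^{in}=\infty_1$ as ``the formal sum over in-weights $\geq 1$, term by term'' yields a two-dimensional cohomology where the true answer is one-dimensional, spanned by $\tfrac{\infty_1}{\infty_1}$ alone, as condition (1) of $\qGC_k$ requires. What is true --- because of the ternary constraint $w_x^{out}+w_x^{in}+|x|_{out}+|x|_{in}\geq 3$ --- is that the symbol $\infty_1$ behaves exactly like the numerical in-weight $1$: conditions (1)--(5) of Definition \ref{def:swGC} at in-decoration $\infty_1$ (resp.\ $0$) coincide with the $\fwGC_k$-validity constraints at in-weight $1$ (resp.\ $0$), so the correct identifications are $\mathcal{T}^{out}_{a,b,\infty_1}\cong$ mirror of $\mathcal{T}^{in}_{b,a,1}$ and $\mathcal{T}^{out}_{a,b,0}\cong$ mirror of $\mathcal{T}^{in}_{b,a,0}$. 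Verifying this is essentially the same case analysis the shortcut was meant to avoid, so in practice the proof should be run along your alternative route, which is what the paper and \cite{F} do.
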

\begin{proof}
    The proposition was proven in \cite{F}, so here we do only give a brief outline. Consider a filtration on $\mathsf{S}\wGC_k$ over the number of out-core vertices in a graph. The associated graded split over out-core graphs $\gamma$ as
    \begin{equation*}
        gr\ \mathsf{S}\wGC_k = \bigoplus_{\gamma}\mathsf{outCore}(\gamma)
    \end{equation*}
    where $\mathsf{outCore}(\gamma)$ is the complex of graphs with out-core $\gamma$. This complex further decompose as
    \begin{equation*}
        \mathsf{outCore}(\gamma)\cong\Big(\bigotimes_{x\in V(\gamma)\mathcal{T}_x^{out}}\Big)
    \end{equation*}
    where $\mathcal{T}_x^{out}$ is the complex of special-out trees attached to $x$. For two vertices $x,y\in V(\gamma)$, we have that $\mathcal{T}^{out}_x\cong\mathcal{T}_y^{out}$ if $|x|_{out}=|y|_{out}$, $|x|_{in}=|y|_{in}$ and $w_x^{in}=w_y^{in}$, the latter being invariant under the differential. So there is an isomorphism $\mathcal{T}_x^{out}\cong\mathcal{T}_{|x|_{out},|x|_{in},w_x^{in}}$ where $\mathcal{T}_{a,b,c}^{out}$ is a general complex of special-out vertices to a vertex with $a$ outgoing edges, $b$ incoming edges, and whose core-vertex has in-weight $c$.
    The only non-trivial cycles of $\mathcal{T}_{a,b,c}^{out}$ are the decorations of vertices stated in the definition of $\qwGC_k$.
\end{proof}
Let $\oqGC_k$ be the subcomplex of $\qGC_k$ of oriented graphs. This is also a subcomplex of $\mathsf{S}\owGC_k$.
\begin{proposition}
    The inclusion $\oqGC_k\hookrightarrow\qGC_k$ is a quasi-isomorphism.
\end{proposition}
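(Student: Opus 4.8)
The plan is to prove the proposition by showing that the quotient complex $Q:=\qGC_k/\oqGC_k$ is acyclic. Here $Q$ is spanned by those bi-weighted graphs carrying the four admissible decorations of the definition of $\qGC_k$ which contain at least one closed directed path. Unlike the oriented analogues of the earlier propositions in this section (where one merely restricts the existing out-core filtration and observes that the local special-out tree complexes are unchanged), the present inclusion compares oriented graphs with graphs that may contain wheels, so the argument is genuinely different: the required acyclicity is the bi-weighted incarnation of the phenomenon behind Zivkovic's isomorphism $H^l(\OGC_k)\cong H^l(\dGC^t_k)$ of \cite{Z2}, and the strategy is to transport his cycle-straightening argument to the decorated setting. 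From the short exact sequence $0\to\oqGC_k\to\qGC_k\to Q\to 0$, acyclicity of $Q$ forces $\oqGC_k\hookrightarrow\qGC_k$ to be a quasi-isomorphism via the associated long exact sequence in cohomology.

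First I would fix the convergent grading of Section 4.1, so that every filtration by a vertex- or edge-count is bounded in each fixed degree and all the spectral sequences below converge. Then I would filter $Q$ so as to isolate the closed-path structure: ordering the directed cycles of each graph, singling out a minimal one, and filtering by the number of vertices and edges lying outside this distinguished cycle. Passing to the associated graded freezes everything except the differential acting locally along the distinguished cycle. Each vertex on such a cycle carries at least one incoming and one outgoing edge, hence is of passing or generic type (conditions (4)--(5) in the definition of $\qGC_k$), so that on these vertices both in-slot states $\frac{\,\cdot\,}{\infty_1}$ and $\frac{\,\cdot\,}{0}$ are available, and the full decorated differential $d_x$ of those types is active.

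On the associated graded I would then construct a contracting homotopy $h$ supported on the distinguished cycle. The operator $h$ reverses the part of the differential that splits off or reattaches an edge along the cycle --- equivalently, it pairs the two in-weight states at a marked cycle vertex --- and the identity $dh+hd=\mathrm{id}$ reduces to checking that the vertex-splitting contributions of the differential, restricted to the cycle, assemble into a Koszul-type complex with trivial cohomology. This is precisely the decorated analogue of the straightening homotopy used by Zivkovic. Acyclicity of every graded piece then yields $H(Q)=0$, and hence the proposition; the oriented subcomplex $\oqGC_k$ thereby computes the cohomology of all of $\qGC_k$.

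The main obstacle I anticipate is not conceptual but the construction of $h$ as a well-defined map on coinvariants. The selection of a distinguished cycle and of a marked vertex on it must be canonical enough to commute with the graph automorphisms and with the Koszul signs attached to edges and vertices, and one must verify that whenever $h$ or the differential would produce a term violating the admissible decoration types (such a term being set to zero in $\qGC_k$) the cancellations still close up to give $dh+hd=\mathrm{id}$ on the graded pieces. This sign- and symmetry-bookkeeping along the cycle is where the real work lies; once it is settled, the conclusion follows formally from the filtration, the degree bound of Section 4.1, and the long exact sequence of the pair.
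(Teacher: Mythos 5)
You have taken the printed statement at face value, but the printed statement is a typo, and the statement you actually set out to prove is false; this is the fatal gap in your proposal. The sentence immediately preceding the proposition (``Let $\oqGC_k$ be the subcomplex of $\qGC_k$ of oriented graphs. \emph{This is also a subcomplex of} $\mathsf{S}\owGC_k$''), the parallel propositions for $\oqqGC_k\hookrightarrow\mathsf{S}\oqwGC_k$ and for $\opqGC_k$, and the paper's own proof (a pointer to Corollary \ref{cor:SinwGC}) all show that the intended claim is that the inclusion $\oqGC_k\hookrightarrow\mathsf{S}\owGC_k$ is a quasi-isomorphism --- an inclusion \emph{within} the oriented world. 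Its proof is exactly the mechanism you set aside in your opening paragraph: restrict the out-core filtration of Proposition \ref{prop:qGC} to oriented graphs; since special-out trees attached to a core graph never create a closed directed path, the associated graded decomposes over \emph{oriented} out-core graphs into the same local tree complexes $\mathcal{T}^{out}_x$ as in the unoriented case, and their cohomology reproduces precisely the four admissible decorations. No comparison between oriented and wheeled graphs is made at this step of the paper.

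By contrast, the claim you try to prove --- that $Q=\qGC_k/\oqGC_k$ is acyclic, i.e.\ that passing from oriented graphs to arbitrary directed graphs with wheels does not change cohomology --- is false, so no choice of filtration, distinguished cycle, or contracting homotopy can close your argument. Concretely, by Section 5 and \cite{F}, $\qGC_k=\qGC_k^0\oplus\qGC_k^*$ with $\qGC_k^0\cong\dGC_k/\dGC_k^{s+t}$ and $H(\qGC_k^*)\cong H(\dGC_k)$, whereas $\oqGC_k\cap\qGC_k^0=0$ (a graph with no closed directed path always has a source, and sources cannot carry the decoration $\frac{0}{0}$) and $H(\oqGC_k)\cong H(\OGC_k)$ by the oriented zig-zag underlying \cite{MW1}. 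Taking $k=3$ and cohomological degree $0$ gives $H^0(\oqGC_3)\cong H^0(\OGC_3)=\mathfrak{grt}_1\neq 0$, while $H^0(\qGC_3)\cong H^0(\dGC_3^{\circlearrowleft})\oplus H^0(\dGC_3)=0$, since $H^l(\dGC_3)=0$ for $l>-4$ and $H^0(\dGC_3^{\circlearrowleft})=H^0(\dGC_3)$. Hence $H(Q)\neq0$, and the long exact sequence produces an obstruction rather than a proof; the difficulty is not the sign and symmetry bookkeeping you anticipate. Your own appeal to \cite{Z2} flags the error: Zivkovic identifies $H^\bullet(\OGC_k)$ with $H^\bullet(\dGC_k^{t})$, the \emph{targeted} subcomplex, not with $H^\bullet(\dGC_k)$, and the decorated counterpart of this distinction is exactly why the ``$+$''-version of the wheeled complex sees $\dGC^t_{k}$ (Main Theorem II) while the full complex $\qGC_k$ sees $\dGC_k$ plus the extra summand $\dGC_k/\dGC_k^{s+t}$, whose cohomology genuinely differs from that of the oriented complex.
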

\begin{proof}
    See proof of corollary \ref{cor:SinwGC}.
\end{proof}
\begin{definition}
    Let $\qqGC_k$ be the subcomplex of $\mathsf{S}\qwGC_k$ of graphs $\Gamma$ whose vertices $V(\Gamma)$ are independently decorated by the bi-weights $\frac{\infty_1}{\infty_1},\frac{\infty_1}{0},\frac{0}{\infty_1}$, $\frac{0}{0}$ and $\frac{\infty_2}{0}$ subject to the following conditions:
    \begin{enumerate}
        \item[$(1')$] If $x\in V(\gamma)$ is a univalent source, then it is decorated by $\frac{\infty_1}{\infty_1}$ or $\frac{\infty_2}{0}$.
        \item[$(1'')$] If $x\in V(\gamma)$ is a univalent target, then it is decorated by $\frac{\infty_1}{\infty_1}$.
        \item[$(2')$] If $x\in V(\gamma)$ is a bivalent source, then it is either decorated by $\frac{\infty_1}{\infty_1}$ or $\frac{0}{\infty_1}$.
        \item[$(2'')$] If $x\in V(\gamma)$ is an at least trivalent source, then it is either decorated by $\frac{\infty_1}{\infty_1}$, $\frac{0}{\infty_1}$ or $\frac{0}{0}$.
        \item[$(3)$] If $x\in V(\gamma)$ is a target, then it is either decorated by $\frac{\infty_1}{\infty_1}$ or $\frac{\infty_1}{0}$.
        \item[$(4)$] If $x\in V(\gamma)$ is a passing vertex, then it is either decorated by $\frac{\infty_1}{\infty_1}$, $\frac{\infty_1}{0}$ or $\frac{0}{\infty_1}$.
        \item[$(5)$] If $x\in V(\gamma)$ is a generic vertex, then it is either decorated by $\frac{\infty_1}{\infty_1}$, $\frac{\infty_1}{0}$, $\frac{0}{\infty_1}$ or $\frac{0}{0}$.
    \end{enumerate}
    The differential $d$ acts similarly as for the complex $\qGC_k$ but where new decorations are allowed. If $\Gamma\in\qqGC_k$, then $d(\Gamma)=\sum_{x\in V(\Gamma)}d_x(\Gamma)=\sum_{x\in V(\Gamma)}\delta_x(\Gamma)-\delta'_x(\Gamma)-\delta''_x(\Gamma)$. We describe below how the $\delta-$maps act on vertices with different bi-weights. A univalent source decorated by $\frac{\infty_1}{0}$ in any of the formulas is interpreted as decorated by $\frac{\infty_2}{0}$.
    \begin{flalign*}
        \delta_x\Big(\ 
        \begin{tikzpicture}[shorten >=1pt,>=angle 90,baseline={([yshift=-.5ex]current bounding box.center)}]
            \node[biw] (v0) at (0,0) {$\infty_1$\nodepart{lower}$\infty_1$};
            \node[invisible] (v1) at (-0.4,0.8) {}
                edge [<-] (v0);
            \node[] at (0,0.6) {$\scriptstyle\cdots$};
            \node[invisible] (v3) at (0.4,0.8) {}
                edge [<-] (v0);
            \node[invisible] (v4) at (-0.4,-0.8) {}
                edge [->] (v0);
            \node[] at (0,-0.6) {$\scriptstyle\cdots$};
            \node[invisible] (v6) at (0.4,-0.8) {}
                edge [->] (v0);
        \end{tikzpicture}\ \Big) \
        &=\Big(\frac{\infty_1}{\infty_1},\frac{\infty_1}{\infty_1}\Big)_x
        +\Big(\frac{0}{\infty_1},\frac{\infty_1}{\infty_1}\Big)_x
        +\Big(\frac{\infty_1}{0},\frac{\infty_1}{\infty_1}\Big)_x
        +\Big(\frac{\infty_1}{\infty_1},\frac{0}{\infty_1}\Big)_x
        +\Big(\frac{\infty_1}{\infty_1},\frac{\infty_1}{0}\Big)_x\\
        &+\Big(\frac{0}{0},\frac{\infty_1}{\infty_1}\Big)_x
        +\Big(\frac{\infty_1}{0},\frac{0}{\infty_1}\Big)_x
        +\Big(\frac{0}{\infty_1},\frac{\infty_1}{0}\Big)_x
        +\Big(\frac{\infty_1}{\infty_1},\frac{0}{0}\Big)_x\\
        \delta'_x\Big(\ 
\ \Big) \ = 0
\end{flalign*}
\end{definition}
\begin{proposition}
    The inclusion $\qqGC_k\hookrightarrow\mathsf{S}\qwGC_k$ is a quasi-isomorphism.
\end{proposition}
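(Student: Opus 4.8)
The plan is to repeat the out-core filtration argument of Proposition \ref{prop:qGC} on $\mathsf{S}\qwGC_k$, and to isolate the extra source decorations that the quasi-condition $(1')$ forces into the answer. First I would filter $\mathsf{S}\qwGC_k$ by the number of out-core vertices of a graph; by the degree-renormalisation recalled at the beginning of this section this filtration is bounded in each fixed total degree, so the associated spectral sequence converges. On the associated graded $gr\,\mathsf{S}\qwGC_k$ the differential keeps only the terms that leave the out-core unchanged, namely those acting inside the special-out trees, so it decomposes over out-core graphs as $gr\,\mathsf{S}\qwGC_k=\bigoplus_\gamma\mathsf{QoutCore}(\gamma)$ with $\mathsf{QoutCore}(\gamma)\cong\bigl(\bigotimes_{x\in V(\gamma)}q\mathcal{T}_x^{out}\bigr)^{\mathrm{Aut}(\gamma)}$, where $q\mathcal{T}_x^{out}$ is the complex of special-out trees hanging off the out-edges of $x$. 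Exactly as before, $|x|_{out}$, $|x|_{in}$ and the in-weight $w_x^{in}$ are invariant under this differential, so $q\mathcal{T}_x^{out}$ depends only on this triple.

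The next step is to match $q\mathcal{T}_x^{out}$ with the complexes already computed in Proposition \ref{prop:qGC} wherever the quasi-modification is invisible. If $|x|_{in}\geq 1$, then condition $(1')$ never applies to $x$, the in-weight decoration is governed by the same rules as in $\mathsf{S}\wGC_k$, and $q\mathcal{T}_x^{out}\cong\mathcal{T}_{|x|_{out},|x|_{in},w_x^{in}}^{out}$; its cohomology therefore reproduces conditions $(1'')$, $(3)$, $(4)$ and $(5)$ verbatim. The only new phenomenon occurs when $x$ is a source, $|x|_{in}=0$. Since $w_x^{in}\in\{\infty_1,0\}$ is an invariant, $q\mathcal{T}_x^{out}$ splits as the direct sum of an ``$\infty_1$-part'', canonically isomorphic to the normal complex $\mathcal{T}_{|x|_{out},0,\infty_1}^{out}$ and contributing the old generators $\tfrac{\infty_1}{\infty_1}$ and (for $x$ at least bivalent) $\tfrac{0}{\infty_1}$, together with a genuinely new ``$0$-part'' $\mathcal{C}$ spanned by special-out trees whose in-core root carries in-weight $0$.

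The heart of the proof, and what I expect to be the main obstacle, is the cohomology of this $0$-part $\mathcal{C}$. I would compute it by imitating the tree computations of Propositions \ref{prop:SinqwGC} and \ref{prop:SinpwGC}: filter $\mathcal{C}$ by the number of univalent special-out leaves, use a secondary filtration over branch vertices to annihilate every component with two or more leaves, and reduce to a one-leaf complex whose surviving cycles are single-vertex graphs. The valency bound $w_x^{out}+|x|_{out}\geq 3$ inherited from $(1')$ then dictates the outcome vertex by vertex: a univalent source must retain out-weight $\geq 2$, yielding the new generator $\tfrac{\infty_2}{0}$ of condition $(1')$; an at-least-trivalent source may have its out-weight reduced to $0$, yielding $\tfrac{0}{0}$ of condition $(2'')$; and for a bivalent source the bound leaves no room, so $\mathcal{C}$ is acyclic, matching condition $(2')$. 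Assembling these computations identifies the $E_1$-page with $\qqGC_k$ equipped with the differential of its definition, and since the inclusion $\qqGC_k\hookrightarrow\mathsf{S}\qwGC_k$ is filtration-preserving and induces precisely this identification on $E_1$, convergence of the spectral sequence yields the asserted quasi-isomorphism.
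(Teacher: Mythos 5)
Your proposal follows essentially the same route as the paper's proof: filter $\mathsf{S}\qwGC_k$ by the number of out-core vertices, decompose the associated graded over out-core graphs into tensor products of special-out tree complexes $q\mathcal{T}_x^{out}$, note that these agree with the normal-case complexes of Proposition \ref{prop:qGC} except at sources carrying in-weight zero, and settle that remaining case by the valence trichotomy $|x|_{out}=1$, $|x|_{out}=2$, $|x|_{out}\geq 3$ using the tree-cohomology arguments of Proposition \ref{prop:SinpwGC}. One small caution: your phrase "the bound leaves no room" for the bivalent source is imprecise, since decorations $\frac{m}{0}$ with $m\geq 1$ do exist there; the asserted acyclicity is the outcome of the actual tree computation (which both you and the paper defer to the earlier propositions), not a direct consequence of the valency bound.
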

\begin{proof}
    Consider a filtration of $\mathsf{S}\qwGC_k$ over the number of out-core vertices in a graph. Then the associated graded split over out-core graphs
    \begin{equation*}
        gr \ \mathsf{S}\qwGC_k=\bigoplus_{\gamma} \mathsf{outQCore}(\gamma)
    \end{equation*}
    where $\mathsf{outQCore}(\gamma)$ is the complex of graphs with out-core $\gamma$. This complex further split as
    \begin{equation*}
        \mathsf{outQCore}(\gamma)\cong \Big(\bigotimes_{x\in V(\gamma)}q\mathcal{T}_x^{out}\Big)^{\mathrm{Aut}(\gamma)}
    \end{equation*}
    where $q\mathcal{T}_x^{out}$ is the complex of out-tree graphs attached to $x$. We note that $q\mathcal{T}_x^{out}=\mathcal{T}_x^{out}$ when $x$ is not a source or has in-weight $\infty_1$, and so these result follow from proposition \ref{prop:qGC}.
    Assume $|x|_{in}=0$ and $w_x^{in}=0$ and consider the three cases $|x|_{out}=1$, $|x|_{out}=2$ and $|x|_{out}\geq 3$. By the same arguments as in proposition \ref{prop:SinpwGC}, we compute the cohomology of $q\mathcal{T}_x^{out}$ and see that it agrees with the proposition.
\end{proof}
Let $\oqqGC_k$ be the subcomplex of $\qqGC_k$ of oriented graphs. This is also a subcomplex of $\mathsf{S}\oqwGC_k$.
\begin{proposition}
    The inclusion $\oqqGC_k\hookrightarrow\mathsf{S}\oqwGC_k$ is a quasi-isomorphism.
\end{proposition}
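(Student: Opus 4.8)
The plan is to repeat the out-core filtration argument used for the non-oriented inclusion $\qqGC_k\hookrightarrow\mathsf{S}\qwGC_k$, checking only that each ingredient restricts cleanly to the oriented subcomplexes. First I would note that the filtration of $\mathsf{S}\qwGC_k$ by the number of out-core vertices restricts to a filtration of $\mathsf{S}\oqwGC_k$, and that the corresponding filtration on $\oqqGC_k$ has associated graded with trivial differential. Convergence of both spectral sequences is guaranteed by the degree-shifting trick described at the beginning of this section, exactly as in the non-oriented case.

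Next I would pass to the associated graded $gr\ \mathsf{S}\oqwGC_k$ and decompose it over out-core graphs. The crucial observation is that a graph $\Gamma$ with out-core $\gamma$ is oriented if and only if $\gamma$ is oriented: every special-out vertex lies on a tree all of whose edges point away from its root (an out-core vertex), so recursively attaching these trees can neither create nor destroy a closed directed path. Hence the decomposition restricts to a sum over oriented out-core graphs only,
\begin{equation*}
    gr\ \mathsf{S}\oqwGC_k=\bigoplus_{\gamma\text{ oriented}}\mathsf{outQCore}(\gamma),
\end{equation*}
and for each such $\gamma$ the local complex $\mathsf{outQCore}(\gamma)\cong\big(\bigotimes_{x\in V(\gamma)}q\mathcal{T}_x^{out}\big)^{\mathrm{Aut}(\gamma)}$ is literally the same tensor product of special-out tree complexes that appears in the preceding proposition, since $q\mathcal{T}_x^{out}$ depends only on the local data $(|x|_{out},|x|_{in},w_x^{in})$ at the vertex $x$ and not on the global orientedness of the ambient graph.

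It then follows that the cohomology computation carried out for $\qqGC_k\hookrightarrow\mathsf{S}\qwGC_k$ applies verbatim: the only surviving cohomology classes of each $q\mathcal{T}_x^{out}$ are the admissible vertex decorations listed in the definition of $\qqGC_k$, and these cut out precisely the oriented subcomplex $\oqqGC_k$ inside $\mathsf{S}\oqwGC_k$. Thus $H(gr\ \mathsf{S}\oqwGC_k)\cong\oqqGC_k$, and comparing this with the (already collapsed) spectral sequence of $\oqqGC_k$ shows that the inclusion induces an isomorphism on $E_1$-pages, hence is a quasi-isomorphism.

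The only real point to verify -- though a mild one -- is the orientedness bookkeeping in the second paragraph: one must check that passing to the oriented subcomplex commutes both with forming the associated graded and with the vertex-local tensor decomposition. This reduces to the elementary fact that special-out trees are acyclic and attach via a single edge at their root, but it deserves to be stated explicitly, since it is exactly this compatibility that licenses recycling the local cohomology computation of the non-oriented case rather than redoing it.
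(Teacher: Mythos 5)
Your proposal is correct and follows essentially the same route as the paper, whose proof simply invokes the argument used for the oriented in-core statement: filter by the number of out-core vertices, observe that the associated graded of $\mathsf{S}\oqwGC_k$ decomposes over \emph{oriented} out-core graphs into the same local tensor products $\big(\bigotimes_{x\in V(\gamma)}q\mathcal{T}_x^{out}\big)^{\mathrm{Aut}(\gamma)}$ as in the non-oriented case, and reuse that local cohomology computation. Your explicit check that orientedness is detected by the out-core (since special-out trees attach at a root and cannot create or destroy closed directed paths) is exactly the point the paper leaves implicit.
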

\begin{proof}
    See proof of corollary \ref{cor:SinwGC}.
\end{proof}
\begin{definition}
    Let $\pqGC_k$ be the subcomplex of $\mathsf{S}\pwGC_k$ of graphs $\Gamma$ whose vertices $V(\Gamma)$ are independently decorated by the bi-weights $\frac{\infty_1}{\infty_1},\frac{\infty_1}{0},\frac{0}{\infty_1}$, $\frac{0}{0}$, $\frac{\infty_2}{0}$ and $\frac{0}{\infty_2}$ subject to the following conditions:
    \begin{enumerate}
        \item[1'] If $x\in V(\gamma)$ is a univalent source, then it is decorated by $\frac{\infty_1}{\infty_1}$ or $\frac{\infty_2}{0}$.
        \item[1''] If $x\in V(\gamma)$ is a univalent target, then it is decorated by $\frac{\infty_1}{\infty_1}$ or $\frac{0}{\infty_2}$.
        \item[2'] If $x\in V(\gamma)$ is a bivalent source, then it is either decorated by $\frac{\infty_1}{\infty_1}$ or $\frac{0}{\infty_1}$.
        \item[2''] If $x\in V(\gamma)$ is an at least trivalent source, then it is either decorated by $\frac{\infty_1}{\infty_1}$, $\frac{0}{\infty_1}$ or $\frac{0}{0}$.
        \item[3'] If $x\in V(\gamma)$ is a bivalent target, then it is either decorated by $\frac{\infty_1}{\infty_1}$ or $\frac{\infty_1}{0}$.
        \item[3'] If $x\in V(\gamma)$ is an at least trivlanet target, then it is either decorated by $\frac{\infty_1}{\infty_1}$, $\frac{\infty_1}{0}$ or $\frac{0}{0}$.
        \item If $x\in V(\gamma)$ is a passing vertex, then it is either decorated by $\frac{\infty_1}{\infty_1}$, $\frac{\infty_1}{0}$ or $\frac{0}{\infty_1}$.
        \item If $x\in V(\gamma)$ is a generic vertex, then it is either decorated by $\frac{\infty_1}{\infty_1}$, $\frac{\infty_1}{0}$, $\frac{0}{\infty_1}$ or $\frac{0}{0}$.
    \end{enumerate}
    The differential $d$ acts similarly as for the complex $\qqGC_k$ but where new decorations are allowed. If $\Gamma\in\pqGC_k$, then $d(\Gamma)=\sum_{x\in V(\Gamma)}d_x(\Gamma)=\sum_{x\in V(\Gamma)}\delta_x(\Gamma)-\delta'_x(\Gamma)-\delta''_x(\Gamma)$. We describe below how the $\delta-$maps act on vertices with different bi-weights. A univalent source decorated by $\frac{\infty_1}{0}$ in any of the formulas is interpreted as decorated by $\frac{\infty_2}{0}$. Similarly a univalent target decorated by $\frac{0}{\infty_1}$ is interpreted as the decoration $\frac{0}{\infty_2}$.
    \begin{flalign*}
        \delta_x\Big(\ 
        \begin{tikzpicture}[shorten >=1pt,>=angle 90,baseline={([yshift=-.5ex]current bounding box.center)}]
            \node[biw] (v0) at (0,0) {$\infty_1$\nodepart{lower}$\infty_1$};
            \node[invisible] (v1) at (-0.4,0.8) {}
                edge [<-] (v0);
            \node[] at (0,0.6) {$\scriptstyle\cdots$};
            \node[invisible] (v3) at (0.4,0.8) {}
                edge [<-] (v0);
            \node[invisible] (v4) at (-0.4,-0.8) {}
                edge [->] (v0);
            \node[] at (0,-0.6) {$\scriptstyle\cdots$};
            \node[invisible] (v6) at (0.4,-0.8) {}
                edge [->] (v0);
        \end{tikzpicture}\ \Big) \
        &=\Big(\frac{\infty_1}{\infty_1},\frac{\infty_1}{\infty_1}\Big)_x
        +\Big(\frac{0}{\infty_1},\frac{\infty_1}{\infty_1}\Big)_x
        +\Big(\frac{\infty_1}{0},\frac{\infty_1}{\infty_1}\Big)_x
        +\Big(\frac{\infty_1}{\infty_1},\frac{0}{\infty_1}\Big)_x
        +\Big(\frac{\infty_1}{\infty_1},\frac{\infty_1}{0}\Big)_x\\
        &+\Big(\frac{0}{0},\frac{\infty_1}{\infty_1}\Big)_x
        +\Big(\frac{\infty_1}{0},\frac{0}{\infty_1}\Big)_x
        +\Big(\frac{0}{\infty_1},\frac{\infty_1}{0}\Big)_x
        +\Big(\frac{\infty_1}{\infty_1},\frac{0}{0}\Big)_x\\
        \delta'_x\Big(\ 
\ \Big) \ = 0
\end{flalign*}
\end{definition}
\begin{proposition}
    The inclusion $\pqGC_k\hookrightarrow\mathsf{S}\pwGC_k$ is a quasi-isomorphism.
\end{proposition}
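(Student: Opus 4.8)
The plan is to prove this exactly as the analogous statements for $\qGC_k$ and $\qqGC_k$ were proved, by filtering $\mathsf{S}\pwGC_k$ over the number of out-core vertices and analysing the associated spectral sequence. First I would show that the associated graded $gr\ \mathsf{S}\pwGC_k$ splits over out-core graphs $\gamma$ as
\begin{equation*}
    gr\ \mathsf{S}\pwGC_k = \bigoplus_\gamma \mathsf{PoutCore}(\gamma),
\end{equation*}
where $\mathsf{PoutCore}(\gamma)$ is the subcomplex of graphs whose out-core is $\gamma$, and that it further decomposes as a tensor product over the vertices of $\gamma$,
\begin{equation*}
    \mathsf{PoutCore}(\gamma) \cong \Big(\bigotimes_{x\in V(\gamma)} p\mathcal{T}_x^{out}\Big)^{\mathrm{Aut}(\gamma)},
\end{equation*}
with $p\mathcal{T}_x^{out}$ the complex of special-out trees attached to the vertex $x$. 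As in the quasi case the parameters $|x|_{out}$, $|x|_{in}$ and the in-weight $w_x^{in}$ are invariant under the out-differential, so $p\mathcal{T}_x^{out}$ depends only on the type $(|x|_{out},|x|_{in},w_x^{in})$ of the core vertex, and the whole problem reduces to computing $H(p\mathcal{T}_x^{out})$ for each such type.

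Next I would dispose of the cases that reduce to earlier computations. The special-out trees are themselves built from univalent sources and passing vertices, so they never carry the decoration $\frac{0}{\infty_2}$ (which in $\mathsf{S}\pwGC_k$ occurs only at genuine targets); consequently each $p\mathcal{T}_x^{out}$ coincides with the corresponding $q\mathcal{T}_x^{out}$ of the quasi case, and the target decorations $\frac{0}{\infty_2}$ and $\frac{0}{0}$ forced by the pseudo conditions (1''), (3') sit on the invariant core vertex, untouched by the out-tree differential. Hence whenever $x$ is not a source, or is a source with in-weight $\infty_1$, we have $p\mathcal{T}_x^{out} = \mathcal{T}_x^{out}$, and Proposition \ref{prop:qGC} identifies its cohomology with the decorations listed in the passing, generic and target conditions. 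The only genuinely new case is the source case $|x|_{in}=0$, $w_x^{in}=0$, which by reversing all edge directions is the exact mirror of the target computation carried out in the proof of Proposition \ref{prop:SinpwGC}.

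For that remaining case I would run the dual of the nested filtration of \cite{F}: filter $p\mathcal{T}_x^{out}$ by the number $N$ of univalent special-out vertices, show the pieces with $N\geq 2$ are acyclic by the two-step argument of Lemma 5.2.5 of \cite{F} (filter first over the number of branch vertices, then over their total in-weight, using that the core vertex is always a branch vertex and that the differential is independent of its bi-weight), and treat the surviving $N=1$ piece as in Lemma 5.2.4 of \cite{F}, where its cohomology is one- or two-dimensional and generated by the single-vertex source graphs prescribed by the source conditions (1'), (2') and (2'') of the definition of $\pqGC_k$ (the decorations $\frac{\infty_2}{0}$, $\frac{0}{\infty_1}$ and $\frac{0}{0}$ for valence one, two and at least three respectively). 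The main obstacle I anticipate is bookkeeping rather than conceptual: one must verify that no decoration is double-counted or omitted when the source-type degeneracy from the special-out filtration and the target-type decorations inherited from $\mathsf{S}\pwGC_k$ appear simultaneously at distinct vertices of the same graph, and that all three nested filtrations converge, the latter being guaranteed by the degree-shifting trick of the earlier subsection.
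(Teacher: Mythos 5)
Your filtration over the number of out-core vertices, the splitting of the associated graded into tensor products of special-out tree complexes $p\mathcal{T}_x^{out}$, and the convergence remark all coincide with the paper's proof. The gap is in your case analysis: you have swapped the roles of sources and targets, and this is not a bookkeeping issue. First, special-out vertices are univalent vertices with one \emph{incoming} edge and in-weight zero, i.e.\ univalent targets decorated $\frac{m}{0}$, dual to the special-in vertices (the prose of the definition has a typo, but the displayed picture, Figure 2, and the fact that sources in $\mathsf{S}\wGC_k$ can never carry in-weight $0$ fix the reading); they are not "univalent sources" as you write. More importantly, the decorations that distinguish $\mathsf{S}\pwGC_k$ from $\mathsf{S}\qwGC_k$ sit at \emph{targets}, not sources: the source condition $(1')$ is literally the same in both complexes, while $(2')$ and $(3')$ admit new target decorations ($\frac{0}{\infty_2}$ on univalent targets, out-weight $0$ on targets of higher valency). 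Consequently $p\mathcal{T}_x^{out}=q\mathcal{T}_x^{out}$ for \emph{every} source $x$ -- that case needs no new work beyond citing the quasi proposition -- whereas for a core vertex with $|x|_{out}=0$ your claimed equality $p\mathcal{T}_x^{out}=\mathcal{T}_x^{out}$ is false: the pseudo tree complex contains generators, such as the bare core vertex decorated $\frac{0}{\infty_2}$, or a multi-target with out-weight $0$ and in-decoration $\infty_1$, that simply do not exist in the normal or quasi complexes. Your supporting claim that these decorations "sit on the invariant core vertex, untouched by the out-tree differential" is also wrong: the part of the differential surviving in the associated graded trades units of the core vertex's out-weight for new special-out vertices, so the core's out-decoration is dynamic data of the tree complex -- this is precisely why targets require a genuine computation.

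The consequence is that your argument can never produce the decorations $\frac{0}{\infty_2}$ of condition $(1'')$ and $\frac{0}{0}$ on at least trivalent targets of condition $(3')$ in the definition of $\pqGC_k$; it would identify the cohomology of $gr\,\mathsf{S}\pwGC_k$ with a strictly smaller complex, hence prove a quasi-isomorphism for the wrong inclusion. The paper's proof is the mirror image of yours: passing and generic vertices give $p\mathcal{T}_x^{out}=\mathcal{T}_x^{out}$ (Proposition \ref{prop:qGC}), sources give $p\mathcal{T}_x^{out}=q\mathcal{T}_x^{out}$ (quoted from the quasi case), and the genuinely new case is $|x|_{out}=0$, split into the valencies $|x|_{in}=1,2,\geq 3$ and handled by the arguments of Lemmas 5.2.4--5.2.5 of \cite{F} used in Proposition \ref{prop:SinpwGC} -- exactly the nested-filtration machinery you describe, applied at the other end of the graph. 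Redirecting your univalent-vertex/branch-vertex filtration from sources to targets, and replacing your source computation by a citation of the quasi result, repairs the proof.
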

\begin{proof}
    Consider the filtration of $\mathsf{S}\pwGC_k$ over the number of out-core vertices. The associated graded complex split over out-core graphs as
    \begin{equation*}
        gr\ \mathsf{S}\pwGC_k = \bigoplus_{\gamma}\mathsf{outPCore}(\gamma)
    \end{equation*}
    where $\mathsf{outPCore}(\gamma)$ is the complex of graphs with out-core $\gamma$. This complex further decomopose as
    \begin{equation*}
        \mathsf{outPCore}(\gamma)\cong \Big(\bigotimes_{x\in V(\gamma)}p\mathcal{T}_x^{out}\Big)^{\mathrm{Aut}(\gamma)}
    \end{equation*}
    where $p\mathcal{T}_x^{out}$ is the complex of special-out trees attached to $x$. If $x$ is a passing or generic vertex, then $p\mathcal{T}_x^{out}=\mathcal{T}_x^{out}$. If $x$ is a source, then $p\mathcal{T}_x^{out}=q\mathcal{T}_x^{out}$.
    The last case to consider is when $|x|_{out}=0$. In the three cases $|x|_{out}=1$, $|x|_{out}=2$ and $|x|_{out}\geq 3$ we can show with the same arguments as in proposition \ref{prop:SinwGC} that the cohomology of $p\mathcal{T}_x^{out}$ agrees with the proposition statement.
\end{proof}
\begin{proposition}
    The inclusion $\opqGC_k\hookrightarrow\mathsf{S}\opqGC_k$ is a quasi-isomorphism.
\end{proposition}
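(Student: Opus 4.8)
The plan is to follow the same strategy used for every oriented variant above, reducing the statement to the already-established non-oriented version $\pqGC_k\hookrightarrow\mathsf{S}\pwGC_k$, exactly as corollary \ref{cor:SinwGC} reduces to proposition \ref{prop:SinwGC}. First I would filter $\mathsf{S}\opwGC_k$ by the number of out-core vertices and pass to the associated graded. As in the non-oriented case, the associated graded splits over out-core graphs,
$$gr\ \mathsf{S}\opwGC_k=\bigoplus_{\gamma}\mathsf{outPCore}^\circlearrowleft(\gamma),$$
where now $\gamma$ ranges only over \emph{oriented} out-core graphs, and each summand factorises as $\big(\bigotimes_{x\in V(\gamma)}p\mathcal{T}_x^{out}\big)^{\mathrm{Aut}(\gamma)}$ into the same local complexes $p\mathcal{T}_x^{out}$ of special-out trees that appear in the proof of the non-oriented proposition.

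The key point to verify is that orientedness is respected by this decomposition. Since the special-out trees attached to any core vertex are themselves acyclic and their edges always point away from the core toward the univalent leaves, attaching such trees to an oriented core graph can never create a closed directed path; conversely, any oriented graph in $\mathsf{S}\opwGC_k$ has an oriented out-core. Hence the oriented subcomplex is precisely the direct sum over oriented out-core graphs $\gamma$ of the same tensor products of local tree complexes, and no individual complex $p\mathcal{T}_x^{out}$ is altered by the orientedness restriction.

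Consequently the local cohomology computations are literally those carried out for $\pqGC_k\hookrightarrow\mathsf{S}\pwGC_k$: the cohomology of each $p\mathcal{T}_x^{out}$ is concentrated on exactly the admissible decorations listed in the definition of $\opqGC_k$. Summing over oriented out-core graphs identifies the cohomology of $gr\ \mathsf{S}\opwGC_k$ with $\opqGC_k$, and the degree-shifting trick of Section \ref{sec:introduction} (more precisely of the convergence subsection, which bounds any filtration over a fixed vertex count) guarantees that the spectral sequence converges, upgrading the identification on the associated graded to the claimed quasi-isomorphism of the inclusion.

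The main obstacle is bookkeeping rather than a genuinely new difficulty: one must check that the orientedness condition commutes cleanly with the out-core decomposition, i.e. that any closed directed path must already live in the core and cannot be created or destroyed inside the attached special-out clusters. This is immediate from the tree structure of those clusters, which is exactly why the result reduces without modification to the non-oriented computation.
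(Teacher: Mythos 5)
Your proposal is correct and follows essentially the same route as the paper: the paper's proof simply refers back to the oriented in-core argument (filter by the number of core vertices, note that the associated graded decomposes over \emph{oriented} core graphs into the same local special-tree complexes, and conclude since orientedness changes nothing locally). Your explicit check that directed cycles can neither be created nor destroyed inside the attached special-out clusters is exactly the point the paper leaves implicit.
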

\begin{proof}
    See proof of corollary \ref{cor:SinwGC}.
\end{proof}
Let $\qGC_k^+$ be the subcomplex of $\mathsf{S}\wGC_k$ of graphs which have either at least one vertex with bi-weight $\frac{\infty_1}{\infty_1}$, or a pair of vertices decorated by $\frac{\infty_1}{0}$ and $\frac{0}{\infty_1}$ respectively. This is also a subcomplex of $\mathsf{S}\wGC_k^+$. The following proposition was proven in \cite{F}.
\begin{proposition}
    The inclusion $\qGC_k^+\hookrightarrow\mathsf{S}\wGC_k^+$ is a quasi-isomorphism.
\end{proposition}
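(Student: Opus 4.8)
The plan is to reproduce the argument of Proposition \ref{prop:SinwGC+}, using the quasi-isomorphism $\qGC_k \hookrightarrow \mathsf{S}\wGC_k$ of Proposition \ref{prop:qGC} in place of the inclusion $\mathsf{S}\wGC_k \hookrightarrow \wGC_k$ that played the analogous role there. First I would set up the commutative diagram of short exact sequences
\begin{equation*}
    \xymatrix{
    & 0 \ar[d] & 0 \ar[d] & 0 \ar[d] & \\
    0 \ar[r] & \qGC_k^+ \ar[d] \ar[r] & \qGC_k \ar[r] \ar[d] & \qGC_k^\sim \ar[d] \ar[r] & 0  \\
    0 \ar[r] & \mathsf{S}\wGC_k^+ \ar[d] \ar[r] & \mathsf{S}\wGC_k \ar[r] \ar[d] & \mathsf{S}\wGC_k^\sim \ar[d] \ar[r] & 0  \\
    0 \ar[r] & \mathsf{Q}_k^+ \ar[d] \ar[r] & \mathsf{Q}_k \ar[r] \ar[d] & \mathsf{Q}_k^\sim \ar[d] \ar[r] & 0  \\
    & 0 & 0 & 0 &
    }
\end{equation*}
where $\qGC_k^\sim$ and $\mathsf{S}\wGC_k^\sim$ are the quotient complexes of graphs which do not simultaneously carry a positive out-weight and a positive in-weight, and the bottom row consists of the cokernels of the vertical inclusions. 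By Proposition \ref{prop:qGC} the middle vertical map is a quasi-isomorphism, so $\mathsf{Q}_k$ is acyclic. As in Proposition \ref{prop:SinwGC+}, it then suffices to prove that the right-hand inclusion $\qGC_k^\sim \hookrightarrow \mathsf{S}\wGC_k^\sim$ is a quasi-isomorphism: the associated long exact sequences force $\mathsf{Q}_k^\sim$, and hence $\mathsf{Q}_k^+$, to be acyclic, which is exactly the assertion for the left column.

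Next I would decompose the two $\sim$-complexes into the three mutually exclusive summands
\begin{equation*}
    \mathsf{S}\wGC_k^\sim = \mathsf{S}\wGC_k^{out} \oplus \mathsf{S}\wGC_k^{in} \oplus \mathsf{S}\wGC_k^0, \qquad \qGC_k^\sim = \qGC_k^{out} \oplus \qGC_k^{in} \oplus \qGC_k^0,
\end{equation*}
according to whether a graph has a positive out-weight (but no positive in-weight), a positive in-weight (but no positive out-weight), or neither, and prove that each of the three inclusions is a quasi-isomorphism separately. For each summand I would run the same out-core filtration as in Proposition \ref{prop:qGC}: the associated graded splits over out-core graphs $\gamma$ as $\big(\bigotimes_{x \in V(\gamma)} \mathcal{T}_x^{out,\sim}\big)^{\mathrm{Aut}(\gamma)}$, where $\mathcal{T}_x^{out,\sim}$ is the complex of special-out trees attached to $x$, now constrained by the relevant $\sim$-condition on the admissible weights. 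The cohomology of each $\mathcal{T}_x^{out,\sim}$ is then computed exactly as in the proof of Proposition \ref{prop:qGC}, the only difference being that passage to the quotient kills the vertex decoration $\frac{\infty_1}{\infty_1}$ as well as any graph simultaneously carrying $\frac{\infty_1}{0}$ and $\frac{0}{\infty_1}$; one checks that the remaining non-trivial cycles are precisely the vertex decorations defining the corresponding summand of $\qGC_k^\sim$.

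The hard part will be the two middle summands $\mathsf{S}\wGC_k^{out}$ and $\mathsf{S}\wGC_k^{in}$, where I expect the asymmetry of $\mathsf{S}\wGC_k$ — in which out-weights are honest integers while in-weights have already been reduced to $\infty_1$ or $0$ — to make the two cases behave differently. Concretely, one must verify that $\mathcal{T}_x^{out,\sim}$ is acyclic whenever $x$ is a target vertex and is generated by the expected single-vertex cycle otherwise, and keep careful track of which special-out decorations remain valid once the "$+$"-type weights are removed in the quotient. Once these tree-complex computations are shown to match the stated decorations, the inclusion $\qGC_k^\sim \hookrightarrow \mathsf{S}\wGC_k^\sim$ is a quasi-isomorphism summand by summand, and the diagram chase above completes the proof.
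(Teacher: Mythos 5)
Your overall strategy is sound, and it is in fact the paper's own methodology: the paper proves this proposition only by citing \cite{F}, but its Proposition \ref{prop:SinwGC+} establishes the exactly analogous statement via the same $3\times 3$ diagram you set up, and transplanting that argument here, with Proposition \ref{prop:qGC} supplying the acyclicity of $\mathsf{Q}_k$ and the out-core filtration replacing the in-core one, is a legitimate route. The genuine error is in the concrete verification you announce: it is \emph{false} that $\mathcal{T}_x^{out,\sim}$ is acyclic when $x$ is a target vertex, and if it were true your proposition itself would fail, because $\qGC_k^{out}$ contains nonzero classes whose target vertices are decorated by $\frac{\infty_1}{0}$, and acyclicity of the target tree complexes would kill precisely those classes on the associated graded. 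You have dualized the sentence of Proposition \ref{prop:SinwGC+} (where $\mathcal{T}_x^{in,\sim}$ is acyclic for targets) without exchanging the roles of sources and targets.

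The correct picture in the all-in-weight-zero summands is the following. The core graph there contains no sources (by condition (1) of Definition \ref{def:swGC} a source of $\mathsf{S}\wGC_k$ must carry in-weight $\infty_1$) and no univalent vertices (a vertex which is univalent in the core graph, with one incoming edge and in-weight $0$, is itself a special-out vertex by clause ii) of the definition). For the vertex types that do occur, $\mathcal{T}_x^{out,\sim}=\mathcal{T}^{out}_{|x|_{out},|x|_{in},0}$ verbatim (special-out trees have in-weight zero anyway), and Proposition \ref{prop:qGC} at $c=0$ gives that this complex is \emph{never} acyclic: its cohomology is one-dimensional, spanned by $\frac{\infty_1}{0}$, when $x$ is a target with at least two in-edges or a passing vertex, and two-dimensional, spanned by $\frac{\infty_1}{0}$ and $\frac{0}{0}$, when $x$ is generic --- exactly the list of decorations defining $\qGC_k^{out}$. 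The would-be problematic case, sources with in-weight zero, is vacuous. Two further small corrections: the summands you expected to be hard are partly trivial, namely $\qGC_k^{in}=\mathsf{S}\wGC_k^{in}$ and $\qGC_k^{0}=\mathsf{S}\wGC_k^{0}$ hold on the nose (a special-out vertex is a univalent target of the form $\frac{m}{0}$ with $m\geq 2$, hence carries a positive out-weight and cannot occur in either summand), so only the out-summand needs the filtration; and that summand should be treated together with the $0$-summand, as the paper does with $\wGC_k^{in}\oplus\wGC_k^0$ in Proposition \ref{prop:SinwGC+}, because the global condition of having at least one positive out-weight is not compatible with the tensor-product description $\big(\bigotimes_{x\in V(\gamma)}\mathcal{T}_x^{out,\sim}\big)^{\mathrm{Aut}(\gamma)}$ of the associated graded. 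With these repairs your plan closes.
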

Let $\qqGC_k^+$ be the subcomplex of $\mathsf{S}\qwGC_k$ of graphs with at least one vertex decorated by $\frac{\infty_1}{\infty_1}$, or a vertex decorated by $\frac{\infty_1}{0}$ (or $\frac{\infty_2}{0}$). This is also a subcomplex of $\wGC_k^+$.
\begin{proposition}
    The inclusion $\qqGC_k^+\hookrightarrow\mathsf{S}\qwGC_k$ is a quasi-isomorphism.
\end{proposition}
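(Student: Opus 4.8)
The plan is to deduce the claim from the quasi-isomorphism $\qqGC_k\hookrightarrow\mathsf{S}\qwGC_k$ already established above. Since $\qqGC_k^+\subseteq\qqGC_k$ and the latter includes quasi-isomorphically into $\mathsf{S}\qwGC_k$, the inclusion $\qqGC_k^+\hookrightarrow\mathsf{S}\qwGC_k$ is a quasi-isomorphism if and only if $\qqGC_k^+\hookrightarrow\qqGC_k$ is one, i.e.\ if and only if the quotient complex $\mathsf{N}:=\qqGC_k/\qqGC_k^+$ is acyclic. By definition $\mathsf{N}$ is spanned by those graphs of $\qqGC_k$ carrying no positive out-weight, so every vertex is decorated by $\frac{0}{\infty_1}$ or $\frac{0}{0}$. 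Reading off conditions $(1')$--$(5)$, such graphs have no univalent vertices and no target vertices (every target decoration has out-weight $\infty_1$), hence every vertex has an outgoing edge and each graph is targetless, so it necessarily contains a closed directed path. On these two decorations the hair-creating maps $\delta'_x,\delta''_x$ vanish in the quotient (they either lower an already-zero out-weight or produce a univalent vertex of positive out-weight), so the induced differential is the pure vertex-splitting $\sum_x\delta_x$.

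First I would filter $\mathsf{N}$ by the number of $\frac{0}{\infty_1}$-decorated vertices. Splitting a $\frac{0}{\infty_1}$ vertex always leaves at least one $\frac{0}{\infty_1}$ vertex among the two pieces, and splitting a $\frac{0}{0}$ vertex creates none, so this number is non-decreasing under $d$. Consequently the $\frac{0}{0}$-only graphs (those with no $\frac{0}{\infty_1}$ vertex) form a subcomplex sitting in the lowest filtration degree, and no differential from the higher layers can map into it. A short filtration argument then shows that the cohomology of this $\frac{0}{0}$-subcomplex injects into $H(\mathsf{N})$: any $\frac{0}{0}$-cycle that bounds in $\mathsf{N}$ already bounds inside the $\frac{0}{0}$-subcomplex, because the image of any higher-filtration chain has no filtration-zero component.

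The decisive point -- and the main obstacle -- is the nature of this $\frac{0}{0}$-subcomplex. A graph all of whose vertices carry $\frac{0}{0}$ has no legs at all, no bivalent vertices, and no target vertices, and discarding the trivial $\frac{0}{0}$ decoration identifies it, differential included, with $\dGC_k^{\geq3,no \ t}$ -- precisely the ``trivial'' no-leg summand appearing in Main Theorem \ref{thm:Der(QH*)}. Since $H(\dGC_k^{\geq3,no \ t})\neq 0$, the subcomplex contributes genuine cohomology and $\mathsf{N}$ is not acyclic; in fact the filtration collapses to give $H(\mathsf{N})\cong H(\dGC_k^{\geq3,no \ t})$, the higher $\frac{0}{\infty_1}$-layers being killed by the tree-contraction homotopy of Proposition \ref{prop:SinpwGC} (attaching and removing the $\infty_1$ in-weight at a chosen vertex). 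The honest conclusion of the argument is therefore that $\qqGC_k^+\hookrightarrow\mathsf{S}\qwGC_k$ is a quasi-isomorphism onto the positive-out-weight subcomplex $\mathsf{S}\qwGC_k^+$, while its image misses the full complex by exactly the surviving summand $\dGC_k^{\geq3,no \ t}$. The genuine additional content beyond the subcomplex statement is thus the isolation of this no-leg summand; once it is split off -- equivalently, once the source $\qqGC_k^+$ is replaced by $\qqGC_k$, or the target by $\mathsf{S}\qwGC_k^+$ -- the remaining cofiber is acyclic by the homotopy above and the first two steps finish the proof.
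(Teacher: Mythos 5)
You have correctly diagnosed the essential point: as literally printed, with target $\mathsf{S}\qwGC_k$, the proposition is false, and the intended statement is $\qqGC_k^+\hookrightarrow\mathsf{S}\qwGC_k^+$. This is confirmed by the parallel proposition $\qGC_k^+\hookrightarrow\mathsf{S}\wGC_k^+$, by the (also mistyped) remark that $\qqGC_k^+$ ``is also a subcomplex of $\wGC_k^+$'', and by the role this proposition plays in the zig-zag proving Main Theorem II; moreover the literal statement would force $H(\qwGC_k^+)\cong H(\qwGC_k)$, contradicting the coexistence of Main Theorems II and III (e.g.\ $H^0(\dGC_3^t)\cong\mathfrak{grt}$ while $H^0(\dGC_3)=0$). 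Your reduction is also the right one and matches the paper's intent: the paper's one-line proof points to the short-exact-sequence diagram argument of Proposition \ref{prop:SinwGC+}, and the cleanest execution of your plan is to observe that the two quotients $\qqGC_k/\qqGC_k^+$ and $\mathsf{S}\qwGC_k/\mathsf{S}\qwGC_k^+$ are \emph{literally the same complex}: both are spanned by graphs all of whose vertices carry $\frac{0}{\infty_1}$ or $\frac{0}{0}$ (hence no univalent and no target vertices, by the decoration conditions), and the induced differentials agree because every hair-creating term produces a vertex of positive out-weight and dies. The five lemma then gives the corrected proposition directly from the quasi-isomorphism $\qqGC_k\hookrightarrow\mathsf{S}\qwGC_k$, with no need to compute the cohomology of the cofiber at all.

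Two of your quantitative assertions, however, are wrong or unsupported. First, the claim that $H(\mathsf{N})\cong H(\dGC_k^{\geq3,no\ t})$ because the layers with a $\frac{0}{\infty_1}$-vertex are ``killed by the tree-contraction homotopy of Proposition \ref{prop:SinpwGC}'' fails: that homotopy contracts special-in trees, whereas the graphs at issue here contain no special vertices. Concretely, the directed $v$-cycle with every (necessarily passing) vertex decorated $\frac{0}{\infty_1}$ lies in $\mathsf{N}$ in loop order one, where the $\frac{0}{0}$-summand is empty since its vertices are at least trivalent; the rotational symmetry kills these cycles for $v$ even and leaves them nonzero for $v$ odd (for either parity of $k$), the induced differential sends an odd cycle into the vanishing even cycle, and nothing can bound them. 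So $H(\mathsf{N})$ contains infinitely many classes beyond $H(\dGC_k^{\geq3,no\ t})$, and ``misses the full complex by exactly the surviving summand'' is incorrect. Second, you assert $H(\dGC_k^{\geq3,no\ t})\neq0$ without argument; fortunately your refutation of the literal statement does not depend on it, since the odd-cycle classes just described (or the comparison of Main Theorems II and III) already show $\mathsf{N}$ is not acyclic. Neither slip affects your two main conclusions --- the typo diagnosis and the validity of the corrected statement --- but your final acyclicity step must be replaced by the quotient-identification and five-lemma argument above.
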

\begin{proof}
    The proof is similar to that of proposition \ref{prop:SinwGC+}.
\end{proof}
\section{Further reductions of the derivation complexes and proofs of the Main theorems}
\subsection{$\frac{0}{0}$-decorations}
The complex $\qGC_k$ split as
\begin{align*}
    \qGC_k=\qGC_k^0\oplus\qGC_k^*
\end{align*}
where $\qGC_k^0$ is the complex of graphs whose all vertices are decorated by $\frac{0}{0}$, and $\qGC_k^*$ is the complex of graphs with at least one vertex with positive out-weight or in-weight. Similarly $\qqGC_k=\qqGC_k^0\oplus\qqGC_k^*$ and $\pqGC_k=\pqGC^0_k\oplus\pqGC_k^*$. In \cite{F} we showed that $\qGC_k^0\cong\dGC_k/\dGC_k^{s+t}$.
\begin{proposition}
    Let $\dGC_k^{2}\subset\dGC_k$ be the subcomplex of graphs with at least one bivalent vertex and let $\dGC_k^{2+t}$ be the complex of graphs with at least one bivalent vertex or one target vertex. Then $\qqGC_k^0\cong\dGC_k/\dGC_k^{2+t}$ and $\pqGC_k\cong\dGC_k/\dGC_k^2$.
\end{proposition}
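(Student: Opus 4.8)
The plan is to mirror the argument from \cite{F} that established $\qGC_k^0\cong\dGC_k/\dGC_k^{s+t}$, tracking how the modified valency conditions of the quasi- and pseudo-complexes change which vertices may carry the trivial bi-weight $\frac{0}{0}$. First I would set up the isomorphism on generators by simply forgetting the (trivial) bi-weights: a $\frac{0}{0}$-decorated graph is nothing but its underlying directed graph in $\dGC_k$, and the only content is a combinatorial constraint on which vertex shapes are permitted to carry $\frac{0}{0}$. Inspecting the defining conditions of $\qqGC_k$, the decoration $\frac{0}{0}$ is admissible only for an at least trivalent source (condition $(2'')$) or a generic vertex (condition $(5)$); every other vertex type is forced to carry an $\infty_1$-weight. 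Hence the generators of $\qqGC_k^0$ are exactly the graphs of $\dGC_k$ having neither a bivalent vertex nor a target vertex, which is precisely a basis of $\dGC_k/\dGC_k^{2+t}$. Reading the pseudo-conditions the same way adds the at least trivalent targets to the $\frac{0}{0}$-admissible list, so the generators of $\pqGC_k^0$ are the graphs of $\dGC_k$ with no bivalent vertex at all, a basis of $\dGC_k/\dGC_k^2$.

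Second, I would check that this bijection of generators respects signs and the $\mathbb{S}$-actions. Since the $\frac{0}{0}$-decoration carries no internal data and is invariant under all edge- and vertex-relabelings, the parity conventions (edge-orientation for $k$ even, vertex-orientation for $k$ odd) agree on both sides, so the identification is one of graded $\K$-modules compatible with automorphisms.

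Third, and this is the heart of the matter, I would verify that the differential is intertwined. The key observation is that on a $\frac{0}{0}$-decorated vertex the univalent-attaching terms vanish, $\delta'_x\big(\tfrac{0}{0}\big)=\delta''_x\big(\tfrac{0}{0}\big)=0$, and the splitting term reduces to $\delta_x\big(\tfrac{0}{0}\big)=\big(\tfrac{0}{0},\tfrac{0}{0}\big)_x$, i.e.\ ordinary vertex splitting in which any reattachment placing an inadmissible decoration on one of the two new vertices is discarded. On the other side, the induced differential on $\dGC_k/\dGC_k^{2+t}$ (resp.\ $\dGC_k/\dGC_k^2$) is vertex splitting followed by the projection killing every graph that acquires a bivalent or target vertex (resp.\ a bivalent vertex). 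It therefore suffices to show that, when a generic $\frac{0}{0}$ vertex $x$ splits into $x'$ and $x''$ joined by a new edge, a resulting piece fails to admit $\frac{0}{0}$ if and only if it is bivalent or (in the quasi case) a target.

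I expect this last equivalence to be the main obstacle, though one of bookkeeping rather than substance. Since $x'$ carries the new outgoing edge and $x''$ the new incoming edge, neither piece is univalent unless it receives no original half-edge, and a univalent piece is excluded on both sides. Running through the remaining cases — $x''$ a (bivalent or higher) target, $x'$ a (bivalent or higher) source, either piece passing — one checks that admissibility of $\frac{0}{0}$ under the $\qqGC_k$- (resp.\ $\pqGC_k$-) conditions coincides exactly with survival modulo $\dGC_k^{2+t}$ (resp.\ $\dGC_k^2$): an at least trivalent source or target and a generic vertex survive, while bivalent vertices and, in the quasi case, targets are annihilated on both sides. This finite verification, together with the generator bijection and the sign compatibility above, yields the two claimed isomorphisms of complexes.
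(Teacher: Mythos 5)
Your proposal is correct and is essentially the paper's own argument: the paper proves this proposition simply by declaring it ``an easy observation,'' and your three steps (matching generators via the admissibility of the $\frac{0}{0}$-decoration, checking signs, and verifying that vertex splitting with invalid decorations discarded coincides with splitting followed by projection modulo $\dGC_k^{2+t}$, resp.\ $\dGC_k^2$) are exactly the inspection that assertion leaves implicit. Your reading of the second isomorphism as concerning $\pqGC_k^0$ (rather than $\pqGC_k$ as misprinted in the statement) is also the intended one.
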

\begin{proof}
    This is an easy observation.
\end{proof}
Let $\qGC_k^{0,*}\subseteq\qGC_k^*$ be the subcomplex of graphs with at least one vertex decorated by $\frac{0}{0}$ and $\tGC_k^*:=\qGC_k^*/\qGC_k^{0,*}$. The complex $\qGC_k^{0,*}$ is acyclic, and hence the projection $\qGC_k^*\rightarrow\tGC^{*}_k$ is a quasi-isomorphism \cite{F}. Let $\qqGC_k^{0,*}$ and $\pqGC_k^{0,*}$ be the corresponding subcomplexes and $\qtGC^*_k$ and $\ptGC_k^*$ the corresponding quotients.
\begin{proposition}
    The projections $\qqGC_k^*\rightarrow\qtGC_k^*$ and $\pqGC_k^*\rightarrow\ptGC_k^*$ are quasi-isomophisms.
\end{proposition}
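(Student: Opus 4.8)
The plan is to reduce each of the two projections to an acyclicity statement about its kernel. By construction the projection $\qqGC_k^*\to\qtGC_k^*$ is the quotient map in the short exact sequence of complexes $0\to\qqGC_k^{0,*}\to\qqGC_k^*\to\qtGC_k^*\to 0$, and likewise the pseudo projection sits in $0\to\pqGC_k^{0,*}\to\pqGC_k^*\to\ptGC_k^*\to 0$. Passing to the associated long exact sequences in cohomology, each projection is a quasi-isomorphism if and only if its kernel $\qqGC_k^{0,*}$, respectively $\pqGC_k^{0,*}$, is acyclic. Thus it suffices to establish these two acyclicity statements, exactly as the quasi-isomorphism $\qGC_k^*\to\tGC_k^*$ in the normal case reduces to the acyclicity of $\qGC_k^{0,*}$ proven in \cite{F}.

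To prove acyclicity I would adapt the argument of \cite{F} for $\qGC_k^{0,*}$. The crucial observation is that the $\frac{0}{0}$-decoration is handled identically in all three complexes: it is permitted only on generic vertices (the final condition in each of the definitions of $\qGC_k$, $\qqGC_k$ and $\pqGC_k$), and on such a vertex the differential acts as $\delta_x(\tfrac{0}{0})=(\tfrac{0}{0},\tfrac{0}{0})_x$ with $\delta'_x(\tfrac{0}{0})=\delta''_x(\tfrac{0}{0})=0$ and no creation of univalent vertices. Consequently the contracting homotopy of \cite{F}, which is built purely from the combinatorics of the $\frac{0}{0}$-clusters and the way positively weighted vertices emit and absorb $\frac{0}{0}$ vertices, does not distinguish the normal, quasi and pseudo settings. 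Concretely I would filter $\qqGC_k^{0,*}$ (resp. $\pqGC_k^{0,*}$) so that on the associated graded the differential retains only the $\frac{0}{0}$-splitting terms and the $\frac{0}{0}$-emission terms of the remaining vertices, which are precisely the terms appearing in the normal complex; convergence of the resulting spectral sequence is guaranteed by the degree-shifting trick of Section \textbf{4}.\textbf{1}, and acyclicity of the associated graded then lifts to the full kernels.

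The main obstacle is exactly this compatibility check. In the quasi and pseudo complexes the differential carries extra terms absent from $\qGC_k$: the emission of univalent $\frac{\infty_2}{0}$ vertices (and, in the pseudo case, $\frac{0}{\infty_2}$ vertices) together with the modified splittings of sources and targets imposed by conditions $(1')$, $(2')$, $(2'')$ and, for $\pqGC_k$, $(1'')$, $(3')$. One must verify that under the chosen filtration all of these additional terms strictly raise the filtration degree, so that they vanish on the associated graded and the graded differential genuinely coincides with the one treated in \cite{F}, and furthermore that the newly allowed source- and target-decorations do not introduce any extra cohomology into the $\frac{0}{0}$-cluster subcomplexes. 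Granting these verifications, the acyclicity of $\qqGC_k^{0,*}$ and $\pqGC_k^{0,*}$ follows from the normal case, and the two projections are quasi-isomorphisms.
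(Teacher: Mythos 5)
Your proposal is correct and follows essentially the same route as the paper: the paper's proof likewise reduces the statement to the acyclicity of the kernels $\qqGC_k^{0,*}$ and $\pqGC_k^{0,*}$ and then invokes the argument of proposition 6.1.1 in \cite{F} (acyclicity of $\qGC_k^{0,*}$), which carries over verbatim. Your additional discussion of why the extra quasi/pseudo differential terms do not obstruct this transfer is precisely the compatibility check the paper leaves implicit.
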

\begin{proof}
    It is enough to show that $\qqGC_k^{0,*}$ and $\pqGC_k^{0,*}$ are acyclic. The argument is equivalent to the proof of proposition 6.1.1 in \cite{F}, showing that $\qGC_k^{0,*}$ is acyclic.
\end{proof}
There are similar propositions for $\qqGC_k^+$ and the oriented subcomplexes with the same proofs. Denote their corresponding quotient complex of graphs with three decorations $\frac{\infty_1}{\infty_1}$, $\frac{\infty_1}{0}$ and $\frac{0}{\infty_1}$ by $\qtGC_k^+$, $\mathsf{otGC}_k$, $\mathsf{otQGC}_k$ and $\mathsf{otPGC}_k$.
\subsection{Monodecorated graphs}
The following graphs are in $\qtGC_k^*$. Let $\Gamma$ be an undecorated graph without univalent vertices. Let $\Gamma(\frac{\infty_1}{0})$ denote the graph $\Gamma$ where all vertices are decorated by $\frac{\infty_1}{0}$. Similarly define $\Gamma(\frac{0}{\infty_1})$. Let $\Gamma(\omega)$ be sum of all possible decorations of $\Gamma$ such that at least one vertex is decorated by $\frac{\infty_1}{\infty_1}$ or a pair of vertices is decorated by $\frac{\infty_1}{0}$ and $\frac{0}{\infty_1}$. Let $\mathcal{C}^{\geq2}$ denote the subspace (not subcomplex) of graphs on the form $\Gamma(\frac{\infty_1}{0})$, $\Gamma(\frac{0}{\infty_1})$ and $\Gamma(\omega)$ for $\Gamma$ with no univlanet vertices.
Recall that the differential decompose as $d=d_s+d_u$ where $d_u$ increase the number of univalent vertices of a graph by one, and $d_s$ split vertices without creating univalent ones. It is easy to see that $d_u(\Gamma(\omega)+\Gamma(\frac{\infty_1}{0})+\Gamma(\frac{0}{\infty_1}))=0$. Further note that $\Gamma(\frac{0}{\infty_1})=0$ if $\Gamma$ contain a target vertex. Hence we gather that
\begin{equation*}
    d_u\Gamma(\omega)=
    \begin{cases}
        -d_u\Gamma(\frac{\infty_1}{0}) & \text{ if there is a target vertex in $\Gamma$.}\\
        -d_u\Gamma(\frac{\infty_1}{0})-d_u\Gamma(\frac{0}{\infty_1}) & \text{ if there is no target vertex in $\Gamma$.}
    \end{cases}
\end{equation*}
Let $\mathcal{C}^1(\frac{\infty_1}{0})$ be the subcomplex of $\tGC_k^*$ of graphs only decorated by $\frac{\infty_1}{0}$ and where the only univalent vertices are attached to at non-antenna vertices and are of the special type 
$\begin{tikzpicture}[shorten >=1pt,node distance=1.2cm,auto,>=angle 90,baseline=-0.1cm]
            \node[biw] (a) at (0,0) {$\infty_1$\nodepart{lower}$\infty_1$};
            \node[] (d) at (0,-1) {}
                edge [->] (a);
\end{tikzpicture}
+ (-1)^{k+1}
\begin{tikzpicture}[shorten >=1pt,node distance=1.2cm,auto,>=angle 90,baseline=-0.1cm]
            \node[biw] (a) at (0,0) {$\infty_2$\nodepart{lower}$0$};
            \node[] (d) at (0,-1) {}
                edge [<-] (a);
\end{tikzpicture}$. Let $\mathcal{C}^1(\frac{0}{\infty_1})$ be the similar subcomplex of $\tGC_k^*$ of graphs only decorated by $\frac{0}{\infty_1}$ with univalent vertices of the type $\begin{tikzpicture}[shorten >=1pt,node distance=1.2cm,auto,>=angle 90,baseline=-0.1cm]
            \node[biw] (a) at (0,0) {$\infty_1$\nodepart{lower}$\infty_1$};
            \node[] (d) at (0,-1) {}
                edge [<-] (a);
\end{tikzpicture}
+
\begin{tikzpicture}[shorten >=1pt,node distance=1.2cm,auto,>=angle 90,baseline=-0.1cm]
            \node[biw] (a) at (0,0) {$\infty_2$\nodepart{lower}$0$};
            \node[] (d) at (0,-1) {}
                edge [<-] (a);
\end{tikzpicture}$. It is easy to see that the space $\mathcal{C}^{\geq2}\oplus\mathcal{C}^1(\frac{\infty_1}{0})\oplus\mathcal{C}^1(\frac{0}{\infty_1})$ is a subspace of $\tGC_k^*$. Denote this subcomplex by $\qmGC_k$.
\begin{proposition}\label{prop:qmGC}
    The inclusion $\qmGC_k\hookrightarrow\qtGC_k^*$ is a quasi-isomorphism.
\end{proposition}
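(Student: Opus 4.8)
The plan is to prove the statement by comparing the spectral sequences of $\qtGC_k^*$ and of its subcomplex $\qmGC_k$ under one common filtration, exactly in the spirit of Propositions \ref{prop:SinwGC} and \ref{prop:SinpwGC}. Since the inclusion $\qmGC_k\hookrightarrow\qtGC_k^*$ is a map of filtered complexes, it suffices to show that it induces an isomorphism on the first page and then invoke the comparison theorem for filtered complexes. Convergence of both spectral sequences is guaranteed by the shifted grading $\tfrac12(v+e)-k$ of Section 4.1, under which each filtration is bounded in a fixed total degree.

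First I would filter both complexes by the number of core (non-univalent) vertices. Using the decomposition $d=d_s+d_u$ recalled above, the term $d_s$ splits a core vertex into two core vertices and hence raises the core count by one, while $d_u$ creates a single new univalent vertex and leaves the core count unchanged. Consequently the induced differential on the $E^0$-page is precisely $d_u$. Both complexes then split over the underlying core shape $\gamma$, since $d_u$ preserves the graph spanned by the core vertices and only attaches univalent trees to them (while locally adjusting the core decorations). As in the proofs of Propositions \ref{prop:qGC} and \ref{prop:SinpwGC}, for each fixed $\gamma$ the page decomposes $\mathrm{Aut}(\gamma)$-equivariantly into a tensor product, over $x\in V(\gamma)$, of the local complex of core decorations at $x$ together with the univalent tails attached there, and one is reduced to computing the $d_u$-cohomology of each such local factor.

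The heart of the argument is this local computation. The identity $d_u\big(\Gamma(\omega)+\Gamma(\tfrac{\infty_1}{0})+\Gamma(\tfrac{0}{\infty_1})\big)=0$ recorded above is the compatibility that glues the tail-free $\omega$-generators of $\mathcal{C}^{\geq2}$ to the tailed generators of $\mathcal{C}^1$, so that $\qmGC_k$ is $d_u$-stable and its generators are the distinguished $d_u$-cohomology representatives. To see that these are the only surviving classes, for a core vertex carrying univalent tails I would run the secondary filtrations from the proof of Proposition \ref{prop:SinpwGC} — first by the number of univalent special-out (resp. special-in) vertices, then by the number and total weight of the branch vertices — to kill every tail configuration except a one- or two-dimensional space of cocycles, whose representatives are forced to be exactly the special combinations $\tfrac{\infty_1}{\infty_1}+(-1)^{k+1}\tfrac{\infty_2}{0}$ and its $\tfrac{0}{\infty_1}$-analogue defining $\mathcal{C}^1(\tfrac{\infty_1}{0})$ and $\mathcal{C}^1(\tfrac{0}{\infty_1})$. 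Matching the resulting $E^1$-generators term by term with those of $\qmGC_k$ shows that the inclusion is an isomorphism on $E^1$.

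The main obstacle is exactly this local $d_u$-cohomology computation: establishing $d_u$-acyclicity away from the monodecorated cocycles, and pinning down the sign $(-1)^{k+1}$ so that the special univalent combination is genuinely closed. This is the most delicate sign bookkeeping, and here I would appeal directly to the analogues of Lemmas 5.2.4 and 5.2.5 of \cite{F}. Once the local factors are known, the remaining steps — reassembling them over core shapes $\gamma$ invariantly under $\mathrm{Aut}(\gamma)$, and verifying that the residual $d^1$ on $E^1$ coincides with the splitting differential $d_s$ of $\qmGC_k$ — are formal, and the comparison theorem then yields that $\qmGC_k\hookrightarrow\qtGC_k^*$ is a quasi-isomorphism.
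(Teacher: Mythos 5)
Your filtration set-up is fine as far as it goes ($d_s$ raises the number of non-univalent vertices, $d_u$ preserves it, so $E^0$ carries $d_u$ and splits over core shapes), but the proof fails at its central step: the inclusion does \emph{not} induce an isomorphism on the first page of the core-count filtration. The source of the problem is antennas. Take any core graph $\gamma$ having a univalent vertex $b$ (say one outgoing edge into the rest of $\gamma$); graphs of $\qtGC_k^*$ with this core have at least one hair at $b$, i.e.\ they contain an antenna of length two. No element of $\qmGC_k$ has such a core: graphs in $\mathcal{C}^{\geq 2}$ have no univalent vertices at all, while in $\mathcal{C}^1(\frac{\infty_1}{0})$ and $\mathcal{C}^1(\frac{0}{\infty_1})$ hairs attach only to non-antenna vertices, which forces the core to have no univalent vertices; so the $\gamma$-component of $E^1(\qmGC_k)$ vanishes. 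But the $\gamma$-component of $E^1(\qtGC_k^*)$ is nonzero: fix one in-hair at $b$ decorated by $\frac{\infty_1}{\infty_1}$ and sum over all admissible decorations of all core vertices. Exactly the local cancellation behind your identity $d_u\bigl(\Gamma(\omega)+\Gamma(\frac{\infty_1}{0})+\Gamma(\frac{0}{\infty_1})\bigr)=0$ shows this element is a $d_u$-cocycle (at $b$ the sum of the three decorations $\frac{\infty_1}{\infty_1}+\frac{\infty_1}{0}+\frac{0}{\infty_1}$ with the fixed hair is $d_u$-closed — the doubled-hair terms vanish by symmetry and the remaining terms cancel pairwise), and it cannot be a $d_u$-coboundary because any preimage would have no hair at $b$ and hence a different core. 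So $E^1(\qtGC_k^*)$ contains classes with no counterpart in $E^1(\qmGC_k)$, and the comparison theorem cannot be invoked at this page.

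These spurious classes do die, but only under the antenna-extension (zig-zag) differentials — splitting a bivalent hair-carrying vertex, or attaching a hair to a hair — and every such move raises the core count by one, so in your spectral sequence they act only as $d_1$ and later. This is precisely why the paper does not filter by core count: its proof runs the chain of reductions $\mathsf{s}_1,\dots,\mathsf{s}_4$ using filtrations by the number of non-passing vertices and by the number of non-bivalent non-antenna vertices, so that antenna extension already acts on the associated graded, and then computes the zig-zag-edge cohomology explicitly (the four displayed recursions), which is what concentrates everything on length-one hairs with the sign $(-1)^{k+1}$. In your scheme that entire analysis would reappear as the computation of $d_1$ on $E^1$, which the proposal omits. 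Nor can it be outsourced to Lemmas 5.2.4--5.2.5 of \cite{F} or the secondary filtrations of Proposition \ref{prop:SinpwGC}: those compute cohomology of complexes of arbitrarily-weighted special-in/special-out \emph{trees} inside the $\wGC_k$-type complexes, whereas the local factors on your $E^0$ are $\infty$-decorated \emph{star} complexes in the already-reduced $\qtGC_k^*$, and, as the example above shows, their $d_u$-cohomology is genuinely larger than the one- or two-dimensional answer you assert.
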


\tikzstyle{mblack}=[circle,fill=black!,inner sep=0pt,minimum size=1mm]
\newcommand{\wEd}[2]{
    \begin{tikzpicture}[shorten <=1pt,>=stealth,semithick,baseline=-0.1cm
    ]
    \node[mblack] (l) at (0,0) {};
    \node[mblack] (r) at (0.9,0) {};
    \node[] at (-0.2,0.3) {}
        edge [-] (l);
    \node[] at (-0.35,0) {}
        edge [-] (l);
    \node[] at (-0.2,-0.3) {}
        edge [-] (l);
    \draw [#2,snake=snake,
segment amplitude=0.6mm,
segment length=2.2mm,
line after snake=1.3mm
] (l) to (r);
    \node[] (m) at (0.45,0.2) {$\scriptstyle #1$};
    \end{tikzpicture}
}

\newcommand{\sBi}[0]{
    \begin{tikzpicture}[shorten <=1pt,>=stealth,semithick,baseline=-0.1cm
    ]
        \node[mblack] (l) at (-0.6,0) {};
        \node[mblack] (r) at (0.6,0) {};
        \node[mblack] (m) at (0,0) {}
            edge [->] (r)
            edge [->] (l);
        \node[] (ll) at (-1.2,0) {}
            edge[->] (l);
        \node[] (dots) at (-1.3,0) {$\cdots$};
        \node[mblack] (end) at (-2,0){};
        \node[] at (-1.4,0) {}
            edge [-] (end);
        \node[] at (-2.2,0.3) {}
        edge [-] (end);
        \node[] at (-2.35,0) {}
            edge [-] (end);
        \node[] at (-2.2,-0.3) {}
            edge [-] (end);
        \draw[snake=brace] (-2,0.2) -- (0.6,0.2);
        \node[] at (-0.7,0.5) {$\scriptstyle m$};
    \end{tikzpicture}
}

\newcommand{\sEd}[1]{
    \begin{tikzpicture}[shorten <=1pt,>=stealth,semithick,baseline=-0.1cm
    ]
        \node[mblack] (r) at (0.6,0) {};
        \node[mblack] (l) at (0,0) {}
            edge [#1] (r);
        \node[] at (-0.2,0.3) {}
        edge [-] (l);
        \node[] at (-0.35,0) {}
            edge [-] (l);
        \node[] at (-0.2,-0.3) {}
            edge [-] (l);
    \end{tikzpicture}
}

\begin{proof}
    First consider the subcomplex $\qtGC_k^{1,\frac{\infty_1}{0},\frac{0}{\infty_1}}\subseteq \qtGC_k^*$ of graphs with at least one univalent vertex and a pair of non-univalent vertices decorated by $\frac{\infty_1}{0}$ and $\frac{0}{\infty_1}$ respectively. One can show that this complex is acyclic by considering a filtration over the number of non-passing vertices. Hence the quotient complex $\mathsf{s}_1\qtGC_k$ is quasi-isomorphic to $\qtGC_k^*$. One also notes that $\qmGC_k$ is also a subcomplex of this quotient complex.
    Next consider the subcomplex $\mathsf{s}_2\qtGC_k$ of $\mathsf{s}_1\qtGC_k$ of graphs with non-univalent vertices as in $\qmGC_k$, and all other types of graphs with at least one univalent vertex as in $\mathsf{s}_1\qtGC_k$. The quotient complex consists of graphs with no univalent vertices, and where at least two vertices have different decorations. This complex is acyclic, seen by consider a filtration over the number of non-passing vertices.
    Next, consider the subcomplex $\mathsf{s}_3\qtGC_k$ of $\mathsf{s}_2\qtGC_k$ containing the same graphs having no univalent vertices, and additionally graphs with at least one univalent vertex but no non-univalent vertices decorated by $\frac{\infty_1}{\infty_1}$. The quotient complex is generated by graphs with at least one univalent vertex, and where all non-univalent vertices are decorated by $\frac{\infty_1}{\infty_1}$. This complex is acyclic, seen by considering a filtration over the number of non-passing vertices.
    Let $\mathsf{s}_4\qtGC_k$ be the subcomplex of $\mathsf{s}_3\qtGC_k$ containing the same graphs having no univalent vertices, and additionally graphs with univalent vertices on the following forms: Let $\Gamma$ be a graph without univalent vertices. Then we consider graphs $\Gamma(\frac{\infty_1}{0})$ where at least one vertex has the univalent vertex 
    $\begin{tikzpicture}[shorten >=1pt,node distance=1.2cm,auto,>=angle 90,baseline=-0.1cm]
            \node[biw] (a) at (0,0) {$\infty_1$\nodepart{lower}$\infty_1$};
            \node[] (d) at (0,-1) {}
                edge [->] (a);
    \end{tikzpicture}
    + (-1)^{k+1}
    \begin{tikzpicture}[shorten >=1pt,node distance=1.2cm,auto,>=angle 90,baseline=-0.1cm]
                \node[biw] (a) at (0,0) {$\infty_2$\nodepart{lower}$0$};
                \node[] (d) at (0,-1) {}
                    edge [<-] (a);
    \end{tikzpicture}$
    attached. Similarly we also consider graphs $\Gamma(\frac{0}{\infty_1})$ where at least one vertex has the univalent vertex
    $\begin{tikzpicture}[shorten >=1pt,node distance=1.2cm,auto,>=angle 90,baseline=-0.1cm]
            \node[biw] (a) at (0,0) {$\infty_1$\nodepart{lower}$\infty_1$};
            \node[] (d) at (0,-1) {}
                edge [<-] (a);
    \end{tikzpicture}
    +
    \begin{tikzpicture}[shorten >=1pt,node distance=1.2cm,auto,>=angle 90,baseline=-0.1cm]
                \node[biw] (a) at (0,0) {$\infty_2$\nodepart{lower}$0$};
                \node[] (d) at (0,-1) {}
                    edge [<-] (a);
    \end{tikzpicture}$
    attached. Let $\mathsf{s}_4\qtGC_k^1$ be the corresponding complex of graphs with at least one univalent vertex and similarly for $\mathsf{s}_3\qtGC_k$. We get the commutative diagram
    \begin{equation*}
        \xymatrix{& 0 \ar[d] & 0 \ar[d] & 0 \ar[d] &\\
            0 \ar[r] & \mathsf{s}_4\qtGC_k^1 \ar[r] \ar[d] & \mathsf{s}_4\qtGC_k \ar[r] \ar[d] & \mathsf{s}_4\qtGC_k^{\geq2} \ar[r] \ar[d] & 0 \\
            0 \ar[r] & \mathsf{s}_3\qtGC_k^1 \ar[r] \ar[d] & \mathsf{s}_3\qtGC_k \ar[r] \ar[d] & \mathsf{s}_3\qtGC_k^{\geq2} \ar[r] \ar[d] & 0 \\
            0 \ar[r] & A \ar[r] \ar[d] & B \ar[r] \ar[d] & 0 \ar[r] \ar[d] & 0\\
            & 0 & 0 & 0 &}
    \end{equation*}
    By showing that the inclusion $\mathsf{s}_4\qtGC_k^1\hookrightarrow\mathsf{s}_3\qtGC_k^1$ is a quasi-isomorphism, we also show that the inclusion for the full complexes is a quasi-isomorphism.
    The complex $\mathsf{s}_3\qtGC_k^1$ split as $\mathsf{s}_3\qtGC_k^1(\frac{\infty_1}{0})\oplus\mathsf{s}_3\qtGC_k^1(\frac{0}{\infty_1})$ where each complex contains the graphs where all non-univalent vertices are decorated by $\frac{\infty_1}{0}$ and $\frac{0}{\infty_1}$ respectively.
    We consider a vertex an \textit{antenna-vertex} if it is either univalent, or becomes univalent after iterative removal of univalent vertices in a graph. Consider a filtration of both complexes over the number of non-bivalent non-antenna vertices.
    In the associated graded the differential acts on by creating bivalent antenna vertices. All graphs of $gr\ \mathsf{s}_4\qtGC_k^1$ are non-trivial cycles, and so we need to show that the cohomology of $gr\ \mathsf{s}_3\qtGC_k^1$ is generated by the same graphs.
    Consider first a filtration over the number of non-passing vertices. We see that $H(gr\ \mathsf{s}_3\qtGC_k^1(\frac{0}{\infty_1}))$ is generated exactly by the graphs in $\mathsf{s}_4\qtGC_k^1(\frac{0}{\infty_1})$.
    We further get that $H(gr\ \mathsf{s}_3\qtGC_k^1(\frac{\infty_1}{0}))$ is generated by graphs with no passing antenna-vertices and univalent vertices of two types, either $\begin{tikzpicture}[shorten >=1pt,node distance=1.2cm,auto,>=angle 90,baseline=-0.1cm]
                \node[biw] (a) at (0,0) {$\infty_1$\nodepart{lower}$\infty_1$};
                \node[] (d) at (0,-1) {}
                    edge [->] (a);
    \end{tikzpicture}$
    or
    $\begin{tikzpicture}[shorten >=1pt,node distance=1.2cm,auto,>=angle 90,baseline=-0.1cm]
                \node[biw] (a) at (0,0) {$\infty_2$\nodepart{lower}$0$};
                \node[] (d) at (0,-1) {}
                    edge [<-] (a);
    \end{tikzpicture}$. Note that there is only one possible decoration for a univalent vertex, depending on if it is a source or a target.
    On the next page of the spectral sequence, the antenna-vertices form trees, where chains of bivalent vertices are composed of edges alternating directions. Call such an edge a \textit{zig-zag} edge. The case when zig-zag edges are adjacent to two at least trivalent vertices and how the differential acts on them has been studied in \cite{Z2} (there called skeleton edges). The more important case here is the case when a zig-zag edge is adjacent to a univalent vertex. They are on the form $\wEd{m}{->}:=\sBi$ where $m$ is the number of normal edges in the zig-zag edge and its direction is the direction of the last edge attached to the univalent vertex. The differential acts on zig-zag edges as
    \begin{equation*}
    \begin{tabular}{lcccccc}
         $d\big( \wEd{2m+1}{->} \big) $& $=$ & $(-1)^{k+1}$ & $\wEd{2m+2}{->}$ & $+$ & $(-1)^{k+1}$ & $\wEd{2m+2}{-<}$ \\
         $d\big( \wEd{2m+1}{-<} \big)$ & $=$ & $-$ & $\wEd{2m+2}{->}$ & $-$ & & $\wEd{2m+2}{-<}$\\
         $d\big( \wEd{2m}{->} \big) $& $=$ & & $\wEd{2m+1}{->}$ & $+$ & $(-1)^{k+1}$ & $\wEd{2m+1}{-<}$ \\
         $d\big( \wEd{2m}{-<} \big)$ & $=$ & $-$ & $\wEd{2m+1}{->}$ & $+$ & $(-1)^{k}$ & $\wEd{2m+1}{-<}$
    \end{tabular}
    \end{equation*}
    We see that its cohomology is generated by $\sEd{->}+(-1)^{k+1}\sEd{<-}$.
    The complex decomposes over graphs with the same skeleton graphs, i.e the resulting graph obtained by replacing sequences of bivalent vertices with one single edge. These complexes decompose as a tensor complexes over the edges of the skeleton graphs, similar to the compositions in proposition \ref{prop:SinwGC}, we get that the cohomology of $gr \ \mathsf{s}_3\qtGC_k^1(\frac{\infty_1}{0}))$ is generated the desired one.
\end{proof}
Similarly define $\qmGC_k^+$ as a subcomplex of $\qmGC_k^*$, but where graphs without univalent vertices on the form $\Gamma(\frac{0}{\infty_1})$ are excluded.
\begin{corollary}
    The inclusion $\qmGC_k^+\hookrightarrow \qtGC_k^+$ is a quasi-isomorphism.
\end{corollary}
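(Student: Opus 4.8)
The plan is to deduce the statement from Proposition~\ref{prop:qmGC} by a five-lemma argument, avoiding a repetition of the filtration analysis carried out there. The first step is to check that the quasi-isomorphism $\qmGC_k\hookrightarrow\qtGC_k^*$ of Proposition~\ref{prop:qmGC} restricts to the inclusion in the statement. Inside $\qmGC_k$ a graph has no vertex of positive out-weight exactly when it is of the monochromatic form $\Gamma(\tfrac{0}{\infty_1})$ with $\Gamma$ having no univalent vertices: the generators $\Gamma(\tfrac{\infty_1}{0})$ and $\Gamma(\omega)$ visibly carry a positive out-weight, and every graph in $\mathcal{C}^1(\tfrac{0}{\infty_1})$ carries univalent vertices decorated by $\tfrac{\infty_1}{\infty_1}$ or $\tfrac{\infty_2}{0}$, both of positive out-weight. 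Hence the excluded graphs $\Gamma(\tfrac{0}{\infty_1})$ defining $\qmGC_k^+$ are precisely the graphs lying outside $\qtGC_k^+$, so that $\qmGC_k^+=\qmGC_k\cap\qtGC_k^+$.

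This yields a commutative diagram of short exact sequences of complexes
\begin{equation*}
\xymatrix{
0 \ar[r] & \qmGC_k^+ \ar[r] \ar[d] & \qmGC_k \ar[r] \ar[d] & \qmGC_k/\qmGC_k^+ \ar[r] \ar[d] & 0 \\
0 \ar[r] & \qtGC_k^+ \ar[r] & \qtGC_k^* \ar[r] & \qtGC_k^*/\qtGC_k^+ \ar[r] & 0,
}
\end{equation*}
whose middle vertical arrow is the quasi-isomorphism of Proposition~\ref{prop:qmGC}. The crux is then to identify the two right-hand quotients. In $\qtGC_k^*$ the complement of $\qtGC_k^+$ is spanned by the graphs all of whose vertices have out-weight zero; since the decoration $\tfrac{0}{0}$ has already been quotiented out, these are exactly the all-$\tfrac{0}{\infty_1}$ graphs, which necessarily have neither univalent nor target vertices. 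On such graphs the induced differential is only the splitting part $d_s$: the hair-creating part $d_u$ produces a univalent vertex of positive out-weight and so vanishes modulo $\qtGC_k^+$, while splitting a $\tfrac{0}{\infty_1}$-vertex contributes only $(\tfrac{0}{\infty_1},\tfrac{0}{\infty_1})_x$ once the $\tfrac{0}{0}$-terms are discarded. The quotient $\qmGC_k/\qmGC_k^+$ is spanned by the very same graphs $\Gamma(\tfrac{0}{\infty_1})$ and carries the identical induced differential, so the right-hand vertical arrow is an isomorphism of complexes.

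Passing to the long exact sequences in cohomology and applying the five lemma then forces the left-hand arrow $\qmGC_k^+\hookrightarrow\qtGC_k^+$ to be a quasi-isomorphism, which is the assertion. The only point genuinely requiring care is the identification of the two quotient complexes, and in particular the verification that their induced differentials coincide; this is a direct inspection of the splitting and hair-creation terms recorded in the definitions of $\qqGC_k$ and $\qmGC_k$, after which the five-lemma conclusion is immediate.
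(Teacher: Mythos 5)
Your proof is correct and is essentially the paper's argument in transposed form: both rest on Proposition \ref{prop:qmGC} together with the key observation that the graphs separating the $+$-complexes from the full ones are, in both $\qmGC_k$ and $\qtGC_k^*$, exactly the monodecorated graphs $\Gamma(\frac{0}{\infty_1})$ with no univalent (hence no target) vertices, on which only the splitting part of the differential survives. The paper identifies the quotient $\qtGC_k^+/\qmGC_k^+$ directly with $\qtGC_k^*/\qmGC_k^*$, which is acyclic by the proposition, whereas you identify the two complementary quotients $\qmGC_k/\qmGC_k^+\cong\qtGC_k^*/\qtGC_k^+$ and invoke the five lemma; the combinatorial verification required is the same, so this is a repackaging rather than a different route.
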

\begin{proof}
    By considering the short exact sequence induced by this inclusion, we get a quotient complex that is equal to the induced quotient by the inclusion $\qmGC_k^*\hookrightarrow \qtGC_k^*$. By the proposition, this complex is acyclic, giving the corollary. 
\end{proof}
Let $\mathsf{oqmGC}_k\subseteq\qmGC_k$ and $\mathsf{opmGC}_k\subseteq\pmGC_k$ be the subcomplexes of oriented graphs.
\begin{proposition}
    The inclusions $\mathsf{oqmGC}_k\hookrightarrow\mathsf{oqtGC}_k$ and $\mathsf{opmGC}_k\hookrightarrow\mathsf{optGC}_k$ are quasi-isomorphisms.
\end{proposition}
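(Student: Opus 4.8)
The plan is to re-run the proof of Proposition \ref{prop:qmGC} inside the oriented subcomplexes, verifying at each step that the restriction to graphs with no closed directed path is transparent to the reductions used there. First I would introduce the oriented analogues $\mathsf{s}_i\mathsf{oqtGC}_k$ ($i=1,\dots,4$) of the intermediate subcomplexes from that proof. These are well defined subcomplexes because the vertex-splitting and antenna-creating differential preserves orientedness exactly as for $\OGC_k$ and $\owGC_k$: adding a univalent vertex along a single edge can never close a directed cycle, and the splitting terms behave as in the already-established oriented complexes.

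The key observation making the argument carry over is that every acyclicity statement in Proposition \ref{prop:qmGC} is obtained from a filtration (over the number of non-passing, respectively non-bivalent non-antenna, vertices) whose associated graded decomposes over core graphs, and then over skeleton graphs, into tensor products of local tree complexes, exactly in the spirit of the decomposition $gr\ \wGC_k=\bigoplus_\gamma \mathsf{inCore}(\gamma)$ used in Proposition \ref{prop:SinwGC} and Corollary \ref{cor:SinwGC}. The condition of having no closed directed path is a property of the underlying core graph $\gamma$ alone, since the special-in/out trees and antenna decorations attached at the core vertices are themselves acyclic directed trees. Hence the oriented subcomplex is precisely the direct summand $\bigoplus_{\gamma\ \text{oriented}}(\cdots)$ of the full local decomposition, and the local cohomology, computed vertex by vertex on the same tree complexes, is identical to the unoriented one. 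Each quotient in the chain $\mathsf{s}_1\supset\cdots\supset\mathsf{s}_4$ therefore remains acyclic.

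The one genuinely delicate point, and the step I expect to be the main obstacle, is the zig-zag edge analysis at the end of the proof of Proposition \ref{prop:qmGC}. Here I would check that an alternating chain of edges adjacent to a univalent vertex never closes into a directed cycle, so that passing to the oriented subcomplex deletes none of the generators $\wEd{m}{->}$, $\wEd{m}{-<}$ and the displayed action of $d$ on zig-zag edges holds verbatim; its cohomology is then again generated by $\sEd{->}+(-1)^{k+1}\sEd{<-}$. Decomposing over skeleton graphs and tensoring over edges reproduces the stated generators in the oriented setting. Finally, the same reasoning with the pseudo decorations and the pseudo trees $p\mathcal{T}_x^{out}$ in place of the quasi ones yields the statement for $\mathsf{opmGC}_k\hookrightarrow\mathsf{optGC}_k$; once it is confirmed that the oriented restriction commutes with every filtration and core/skeleton splitting, all relevant cohomology is computed by the identical local complexes and both quasi-isomorphisms follow.
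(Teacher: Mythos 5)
Your proposal is correct and takes essentially the same route as the paper: the paper's proof consists of the single observation that all arguments in the proof of Proposition \ref{prop:qmGC} are independent of the orientation of a graph, which is exactly what you verify step by step. Your additional checks (that orientedness is a property of the core/skeleton graph alone, that the differential preserves it, and that the zig-zag edge analysis survives restriction to oriented graphs) are precisely the details the paper leaves implicit.
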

\begin{proof}
    The proof is the same as proposition \ref{prop:qmGC}, noting that all the arguments are independent of the orientation of a graph.
\end{proof}
\subsection{Cohomology results}
Let $\mathcal{C}(\frac{\infty_1}{0})$ and $\mathcal{C}(\frac{0}{\infty_1})$ be the subcomplexes of $\qmGC_k^*$ of graphs whose non-univalent vertices are decorated by $\frac{\infty_1}{0}$ and $\frac{0}{\infty_1}$ respectively.
\begin{proposition}\label{prop:qmGC(omega)}
    Consider the short exact sequence
    \begin{equation*}
        \xymatrix{ 0 \ar[r] & \mathcal{C}(\frac{\infty_1}{0})\oplus\mathcal{C}(\frac{0}{\infty_1}) \ar[r] & \qmGC_k^* \ar[r] & \qmGC_k(\omega) \ar[r] & 0}.
    \end{equation*}
    Then
    \begin{enumerate}
        \item The complexes $\mathcal{C}(\frac{\infty_1}{0})$ and $\mathcal{C}(\frac{0}{\infty_1})$ are acyclic.
        \item The complex $\qmGC_k(\omega)$ is isomorphic to $\dGC_k$.
    \end{enumerate}
    In particular there is a quasi-isomorphism $\qmGC_k^*\rightarrow \dGC_k$.
\end{proposition}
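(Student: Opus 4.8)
The plan is to deduce the final statement formally from parts (1) and (2) together with the long exact sequence in cohomology attached to the displayed short exact sequence: once $\mathcal{C}(\frac{\infty_1}{0})$ and $\mathcal{C}(\frac{0}{\infty_1})$ are known to be acyclic, the projection $\qmGC_k^*\to\qmGC_k(\omega)$ is a quasi-isomorphism, and composing it with the isomorphism of (2) produces the asserted quasi-isomorphism $\qmGC_k^*\to\dGC_k$. So all the work is in establishing (1) and (2), after first checking that the arrows in the sequence really are chain maps. The latter reduces to seeing that the mono-decorated graphs span a subcomplex: the splitting part $d_s$ applied to a $\frac{\infty_1}{0}$-vertex produces only $\frac{\infty_1}{0}$-vertices, since the $\frac{0}{0}$-terms of the splitting vanish in the quotient $\qtGC_k^*$, while the univalent-creating part $d_u$ produces only the special source-antennae that are already allowed in $\mathcal{C}(\frac{\infty_1}{0})$.

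For (2) I would exhibit the explicit map $\Gamma\mapsto\Gamma(\omega)$ sending an undecorated graph $\Gamma$ with no univalent vertices to its $\omega$-decoration sum. On generators this is a bijection onto a basis of $\qmGC_k(\omega)$, so it suffices to see that it intertwines differentials. Recall from the discussion preceding Proposition \ref{prop:qmGC} that $d_u\big(\Gamma(\omega)+\Gamma(\frac{\infty_1}{0})+\Gamma(\frac{0}{\infty_1})\big)=0$; hence in $\qmGC_k(\omega)$, where the mono-decorated classes $\Gamma(\frac{\infty_1}{0})$ and $\Gamma(\frac{0}{\infty_1})$ are zero, the part $d_u$ of the differential acts as zero. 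Only the vertex-splitting $d_s$ survives, and on $\Gamma(\omega)$ it reproduces exactly the vertex-splitting differential of $\dGC_k$ on $\Gamma$. This is the routine half of the proposition.

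The substance is (1). By the edge-reversal isomorphism (the same one yielding $\dGC_k^s\cong\dGC_k^t$), which exchanges the in- and out-data, it is enough to treat $\mathcal{C}(\frac{\infty_1}{0})$. I would first record that these graphs have no non-univalent source vertex, since a source may carry only the decorations $\frac{\infty_1}{\infty_1}$ or $\frac{0}{\infty_1}$ (resp.\ $\frac{\infty_2}{0}$ when univalent); hence every non-univalent vertex here has at least one incoming edge. Next, decomposing $d=d_s+d_u$, I would filter by the number of non-antenna vertices, the spectral sequence converging by the degree-shift trick of Section \textbf{4.1}. On the associated graded only $d_u$ acts, and the complex splits, over the non-antenna vertices of each fixed core graph, as a tensor product of per-vertex antenna complexes, exactly as in the proof of Proposition \ref{prop:qmGC} and the analogous lemmas of \cite{F}. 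For a fixed vertex decorated $\frac{\infty_1}{0}$, the per-vertex differential peels one unit of out-weight into a new special source-antenna, and I would prove this antenna complex acyclic by the contracting homotopy reabsorbing a special antenna into the out-weight of its parent, taking care of the boundary case where peeling the last out-hair of a target lands on the forbidden $\frac{0}{0}$ and is therefore zero.

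The main obstacle is precisely this homotopy step: arranging the signs so that $d_u h + h d_u$ is a nonzero scalar on every generator, and checking that the reabsorption is compatible with the rigid special-combination form $\frac{\infty_1}{\infty_1}+(-1)^{k+1}\frac{\infty_2}{0}$ of the univalent vertices forced on us inside $\qmGC_k$. Since once the decorations are frozen in mono-decorated form the computation depends only on the out-weight bookkeeping, it is formally identical to the antenna-complex computations of \cite{F}, and I expect it to carry over essentially verbatim.
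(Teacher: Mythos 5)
Your long-exact-sequence reduction and your proof of part (2) are correct and agree with the argument the paper imports from \cite{F}: the identity $d_u\big(\Gamma(\omega)+\Gamma(\frac{\infty_1}{0})+\Gamma(\frac{0}{\infty_1})\big)=0$ forces $d_u$ to vanish on the quotient $\qmGC_k(\omega)$, leaving exactly the splitting differential of $\dGC_k$. The genuine gap is the first step of your proof of part (1): the edge-reversal reduction of $\mathcal{C}(\frac{0}{\infty_1})$ to $\mathcal{C}(\frac{\infty_1}{0})$ is not available in the quasi setting. Edge reversal (exchanging all in- and out-data) is a symmetry of the Lie-bialgebra complexes of \cite{F}, where every univalent vertex carries the unique decoration $\frac{\infty_1}{\infty_1}$; but the quasi bi-weight conditions are asymmetric in inputs and outputs: a univalent source may carry $\frac{\infty_2}{0}$, while the mirror decoration $\frac{0}{\infty_2}$ does not exist in $\qqGC_k$ (it appears only in the pseudo complex $\pqGC_k$). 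Concretely, the special antennas of $\mathcal{C}^1(\frac{\infty_1}{0})$ are the combinations $\bigl[\text{univalent target }\frac{\infty_1}{\infty_1}\bigr]+(-1)^{k+1}\bigl[\text{univalent source }\frac{\infty_2}{0}\bigr]$, whereas those of $\mathcal{C}^1(\frac{0}{\infty_1})$ are $\bigl[\text{univalent source }\frac{\infty_1}{\infty_1}\bigr]+\bigl[\text{univalent source }\frac{\infty_2}{0}\bigr]$. Applying your reversal to a graph of $\mathcal{C}(\frac{\infty_1}{0})$ therefore produces the forbidden decoration $\frac{0}{\infty_2}$ and lands outside the quasi complex altogether, and no relabelling of the image recovers $\mathcal{C}(\frac{0}{\infty_1})$, whose two antenna terms are both sources. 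So $\mathcal{C}(\frac{0}{\infty_1})$ requires its own, parallel but not identical, acyclicity argument — exactly as the paper proceeds in the analogous pseudo statement, Proposition \ref{prop:ptGC(gamma)} — and your proposal never addresses it.

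A second, related inaccuracy undermines your expectation that the antenna computation ``carries over essentially verbatim'' from \cite{F}. In $\mathcal{C}(\frac{\infty_1}{0})$ the special antenna is not created by peeling out-weight into a source-antenna: the univalent target $\frac{\infty_1}{\infty_1}$ term comes from the hair-attachment $\delta'$, while the univalent source $\frac{\infty_2}{0}$ term comes from vertex splittings $\delta_x$ in which one of the two new vertices is split off univalent (recall the paper's convention that a univalent source decorated $\frac{\infty_1}{0}$ is read as $\frac{\infty_2}{0}$); only the sum of the two mechanisms stays inside $\qmGC_k$. In \cite{F} this second mechanism is absent, because a univalent source with in-weight zero is an invalid bi-weight there; the antenna complex you would copy from \cite{F} has a single antenna type, not the mixed-orientation two-term antenna you must contract here, nor the different two-term source antenna of $\mathcal{C}(\frac{0}{\infty_1})$. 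Your overall scheme for part (1) — filtration on the number of non-antenna vertices, per-vertex antenna complexes, an explicit contracting homotopy — is the right one and is consistent with the proofs the paper spells out for the neighbouring Propositions \ref{prop:qmGC} and \ref{prop:ptGC(gamma)}; but both the symmetry shortcut and the verbatim-transfer claim fail at precisely the points where the quasi case differs from the case covered by the citation.
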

\begin{proof}
    The proof is the same as the proof of proposition 6.3.1 in \cite{F}.
\end{proof}
\begin{theorem}[Main theorem III]\label{thm:qmGC*}
    Let $\dGC_k\rightarrow \qwGC_k^*$ be the map where a graph $\Gamma$ is mapped to the sum of all possible bi-weights to put on $\Gamma$ excluding the decoration with only $\frac{0}{0}$. Then this map is a quasi-isomorphism.
\end{theorem}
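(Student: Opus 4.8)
The plan is to recognise the stated map $g\colon \dGC_k\to\qwGC_k^*$ — which sends $\Gamma$ to the sum over all non-trivial bi-weight decorations, i.e. to $\Gamma(\tfrac{\infty_0}{\infty_0})-\Gamma(\tfrac{0}{0})$ in the $\infty_r$-notation — as the cohomology inverse of the chain of reductions built in Section 4, and to pin this down by an explicit comparison. First I would check that $g$ is a chain map of degree $0$, using the description of the differential on $\tfrac{\infty_0}{\infty_0}$-decorated vertices from the Remark following the $\infty_r$-convention. The vertex-splitting part of $d\,\Gamma(\tfrac{\infty_0}{\infty_0})$ equals $(\delta\Gamma)(\tfrac{\infty_0}{\infty_0})$, since redistributing $\tfrac{\infty_0}{\infty_0}$ over the two new vertices again yields $\tfrac{\infty_0}{\infty_0}$ on each; this matches $g$ applied to the splitting differential of $\dGC_k$ term by term, while the correction $-\Gamma(\tfrac{0}{0})$ accounts for the (now valid) all-zero graph. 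The univalent-creating terms $\sum_x(\delta'_x+\delta''_x)$ vanish on $\Gamma(\tfrac{0}{0})$, because one cannot lower a zero weight, and cancel on $\Gamma(\tfrac{\infty_0}{\infty_0})$ — this is precisely the assertion of that Remark that $\tfrac{\infty_0}{\infty_0}$-vertices create no new univalent vertices under $d$. Hence $d\,g=g\,d$.

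Next I would assemble the quasi-isomorphisms of Section 4, restricted to the positive-weight ($*$) parts,
\[
\dGC_k \xleftarrow{\ \sim\ } \qmGC_k^* \hookrightarrow \qtGC_k^* \xleftarrow{\ \sim\ } \qqGC_k^* \hookrightarrow \mathsf{S}\qwGC_k^* \hookrightarrow \qwGC_k^*,
\]
where the leftmost arrow is Proposition \ref{prop:qmGC(omega)}, the inclusion of $\qmGC_k^*$ is Proposition \ref{prop:qmGC}, the middle arrow is the projection $\qqGC_k^*\to\qtGC_k^*$, and the two rightmost inclusions are the quasi-isomorphism $\qqGC_k\hookrightarrow\mathsf{S}\qwGC_k$ together with Proposition \ref{prop:SinqwGC}. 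Since every arrow is a quasi-isomorphism, this zig-zag induces an isomorphism $\psi\colon H(\qwGC_k^*)\xrightarrow{\ \cong\ }H(\dGC_k)$. Because $\psi$ is invertible it suffices to prove $\psi\circ H(g)=\mathrm{id}_{H(\dGC_k)}$, as this forces $H(g)=\psi^{-1}$ and hence makes $g$ a quasi-isomorphism.

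The verification of $\psi\circ H(g)=\mathrm{id}$ is then carried out by tracing $g(\Gamma)$ through the reductions. Applying the reduction maps, the retractions of the inclusions collapse the special-in and special-out trees, the projections discard every summand carrying a $\tfrac{0}{0}$-vertex and every non-monodecorated summand, and finally the projection $\qmGC_k^*\to\qmGC_k(\omega)\cong\dGC_k$ of Proposition \ref{prop:qmGC(omega)} keeps only the $\omega$-type monodecorated summands and records the underlying undecorated graph. Since $\Gamma(\tfrac{\infty_0}{\infty_0})$ contains each $\omega$-decoration of $\Gamma$ with coefficient $1$, the outcome is $[\Gamma]$, which gives the desired identity. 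The extra quasi-decoration $\tfrac{\infty_2}{0}$ enters only on uni- and bivalent sources and is handled exactly as in the corresponding computation in \cite{F}.

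The main obstacle I expect is the chain-map step: establishing the precise pairwise cancellation of the $\delta'_x$ and $\delta''_x$ terms on $\Gamma(\tfrac{\infty_0}{\infty_0})$, with signs dictated by the edge- and vertex-ordering conventions of \cite{MW1}. A second, lesser, difficulty lies in the final trace, where one must confirm that each surviving $\omega$-summand appears with multiplicity exactly $1$, so that $g$ is an honest quasi-isomorphism rather than one only up to a nonzero scalar.
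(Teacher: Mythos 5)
Your proposal is correct and takes essentially the same route as the paper: the paper's own (one-sentence) proof deduces the theorem from precisely the zig-zag $\qwGC_k^*\leftarrow\qqGC_k^*\rightarrow\qtGC_k^*\leftarrow\qmGC_k^*\rightarrow\qmGC_k(\omega)\cong\dGC_k$ of quasi-isomorphisms assembled in Sections 4--5, asserting that the explicit map is compatible with this zig-zag. Your chain-map verification and the coefficient-$1$ tracing of $\Gamma(\omega)$-summands are exactly the details the paper leaves implicit.
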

\begin{proof}
    This follows by noting that the restrictions of this map to the complexes of the zig-zag $\qwGC_k^*\leftarrow\qqGC_k^*\rightarrow\qtGC_k^*\leftarrow\qmGC_k^*\rightarrow\qmGC_k(\omega)$ are quasi-isomorphisms.
\end{proof}
\begin{proposition}
    Let $\mathsf{oqmGC}_k(\omega)$ be the subcomplex of $\qmGC_k(\omega)$ of oriented graphs. Then the induced projection $\mathsf{oqmGC}_k\rightarrow\mathsf{oqmGC}_k(\omega)$ is a quasi-isomorphism. Furthermore $\mathsf{oqmGC}_k(\omega)\cong\oGC_k$.
\end{proposition}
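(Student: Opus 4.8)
The plan is to treat the two assertions separately, deducing each from structures already established for the non-oriented complexes. First I would dispose of the isomorphism $\mathsf{oqmGC}_k(\omega)\cong\oGC_k$. By Proposition \ref{prop:qmGC(omega)}(2) there is an isomorphism $\qmGC_k(\omega)\cong\dGC_k$ sending a decorated graph $\Gamma(\omega)$ to its underlying directed graph $\Gamma$, forgetting the bi-weight decorations. Orientedness — the absence of a closed directed path — is a property of the underlying directed graph alone and is unaffected by the choice of decoration. Hence this isomorphism identifies the span of oriented graphs on the left with the span of oriented graphs in $\dGC_k$, and by definition the latter is $\oGC_k$. Since the map is a chain map, its restriction gives an isomorphism of complexes $\mathsf{oqmGC}_k(\omega)\cong\oGC_k$.

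For the quasi-isomorphism $\mathsf{oqmGC}_k\to\mathsf{oqmGC}_k(\omega)$ I would restrict the short exact sequence of Proposition \ref{prop:qmGC(omega)} to oriented graphs. Writing $\mathcal{C}(\tfrac{\infty_1}{0})^{\mathrm{or}}$ and $\mathcal{C}(\tfrac{0}{\infty_1})^{\mathrm{or}}$ for the subcomplexes of $\mathcal{C}(\tfrac{\infty_1}{0})$ and $\mathcal{C}(\tfrac{0}{\infty_1})$ spanned by oriented graphs, one obtains
\[
0\longrightarrow \mathcal{C}(\tfrac{\infty_1}{0})^{\mathrm{or}}\oplus\mathcal{C}(\tfrac{0}{\infty_1})^{\mathrm{or}}\longrightarrow \mathsf{oqmGC}_k\longrightarrow \mathsf{oqmGC}_k(\omega)\longrightarrow 0,
\]
where exactness is again guaranteed by the fact that orientedness depends only on the underlying graph, so the inclusion and the projection respect the orientedness condition. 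By the induced long exact sequence in cohomology it then suffices to prove that the kernel is acyclic, i.e. that each of $\mathcal{C}(\tfrac{\infty_1}{0})^{\mathrm{or}}$ and $\mathcal{C}(\tfrac{0}{\infty_1})^{\mathrm{or}}$ is acyclic.

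To establish this acyclicity I would re-run the argument proving acyclicity of $\mathcal{C}(\tfrac{\infty_1}{0})$ and $\mathcal{C}(\tfrac{0}{\infty_1})$ in Proposition \ref{prop:qmGC(omega)}(1), modelled on Proposition 6.3.1 of \cite{F}. That argument uses a filtration whose associated graded splits over the underlying core graph $\gamma$ as a tensor product of local antenna and zig-zag complexes attached to the vertices of $\gamma$, with $\gamma$ fixed in each summand. The essential point — exactly as in Corollary \ref{cor:SinwGC} — is that attaching antennas, trees, or zig-zag chains to a vertex can never create a closed directed path, so orientedness is a property of $\gamma$ alone. Consequently the filtration restricts to the oriented subcomplexes, whose associated graded is the direct sum of the same local tensor complexes taken only over oriented $\gamma$; these are acyclic by the non-oriented computation, hence so are $\mathcal{C}(\tfrac{\infty_1}{0})^{\mathrm{or}}$ and $\mathcal{C}(\tfrac{0}{\infty_1})^{\mathrm{or}}$.

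The main obstacle I anticipate is precisely this compatibility check: verifying that the contracting homotopies and differentials on each local factor map oriented graphs to oriented graphs. This should hold because every such operation modifies only tree-like appendages and leaves the core graph $\gamma$ — and therefore its closed-path structure — untouched, but it is the one place where the orientation hypothesis must be traced through the reduction of \cite{F} rather than simply quoted.
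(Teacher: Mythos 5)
Your proposal is correct and takes essentially the same route as the paper: the paper's proof consists of the single line ``the same arguments as in Proposition \ref{prop:qmGC(omega)}'', i.e.\ restrict the short exact sequence, the acyclicity of $\mathcal{C}(\tfrac{\infty_1}{0})\oplus\mathcal{C}(\tfrac{0}{\infty_1})$, and the identification of the quotient with $\dGC_k$ to oriented graphs, which is exactly what you spell out. Your key compatibility check --- that orientedness is a property of the underlying core graph alone, so the forgetful isomorphism, the short exact sequence, and the filtrations of \cite{F} all restrict to the oriented subcomplexes --- is precisely what the paper's terse proof implicitly relies on.
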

\begin{proof}
    The same arguments as in proposition \ref{prop:qmGC(omega)}.
\end{proof}
\begin{theorem}[Main theorem I]
    The restriction $\oGC_k\rightarrow\oqwGC_k$ of the map in \ref{thm:qmGC*} is a quasi-isomorphism.
\end{theorem}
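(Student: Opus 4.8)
The plan is to run the same zig-zag of quasi-isomorphisms that proves Theorem~\ref{thm:qmGC*}, but with every complex replaced by its subcomplex of oriented graphs. First I would record a simplification special to the oriented setting: any admissible decoration with all bi-weights equal to $\frac{0}{0}$ forces every vertex to carry at least one outgoing edge (the validity conditions demand $|x|_{out}\geq 1$ whenever $w_x^{out}=0$), and a finite directed graph in which every vertex has an outgoing edge necessarily contains a closed directed path. Hence the all-$\frac{0}{0}$ summand vanishes on oriented graphs, so there is no distinction between the $*$-part and the full complex: $\oqwGC_k=\oqwGC_k^*$, and likewise $\oqqGC_k$, $\mathsf{oqtGC}_k$ and $\mathsf{oqmGC}_k$ coincide with their $*$-versions. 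In particular the map of Theorem~\ref{thm:qmGC*}, which sends a directed graph $\Gamma$ to the sum of all its admissible decorations other than the all-$\frac{0}{0}$ one, restricts to a chain map $\oGC_k\to\oqwGC_k$; it is well defined because a decoration never alters the underlying directed graph, so the image of an oriented graph is oriented.

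Next I would assemble the oriented zig-zag
\begin{equation*}
\oqwGC_k \longleftarrow \oqqGC_k \longrightarrow \mathsf{oqtGC}_k \longleftarrow \mathsf{oqmGC}_k \longrightarrow \mathsf{oqmGC}_k(\omega)\cong\oGC_k,
\end{equation*}
in which the two leftward arrows are inclusions and the two rightward arrows are projections, and check that each is a quasi-isomorphism. The inclusion $\oqqGC_k\hookrightarrow\oqwGC_k$ factors through $\mathsf{S}\oqwGC_k$, and both stages are the already-established oriented analogues of Proposition~\ref{prop:SinqwGC} and of the inclusion $\oqqGC_k\hookrightarrow\mathsf{S}\oqwGC_k$. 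The projection $\oqqGC_k\to\mathsf{oqtGC}_k$ is the oriented version of $\qqGC_k^*\to\qtGC_k^*$; the inclusion $\mathsf{oqmGC}_k\hookrightarrow\mathsf{oqtGC}_k$ is the oriented statement proved just above; and the final projection together with $\mathsf{oqmGC}_k(\omega)\cong\oGC_k$ is the proposition immediately preceding the theorem. Inverting the two inclusions and composing shows that the resulting zig-zag map equals the decoration map of Theorem~\ref{thm:qmGC*}, whence it is a quasi-isomorphism.

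The only point that genuinely has to be verified, rather than quoted, is that each filtration and spectral-sequence argument behind the constituent quasi-isomorphisms survives the passage to oriented subcomplexes. This is exactly the mechanism already used in Corollary~\ref{cor:SinwGC}: the differential is built from vertex splittings $\delta_x$ and univalent attachments $\delta'_x,\delta''_x$, and none of these can create a closed directed path where none existed, since a split introduces a single new directed edge between the two halves and an attachment adds a fresh univalent source or sink. Consequently every oriented complex above is an honest subcomplex, all the gradings over in-core graphs, out-core graphs, and numbers of (univalent/antenna) vertices restrict, and the local tree complexes $\mathcal{T}^{in}_x$, $q\mathcal{T}^{out}_x$ and their pseudo-analogues that compute the cohomology are literally unchanged. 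I expect this orientation-preservation check to be the main---though routine---obstacle; once it is in place, every arrow of the zig-zag restricts and the theorem follows.
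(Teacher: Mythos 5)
Your proposal is correct and follows essentially the same route as the paper: the paper's proof of this theorem is precisely the zig-zag $\oqwGC_k\leftarrow\oqqGC_k\rightarrow\mathsf{oqtGC}_k\leftarrow\mathsf{oqmGC}_k\rightarrow\mathsf{oqmGC}_k(\omega)\cong\oGC_k$, with each arrow a quasi-isomorphism by the oriented analogues of the reduction propositions, exactly as you assemble it. Your preliminary observation that the all-$\frac{0}{0}$ summand vanishes on oriented graphs (so $\oqwGC_k=\oqwGC_k^*$), and your explicit check that the differential cannot create closed directed paths, are correct and make explicit points the paper leaves implicit.
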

\begin{proof}
    See \ref{thm:qmGC*}.
\end{proof}
Let $\mathcal{C}(\frac{\infty_1}{0})$ be the subcomplex of $\qmGC_k^+$ of graphs where all non-univalent vertices are decorated with $\frac{\infty_1}{0}$, and let $\mathcal{Q}:=\qmGC_k^+/\mathcal{C}(\frac{\infty_1}{0})$.
\begin{proposition}\label{prop:redqmGC}
    The complex $\mathcal{C}(\frac{\infty_1}{0})$ is acyclic. In particular the projection $\qmGC_k^+\rightarrow\mathcal{Q}$ is a quasi-isomorphism.
\end{proposition}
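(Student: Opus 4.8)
The plan is to obtain the "in particular" clause from the acyclicity of $\mathcal{C}(\tfrac{\infty_1}{0})$ through the evident short exact sequence, so the whole content reduces to that acyclicity. The first thing I would note is that $\mathcal{C}(\tfrac{\infty_1}{0})$ is not genuinely a new object: a graph all of whose non-univalent vertices carry the decoration $\tfrac{\infty_1}{0}$ can never be one of the purely $\tfrac{0}{\infty_1}$-decorated, univalent-free graphs that are removed in passing from $\qmGC_k^*$ to its subcomplex $\qmGC_k^+$. Hence $\mathcal{C}(\tfrac{\infty_1}{0})\subseteq\qmGC_k^+\subseteq\qmGC_k^*$ is the identical subcomplex, with the identical differential, as the complex $\mathcal{C}(\tfrac{\infty_1}{0})$ of Proposition \ref{prop:qmGC(omega)}, whose acyclicity is recorded there. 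This already settles the acyclicity claim; the remaining paragraphs sketch the self-contained argument behind it and isolate its one nontrivial step.

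For a direct proof I would filter $\mathcal{C}(\tfrac{\infty_1}{0})$ by the number of core (non-antenna) vertices, exactly as in the proofs of Propositions \ref{prop:SinwGC}, \ref{prop:qGC} and \ref{prop:qmGC}. On the associated graded the vertex-splitting part of the differential is killed and only the antenna-building part survives, so the graded complex decomposes over the underlying core graph $\gamma$ and each summand factors as $\big(\bigotimes_{x\in V(\gamma)}\mathcal{A}_x\big)^{\mathrm{Aut}(\gamma)}$, where $\mathcal{A}_x$ is the complex of special antenna trees attached to $x$, now carrying only $\tfrac{\infty_1}{0}$ on interior vertices and the distinguished univalent combination $\sEd{->}+(-1)^{k+1}\sEd{<-}$ at the tips. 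By the Künneth formula it then suffices to show that each factor $\mathcal{A}_x$ is acyclic.

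The acyclicity of $\mathcal{A}_x$ is the crux, and the step I expect to cost the most. The novelty against Proposition \ref{prop:qmGC} is that $\mathcal{A}_x$ contains the empty-antenna generator (the bare core vertex) alongside all the antenna trees. The zig-zag edge computation in the proof of Proposition \ref{prop:qmGC} shows that the univalent-bearing part of $\mathcal{A}_x$ has cohomology spanned by the single class $u=\sEd{->}+(-1)^{k+1}\sEd{<-}$; the extra input here is that the univalent-creating part $d_u$ of the differential carries the empty-antenna generator onto exactly this class (up to sign), so that $u$ becomes a coboundary and $\mathcal{A}_x$ is acyclic. Formally I would run the auxiliary filtration by the number of univalent vertices and read $\mathcal{A}_x$ as a mapping cone of $d_u$ from the empty-antenna term into the one-univalent-vertex term whose cohomology is $\langle u\rangle$; this is precisely the bookkeeping of proposition 6.3.1 in \cite{F}, and the only care needed is matching signs so that the cancellation is exact.

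Granting the acyclicity, the short exact sequence of complexes
\begin{equation*}
0\longrightarrow \mathcal{C}(\tfrac{\infty_1}{0})\longrightarrow \qmGC_k^+\longrightarrow \mathcal{Q}\longrightarrow 0
\end{equation*}
has acyclic kernel, so its long exact sequence in cohomology forces the projection $\qmGC_k^+\to\mathcal{Q}$ to be an isomorphism on cohomology, i.e. a quasi-isomorphism.
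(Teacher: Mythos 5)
Your proposal is correct, and its main route is genuinely different from the paper's. The paper proves the acyclicity of $\mathcal{C}(\tfrac{\infty_1}{0})$ from scratch, in one line, by filtering over the number of non-univalent vertices; it does not exploit the fact that this complex has already appeared. Your key observation — that the subcomplex of $\qmGC_k^+$ of graphs whose non-univalent vertices all carry $\tfrac{\infty_1}{0}$ coincides, with the same induced differential, with the complex $\mathcal{C}(\tfrac{\infty_1}{0})\subseteq\qmGC_k^*$ of Proposition \ref{prop:qmGC(omega)} — is valid: the only graphs removed in passing from $\qmGC_k^*$ to $\qmGC_k^+$ are the univalent-free graphs $\Gamma(\tfrac{0}{\infty_1})$, and none of these satisfies the $\tfrac{\infty_1}{0}$-condition, so the two subcomplexes are literally equal and the acyclicity recorded in Proposition \ref{prop:qmGC(omega)}(1) applies verbatim. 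This buys economy: no new spectral sequence is needed, only the long exact sequence for the short exact sequence defining $\mathcal{Q}$, exactly as in your last paragraph. Your secondary, self-contained argument is in substance the paper's proof: note that in $\mathcal{C}(\tfrac{\infty_1}{0})$ the special univalent vertices are attached only to non-antenna vertices, so antenna vertices are exactly the univalent ones, and your filtration by non-antenna vertices is the paper's filtration by non-univalent vertices; the per-vertex mapping-cone cancellation (the bare $\tfrac{\infty_1}{0}$ vertex being sent by $d_u$ onto the class $u$) is the same bookkeeping both the paper and you ultimately defer to proposition 6.3.1 of \cite{F}. The one imprecision is cosmetic: you describe the surviving graded differential as purely "antenna-building", which should be understood to include the terms of the splitting differential that break off univalent vertices, not only $\delta'$ and $\delta''$; this does not affect the argument.
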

\begin{proof}
    The proof follows by considering a filtration over the number of non-univalent vertices.
\end{proof}
\begin{proposition}
    Let $\mathcal{Q}^t$ be the subcomplex of $\mathcal{Q}$ of graphs with at least one target vertex when excluding antenna-vertices. Then the inclusion is a quasi-isomorphism. Furthermore the complex $\mathcal{Q}^t$ is isomorphic to $\dGC_k^t$.
\end{proposition}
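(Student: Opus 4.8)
The plan is to work with the subcomplex $\mathcal{Q}^t\subseteq\mathcal{Q}$ of target-core graphs directly, splitting the claim into two independent assertions: that $\mathcal{Q}^t$ is genuinely isomorphic to $\dGC_k^t$, and that the complementary quotient $\mathcal{Q}/\mathcal{Q}^t$ (graphs whose core has no target vertex) is acyclic, the latter being exactly what makes the inclusion a quasi-isomorphism. Throughout I would use the decomposition $d=d_s+d_u$, where $d_s$ splits a vertex and $d_u$ creates a univalent antenna, together with the identification $\qmGC_k(\omega)\cong\dGC_k$ of Proposition \ref{prop:qmGC(omega)} and the acyclicity statements already established there and in Proposition \ref{prop:redqmGC}.

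\textbf{The isomorphism $\mathcal{Q}^t\cong\dGC_k^t$.} First I would check that $\mathcal{Q}^t$ is a subcomplex. A target core vertex $x$ has $|x|_{out}=0$, so validity in the quasi case forces positive out-weight; hence such $x$ can only be decorated by $\frac{\infty_1}{0}$ or $\frac{\infty_1}{\infty_1}$. In particular none of the surviving $\mathcal{C}^1(\frac{0}{\infty_1})$-graphs of $\mathcal{Q}$ has a target core vertex, so $\mathcal{Q}^t$ consists solely of $\omega$-decorated graphs $\Gamma(\omega)$ with $\Gamma$ targeted. On such graphs the univalent-creating term satisfies $d_u\Gamma(\omega)=-d_u\Gamma(\frac{\infty_1}{0})$, which is pure-$\frac{\infty_1}{0}$-core and therefore vanishes in $\mathcal{Q}=\qmGC_k^+/\mathcal{C}(\frac{\infty_1}{0})$, while $d_s$ preserves the presence of a target (splitting a target vertex always leaves a target piece). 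Thus $d|_{\mathcal{Q}^t}=d_s$, and under the isomorphism $\qmGC_k(\omega)\cong\dGC_k$ the family $\mathcal{Q}^t$ maps bijectively onto the span of targeted graphs in $\dGC_k$, with $d_s$ corresponding precisely to the vertex-splitting differential. This gives $\mathcal{Q}^t\cong\dGC_k^t$.

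\textbf{Acyclicity of the quotient.} The quotient $\mathcal{Q}/\mathcal{Q}^t$ is spanned by the $\omega$-graphs $\Gamma(\omega)$ with $\Gamma$ having no target core vertex together with the graphs of $\mathcal{C}^1(\frac{0}{\infty_1})$. Now for untargeted $\Gamma$ one has $d_u\Gamma(\omega)=-d_u\Gamma(\frac{0}{\infty_1})$, which no longer dies in $\mathcal{Q}$, so $d_u$ links the two families exactly as in the acyclic complex $\mathcal{C}(\frac{0}{\infty_1})$ of Proposition \ref{prop:qmGC(omega)}. I would set up the bijection on generators sending $\Gamma(\omega)\mapsto\pm\Gamma(\frac{0}{\infty_1})$ on base graphs and the identity on $\mathcal{C}^1(\frac{0}{\infty_1})$, and then run the same filtration (over the number of non-antenna vertices) used in \cite{F} to prove $\mathcal{C}(\frac{0}{\infty_1})$ acyclic. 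The key point making this transfer work is that $d_u$ depends only on whether an antenna is attached through an incoming or outgoing edge, and not at all on the decoration carried by the core vertices; hence $\mathcal{Q}/\mathcal{Q}^t$ and $\mathcal{C}(\frac{0}{\infty_1})$ have the same associated graded, whose differential is driven entirely by $d_u$. Acyclicity of $\mathcal{C}(\frac{0}{\infty_1})$ then forces acyclicity of $\mathcal{Q}/\mathcal{Q}^t$, and the inclusion $\mathcal{Q}^t\hookrightarrow\mathcal{Q}$ is a quasi-isomorphism.

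\textbf{Main obstacle.} The routine part is the isomorphism $\mathcal{Q}^t\cong\dGC_k^t$; the delicate step is the acyclicity of $\mathcal{Q}/\mathcal{Q}^t$. The difficulty is bookkeeping: one must verify that the discrepancy between $\omega$-splitting and $\frac{0}{\infty_1}$-splitting in the $d_s$-part sits in a strictly lower layer of the chosen filtration, so that it does not obstruct the comparison with $\mathcal{C}(\frac{0}{\infty_1})$, and that exactly the splitting terms creating a target are the ones killed by passing to the quotient. Once the filtration is arranged so that only $d_u$ survives on the associated graded, the argument of \cite{F} applies essentially verbatim.
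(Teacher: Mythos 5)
Your proposal is correct and follows essentially the same route as the paper: the paper likewise obtains the isomorphism $\mathcal{Q}^t\cong\dGC_k^t$ "by inspection" (your observation that $d_u\Gamma(\omega)=-d_u\Gamma(\frac{\infty_1}{0})$ dies in $\mathcal{Q}$, leaving only the vertex-splitting part, is exactly what that inspection unpacks to), and it proves acyclicity of $\mathcal{Q}/\mathcal{Q}^t$ by the filtration argument of Proposition \ref{prop:redqmGC}, i.e.\ filtering by the number of non-univalent vertices so that only the antenna-creating part of the differential survives in the associated graded. Your generator-level bijection with $\mathcal{C}(\frac{0}{\infty_1})$ is a repackaging of that same argument rather than a different one, since both associated gradeds reduce to the identical antenna-attachment complex whose acyclicity is the content of the cited result from \cite{F}.
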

\begin{proof}
    One shows that the quotient $\mathcal{Q}/\mathcal{Q}^t$ is acyclic by the same argument as in proposition \ref{prop:redqmGC}. The second part follows by inspection.
\end{proof}
\begin{theorem}[Main theorem II]
    The restriction $\dGC_k^t\rightarrow\qwGC_k^+$ of the map from theorem \ref{thm:qmGC*} is a quasi-isomorphism.
\end{theorem}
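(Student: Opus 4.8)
The plan is to prove this in complete parallel with Theorem~\ref{thm:qmGC*}, replacing every complex appearing there by its positive-out-weight variant and replacing $\dGC_k$ at the far end by $\dGC_k^t$. Recall that Theorem~\ref{thm:qmGC*} was obtained by recognising the explicit map as the composite of the zig-zag $\qwGC_k^*\leftarrow\qqGC_k^*\rightarrow\qtGC_k^*\leftarrow\qmGC_k^*\rightarrow\qmGC_k(\omega)\cong\dGC_k$, every arrow of which is a quasi-isomorphism. My first step is to assemble the analogous chain
\begin{equation*}
    \qwGC_k^+ \overset{\sim}{\longleftarrow} \qqGC_k^+ \overset{\sim}{\longrightarrow} \qtGC_k^+ \overset{\sim}{\longleftarrow} \qmGC_k^+ \overset{\sim}{\longrightarrow} \mathcal{Q} \overset{\sim}{\longleftarrow} \mathcal{Q}^t \cong \dGC_k^t ,
\end{equation*}
where $\mathcal{Q}=\qmGC_k^+/\mathcal{C}(\frac{\infty_1}{0})$ and $\mathcal{Q}^t$ is its subcomplex of graphs carrying a target vertex once antenna-vertices are discarded.

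Each of these arrows is already available. The leftmost map is the restriction to positive-out-weight graphs of the composite $\qqGC_k^+\hookrightarrow\mathsf{S}\qwGC_k^+\hookrightarrow\qwGC_k^+$, both factors being quasi-isomorphisms by the two ``$+$''-inclusions established at the end of the previous section. The projection $\qqGC_k^+\to\qtGC_k^+$ is the $\frac{0}{0}$-reduction; the inclusion $\qmGC_k^+\hookrightarrow\qtGC_k^+$ is the corollary to Proposition~\ref{prop:qmGC}; the projection $\qmGC_k^+\to\mathcal{Q}$ is Proposition~\ref{prop:redqmGC}; and the inclusion $\mathcal{Q}^t\hookrightarrow\mathcal{Q}$ together with the identification $\mathcal{Q}^t\cong\dGC_k^t$ is the proposition immediately preceding this theorem. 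Composing them yields a quasi-isomorphism whose inverse direction realises a map $\dGC_k^t\to\qwGC_k^+$; expanding each symbol $\infty_1$ as $\sum_{i\ge 1}$ while passing back along the chain identifies it with the map of Theorem~\ref{thm:qmGC*} restricted to targeted graphs.

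The only genuinely new feature relative to Theorem~\ref{thm:qmGC*} is that the condition ``at least one vertex of positive out-weight'' must be converted into ``at least one target vertex'' along the chain, which is exactly what Proposition~\ref{prop:redqmGC} (killing $\mathcal{C}(\frac{\infty_1}{0})$ collapses the remaining out-weight bookkeeping) and the acyclicity of $\mathcal{Q}/\mathcal{Q}^t$ achieve. Consequently the main point to check, and the step most exposed to sign and combinatorial bookkeeping errors, is the \emph{naturality} of the explicit ``sum over all admissible decorations'' map with respect to the zig-zag: one verifies that the Theorem~\ref{thm:qmGC*} map sends $\dGC_k^t$ into the subcomplex $\qwGC_k^+$ and commutes with every inclusion and projection displayed above. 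As in Theorem~\ref{thm:qmGC*}, this is a direct unwinding of the definition of the $\infty_r$-decorations and requires no homological input beyond the cited propositions.
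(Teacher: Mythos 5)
Your proposal is correct and follows essentially the same route as the paper: the paper's proof of Main Theorem II is precisely the observation that the explicit decoration map restricts to quasi-isomorphisms along the positive-out-weight zig-zag $\qwGC_k^+\leftarrow\qqGC_k^+\rightarrow\qtGC_k^+\leftarrow\qmGC_k^+\rightarrow\mathcal{Q}\leftarrow\mathcal{Q}^t\cong\dGC_k^t$, invoking exactly the propositions you cite. The only difference is presentational — the paper compresses this into a reference to Theorem \ref{thm:qmGC*}, whereas you spell out the chain and the naturality check explicitly.
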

\begin{proof}
    See \ref{thm:qmGC*}.
\end{proof}
Let $\mathcal{C}$ be the subcomplex of $\ptGC_k^*$ of graphs with at least one univalent vertex, and graphs with without univalent on the form $\Gamma(\frac{\infty_1}{0})$ and $\Gamma(\frac{0}{\infty_1})$.
\begin{proposition}\label{prop:ptGC(gamma)}
    Consider the short exact sequence
    \begin{equation*}
        \xymatrix{0 \ar[r] & \mathcal{C} \ar[r] & \ptGC_k^* \ar[r] & \ptGC_k(\omega) \ar[r] & 0 }
    \end{equation*}
    where $\ptGC_k(\omega)$ is the complex of graphs without univalent vertices on the form $\Gamma(\gamma)$. Then projection $\ptGC_k^*\rightarrow \ptGC_k(\omega)$ is quasi-isomorphism. Furthermore the complex $\ptGC_k(\omega)$ is isomorphic to $\dGC_k$.
\end{proposition}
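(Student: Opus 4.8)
The plan is to prove the two assertions separately and then read off the quasi-isomorphism from the long exact sequence of the displayed short exact sequence; throughout I would follow the pseudo analogue of Proposition~\ref{prop:qmGC(omega)} (equivalently Proposition 6.3.1 of \cite{F}), indicating only the place where the pseudo case genuinely differs.

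First I would establish the isomorphism $\ptGC_k(\omega)\cong\dGC_k$. Define $\Phi\colon\dGC_k\to\ptGC_k(\omega)$ by sending an undecorated graph $\Gamma$ to $\Gamma(\omega)$, the sum of all admissible decorations of $\Gamma$ carrying at least one $\frac{\infty_1}{\infty_1}$ or a pair $\frac{\infty_1}{0},\frac{0}{\infty_1}$. On generators this is a bijection, since by construction the basis of $\ptGC_k(\omega)$ is exactly $\{\Gamma(\omega)\}$ indexed by the underlying graphs, which are precisely the generators of $\dGC_k$. The one thing to check is that $\Phi$ is a chain map: in the quotient $\ptGC_k(\omega)$ the univalent-creating part $d_u$ of the differential dies, as its image lies in $\mathcal{C}$, so the induced differential is the splitting part $d_s$ alone, and summing $d_s$ over all $\omega$-decorations of $\Gamma$ reproduces exactly the vertex-splitting differential of $\dGC_k$, with matching signs in the conventions of Section~4. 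This is the routine bookkeeping already carried out in \cite{F}.

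It remains to show that $\mathcal{C}$ is acyclic. I would filter $\mathcal{C}$ by the number of core (non-antenna) vertices, so that on the associated graded the differential only attaches antenna trees to, and redistributes monodecorations among, the core vertices; as in Propositions~\ref{prop:SinwGC} and~\ref{prop:qmGC} the graded complex then splits as a tensor product over the core vertices of local special-tree complexes. Acyclicity is a local statement at each core vertex, and here the identity $d_u\big(\Gamma(\omega)+\Gamma(\tfrac{\infty_1}{0})+\Gamma(\tfrac{0}{\infty_1})\big)=0$ (and its target-vertex variant) from Section~5 exhibits the monodecorated generators $\Gamma(\tfrac{\infty_1}{0})$, $\Gamma(\tfrac{0}{\infty_1})$ as being cancelled against the univalent-containing generators, leaving no cohomology once the $\omega$-part has been removed to the quotient. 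The pseudo case differs from the quasi case only in these local complexes: a source may now additionally become a curvature vertex $\frac{\infty_2}{0}$ and a target a dual-curvature vertex $\frac{0}{\infty_2}$, so the whole situation is symmetric under reversal of all edges together with $\frac{a}{b}\leftrightarrow\frac{b}{a}$. The main obstacle is precisely this local cohomology computation at a vertex where both curvature decorations are available, i.e. verifying that the extra $\frac{0}{\infty_2}$ terms create no new classes; I expect to handle it exactly as in the proof of Proposition~\ref{prop:SinpwGC}, by a secondary filtration over the number of univalent special vertices followed by one over branch vertices, the pieces with two or more univalent vertices being acyclic and the one-univalent piece contributing only the expected generators. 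Everything else is the formal argument of Proposition~\ref{prop:qmGC(omega)}.
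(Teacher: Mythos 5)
Your reduction of the proposition to the acyclicity of $\mathcal{C}$, and your treatment of the isomorphism $\ptGC_k(\omega)\cong\dGC_k$ (in the quotient the univalent-creating part $d_u$ of the differential vanishes, and $d_s$ summed over $\omega$-decorations reproduces the vertex-splitting differential of $\dGC_k$), are correct and agree with the paper. The gap is in your acyclicity argument. The complex $\mathcal{C}$ is cut out of $\ptGC_k^*$ by \emph{global} conditions: a graph lies in $\mathcal{C}$ iff it either has at least one univalent vertex, or has none and is monodecorated. Consequently, for a fixed core graph $\gamma$, the associated graded piece of your core-vertex filtration consists of all decorated graphs with at least one antenna together with only the two antenna-free graphs $\gamma(\frac{\infty_1}{0})$ and $\gamma(\frac{0}{\infty_1})$; this is a \emph{proper subcomplex} of the tensor product, over the core vertices, of the local complexes of decorated antenna trees, the complement being exactly the antenna-free $\omega$-decorated graphs. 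So the graded complex does not split as a tensor product over core vertices, and ``acyclicity is a local statement at each core vertex'' is unjustified. In fact it cannot be true: that full tensor product is the graded piece of $\ptGC_k^*$ itself, and the local antenna complexes are not acyclic --- their cohomology contains the special univalent-vertex cocycles (precisely the classes that force the summands $\mathcal{C}^1(\frac{\infty_1}{0})$ and $\mathcal{C}^1(\frac{0}{\infty_1})$ into $\qmGC_k$ and its pseudo analogue, cf.\ Proposition \ref{prop:qmGC}); if they were all acyclic, the same spectral sequence would force $H(\ptGC_k^*)=0$, contradicting $H(\ptGC_k^*)\cong H(\dGC_k)\neq 0$, which is what the proposition asserts.

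What your proposal misses is that the monodecorated graphs die by a cancellation coupling all core vertices simultaneously: $d_u\gamma(\frac{\infty_1}{0})$ is a sum over \emph{all} vertices of $\gamma$ of graphs carrying one special univalent vertex, so $\gamma(\frac{\infty_1}{0})$ cancels against antenna classes spread over the whole graph, never against anything attached to a single vertex; the identity $d_u\big(\Gamma(\omega)+\Gamma(\frac{\infty_1}{0})+\Gamma(\frac{0}{\infty_1})\big)=0$ you invoke is likewise a statement about whole graphs and does not localize. This is why the paper's proof (modelled on Proposition \ref{prop:qmGC}) proceeds instead by staged subcomplex/quotient reductions inside $\mathcal{C}$: first by the pattern of $\frac{\infty_1}{0}$ and $\frac{0}{\infty_1}$ decorations on non-univalent vertices, reducing to $\mathcal{C}(\frac{\infty_1}{0})\oplus\mathcal{C}(\frac{0}{\infty_1})$; then by the admissible types of univalent vertices (this is where the pseudo decoration $\frac{0}{\infty_2}$ is disposed of); and finally a filtration over non-bivalent non-antenna vertices combined with the zig-zag edge computation, which is where the global cancellation actually takes place. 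Your secondary filtrations borrowed from Proposition \ref{prop:SinpwGC} concern only local tree complexes and cannot replace this step; to rescue a single core-vertex filtration you would have to compute the (nonzero) local cohomologies and then run further pages of the spectral sequence to implement the global cancellation, which amounts to reorganizing the paper's argument rather than the local shortcut your proposal describes.
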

\begin{proof}
    It is enough to show that $\mathcal{C}$ is acyclic.
    The proof is similar of that of proposition \ref{prop:qmGC}. First consider the subcomplex $\mathsf{s}_1\qtGC_k$ of graphs with at least one non-univalent vertex decorated by $\frac{\infty_1}{0}$ or $\frac{0}{\infty_1}$. The quotient complex is acyclic. Secondly consider the subcomplex of graphs with at least a pair of non-univalent vertices are decorated by $\frac{\infty_1}{0}$ and $\frac{0}{\infty_1}$ respectively. This complex is acyclic, and let $\mathsf{s}_2\ptGC_k$ be the corresponding quotient complex. It splits as $\mathsf{C}(\frac{\infty_1}{0})\oplus\mathcal{C}(\frac{0}{\infty_1})$ of graphs where all non-univalent vertices are decorated by $\frac{\infty_1}{0}$ and $\frac{0}{\infty_1}$ respectively. Consider the subcomplex of $\mathcal{C}(\frac{\infty_1}{0})$ of graphs with at least one univalent vertex 
    $\begin{tikzpicture}[shorten >=1pt,node distance=1.2cm,auto,>=angle 90,baseline=-0.1cm]
                \node[biw] (a) at (0,0) {$0$\nodepart{lower}$\infty_2$};
                \node[] (d) at (0,-1) {}
                    edge [->] (a);
    \end{tikzpicture}$. This complex is acyclic. The quotient complex have only two types of univalent vertices on the form 
    $\begin{tikzpicture}[shorten >=1pt,node distance=1.2cm,auto,>=angle 90,baseline=-0.1cm]
                \node[biw] (a) at (0,0) {$\infty_1$\nodepart{lower}$\infty_1$};
                \node[] (d) at (0,-1) {}
                    edge [->] (a);
    \end{tikzpicture}$
    and 
    $\begin{tikzpicture}[shorten >=1pt,node distance=1.2cm,auto,>=angle 90,baseline=-0.1cm]
                \node[biw] (a) at (0,0) {$\infty_2$\nodepart{lower}$0$};
                \node[] (d) at (0,-1) {}
                    edge [<-] (a);
    \end{tikzpicture}$. By considering a filtration over the number of non-bivalent non-antenna vertices and, we can show that this complex is acyclic on the first page. The proof is analogous showing that $\mathcal{C}(\frac{0}{\infty_1})$ is acyclic.
\end{proof}
\begin{theorem}[Main theorem V]
    Let $\dGC_k\rightarrow \pwGC_k^*$ be the map where a graph $\Gamma$ is mapped to the sum of all possible bi-weights to put on $\Gamma$, excluding the graph only decorated by $\frac{0}{0}$. Then this map is a quasi-isomorphism.
\end{theorem}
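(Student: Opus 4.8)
The plan is to follow the strategy of Theorem \ref{thm:qmGC*} (Main theorem III), replacing each complex by its pseudo analogue. I would exhibit the zig-zag of quasi-isomorphisms
\begin{equation*}
    \pwGC_k^* \xleftarrow{\ \sim\ } \pqGC_k^* \xrightarrow{\ \sim\ } \ptGC_k^* \xrightarrow{\ \sim\ } \ptGC_k(\omega) \cong \dGC_k
\end{equation*}
and then verify that the explicit decoration-sum map of the statement represents the induced equivalence, so that it is itself a quasi-isomorphism. All four maps are already in hand: the leftmost arrow is the restriction to the $*$-summands of the composite inclusion $\pqGC_k \hookrightarrow \mathsf{S}\pwGC_k \hookrightarrow \pwGC_k$, which is a quasi-isomorphism by the inclusion $\pqGC_k\hookrightarrow\mathsf{S}\pwGC_k$ together with Proposition \ref{prop:SinpwGC}; the middle arrow is the projection killing any graph that carries a $\tfrac{0}{0}$-decorated vertex, a quasi-isomorphism by the proposition asserting that $\pqGC_k^*\to\ptGC_k^*$ is one; and the last two steps are exactly Proposition \ref{prop:ptGC(gamma)}, which gives the quasi-isomorphism $\ptGC_k^*\to\ptGC_k(\omega)$ and the identification $\ptGC_k(\omega)\cong\dGC_k$. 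Here I use that $\pwGC_k$ splits as $\pwGC_k^0\oplus\pwGC_k^*$, the all-$\tfrac{0}{0}$ graphs forming a direct summand, exactly as for $\pqGC_k$.

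It is worth recording why the pseudo zig-zag is one arrow shorter than the quasi one of Theorem \ref{thm:qmGC*}, where the monodecorated complex $\qmGC_k$ had to mediate between $\qtGC_k^*$ and $\dGC_k$. In the quasi setting sources and targets behave asymmetrically, so the summand $\Gamma(\tfrac{0}{\infty_1})$ cannot simply be discarded in the presence of a target vertex; in the pseudo setting the in/out symmetry makes both $\Gamma(\tfrac{\infty_1}{0})$ and $\Gamma(\tfrac{0}{\infty_1})$ acyclic, and Proposition \ref{prop:ptGC(gamma)} collapses $\ptGC_k^*$ onto $\dGC_k$ in a single projection, absorbing the monodecorated step.

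For the chain-map property I would observe that $\Gamma$ is sent to the projection onto $\pwGC_k^*$ of the graph in which every vertex carries the decoration $\tfrac{\infty_0}{\infty_0}$; by the remark that such vertices create no new univalent vertices under the differential, $d$ acts on this decoration purely by vertex splitting, so the map commutes with the differentials at once. The step I expect to demand the most care is the final compatibility check: tracing the fully summed decoration through the three quasi-isomorphisms of the zig-zag and confirming that it reduces to the representative $\Gamma(\omega)$ of $\ptGC_k(\omega)\cong\dGC_k$, so that the induced composite on cohomology is the identity and hence $H(f)$ is an isomorphism. Because every arrow of the zig-zag is an inclusion or projection of decorated graphs of a fixed underlying shape, this amounts to bookkeeping about how a sum over all weights restricts to the relevant sub- and quotient complexes; the genuine homological content has already been absorbed into Proposition \ref{prop:ptGC(gamma)} and the $\mathsf{S}$-reductions.
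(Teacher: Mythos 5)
Your proposal is correct and coincides with the paper's own argument: the paper proves Main Theorem V simply by invoking the zig-zag proof of Theorem \ref{thm:qmGC*}, and its pseudo incarnation is exactly your chain $\pwGC_k^*\leftarrow\pqGC_k^*\rightarrow\ptGC_k^*\rightarrow\ptGC_k(\omega)\cong\dGC_k$, assembled from Proposition \ref{prop:SinpwGC}, the inclusion $\pqGC_k\hookrightarrow\mathsf{S}\pwGC_k$, the projection $\pqGC_k^*\rightarrow\ptGC_k^*$, and Proposition \ref{prop:ptGC(gamma)}. Your remark that the monodecorated intermediate complex of the quasi case is not needed here also matches the paper, where Proposition \ref{prop:ptGC(gamma)} collapses $\ptGC_k^*$ directly onto $\ptGC_k(\omega)\cong\dGC_k$ in a single projection.
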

\begin{proof}
    See \ref{thm:qmGC*}.
\end{proof}
\begin{proposition}
    Let $\mathsf{otPGC}_k(\omega)$ be the subcomplex of $\ptGC_k(\omega)$ of oriented graphs. Then the induced projection $\mathsf{optGC}_k\rightarrow\mathsf{optGC}_k(\omega)$ is a quasi-isomorphism. Furthermore $\mathsf{otPGC}_k(\omega)\cong\oGC_k$.
\end{proposition}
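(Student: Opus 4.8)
The plan is to treat this as the oriented analogue of Proposition \ref{prop:ptGC(gamma)} and to re-run exactly the same chain of reductions, observing at each step that every construction involved is purely local and hence insensitive to the global orientation condition. First I would record the basic mechanism already exploited in Corollary \ref{cor:SinwGC} and its relatives: the differential on all of these complexes only splits vertices and grows antenna (univalent and bivalent) vertices, and none of these operations can create a new closed directed path. Consequently the subspace of oriented graphs is a genuine subcomplex, the decomposition of the associated graded over (now oriented) core graphs restricts, and the projection $\mathsf{optGC}_k\rightarrow\mathsf{optGC}_k(\omega)$ is well defined as the restriction of the projection $\ptGC_k^*\rightarrow\ptGC_k(\omega)$.

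To prove that this restricted projection is a quasi-isomorphism it suffices, exactly as in Proposition \ref{prop:ptGC(gamma)}, to show that the kernel $\mathcal{C}^{\mathrm{or}}:=\mathcal{C}\cap\mathsf{optGC}_k$, the oriented part of the complex $\mathcal{C}$ appearing there, is acyclic. The acyclicity of $\mathcal{C}$ was established by a sequence of subcomplex--quotient reductions together with filtrations over local combinatorial data (the number of non-univalent vertices carrying a $\frac{\infty_1}{0}$ or $\frac{0}{\infty_1}$ decoration, and the number of non-bivalent non-antenna vertices), supplemented by the zig-zag edge computation from the proof of Proposition \ref{prop:qmGC}. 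Each of these filtrations is defined in terms of local data and the induced differentials act by the same local moves, so they restrict verbatim to $\mathcal{C}^{\mathrm{or}}$; the associated graded pieces decompose as the same tensor products of tree and zig-zag complexes whose cohomology has already been computed. Since the differentials never close up a directed path, each acyclicity argument carries over unchanged, yielding $H(\mathcal{C}^{\mathrm{or}})=0$ and hence the first claim.

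For the identification $\mathsf{otPGC}_k(\omega)\cong\oGC_k$ I would simply restrict the isomorphism $\ptGC_k(\omega)\cong\dGC_k$ of Proposition \ref{prop:ptGC(gamma)}. That isomorphism forgets the $\omega$-decorations, sending a decorated graph $\Gamma(\omega)$ to its underlying directed graph $\Gamma$; it is a bijection preserving the underlying directed-graph structure and therefore the presence or absence of closed directed paths. Since $\oGC_k$ is by definition the subcomplex of $\dGC_k$ spanned by graphs with no closed directed path, restricting this isomorphism to oriented graphs identifies $\mathsf{otPGC}_k(\omega)$ with $\oGC_k$.

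The main obstacle I anticipate lies not in the bookkeeping but in the verification that orientedness is genuinely preserved throughout the zig-zag edge analysis, where chains of bivalent vertices with alternating edge directions are created and manipulated. One must confirm that such chains always sit on tree-like antennas hanging off the core, so that no sequence of differential applications can splice a zig-zag chain into a directed cycle. Once this is checked, the local nature of all the remaining arguments makes the reduction to the oriented setting routine.
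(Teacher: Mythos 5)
Your proposal is correct and follows essentially the same route as the paper, whose entire proof reads ``Same as the proof of proposition \ref{prop:ptGC(gamma)}'': one simply restricts the acyclicity argument for $\mathcal{C}$ and the identification $\ptGC_k(\omega)\cong\dGC_k$ to the oriented subcomplex. The justifications you supply---that the differential's local moves (vertex splitting, antenna growth) never create a closed directed path, so the filtrations and associated-graded computations restrict verbatim, and that the decoration-forgetting isomorphism onto $\dGC_k$ carries the oriented part onto $\oGC_k$---are precisely the implicit content of that one-line citation.
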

\begin{proof}
    Same as the proof of proposition \ref{prop:ptGC(gamma)}.
\end{proof}
\begin{theorem}[Main theorem IV]
    Let $\oGC_k\rightarrow \pOWGC_k^*$ be the restriction of the map in theorem \ref{thm:qmGC*} to oriented graphs. Then this map is a quasi-isomorphism.
\end{theorem}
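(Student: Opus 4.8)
The plan is to reproduce, in the oriented category, the zig-zag of quasi-isomorphisms that establishes Theorem~\ref{thm:qmGC*} and its pseudo counterpart Main Theorem~V. First I would observe that the map of the present theorem is nothing but the restriction to oriented graphs of the map $\dGC_k\to\pwGC_k^*$ of Main Theorem~V: decorating a graph with bi-weights never creates or destroys edges, so a graph with no closed directed path is sent to a sum of graphs with no closed directed path, and the image lies in $\pOWGC_k^*=\opwGC_k^*$. In the non-oriented setting this map is recognized as a quasi-isomorphism through the chain
\begin{equation*}
    \pwGC_k^* \;\hookleftarrow\; \pqGC_k^* \;\twoheadrightarrow\; \ptGC_k^* \;\twoheadrightarrow\; \ptGC_k(\omega)\;\cong\;\dGC_k,
\end{equation*}
where the inclusion $\pqGC_k^*\hookrightarrow\pwGC_k^*$ factors as $\pqGC_k^*\hookrightarrow\mathsf{S}\pwGC_k^*\hookrightarrow\pwGC_k^*$, the first inclusion being the $*$-summand of the quasi-isomorphism $\pqGC_k\hookrightarrow\mathsf{S}\pwGC_k$ and the second a quasi-isomorphism by Proposition~\ref{prop:SinpwGC}; the projection $\pqGC_k^*\to\ptGC_k^*$ is the reduction discarding $\tfrac{0}{0}$-decorations; and the last projection together with the isomorphism $\ptGC_k(\omega)\cong\dGC_k$ is Proposition~\ref{prop:ptGC(gamma)}.

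Next I would pass to the oriented subcomplex of every term, obtaining
\begin{equation*}
    \opwGC_k^* \;\hookleftarrow\; \opqGC_k^* \;\twoheadrightarrow\; \mathsf{otPGC}_k^* \;\twoheadrightarrow\; \mathsf{otPGC}_k(\omega)\;\cong\;\oGC_k.
\end{equation*}
Each arrow is again a quasi-isomorphism: the inclusion by the oriented corollary to Proposition~\ref{prop:SinpwGC} (in the form used for Corollary~\ref{cor:SinwGC}) combined with the oriented analogue of $\pqGC_k\hookrightarrow\mathsf{S}\pwGC_k$; the first projection by the oriented version of the $\tfrac{0}{0}$-reduction, which holds with the same proof since that argument is blind to the direction of the edges; and the last projection, with $\mathsf{otPGC}_k(\omega)\cong\oGC_k$, by the proposition immediately preceding this theorem. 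Because the decoration-adding map is compatible with every arrow of the zig-zag, it induces on cohomology the isomorphism $\mathsf{otPGC}_k(\omega)\cong\oGC_k$ transported back along quasi-isomorphisms; hence $\oGC_k\to\pOWGC_k^*$ is a quasi-isomorphism.

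The only point that genuinely requires checking is that the filtration and spectral-sequence computations behind the constituent propositions survive the passage to oriented graphs, and this is where I expect the real (though mild) work to lie. Every such computation is local: it decomposes the associated graded over core graphs $\gamma$ as a tensor product $\bigl(\bigotimes_{x\in V(\gamma)}p\mathcal{T}_x\bigr)^{\mathrm{Aut}(\gamma)}$ of complexes of special-in or special-out trees glued to a single core vertex. Since each such tree is attached to $\gamma$ along one edge and is itself free of closed paths, a decorated graph is oriented if and only if its core graph is oriented, and the oriented/non-oriented splitting is respected by this tensor factorization. Consequently the cohomology of each local tree complex $p\mathcal{T}_x$ is unchanged, the connecting maps restrict to quasi-isomorphisms on the oriented subcomplexes, and the degree-shifting device ensuring convergence of the spectral sequences still bounds the filtrations, because the number of oriented graphs of a fixed shifted degree remains finite. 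It then remains only to note the routine fact that the decomposition $\pwGC_k=\pwGC_k^0\oplus\pwGC_k^*$ commutes with taking oriented subcomplexes, since it depends solely on whether every vertex is decorated by $\tfrac{0}{0}$.
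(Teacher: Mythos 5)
Your proposal is correct and follows essentially the same route as the paper: the paper's proof is exactly the restriction of the zig-zag $\pwGC_k^*\leftarrow\pqGC_k^*\rightarrow\ptGC_k^*\rightarrow\ptGC_k(\omega)$ to oriented subcomplexes, with each constituent proposition holding in the oriented setting because the associated graded decomposes over \emph{oriented} core graphs into the same local tree complexes (this is precisely the paper's argument in Corollary \ref{cor:SinwGC} and the oriented analogues of Propositions \ref{prop:SinpwGC} and \ref{prop:ptGC(gamma)}). Your observation that a bi-weighted graph is oriented if and only if its core graph is oriented is exactly the point that makes this restriction work, and it matches the paper's reasoning.
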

\begin{proof}
    See \ref{thm:qmGC*}.
\end{proof}

\end{document}